\theoremstyle{definition} 
 \newtheorem{definition}{Definition}[section]
 \newtheorem{remark}[definition]{Remark}
 \newtheorem{example}[definition]{Example}
 \newtheorem{problem}[definition]{Problem}
 \newtheorem{problems}[definition]{Problems}
\newtheorem*{notation}{Notation}  
\newtheorem*{conventions}{Convention}
\theoremstyle{plain}      
 \newtheorem{proposition}[definition]{Proposition}
 \newtheorem{theorem}[definition]{Theorem}
 \newtheorem{corollary}[definition]{Corollary}
 \newtheorem{lemma}[definition]{Lemma}
\newcommand{\Hom}{\operatorname{Hom}}
\newcommand{\End}{\operatorname{End}}
\newcommand{\Z}{\ensuremath{\mathbb{Z}}}
\newcommand{\R}{\ensuremath{\mathbb{R}}}
\newcommand{\Tmatrix}[1]{\mathop{\left( {#1} \right)}\nolimits}
\newcommand{\Aut}{\mathrm{Aut}\,}
\newcommand{\Out}{\mathrm{Out}\,}
\newcommand{\Ker}{\mathop{\mathrm{Ker}}\nolimits}
\newcommand{\hsymb}[1]{\mbox{\strut\rlap{\smash{\Huge$#1$}}\quad}}
\newcommand{\Acy}{F^{\mathrm{acy}}}
\newcommand{\Sgb}{\ensuremath{\Sigma_{g,1}}}
\newcommand{\Mgb}{\ensuremath{\mathcal{M}_{g,1}}}
\newcommand{\Cgb}{\ensuremath{\mathcal{C}_{g,1}}}
\newcommand{\Hgb}{\ensuremath{\mathcal{H}_{g,1}}}
\newcommand{\Igb}{\ensuremath{\mathcal{I}_{g,1}}}
\newcommand{\Mg}[2]{\mathcal{M}_{#1,#2}}
\newcommand{\Cg}[2]{\mathcal{C}_{#1,#2}}
\newcommand{\Hg}[2]{\mathcal{H}_{#1,#2}}
\newcommand{\Sg}[2]{\Sigma_{#1,#2}}
\newcommand{\Nil}[1]{#1^{\mathrm{nil}}}
\newcommand{\AC}[1]{#1^{\mathrm{acy}}}
\newcommand{\Id}{\mathop{\mathrm{id}}\nolimits}
\newcommand{\dis}{\displaystyle}
\newcommand{\deriv}[2]{\frac{\partial #1}{\partial #2}}
\begin{document}

\title{A survey of Magnus representations for mapping class groups 
and homology cobordisms of surfaces}

\author{Takuya Sakasai\thanks{
Work partially supported by 
Grant-in-Aid for Scientific Research, 
(No.~21740044), 
Ministry of Education, Science, 
Sports and Technology, Japan.}}

\address{Department of Mathematics, Tokyo Institute of Technology,\\
2-12-1 Oh-okayama, Meguro-ku, Tokyo, 152-8551, Japan.\\
email:\,\tt{sakasai@math.titech.ac.jp}
}

\maketitle


\begin{abstract} 
This is a survey of Magnus representations 
with particular emphasis on their applications to 
mapping class groups and monoids (groups) of 
homology cobordisms of surfaces. 
In the first half, we begin by recalling 
the basics of the Fox calculus and 
overview Magnus representations for automorphism groups of 
free groups and mapping class groups of surfaces with 
related topics. 
In the latter half, we discuss in detail 
how the theory in the first half 
extends to homology cobordisms of surfaces and present 
a number of applications from recent researches. 
\end{abstract}

\begin{classification}
57M05, 
57N70, 
20F14, 
20F34, 
57M27, 
20C07. 
\end{classification}

\begin{keywords}
Magnus representation, Fox calculus, mapping class group, 
acyclic closure, homology cylinder, homology cobordism.
\end{keywords}

\tableofcontents  

\section{Introduction}\label{sec:intro}
Let $\Sg{g}{n}$ be a compact connected oriented smooth surface of genus $g$ 
with $n$ boundary components ($n$ may be $0$). 
We take a base point $p$ in $\partial \Sg{g}{n}$ when $n \ge 1$ and 
arbitrarily when $n=0$. 
The {\it mapping class group} 
$\mathcal{M}_{g,n}$ of $\Sigma_{g,n}$ 
is defined as the group of all isotopy classes of orientation-preserving 
diffeomorphisms of $\Sigma_{g,n}$ which fix the boundary pointwise. 
The area of research covered by the mapping class group 
in contemporary mathematics is very broad: 
algebraic geometry, differential geometry, 
hyperbolic geometry, complex analysis, topology, 
combinatorial group theory, mathematical physics etc. 
The group $\Mg{g}{n}$ serves as 
the modular group of the Teichm\"uller space, 
which is the main theme of this handbook, 
and this yields a strong 
connection among the above subjects. 

Now let us consider $\Mg{g}{n}$ 
from a topological point of view, 
which is our approach for studying $\Mg{g}{n}$ in this 
chapter. 
While $\Mg{g}{n}$ does not act on the surface $\Sg{g}{n}$ 
itself, it works as transformation groups of many 
discretized objects related 
to $\Sg{g}{n}$: 
the set of isotopy classes of curves, 
the fundamental group $\pi_1 (\Sg{g}{n},p)$ (when $n \ge 1$), 
the homology groups etc. 
In many cases, these discretized (simplified in a sense) data 
lose no topological information on the objects involved, 
for the classical theory of surface topology says that 
homotopical information of a surface governs 
its topology. 
For example, we use $\Mg{g}{n}$ more often 
than the diffeomorphism group of $\Sg{g}{n}$ 
in the construction of 
three-dimensional manifolds by 
Heegaard splittings or Dehn surgery, 
and also that of surface bundles over a manifold. 
When $n = 1$, 
a theorem of Dehn and Nielsen says that 
an element in $\Mg{g}{1}$ 
is completely characterized by its action on $\pi_1 (\Sg{g}{1},p)$, 
which is a free group of rank $2g$. 
This suggests a method for studying $\Mg{g}{n}$ through 
the theory of 
the {\it automorphism group of a free group}. 

{\it Magnus representations} are matrix representations for 
free groups and their automorphism groups. 
The definition is usually given in terms of 
the {\it Fox derivative} and it looks like a Jacobian matrix of 
a differentiable map. 
With their relationship to homology and cohomology of groups, 
Magnus representations have been used as a fundamental tool 
for studying various groups 
by researchers in combinatorial group theory for many years. 
En route, applications to $\Mg{g}{n}$ have also been given. 

Compared with the history of mapping class groups and 
automorphism groups of free groups,  
the study of monoids and groups of 
{\it homology cobordisms} over a surface is 
a quite new theme of research. 
This research was independently 
initiated by Goussarov \cite{gou} and Habiro \cite{habiro} 
in their investigations of three-dimensional manifolds 
via so-called {\it clover} or {\it clasper} surgery, 
which are known to be essentially the same. 
These surgery techniques are said to be a ``topological 
commutator calculus'', which invokes a connection to $\Mg{g}{n}$ 
as automorphisms of $\pi_1 (\Sg{g}{n},p)$. 
In fact, Garoufalidis-Levine \cite{gl} established a 
connection between the above surgery techniques and 
classical algebraic topology related to $\Mg{g}{n}$ in terms of 
Massey products on the first cohomology of three-dimensional manifolds. 

In this chapter, we use the word ``{\it homology cylinders}'' for 
homology cobordisms over a surface with fixed markings of their boundaries. 
These markings are necessary to define a product operation 
on the set of 
homology cylinders as a generalization of the mapping class group. 
We can consider, for example, 
homology 3-spheres and pure string links 
with $n$ strings to be homology cylinders 
over $\Sigma_{0,1}$ and $\Sigma_{0,n+1}$. 
For a given homology cylinder over $\Sigma_{g,n}$, 
we can construct another one by changing its markings by 
using $\Mg{g}{n}$. Therefore homology cylinders 
enable us to treat important objects 
in three-dimensional topology simultaneously. 
This motivates the study of homology cylinders 
with particular stress on its algebraic structure. 

The purpose of this chapter is to survey research on 
structures of mapping class groups and monoids (groups) 
of homology cylinders through 
their Magnus representations as a common tool for study. 
Note that homology cylinders are also discussed in 
the chapter of Habiro and Massuyeau \cite{habiromassuyeau}. 
However, our approach here is 
distinct from theirs and more group-theoretical. 
The author hopes that their chapter and the present one 
could complement each other and offer the readers an introduction to 
this fruitful subject. 

Here we briefly mention the content of this chapter. 
Basically, it is divided into two parts: 
The first part is intended for serving 
as a survey of Magnus representations 
for automorphism groups of free groups and 
mapping class groups of surfaces. 
In Section \ref{sec:fox}, we first recall 
the Fox calculus as a tool for many computations 
in this chapter. We put stress on its relationship to 
homology and cohomology of groups. 
Section \ref{sec:Magnus1} is devoted to give a machinery 
of Magnus representations with relations to 
automorphisms of the derived quotients of 
a free group. Finally, in Section \ref{sec:MagnusMCG}, 
we overview 
applications of Magnus representations to mapping class 
groups of surfaces following works of Morita and Suzuki. 
The second part of this chapter begins in Section \ref{sec:HC}, where 
the monoid and related groups of homology cylinders over a surface 
are introduced. 
Sections \ref{sec:universal} and \ref{sec:extension} 
are devoted to discussing methods for extending 
Magnus representations for mapping class groups to 
homology cylinders. 
In Section \ref{sec:application}, 
we present several topics on homology cylinders 
where Magnus representations play important roles.  

\begin{conventions}
All maps act on elements from the {\it left}. 
We often use the same notation to write a map and 
induced maps on quotients of its source or target. 
Homology and cohomology groups are assumed to be with 
coefficients in the ring of integers $\mathbb{Z}$ 
and all manifolds are assumed to be smooth 
unless otherwise indicated. 
\end{conventions}

The author is deeply grateful to Athanase Papadopoulos for 
giving him a chance to write this chapter and providing 
many valuable suggestions. 
The author also would like to thank Kazuo Habiro, 
Nariya Kawazumi, Teruaki Kitano, Gw\'ena\"el Massuyeau, 
Takayuki Morifuji, Takao Satoh and Masaaki Suzuki 
for their careful reading and helpful comments on the manuscript, 
and Hiroshi Goda for 
permitting the author to use the pictures 
in Example \ref{ex:concordance}.

\section{Fox calculus}\label{sec:fox}
We begin by recalling the Fox calculus, which 
was defined by Fox in the 1950s and which has been 
known as an important tool in the study of 
free groups and their automorphisms. 
Magnus representations are defined 
as an application of this machinery. 
Historically, 
Magnus representations were defined 
{\it without} the Fox calculus. 
However, we use the Fox calculus since it 
is now widely accepted to be standard and offers 
a clear connection to low-dimensional topology. 
Good references for this topic have been the original paper of 
Fox \cite{fox1} and Birman's book \cite{bi}. 
Our discussion is almost parallel to the latter 
with particular stress on the relationship of 
the Fox calculus to homology and cohomology of groups. 
The relation becomes a key to generalizing the machinery 
so that it can be applied to objects in more broad range than 
free groups and their automorphisms in the latter half of this chapter. 

Since we shall work in {\it non-commutative} rings 
almost everywhere in this chapter, 
we first need to fix our notation in detail. 

\begin{notation}
For a group $G$ and two of its subgroups $G_1$ and $G_2$, 
we denote by $[G_1,G_2]$ the commutator subgroup of $G_1$ and $G_2$. 
We set $[x,y]= x y x^{-1} y^{-1}$ for $x$, $y \in G$. 
The integral (or rational) group ring of $G$ is denoted 
by $\Z [G]$ (or $\mathbb{Q} [G]$). 
For a matrix $A$ with entries in a ring $R$ 
and a ring homomorphism $\varphi:R \to R'$, 
we denote by ${}^{\varphi} A$ the 
matrix obtained from $A$ by applying $\varphi$ to each entry. 
$A^T$ denotes the transpose of $A$. 
When $R$ is $\Z [G]$ 
or its right field of fractions 
(if it exists), we denote by $\overline{A}$ 
the matrix obtained from $A$ by applying the involution induced 
from $(x \mapsto x^{-1},\ x \in G)$ to each entry. 
For a module $M$, we write $M^n$ 
the module of column vectors with $n$ entries in $M$. 
\end{notation}

\subsection{Fox derivatives}\label{subsec:fox}

Let $G$ be a group and let $M$ be a left $\Z [G]$-module. 
A {\it crossed homomorphism} \index{crossed homomorphism} 
(or {\it derivation}) 
from $G$ to $M$ is a map $f: G \to M$ satisfying
\[f(xy)=f(x)+x f(y)\]
for all $x,y \in G$. 
In other words, it 
is a homomorphism $(f,\mathrm{id}_G): G \to M \rtimes G$, where 
$M \rtimes G$ is the semi-direct product with 
the group structure given by 
\[(m_1,g_1) \cdot (m_2, g_2) = (m_1 + g_1 m_2 ,g_1 g_2)\]
for $m_1,m_2 \in M$ and $g_1, g_2 \in G$. 
When $G$ is $F_n$, a free group of rank $n$, 
the latter description shows that 
a crossed homomorphism $f:F_n \to M$ is 
determined by its values on any generating set of $F_n$. 
In general, 
if $G$ has a (not necessarily finite) presentation 
$\langle x_1, x_2, \ldots \mid r_1,r_2,\ldots \rangle$, 
crossed homomorphisms $f: G \to M$ correspond to 
crossed homomorphisms 
$f: \langle x_1, x_2, \ldots \rangle \to M$ satisfying 
$f(r_i)=0$ for all $i=1,2,\ldots$, where 
$\langle x_1, x_2, \ldots \rangle$ is the free group generated 
by $\{x_1, x_2, \ldots\}$. 

Let us define Fox derivatives for $F_n$. 
We take a basis $\{ \gamma_1, \gamma_2, \ldots, \gamma_n\}$ of 
$F_n$. The group $F_n$ acts on $\Z [F_n]$ by left multiplication, 
so that $\Z [F_n]$ is a left $\Z [F_n]$-module. 

\begin{definition}\label{def:Fox}
The {\it Fox derivative} \index{Fox derivative} 
(or {\it free derivative}) 
with respect to $\gamma_j$ in the basis 
$\{ \gamma_1, \gamma_2, \ldots, \gamma_n\}$ 
is the crossed homomorphism 
\[\deriv{}{\gamma_j} : F_n \longrightarrow \Z [F_n]\]
defined by $\dis\deriv{\gamma_i}{\gamma_j} = \delta_{ij}$ 
(Kronecker's delta). 
We use the same notation for its extension 
\[\deriv{}{\gamma_j} : \Z [F_n] \longrightarrow \Z [F_n]\]
to $\Z [F_n]$ as an additive map. 
\end{definition}

Fundamental properties of Fox derivatives are as follows. 
Let $\mathfrak{t}:\Z [F_n] \to \Z$ be the trivializer 
(or augmentation homomorphism) defined by 
$\mathfrak{t} (\sum_{v \in F_n} a_v v)= \sum_{v \in F_n} a_v$. 
\begin{proposition}\label{prop:foxproperty}
$(1)$ \ The equality \ 
$\dis\deriv{\gamma_i^{-1}}{\gamma_j}=-\delta_{ij} \gamma_i^{-1}$ \ holds. 

\noindent
$(2)$ \ For $g, h \in \Z [F_n]$, we have 
\[\dis\deriv{(gh)}{\gamma_j}=\deriv{g}{\gamma_j} \mathfrak{t}(h) 
+g \deriv{h}{\gamma_j}.\]

\noindent
$(3)$ \ $($Chain rule\,$)$ Let $\varphi:F_n \to F_n$ be an 
endomorphism of $F_n$. 
For any $w \in F_n$, we have
\[\deriv{\varphi(w)}{\gamma_j} = \sum_{k=1}^n 
\left(\varphi\left(\deriv{w}{\gamma_k}\right)\right) 
\left(\deriv{\varphi(\gamma_k)}{\gamma_j}\right).\]

\noindent
$(4)$ \ $($``Fundamental formula'' of Fox calculus\,$)$ 
For $g \in \Z [F_n]$, we have 
\[g-\mathfrak{t}(g) = \sum_{j=1}^n \deriv{g}{\gamma_j} (\gamma_j-1).\]

\noindent
$(5)$ \ Let $\rho: F_n \to \Gamma$ be a homomorphism. Then 
$v \in F_n$ satisfies 
\[\rho \left(\deriv{v}{\gamma_j}\right) = 0 \qquad 
\mbox{for $j=1,2,\ldots,n$}\]
if and only if $v \in [\Ker \rho, \Ker \rho]$. 
\end{proposition}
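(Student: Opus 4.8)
The plan is first to replace $\Gamma$ by the image $\rho(F_n)$ — this is harmless, since the vanishing of $\rho(\partial v/\partial\gamma_j)$ is unaffected by the inclusion $\Z[\rho(F_n)]\hookrightarrow\Z[\Gamma]$ — and then, setting $N=\Ker\rho$, to reformulate everything in terms of the map
$$d\colon F_n\longrightarrow\Z[\Gamma]^n,\qquad d(v)=\left(\rho\!\left(\frac{\partial v}{\partial\gamma_1}\right),\dots,\rho\!\left(\frac{\partial v}{\partial\gamma_n}\right)\right)^{T}.$$
By Proposition~\ref{prop:foxproperty}(2) (using $\mathfrak{t}(w)=1$ for $w\in F_n$) together with the fact that $\rho$ is a ring homomorphism, $d$ satisfies $d(vw)=d(v)+\rho(v)\,d(w)$, i.e.\ it is a crossed homomorphism for the diagonal left $F_n$-action on $\Z[\Gamma]^n$ through $\rho$. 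In particular $\Ker d$ is a subgroup of $F_n$, and the assertion to be proved becomes the identity $\Ker d=[N,N]$.

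For the inclusion $[N,N]\subseteq\Ker d$ I would simply note that on $N$ the group $\Gamma$ acts trivially on $\Z[\Gamma]^n$, so $d|_N\colon N\to\Z[\Gamma]^n$ is an honest homomorphism into an abelian group and hence annihilates $[N,N]$. For the reverse inclusion I would proceed in two steps. First, applying $\rho$ to the fundamental formula of Proposition~\ref{prop:foxproperty}(4) with $g=v$ yields $\rho(v)-1=\sum_{j}\rho(\partial v/\partial\gamma_j)\,(\rho(\gamma_j)-1)$ in $\Z[\Gamma]$, so $d(v)=0$ already forces $\rho(v)=1$, that is, $v\in N$. Second — and this is the substantive content — I claim the homomorphism $\overline d\colon N/[N,N]\to\Z[\Gamma]^n$ induced by $d|_N$ is injective. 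Granting this, $v\in N$ with $d(v)=0$ maps to $0$ in $N/[N,N]$, hence $v\in[N,N]$, which finishes the proof.

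The injectivity of $\overline d$ is the one genuinely non-formal ingredient, and I would deduce it from the free resolution furnished by the Fox calculus itself. Proposition~\ref{prop:foxproperty}(4) shows that the elements $\gamma_k-1$ generate $I_{F_n}=\Ker\mathfrak{t}$ as a left $\Z[F_n]$-module, while applying $\partial/\partial\gamma_j$ to $\sum_k a_k(\gamma_k-1)$ returns $a_j$; hence
$$0\longrightarrow\Z[F_n]^n\xrightarrow{\,\partial_2\,}\Z[F_n]\xrightarrow{\,\mathfrak{t}\,}\Z\longrightarrow0,\qquad\partial_2(e_j)=\gamma_j-1,$$
is a free resolution of the trivial $\Z[F_n]$-module $\Z$ (here $e_j$ is the $j$-th standard basis vector), and — $\Z[F_n]$ being free over $\Z[N]$ — it is also a free $\Z[N]$-resolution of $\Z$. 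Comparing it with the bar resolution of $\Z$ over $\Z[N]$ produces in degree one a chain map sending the generator $[w]$ to $(\partial w/\partial\gamma_j)_j$; tensoring with $\Z$ over $\Z[N]$ and taking $H_1$ then identifies $\overline d$ with the canonical isomorphism $H_1(N;\Z)=N/[N,N]\xrightarrow{\,\sim\,}\Ker\bigl(\Z[\Gamma]^n\xrightarrow{\partial}\Z[\Gamma]\bigr)$, where now $\partial(e_j)=\rho(\gamma_j)-1$. Thus $\overline d$ is not merely injective but an isomorphism onto $\Ker\partial$; this is exactly the statement equivalent to Magnus's classical embedding $F_n/[N,N]\hookrightarrow\Z[\Gamma]^n\rtimes\Gamma$. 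An alternative, purely combinatorial route is to fix a Schreier transversal for $N$ in $F_n$, compute $\rho(\partial w/\partial\gamma_k)$ explicitly on the associated free Schreier basis of $N$, and check directly that the resulting column vectors are $\Z$-linearly independent in $\Z[\Gamma]^n$.

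I expect the injectivity of $\overline d$ to be the main obstacle: parts (2) and (4) of the proposition reduce the bulk of the statement to formal bookkeeping, but this last step is not formal — it is essentially the Magnus embedding theorem, equivalently the exactness of the Crowell sequence $0\to N/[N,N]\to\Z[\Gamma]^n\to I_\Gamma\to0$. In the homological argument the work is concealed in the comparison theorem for projective resolutions; in the Schreier argument it lies in exhibiting, for each Schreier generator, a \emph{leading} group-element-and-coordinate in its Fox-derivative vector that cannot be cancelled by the others. For a survey one may, of course, also simply invoke Magnus's theorem (or the Crowell exact sequence) directly.
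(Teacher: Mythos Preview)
Your argument is correct and is, at bottom, the same computation as the paper's, recast in algebraic rather than topological language. The paper takes the bouquet $X$ of $n$ circles as $K(F_n,1)$, passes to the $\Gamma$-covering $X_\Gamma$ (so that $\pi_1 X_\Gamma \cong N$), and observes that sending $v\in N$ to the vector $\bigl({}^\rho\overline{\partial v/\partial\gamma_j}\bigr)_j$ is exactly the Hurewicz map $N=\pi_1 X_\Gamma \to H_1(X_\Gamma)\subset C_1(X_\Gamma)\cong\Z[\Gamma]^n$, since lifting a loop to $\widetilde{X}$ and reading off its $1$-chain is precisely what the Fox derivative records; the kernel is then $[N,N]$ by Hurewicz. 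Your free-resolution argument is the algebraic avatar of this: the resolution $0\to\Z[F_n]^n\to\Z[F_n]\to\Z\to 0$ \emph{is} the cellular chain complex of $\widetilde{X}$, tensoring over $\Z[N]$ with $\Z$ \emph{is} passing to the chains of $X_\Gamma$, and comparing with the bar resolution \emph{is} the algebraic proof of the Hurewicz isomorphism. The paper's version is shorter because it invokes Hurewicz as a black box; yours is more self-contained and makes the two logical steps --- first $\Ker d\subseteq N$ via part~(4), then $\Ker(d|_N)=[N,N]$ via the injectivity of $\overline d$ --- more visibly separate than the paper, which leaves the first step implicit.
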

\noindent
(1) and (2) are easily proved. As for (3), (4) and (5), 
we prove them in the next subsections 
by relating them to homology and cohomology of groups.

\subsection{The Magnus representation 
for a free group}\label{subsec:magnus_free}

Now we define the Magnus representation for $F_n$ 
by using Fox derivatives. 
Let $S=\{s_1,s_2, \ldots, s_n\}$ 
be a set of formal parameters. We denote 
by $(\Z [F_n])[S]$ 
the polynomial ring over $\Z [F_n]$ 
with variables $S$, 
where the elements of $S$ are supposed to 
commute with one another and with the elements of $\Z [F_n]$. 
\begin{definition}
For $w \in F_n$, we put a matrix
\[(w):= \begin{pmatrix} w & 
\dis\sum_{j=1}^n\left(\deriv{w}{\gamma_j}\right)s_j \\
0 & 1
\end{pmatrix}.\]
Then the map $w \mapsto (w)$ gives a homomorphism 
$F_n \to \mathrm{GL}(2,(\Z F_n) [S])$ 
called the {\it Magnus representation} 
\index{Magnus representation!for $F_n$} 
for $F_n$.
\end{definition}

This representation was first given by Magnus \cite{magnus} 
without using Fox derivatives (see Remark \ref{rem:fox}). 
It is clearly injective. 
On the other hand, it follows from 
Proposition \ref{prop:foxproperty} (5) that 
for a homomorphism $\rho:F_n \to \Gamma$, 
the kernel of the homomorphism $w \mapsto {}^{\rho} (w)$ is 
$[\Ker \rho, \Ker \rho]$. 
In particular, if we take the abelianization map 
$\mathfrak{a}: F_n \to H_1:=H_1 (F_n) \cong \Z^n$ as $\rho$, 
we obtain an injection of the {\it metabelian quotient} 
\index{metabelian quotient} 
$F_n/[[F_n,F_n],[F_n,F_n]]$ into 
$\mathrm{GL}(2,(\Z [H_1]) [S])$, where the definition of $(\Z [H_1])[S]$ 
is given by replacing $F_n$ with $H_1$ 
in the definition of $(\Z [F_n])[S]$. 

\begin{remark}\label{rem:fox}
The Magnus representation of $F_n$ can be 
described by using crossed homomorphisms. 
Indeed, we can unify the Fox derivatives 
$\dis\deriv{}{\gamma_j}$ $(j=1,2,\ldots,n)$ 
into a homomorphism 
\[\left(\left(\deriv{}{\gamma_1},\deriv{}{\gamma_2},\ldots,
\deriv{}{\gamma_n}\right)^T, \mathrm{id}_{F_n}\right): 
F_n \longrightarrow (\Z [F_n])^n \rtimes F_n.\]
This map is equivalent to the Magnus representation. 
On the other hand, if we consider a homomorphism 
$F_n \to (\Z [F_n])^n \rtimes F_n$ given by 
\[\gamma_j \longmapsto 
\bordermatrix{
&  j\ \ \,  & \cr
& (0, \ldots, 0,\, 1 \,, 0, \ldots, 0)^T, & \gamma_j}, \]
we can {\it define} the Fox derivatives as its first projection. 
This corresponds to Magnus' original description. 
\end{remark}

\subsection{Homology and cohomology of groups}\label{subsec:grouphomology}

In this subsection, 
we interpret the definition and fundamental properties of 
Fox derivatives and the Magnus representation for $F_n$ in terms of 
homology and cohomology. 
This interpretation makes it easier to give their 
topological applications. 
As space is limited, however, we 
refer to Brown's book \cite{br} for the general theory. 
Instead, here we give explicit chain and cochain complexes 
to calculate the homology and cohomology of a given group. 

Let $G$ be a group and $M$ be a left $\Z [G]$-module. 
We denote by $C_i$ the free $\Z [G]$-module generated by 
the symbols $[g_1|g_2|\cdots|g_i]$ corresponding to 
$i$-tuples of elements $g_1, g_2, \ldots, g_i$ of $G$ 
($C_0 \cong \Z [G]$ generated by $[\cdot]$). 
We define the chain complex $C_\ast (G;M)$ and 
cochain complex $C^\ast (G;M)$ by
\begin{align*}
C_i (G;M) &= C_i \otimes_{\Z [G]} M, \\
C^i (G;M) &= \Hom_{\Z [G]} (C_i,M).
\end{align*}
\noindent 
Here the tensor product 
is taken for the {\it right} $\Z [G]$-module $C_i$ and 
the left $\Z [G]$-module $M$, and 
$\Hom_{\Z [G]} (C_i,M)$ consists of 
$\Z [G]$-``equivariant'' homomorphisms $f$ in the sense that 
$f$ satisfies $f(c g)= g^{-1} f(c)$ 
for $c \in C_i$ and $g \in G$. 
(These conventions are slightly different 
from those in \cite{br}.) 
The boundary operator 
$\partial_i: C_i (G;M) \to C_{i-1}(G;M)$ is given by
\begin{align*}
& \ \ \partial_i([g_1|g_2|\cdots|g_i]\otimes m) \\
=&\ 
[g_2|g_3|\cdots|g_i] \otimes g_1^{-1} m -
[g_1 g_2| g_3 |\cdots|g_i] \otimes m \\
&+[g_1 |g_2 g_3|g_4|\cdots|g_i] - \cdots 
+(-1)^{i-1}[g_1|\cdots| g_{i-2} |g_{i-1}g_i] \otimes m \\
&+(-1)^i[g_1|\cdots| g_{i-2} |g_{i-1}] \otimes m 
\end{align*}
\noindent
and the coboundary operator 
$\delta_i: C^i (G;M) \to C^{i+1}(G;M)$ is given by
\begin{align*}
& \ \ (\delta_i f) ([g_1|g_2|\cdots | g_{i+1}])\\
=&\ g_1f([g_2|g_3|\cdots|g_{i+1}])
-f([g_1 g_2| g_3 |\cdots|g_{i+1}])\\
&+f([g_1 |g_2 g_3|g_4|\cdots|g_{i+1}]) - \cdots 
+(-1)^if([g_1|\cdots| g_{i-1} |g_ig_{i+1}]) \\
&+(-1)^{i+1}f([g_1|\cdots| g_{i-1} |g_i])
\end{align*}
for $f \in C^i (G;M)$ regarded as a function 
from the set of symbols $[g_1|g_2|\cdots|g_i]$ 
to $M$. We denote the corresponding homology and cohomology groups 
by $H_\ast (G;M)$ and $H^\ast (G;M)$. 
We obtain the following explicit description of homology and 
cohomology in degree $0$ by observing the complexes. 
\begin{align*}
H_0 (G;M)& 
= M/\langle m-gm \mid m \in M, g \in G\rangle,\\
H^0 (G;M)& 
= \{ m \in M \mid \mbox{$gm=m$ for any $g \in G$}\}.
\end{align*}
Next we take a close look at $H^1 (G;M)$. The condition for 
$f \in C^1(G;M)$ to be a cocycle is that 
\[0=(\delta_1f)([g_1|g_2])= g_1f([g_2])-f([g_1 g_2])+f([g_1])\]
holds for any $g_1, g_2 \in G$. If we naturally identify a cochain in 
$C^1 (G;M)$ with a map from $G$ to $M$, this cocycle condition is 
nothing more than the definition of crossed homomorphisms from $G$ to $M$ 
mentioned in Section \ref{subsec:fox}. 
Therefore the module 
of $1$-cocycles is 
written as the module $\mathrm{Cross}(G;M)$ of crossed homomorphisms 
from $G$ to $M$. A $1$-coboundary is 
obtained from each element 
$m \in M \cong C^0(G;M)$ 
corresponding to the function $[\cdot] \mapsto m$, 
and we have 
\[(\delta_0 m)([g])=g m -m\]
for $m \in M$ and $g \in G$. We call such a crossed homomorphism 
(after the above identification) a {\it principal crossed homomorphism} 
and denote the module of principal crossed homomorphisms by 
$\mathrm{Prin}(G;M)$. Consequently we have 
\[H^1 (G;M) \cong \mathrm{Cross}(G;M)/\mathrm{Prin}(G;M).\]
\begin{example}
Let $M=\Z$ with the trivial $G$-action. 
We have 
\begin{align*}
H_0 (G;\Z) &\cong H^0 (G;\Z) \cong \Z, \\
H_1 (G;\Z) &\cong G/[G,G], \ \mbox{the abelianization of $G$},\\
H^1 (G;\Z) &\cong \Hom (G,\Z) = \Hom(H_1(G;\Z),\Z).
\end{align*}
\end{example}
\begin{remark}
For any $\Z [G]$-bimodule $M$ 
(for example, $M=\Z [G]$), 
$C_\ast (G;M)$ and $C^\ast (G;M)$ have natural 
{\it right} actions of $G$, with our convention. Moreover, 
the operators $\partial_i$ and $\delta_i$ are 
equivariant with respect to this right action, 
so that $H_\ast (G;M)$ and $H^\ast (G;M)$ become 
right $\Z [G]$-modules. 
\end{remark}

Let us return to our concern. The Fox derivative $\dis\deriv{}{\gamma_j}$ 
can be seen as a $1$-cocycle in 
$\mathrm{Cross} (F_n;\Z [F_n]) \subset C^1 (F_n;\Z [F_n])$ 
sending 
$\gamma_j$ to $1$ and $\gamma_i$ $(i\neq j)$ to $0$. 
Since a crossed homomorphism $F_n \to \Z [F_n]$ is determined by 
its values on any basis of $F_n$, we see that 
$\left\{ \dis\deriv{}{\gamma_1}, \dis\deriv{}{\gamma_2}, \ldots, 
\dis\deriv{}{\gamma_n}\right\}$ forms a basis of 
$\mathrm{Cross} (F_n;\Z [F_n])$ as a right $\Z [F_n]$-module. 

\begin{proof}[Proof of Proposition $\ref{prop:foxproperty} (3)$] 
Let $\varphi^\ast \Z [F_n]$ be the left $\Z [F_n]$-module 
whose underlying abelian group is $\Z [F_n]$ 
but on which $F_n$ acts through $\varphi$. 
We easily see that 
$\mathrm{Cross} (F_n;\varphi^\ast \Z [F_n])$ is a free 
right $\Z [F_n]$-module of rank $n$ generated by 
\[\left\{ \varphi\left(\dis\deriv{}{\gamma_1}\right), \ 
\varphi\left(\dis\deriv{}{\gamma_2}\right), \ \ldots, \ 
\varphi\left(\dis\deriv{}{\gamma_n}\right)\right\},\] 
where $F_n$ acts from the right by 
the usual, not through $\varphi$, multiplication. 
Now the map $w \mapsto \dis\deriv{\varphi (w)}{\gamma_j}$ is 
in $\mathrm{Cross} (F_n;\varphi^\ast \Z [F_n])$, so that 
we can put $\dis\deriv{\varphi ( \cdot )}{\gamma_j} = 
\dis\sum_{k=1}^n 
\left(\varphi\left(\dis\deriv{}{\gamma_k}\right)\right) \cdot g_k$. 
If we substitute $\gamma_k$ in this equality, we have 
$g_k=\dis\deriv{\varphi (\gamma_k)}{\gamma_j}$ 
and our claim follows. 
\end{proof}
\begin{proof}[Proof of Proposition $\ref{prop:foxproperty} (4)$] 
It suffices to show our claim when $g=v \in F_n$, a monomial. 
We can easily check that the equality 
\[\delta_0 1 = \sum_{j=1}^n\left(\deriv{}{\gamma_j}\right) 
\cdot (\gamma_j-1) \quad 
\in C^1 (F_n;\Z [F_n])\]
holds for $1 \in C^0 (F_n;\Z [F_n])\cong \Z [F_n]$. 
Applying this $1$-cochain to $v$, 
we obtain the desired equality. 
\end{proof}
\begin{remark}
For groups $H \subset G$ and a left $\Z [G]$-module $M$, 
the relative homology $H_\ast (G,H;M)$ (resp.\ 
cohomology $H^\ast (G,H;M)$) can be defined 
by considering $C_\ast (G;M)/C_\ast (H;M)$ (resp.\ 
$\Ker (C^\ast (G;M) \to C^\ast (H;M))$) as usual. 
\end{remark}

\subsection{Fox derivatives in low-dimensional topology}\label{subsec:lowdim}
Fox derivatives and low-dimensional topology are connected by 
using the topological definition of (co)homology of groups. 

\begin{conventions}
For a connected CW-complex $X$, we denote by $\widetilde{X}$ 
its universal covering. We take a base point $p$ of $X$ and 
a lift $\widetilde{p}$ of $p$ 
as a base point of $\widetilde{X}$. 
The group $G:=\pi_1 (X,p)$ acts on $\widetilde{X}$ from 
the right through its deck transformation group, namely, 
the lift of $\gamma \in G$ starting from $\widetilde{p}$ 
reaches $\widetilde{p} \gamma^{-1}$. 
When $X$ is a finite complex, 
we regard the cellular chain complex 
$C_{\ast} (\widetilde{X})$ of $\widetilde{X}$, on which 
$G$ acts from the right, 
as a collection of 
free right $\Z [G]$-modules consisting of column vectors together 
with boundary operators given by left multiplication of matrices. 
For a left $\Z [G]$-module $M$, 
the twisted chain complex $C_{\ast} (X;M)$ is given 
by the tensor product of 
the right $\Z [G]$-module $C_{\ast} (\widetilde{X})$ and 
the left $\Z [G]$-module $M$. This complex gives 
the {\it twisted homology group} 
\index{twisted (co)homology groups} $H_\ast (X;M)$. 
The twisted 
cochain complex $C^\ast (X;M)$ and 
the {\it twisted cohomology group} $H^\ast (X;M)$ 
are defined similarly. 
\end{conventions}

In topology, the homology $H_\ast (G;M)$ and 
cohomology $H^\ast (G;M)$ of a group $G$ 
with coefficients in a left $\Z [G]$-module $M$ 
are defined as twisted (co)homology groups
\begin{align*}
H_\ast (G;M) &= H_\ast (K(G,1);M),\\
H^\ast (G;M) &= H^\ast (K(G,1);M),
\end{align*}
\noindent
where $K(G,1)$ is the {\it Eilenberg-MacLane space} 
\index{Eilenberg-MacLane space} of $G$. 
The space $K(G,1)$ is characterized 
uniquely up to homotopy equivalence 
as a connected CW-complex satisfying
\[\pi_1 (K(G,1))=G, \qquad 
\pi_i (K(G,1))=0 \quad (i \ge 2).\]
\begin{remark}
There are several methods for checking that 
this definition coincides 
with that in the previous section. 
We refer to Brown's book \cite{br} again. 
One method is to see that 
the complex in the previous section is derived from 
the {\it fat realization} of a 
(semi-)simplicial structure of $K(G,1)$.  
\end{remark}
Note that for any CW-complex $X$ with $\pi_1 X=G$, we have 
\begin{align*}
&H_0 (X;M) \cong H_0 (G;M), \qquad H_1 (X;M) \cong H_1 (G;M),\\
&H^0 (X;M) \cong H^0 (G;M), \qquad H^1 (X;M) \cong H^1 (G;M)
\end{align*}
\noindent 
since $K(G,1)$ can be obtained from $X$ by attaching $3$-cells, 
$4$-cells,$\ldots$, so that all higher homotopy groups are eliminated. 
We also see that there exists an epimorphism
\begin{equation}\label{eq:secondhomology}
\SelectTips{cm}{}
\xymatrix{
H_2 (X) \ar@{>>}[r] & H_2 (G).}
\end{equation}
\noindent
The kernel is exactly the image of 
the Hurewicz homomorphism $\pi_2 X \to H_2 (X)$. 

For a group $G$ with a presentation 
$\langle x_1, x_2, \ldots \mid r_1,r_2,\ldots \rangle$, 
we construct a $2$-complex $X$ consisting of 
one $0$-cell, say $p$, one $1$-cell for each generator 
and one $2$-cell for each relation with 
an attaching map according to the word. 
Then $\pi_1 X=G$. Using this construction, 
we now look for a practical method for calculating 
$H_1 (G;M)$ in case 
$G$ has a finite presentation 
$\langle x_1, x_2, \ldots, x_k \mid r_1,r_2,\ldots,r_l \rangle$. 
Consider the chain complex
\[C_2 (X;M) 
\stackrel{\partial_2}{\longrightarrow} 
C_1 (X;M) \stackrel{\partial_1}{\longrightarrow} 
C_0 (X;M)\longrightarrow 0.\]
This complex can be rewritten as 
\[M^l \xrightarrow{D_2 \cdot} 
M^k \xrightarrow{D_1 \cdot}
M \longrightarrow 0\]
with matrices $D_1$, $D_2$. 
Observing the lifts of the loops $x_1, \ldots, x_k$ and 
$r_1, \ldots r_l$ starting from the base point $\widetilde{p}$ 
of $\widetilde{X}$, we see that 
\begin{align*}
D_1 &= \begin{pmatrix}
x_1^{-1}-1 & x_2^{-1}-1 & \cdots & x_k^{-1}-1
\end{pmatrix},\\
D_2 &=
\left(\overline{\left(\frac{\partial r_j}{\partial x_i} 
\right)}\right)_{\begin{subarray}{c}
{}1 \le i \le k\\
1 \le j \le l
\end{subarray}}
\end{align*}
over $\Z [G]$. Here and hereafter the words 
``over $\Z [G]$'' means that we are considering 
all entries to be in $\Z [G]$ as 
images of the natural homomorphism 
$\Z [\langle x_1, \ldots, x_k \rangle] \to \Z [G]$. 
The relation $D_1 D_2=O$ follows 
from Proposition \ref{prop:foxproperty} (4). 
From this expression of $D_2$, we actually see that 
Fox derivatives contribute to 
low-dimensional topology in 
the calculation of $H_1 (G;M)=H_1 (X;M)$. 

\begin{example}\label{ex:1st}
Let $\rho: G \to \Gamma$ be an epimorphism. 
Take a CW-complex $X$ with 
$\pi_1 X=G$. 
Then $C_\ast (X;\Z [\Gamma])$ just corresponds to 
the cellular complex of the $\Gamma$-covering 
$X_\Gamma$ of $X$ with respect to $\rho$. 
Hence we have 
\[H_1 (G;\Z [\Gamma]) \cong H_1 (X;\Z [\Gamma]) \cong 
H_1 (X_\Gamma) \cong H_1 (\Ker \rho).\]
\end{example}
\begin{proof}[Proof of Proposition $\ref{prop:foxproperty} (5)$] 
First, we may suppose that 
$\rho:F_n \to \Gamma$ is an epimorphism. We now 
consider $H_1 (F_n;\Z [\Gamma])$. 
The space $K(F_n,1)$ is given by a bouquet $X$ of $n$ circles 
corresponding to the generating system 
$\{\gamma_1, \gamma_2, \ldots, \gamma_n\}$ of $F_n$. 
By an observation similar to the one for the matrix $D_2$ above, 
we see that the abelianization map 
\[\Ker \rho \cong \pi_1 X_\Gamma \longrightarrow 
H_1 (X_\Gamma) = 
\mbox{\{$1$-cycles on $X_\Gamma$\}} 
\subset C_1 (X_\Gamma) \cong (\Z [\Gamma])^n\]
is given by 
\[v \longmapsto 
\sideset{^\rho\!}{^T}{\Tmatrix{\begin{array}{cccc}
\overline{\dis\deriv{v}{\gamma_1}} & 
\overline{\dis\deriv{v}{\gamma_2}} & \cdots & 
\overline{\dis\deriv{v}{\gamma_n}} 
\end{array}}}.\]
The kernel of this map is 
$[\Ker \rho, \Ker \rho]$ and our claim immediately 
follows. 
\end{proof}
\begin{example}\label{ex:aug}
Let $X$ be the bouquet in the above proof of 
Proposition \ref{prop:foxproperty} (5) with base point $p$. 
Consider the homology exact sequence 
\begin{align*}
H_1 (X;\Z [F_n]) &\longrightarrow H_1 (X,\{p\};\Z [F_n]) 
\longrightarrow H_0 (\{p\};\Z [F_n]) \\
&\longrightarrow H_0 (X;\Z [F_n]) 
\longrightarrow H_0 (X,\{p\};\Z [F_n]), 
\end{align*}
\noindent 
which can be regarded as that for the pair $(F_n,\{1\})$ 
of groups. 
Clearly $H_1 (X;\Z [F_n])=H_1(\widetilde{X})=0$ and 
$H_0 (X;\Z [F_n])=H_0(\widetilde{X})=\Z$ 
(see Example \ref{ex:1st}). 
From the cell structures, we immediately see that 
$H_1 (X,\{p\};\Z [F_n]) =C_1 (X,\{p\};\Z [F_n]) \cong (\Z [F_n])^n$, 
$H_0 (X,\{p\};\Z [F_n]) \cong 0$ 
and $H_0 (\{p\};\Z [F_n]) \cong \Z [F_n]$. 
Hence the above exact sequence is rewritten as 
\[0 \longrightarrow (\Z [F_n])^n \stackrel{\chi}{\longrightarrow} 
\Z [F_n] \longrightarrow \Z \longrightarrow 0.\]
The third map is given by the trivializer $\mathfrak{t}$ with 
the kernel $I(F_n)$ called the {\it augmentation ideal} of $\Z [F_n]$. 
Consequently, the map $\chi$ induces an isomorphism 
\begin{equation}\label{eq:aug}
\chi: (\Z [F_n])^n \stackrel{\cong}{\longrightarrow} 
I(F_n)
\end{equation}
\noindent
as right $\Z [F_n]$-modules. 
We can easily check that 
\[\chi ((a_1,a_2,\ldots,a_n)^T) = \sum_{i=1}^n (\gamma_i^{-1}-1) a_i\]
for $(a_1,a_2,\ldots,a_n)^T \in (\Z [F_n])^n$. 
That is, $\{\gamma_1^{-1}-1,\gamma_2^{-1}-1,\ldots,
\gamma_n^{-1}-1\}$ forms a right free basis of $I(F_n)$. Note that 
the map $F_n \to (\Z F_n)^n$ sending $v \in F_n$ to 
$\overline{\chi^{-1} (v^{-1}-1)}$ {\it recovers} 
the Fox derivatives (cf. Proposition \ref{prop:foxproperty} (4)):
\[\overline{\chi^{-1} (v^{-1}-1)}= 
\begin{pmatrix}
\dis\deriv{v}{\gamma_1} & \dis\deriv{v}{\gamma_2} & \cdots & 
\dis\deriv{v}{\gamma_n} 
\end{pmatrix}^T.\]
\end{example}

We close this section by the following 
application of the Fox calculus to 
knot theory: 
\begin{example}[The Alexander polynomial of a knot]\label{ex:alexanderpolyn}
Let $K$ be a {\it knot} in the 3-sphere $S^3$. That is, 
$K$ is a smoothly embedded circle in $S^3$. 
The fundamental group of the knot exterior $E(K):=S^3 - N(K)$ of 
$K$, where $N(K)$ is an open tubular neighborhood of $K$, is called 
the {\it knot group} $G(K)$ of $K$. 
We have $H_1 (E(K)) \cong H_1 (G(K)) \cong \Z$ generated by 
the meridian $t$ of $K$ with a fixed orientation. 
The {\it Alexander module} 
$\mathcal{A}^\Z (K)$ of $K$ is defined as 
\[\mathcal{A}^\Z (K) := H_1 (E(K);\Z [\langle t \rangle]) 
=H_1 (G(K);\Z [\langle t \rangle]).\]
Then the {\it Alexander polynomial} $\Delta_K (t)$ of $K$ 
is defined as the determinant of a square (say $k \times k$) matrix $D$ 
representing $\mathcal{A}^\Z (K)$, namely $D$ fits into 
an exact sequence 
\[\Z [\langle t \rangle]^k \stackrel{D \cdot}{\longrightarrow} 
\Z [\langle t \rangle]^k 
\longrightarrow \mathcal{A}^\Z (K) \longrightarrow 0.\]
It can be checked that $\det D$ up to 
multiplication by $\pm t^m$ ($m \in \Z$) 
does not depend on the choice of $D$. 
To obtain such a matrix $D$, take a Wirtinger presentation 
of $G(K)$, which gives a presentation of the form 
$\langle x_1, x_2, \ldots, x_{k+1} 
\mid r_1, r_2, \ldots, x_k \rangle$. 
Using the arguments in this subsection, we can see that 
the square matrix 
$\left(\overline{\left(\frac{\partial r_j}{\partial x_i} 
\right)}\right)_{1 \le i, j \le k}$ 
represents $\mathcal{A}^\Z (K)$. 
\end{example}

\section{Magnus representations for automorphism groups 
of free groups}
\label{sec:Magnus1}

In this section, we overview generalities of 
Magnus representations for 
the automorphism group $\Aut F_n$ 
of a free group 
$F_n=\langle \gamma_1, \gamma_2, \ldots, \gamma_n \rangle$. 
Applications to the mapping class 
group of a surface are discussed in the next section. 

\begin{definition}\label{def:magnus_free}
The {\it $($universal\,$)$ Magnus representation} 
\index{Magnus representation!for $\mathrm{Aut}\,F_n$} for 
$\Aut F_n$ is the map
\[r:\Aut F_n \to M(n,\Z [F_n])\]
assigning to $\varphi \in \Aut F_n$ the matrix
\[r(\varphi):=\left(\overline{\left(\frac{\partial \varphi(\gamma_j)}
{\partial \gamma_i} \right)}\right)_{i,j},\]
which we call the {\it Magnus matrix} \index{Magnus matrix} 
for $\varphi$. 
\end{definition}
\noindent
While we call the map $r$ the Magnus ``representation'', it is 
actually a crossed homomorphism in the following sense: 
\begin{proposition}\label{prop:MagnusAut_crossed}
For $\varphi$, $\psi \in \Aut F_n$, the equality
\[r (\varphi \psi) = r(\varphi) \cdot {}^\varphi r(\psi)\]
holds. In particular, the image of $r$ is included in the set 
$\mathrm{GL}(n,\Z [F_n])$ of invertible matrices.
\end{proposition}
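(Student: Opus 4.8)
The plan is to prove the cocycle identity $r(\varphi\psi) = r(\varphi)\cdot {}^{\varphi}r(\psi)$ by a direct computation using the chain rule for Fox derivatives (Proposition \ref{prop:foxproperty} (3)) together with the compatibility of the involution $\overline{(\ \cdot\ )}$ with the left $F_n$-action. Concretely, I would start from the definition $r(\varphi)_{ij} = \overline{\partial \varphi(\gamma_j)/\partial \gamma_i}$ and compute the $(i,j)$ entry of $r(\varphi\psi)$, namely $\overline{\partial (\varphi\psi)(\gamma_j)/\partial \gamma_i}$. Writing $w = \psi(\gamma_j)$, the chain rule gives
\[
\deriv{\varphi(w)}{\gamma_i} = \sum_{k=1}^n \varphi\!\left(\deriv{w}{\gamma_k}\right)\deriv{\varphi(\gamma_k)}{\gamma_i},
\]
so that $\partial (\varphi\psi)(\gamma_j)/\partial \gamma_i = \sum_k \varphi(\partial\psi(\gamma_j)/\partial\gamma_k)\cdot (\partial\varphi(\gamma_k)/\partial\gamma_i)$.

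Next I would apply the involution to both sides. Since the involution on $\Z[F_n]$ is an anti-homomorphism, $\overline{ab} = \overline{b}\,\overline{a}$, so applying it term by term yields
\[
\overline{\deriv{(\varphi\psi)(\gamma_j)}{\gamma_i}} = \sum_{k=1}^n \overline{\deriv{\varphi(\gamma_k)}{\gamma_i}}\cdot \overline{\varphi\!\left(\deriv{\psi(\gamma_j)}{\gamma_k}\right)}.
\]
Here I would use that the involution commutes with the ring endomorphism $\varphi$ of $\Z[F_n]$ (because $\varphi$ is a group homomorphism, it commutes with $x\mapsto x^{-1}$), so $\overline{\varphi(\partial\psi(\gamma_j)/\partial\gamma_k)} = \varphi(\overline{\partial\psi(\gamma_j)/\partial\gamma_k})$. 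The right-hand side then reads $\sum_k r(\varphi)_{ik}\cdot {}^{\varphi}\!\left(r(\psi)_{kj}\right)$, which is exactly the $(i,j)$ entry of the matrix product $r(\varphi)\cdot {}^{\varphi}r(\psi)$. This establishes the identity.

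For the final assertion, that $r(\varphi)\in \GL(n,\Z[F_n])$, I would plug $\psi = \varphi^{-1}$ into the identity, obtaining $r(\Id) = r(\varphi)\cdot {}^{\varphi}r(\varphi^{-1})$, and similarly $r(\Id) = r(\varphi^{-1})\cdot {}^{\varphi^{-1}}r(\varphi)$. Since $\partial\gamma_j/\partial\gamma_i = \delta_{ij}$ and the involution fixes $1$, we have $r(\Id) = I_n$, the identity matrix. Hence $r(\varphi)$ has a two-sided inverse over $\Z[F_n]$ (namely ${}^{\varphi}r(\varphi^{-1})$ on one side; the other relation, after applying ${}^{\varphi}$ to it, gives a left inverse as well), so $r(\varphi)$ is invertible.

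I do not anticipate a serious obstacle here — the proof is essentially bookkeeping. The one point that requires genuine care is tracking how the involution interacts with the chain rule: the involution reverses the order of multiplication, which is why the matrices $r(\varphi)$ and ${}^{\varphi}r(\psi)$ appear in that particular order rather than the opposite one, and why $r$ is a crossed homomorphism rather than a genuine homomorphism. I would also want to state explicitly at the outset the two small lemmas being used — that $\varphi\colon \Z[F_n]\to\Z[F_n]$ commutes with the involution, and that the involution is an anti-automorphism — so that the manipulation of matrix entries is transparent.
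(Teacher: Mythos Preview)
Your proposal is correct and follows exactly the approach the paper indicates: the paper's proof simply states that the identity is an easy application of the chain rule (Proposition~\ref{prop:foxproperty}~(3)) together with $r(\mathrm{id}_{F_n})=I_n$, and you have faithfully spelled out those details, including the care needed with the anti-automorphism property of the involution.
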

\begin{proof}
Although we need to be careful about the noncommutativity 
of $\Z [F_n]$, the proof is an easy application of Proposition 
\ref{prop:foxproperty} (3) together with the fact that 
$r(\mathrm{id}_{F_n})=I_n$. 
\end{proof}
\begin{remark}
The second assertion of Proposition 
\ref{prop:MagnusAut_crossed} is part of 
Birman's inverse function theorem \cite{birman_inverse} stating 
that an endomorphism $\psi:F_n \to F_n$ is an automorphism if and 
only if $r(\psi)$, which makes sense, 
belongs to $\mathrm{GL}(n,\Z [F_n])$. 
\end{remark}
The map $r$ is injective since
$\varphi(\gamma_i)$ is recovered 
from $r(\varphi)$ 
by applying Proposition 
\ref{prop:foxproperty} (4) to the $i$-th column
for each $\varphi \in \Aut F_n$, 
namely there is no lack of information. To obtain a 
genuine representation, a homomorphism, we need to reduce 
information of the map $r$ 
as follows. 
Let $\rho:F_n \to \Gamma$ be an epimorphism whose kernel is 
characteristic, namely $\Ker \rho$ is invariant under 
all automorphisms of $F_n$. Then $\rho$ induces a natural homomorphism 
$\Aut F_n \to \Aut \Gamma$. We consider the matrix $r_\rho (\varphi)$ 
obtained from the Magnus matrix $r (\varphi)$ 
by applying the map $\rho:\Z [F_n] \to \Z [\Gamma]$ to each entry. 
\begin{proposition}\label{prop:MagnusAut1}
Let $\rho :F_n \to \Gamma$ be an epimorphism whose kernel is 
characteristic. 
Then the restriction of the map
\[r_\rho: \Aut F_n \longrightarrow \mathrm{GL}(n,\Z [\Gamma])\] 
to $\Ker (\Aut F_n \to \Aut \Gamma)$ is a homomorphism. 
Moreover, the kernel of $r_\rho$ coincides with 
that of the natural homomorphism 
\[\Aut F_n \longrightarrow \Aut (F_n/[\Ker \rho,\Ker \rho]).\] 
\end{proposition}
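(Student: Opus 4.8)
The plan is to exploit Proposition~\ref{prop:MagnusAut_crossed}, which says $r(\varphi\psi)=r(\varphi)\cdot{}^{\varphi}r(\psi)$, and to track what happens after applying $\rho$. First I would verify the homomorphism claim on the subgroup $\Ker(\Aut F_n\to\Aut\Gamma)$: if $\psi$ lies in this subgroup, then $\rho\circ\psi=\rho$ as maps $F_n\to\Gamma$, so for any $w\in F_n$ the element $\rho\!\left(\varphi\!\left(\deriv{w}{\gamma_k}\right)\right)$ depends on $\varphi$ only through its induced automorphism of $\Gamma$; applying $\rho$ to the identity of Proposition~\ref{prop:MagnusAut_crossed} and using that $\psi$ acts trivially on $\Gamma$, the twisting ${}^{\varphi}(-)$ collapses to ${}^{\bar\varphi}(-)$ where $\bar\varphi$ is the induced map on $\Gamma$, and one checks that on this subgroup $r_\rho(\varphi\psi)=r_\rho(\varphi)\,r_\rho(\psi)$ with no leftover twist — i.e. $r_\rho$ restricted there is genuinely multiplicative.

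Next I would identify the kernel. The natural homomorphism $\Aut F_n\to\Aut(F_n/[\Ker\rho,\Ker\rho])$ has a kernel $N$, and I claim $\Ker r_\rho=N$ (note $N$ is automatically contained in $\Ker(\Aut F_n\to\Aut\Gamma)$, since $\Gamma\cong F_n/\Ker\rho$ is a further quotient of $F_n/[\Ker\rho,\Ker\rho]$, so the homomorphism statement applies and asking for $\Ker r_\rho$ makes sense). The containment $N\subseteq\Ker r_\rho$ should follow because an automorphism acting trivially on $F_n/[\Ker\rho,\Ker\rho]$ must send each $\gamma_j$ to $\gamma_j$ times an element of $\Ker\rho$ in a way that is ``invisible modulo $[\Ker\rho,\Ker\rho]$'', and by the Fox-calculus description underlying Proposition~\ref{prop:foxproperty}(5) and Example~\ref{ex:1st}, the matrix ${}^{\rho}\!\left(\deriv{\varphi(\gamma_j)}{\gamma_i}\right)$ records exactly the image of $\varphi(\gamma_j)$ in $H_1(\Ker\rho)$-type data relative to the identity, which vanishes precisely in this situation — so $r_\rho(\varphi)=r_\rho(\mathrm{id})=I_n$.

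For the reverse containment $\Ker r_\rho\subseteq N$, I would argue that if $r_\rho(\varphi)=I_n$ then for each $i$ the $i$-th column of the Magnus matrix of $\varphi$, after applying $\rho$, equals that of the identity; by Proposition~\ref{prop:foxproperty}(4) applied to $\varphi(\gamma_j)\gamma_j^{-1}$ (or to the difference $\varphi(\gamma_j)-\gamma_j$ in $\Z[F_n]$) this forces $\rho\!\left(\deriv{\varphi(\gamma_j)\gamma_j^{-1}}{\gamma_i}\right)=0$ for all $i$, hence by Proposition~\ref{prop:foxproperty}(5) (applied with $\rho$ in the role of the homomorphism there) the element $\varphi(\gamma_j)\gamma_j^{-1}$ lies in $[\Ker\rho,\Ker\rho]$; since this holds for every generator $\gamma_j$, the automorphism $\varphi$ is trivial on $F_n/[\Ker\rho,\Ker\rho]$, i.e. $\varphi\in N$.

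The main obstacle I anticipate is bookkeeping in the first paragraph: making the cancellation of the $\varphi$-twist precise requires care, because $r(\psi)$ has entries in $\Z[F_n]$ and ${}^{\varphi}r(\psi)$ is a genuine (non-abelian) twist — it is only after pushing to $\Z[\Gamma]$ and using $\psi\in\Ker(\Aut F_n\to\Aut\Gamma)$ that things simplify, and one must check that $\rho\bigl({}^{\varphi}r(\psi)\bigr)={}^{\bar\varphi}\bigl(\rho(r(\psi))\bigr)$ and that the latter equals $\rho(r(\psi))$ so the product is just $r_\rho(\varphi)r_\rho(\psi)$. The kernel computation, by contrast, is essentially a direct translation of Proposition~\ref{prop:foxproperty}(4)--(5) into matrix language and should be routine once the column-by-column reduction is set up.
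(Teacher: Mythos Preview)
Your proposal is correct and follows essentially the same route as the paper: Proposition~\ref{prop:MagnusAut_crossed} for the first claim, and Proposition~\ref{prop:foxproperty} (2) and (5) for the kernel identification. Two minor cleanups: in the first paragraph it is $\bar\varphi=\mathrm{id}$ (i.e.\ $\varphi$ in the kernel), not the condition on $\psi$, that kills the twist in ${}^{\bar\varphi}r_\rho(\psi)$; and for $\Ker r_\rho\subseteq N$ the paper computes with $\gamma_i^{-1}\varphi(\gamma_i)$ rather than $\varphi(\gamma_j)\gamma_j^{-1}$, which is slightly slicker since $\rho\bigl(\partial(\gamma_i^{-1}\varphi(\gamma_i))/\partial\gamma_j\bigr)=-\delta_{ij}\rho(\gamma_i^{-1})+\rho(\gamma_i^{-1})\delta_{ij}=0$ directly from $r_\rho(\varphi)=I_n$, whereas your choice requires first deducing $\rho(\varphi(\gamma_j))=\rho(\gamma_j)$ via the fundamental formula.
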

\begin{proof}
The first half of our assertion follows from 
Proposition \ref{prop:MagnusAut_crossed}. 
To show the second, we first note that 
the map $\Aut F_n \to \Aut \Gamma$ is decomposed as 
\[\Aut F_n \longrightarrow \Aut (F_n/[\Ker \rho,\Ker \rho]) 
\longrightarrow \Aut (F_n/\Ker \rho) = \Aut \Gamma,\]
so that 
$\Ker (\Aut F_n \to \Aut (F_n/[\Ker \rho,\Ker \rho])) \subset 
\Ker (\Aut F_n \to \Aut \Gamma)$. 

Suppose $\varphi \in \Ker (\Aut F_n \to \Aut (F_n/[\Ker \rho,\Ker \rho]))$, 
then there exists $v_i \in [\Ker \rho,\Ker \rho]$ such that 
$\varphi (\gamma_i)=\gamma_i v_i$ for each $i$. Using Proposition 
\ref{prop:foxproperty} (2) and (5), we have
\[\rho \left(\deriv{\varphi (\gamma_i)}{\gamma_j}\right)=
\rho \left(\deriv{(\gamma_i v_i)}{\gamma_j} \right)=
\rho \left(
\deriv{\gamma_i}{\gamma_j}+\gamma_i \deriv{v_i}{\gamma_j}\right)
=\delta_{ij}.\]
Therefore $r_\rho (\varphi) = I_n$ and 
$\Ker (\Aut F_n \to \Aut (F_n/[\Ker \rho,\Ker \rho])) \subset \Ker r_\rho$. 
On the other hand, if $\varphi \in \Ker r_\rho$, 
we have 
\begin{align*}
\rho\left(\deriv{(\gamma_i^{-1}\varphi(\gamma_i))}{\gamma_j}\right)
&=\rho\left(\deriv{\gamma_i^{-1}}{\gamma_j}
+\gamma_i^{-1}\deriv{\varphi (\gamma_i)}{\gamma_j}\right)\\
&=\rho(-\delta_{ij}\gamma_i^{-1}) + 
\rho\left(\gamma_i^{-1}\deriv{\varphi (\gamma_i)}{\gamma_j}\right)\\
&=-\delta_{ij}\rho(\gamma_i^{-1})+\rho(\gamma_i^{-1}) \delta_{ij} \\
&=0.
\end{align*}
By Proposition \ref{prop:foxproperty} (5), 
we see that $\gamma_i^{-1} \varphi(\gamma_i) \in [\Ker \rho, \Ker \rho]$. 
This means $\varphi \in \Ker (\Aut F_n \to 
\Aut (F_n/[\Ker \rho,\Ker \rho]))$ 
and hence $\Ker r_\rho \subset 
\Ker (\Aut F_n \to \Aut (F_n/[\Ker \rho,\Ker \rho]))$ follows. 
This completes the proof. 
\end{proof}
\begin{remark}
If we use the description of Fox derivatives in Example \ref{ex:aug}, 
the Magnus representation $r_\rho$ associated with $\rho$ as above 
can be regarded as a 
transformation of $I(F_n) \otimes_{\Z [F_n]} \Z [\Gamma]$ by 
the diagonal action of $\Aut (F_n)$. 
\end{remark}
\begin{example}\label{ex:homologyRep}
The trivializer $\mathfrak{t}:\Z [F_n] \to \Z$ 
is induced from the trivial homomorphism $F_n \to \{1\}$ and 
$\Aut F_n$ acts trivially on $\{1\}$. Hence 
we have a homomorphism 
\[r_\mathfrak{t}:\Aut F_n \longrightarrow \mathrm{GL}(n,\Z).\]
Note that $r_\mathfrak{t}$ coincides with the action of $\Aut F_n$ on 
$H_1:=H_1 (F_n) \cong \Z^n$. 
Nielsen studied the map $r_\mathfrak{t}$ and showed that 
it is surjective \cite{ni}. 
The kernel $IA_n:=\Ker r_\mathfrak{t}$ is called 
the {\it IA-automorphism} \index{IA-automorphism group} group. 
A finite generating set of 
$IA_n$ was first given by Magnus \cite{magnus2}. 
We can see a summary of 
these works in Morita's chapter 
\cite[Section 4]{morita_survey} in the first volume of this handbook. 
\end{example}
\begin{example}\label{ex:Magnus_IA}
The abelianization homomorphism 
$\mathfrak{a}: F_n \to H_1$ is 
most frequently used as a reduction homomorphism $\rho$. 
In this case, the restriction of $r_\mathfrak{a}$ to $IA_n$ yields 
a homomorphism 
\[r_\mathfrak{a}:IA_n \longrightarrow \mathrm{GL}(n,\Z [H_1]).\]
This representation was first introduced by Bachmuth \cite{bachmuth}, who 
defined the representation as a transformation of 
the $(1,2)$-entry of the Magnus representation $F_n \to 
\mathrm{GL}(2,(\Z [H_1])[S])$ 
in Section \ref{subsec:magnus_free} and studied the automorphism group 
of the metabelian quotient of $F_n$. 

In relation to topology, 
we consider the {\it braid group} $B_n$ 
and the {\it pure braid group} $P_n \subset B_n$ of $n$ strings. 
For the definitions, we refer to Paris' chapter \cite{paris} in 
the second volume of this handbook 
as well as to Birman's book \cite{bi}. 
We now focus on Artin's theorem \cite{artin, artin2} saying that 
there exists a natural embedding of $B_n$ into $\Aut F_n$, which 
embeds $P_n$ into $IA_n$. By postcomposing 
$r_\mathfrak{a}$ with 
this embedding, we obtain a homomorphism
\[r_\mathfrak{a}:P_n \longrightarrow \mathrm{GL}(n,\Z [H_1])\]
called the {\it Gassner representation} \index{Gassner representation} 
\cite{gassner}. 
To obtain a representation for $B_n$, we further 
reduce $\Z [H_1]$ to 
$\Z [\langle t \rangle]$ by the homomorphism 
$\mathfrak{b}: H_1 \to \langle t \rangle$ 
sending each $\gamma_j$ to $t$. Then 
we obtain a homomorphism 
\[r_{\mathfrak{b} \circ \mathfrak{a}}: B_n \longrightarrow 
\mathrm{GL}(n,\Z [\langle t \rangle])\]
called the {\it Burau representation} \index{Burau representation} 
\cite{burau}. Note 
that in their papers, Burau and Gassner 
did not use the Fox calculus, but 
gave explicit formulas to construct their representations. 
\end{example}

Continuing $r_\mathfrak{t}$ and $r_\mathfrak{a}$, we now consider 
the following system consisting of a filtration of $\Aut F_n$ and 
a sequence of Magnus representations. 
Let 
\[F_n^{(0)}:= F_n \supset F_n^{(1)} \supset F_n^{(2)} \supset 
F_n^{(3)} \supset \cdots\] 
be the {\it derived series} \index{derived series} of $F_n$ defined by 
$F_n^{(k+1)}=[F_n^{(k)},F_n^{(k)}]$ for $k \ge 0$. 
$F_n^{(k)}$ is a characteristic subgroup of $F_n$ 
and we denote by $\mathfrak{p}_k:F_n \to F_n/F_n^{(k)}$ 
the natural projection. 
Note that $\mathfrak{p}_0=\mathfrak{t}$ and 
$\mathfrak{p}_1=\mathfrak{a}$. 
Define a filtration 
\[I^0 A_n := \Aut F_n \supset I^1 A_n \supset I^2 A_n 
\supset I^3 A_n \supset \cdots\]
of $\Aut F_n$ by $I^k A_n := \Ker (\Aut F_n \to \Aut (F_n/F_n^{(k)}))$ 
for $k \ge 1$. Note that $I^1 A_n=IA_n$. 
By Proposition \ref{prop:MagnusAut1}, the map
\[r_{\mathfrak{p}_k}: I^k A_n \longrightarrow 
\mathrm{GL}(n,\Z [F_n/F_n^{(k)}])\]
is a homomorphism with 
\begin{align*}
\Ker r_{\mathfrak{p}_k}&=
\Ker (\Aut F_n \to \Aut(F_n/[F_n^{(k)},F_n^{(k)}]))\\
&=\Ker (\Aut F_n \to \Aut(F_n/F_n^{(k+1)})) \\
&= I^{k+1}A_n.
\end{align*}
\noindent
That is, each of the Magnus representations $r_{\mathfrak{p}_k}$ plays 
the role of an obstruction for an automorphism in $I^k A_n$ to be 
in the next filter. 
It seems difficult to determine 
the image of $r_{\mathfrak{p}_k}$ 
in general. However, if we put 
\[G_n^{(k)}:=
\left\{\begin{array}{c|c}
\!
{\small (a_{ij})_{i,j} \in \mathrm{GL}(n,\Z  [F_n/F_n^{(k)}])} & 
{\small 
\begin{array}{l}
\sum_{i=1}^n (\gamma_i^{-1}-1)a_{ij}= \gamma_j^{-1}-1\\
\mbox{in $\Z  [F_n/F_n^{(k)}]$ for all $j$.}
\end{array}}
\end{array}\!\!\right\},\]
then we can check that $G_n^{(k)}$ is a subgroup of 
$\mathrm{GL}(n,\Z  [F_n/F_n^{(k)}])$ and 
it includes the image of $r_{\mathfrak{p}_k}$ 
since 
we can apply Proposition \ref{prop:foxproperty} (4) under the 
condition that 
$\varphi (\gamma_j) = \gamma_j \in F_n/F_n^{(k)}$ for all $j$. 
\begin{proposition}
There exists a canonical injective homomorphism 
\[\Phi: G_n^{(k)} \longrightarrow \Aut(F_n/F_n^{(k+1)})\]
and we have an exact sequence 
\[1 \longrightarrow G_n^{(k)} 
\stackrel{\Phi}{\longrightarrow} \Aut(F_n/F_n^{(k+1)}) 
\longrightarrow \Aut(F_n/F_n^{(k)}).\]
\end{proposition}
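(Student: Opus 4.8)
The plan is to construct a map $\Psi$ going in the opposite direction, from the kernel
\[K:=\Ker\bigl(\Aut(F_n/F_n^{(k+1)})\longrightarrow\Aut(F_n/F_n^{(k)})\bigr)\]
onto $G_n^{(k)}$, to show it is a group isomorphism, and then to set $\Phi:=\Psi^{-1}$; since $K=\Ker(\Aut(F_n/F_n^{(k+1)})\to\Aut(F_n/F_n^{(k)}))$, the asserted exact sequence follows at once from injectivity of $\Phi$ and $\mathrm{Im}\,\Phi=K$. Given $\psi\in K$, one can pick a lift $\widetilde\psi\in\End F_n$ with $\widetilde\psi(\gamma_j)=\gamma_j\widetilde v_j$ and $\widetilde v_j\in F_n^{(k)}$ (possible because $\psi$ is the identity modulo $F_n^{(k)}$), and one defines
\[\Psi(\psi):={}^{\mathfrak{p}_k}\!\left(\overline{\left(\deriv{\widetilde\psi(\gamma_j)}{\gamma_i}\right)}\right)_{i,j},\]
the $r_{\mathfrak{p}_k}$-type Magnus matrix of the lift. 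First I would check this is independent of the lift: two lifts of $\psi(\gamma_j)$ differ by an element of $F_n^{(k+1)}=[\Ker\mathfrak{p}_k,\Ker\mathfrak{p}_k]$, all of whose Fox derivatives die under $\mathfrak{p}_k$ by Proposition \ref{prop:foxproperty}(5).

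The next three points are essentially repetitions of arguments already in the text. Since $\mathfrak{p}_k\circ\widetilde\psi=\mathfrak{p}_k$ on $\Z[F_n]$, a composite $\widetilde\psi\,\widetilde\psi'$ is again a legitimate lift, and the chain rule (Proposition \ref{prop:foxproperty}(3)), applied exactly as in the proof of Proposition \ref{prop:MagnusAut_crossed}, gives $\Psi(\psi\psi')=\Psi(\psi)\,\Psi(\psi')$. I emphasize that this computation uses only that $\widetilde\psi,\widetilde\psi'$ reduce to the identity modulo $F_n^{(k)}$, so it is valid for \emph{all} endomorphisms of $F_n/F_n^{(k+1)}$ lying over $\mathrm{id}_{F_n/F_n^{(k)}}$, not merely for the invertible ones — a fact I will need below. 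Applying the fundamental formula (Proposition \ref{prop:foxproperty}(4)) to each word $\widetilde\psi(\gamma_j)=\gamma_j\widetilde v_j$, together with $\mathfrak{p}_k(\gamma_j\widetilde v_j)=\gamma_j$, yields $\sum_i(\gamma_i^{-1}-1)\Psi(\psi)_{ij}=\gamma_j^{-1}-1$, so $\Psi(\psi)$ satisfies the defining relation of $G_n^{(k)}$; its invertibility is automatic from $\Psi(\psi)\,\Psi(\psi^{-1})=\Psi(\mathrm{id})=I_n$. Hence $\Psi\colon K\to G_n^{(k)}$ is a well-defined group homomorphism, and it is injective: $\Psi(\psi)=I_n$ forces $\mathfrak{p}_k(\partial\widetilde v_j/\partial\gamma_i)=0$ for all $i,j$, whence $\widetilde v_j\in[F_n^{(k)},F_n^{(k)}]=F_n^{(k+1)}$ by Proposition \ref{prop:foxproperty}(5), i.e. $\psi=\mathrm{id}$.

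The substantive step, and the one I expect to be the main obstacle, is surjectivity of $\Psi$. Write $\Gamma:=F_n/F_n^{(k)}$ and, following the proof of Proposition \ref{prop:foxproperty}(5), take the bouquet $X$ of $n$ circles with its $\Gamma$-cover $X_\Gamma$. Since $X_\Gamma$ is a graph, Example \ref{ex:1st} gives
\[F_n^{(k)}/F_n^{(k+1)}=H_1(\Ker\mathfrak{p}_k)\cong H_1(X_\Gamma)=Z_1(X_\Gamma)=\{(w_i)\in\Z[\Gamma]^n\,:\,\textstyle\sum_i(\gamma_i^{-1}-1)w_i=0\},\]
the isomorphism carrying $v\in F_n^{(k)}$ to $(\overline{\mathfrak{p}_k(\partial v/\partial\gamma_i)})_i$. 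Given $(a_{ij})\in G_n^{(k)}$, its defining relation says precisely that the $j$-th column of $(a_{ij})-I_n$, and hence also that column right-multiplied by $\gamma_j$, lies in $Z_1(X_\Gamma)$; pulling this vector back through the isomorphism yields a $v_j\in F_n^{(k)}/F_n^{(k+1)}$, and $\gamma_j\mapsto\gamma_j v_j$ defines an endomorphism $\psi$ of $F_n/F_n^{(k+1)}$ lying over $\mathrm{id}_\Gamma$ (well defined because $F_n^{(k)}/F_n^{(k+1)}$ is abelian), for which a direct Fox-derivative computation gives $\Psi(\psi)=(a_{ij})$. It remains to check $\psi\in K$, i.e. that $\psi$ is bijective: form in the same way the endomorphism $\psi'$ associated to $(a_{ij})^{-1}\in G_n^{(k)}$; the extended homomorphism property gives $\Psi(\psi\psi')=\Psi(\psi'\psi)=I_n$, and the injectivity argument — which, like the homomorphism property, works for endomorphisms lying over $\mathrm{id}_\Gamma$ — then forces $\psi\psi'=\psi'\psi=\mathrm{id}$. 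Thus $\Psi$ is an isomorphism, $\Phi:=\Psi^{-1}$ is canonical (it depends only on the fixed basis of $F_n$), injective, with image $K$, giving the exact sequence. The crux is genuinely this surjectivity: one has to route a matrix through the Fox-derivative model of $F_n^{(k)}/F_n^{(k+1)}$ and then run an ``inverse function'' argument, which is available only because the homomorphism and injectivity statements survive the passage from $\Aut$ to the monoid of endomorphisms inducing the identity on $F_n/F_n^{(k)}$.
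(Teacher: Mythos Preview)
Your proof is correct and is essentially the paper's argument run in reverse: the paper defines $\Phi$ directly by invoking \cite[Theorem~3.7]{bi} to lift each column of $A\in G_n^{(k)}$ to a word $v_j\in F_n$ unique modulo $F_n^{(k+1)}$, then observes that the Magnus-matrix map furnishes the inverse. You construct that Magnus-matrix map $\Psi$ first, and your surjectivity step through $F_n^{(k)}/F_n^{(k+1)}\cong Z_1(X_\Gamma)$ is a self-contained reproof of exactly the instance of Birman's theorem the paper cites.
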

\begin{proof}
We first define a homomorphism $\Phi: G_n^{(k)} 
\to \Aut(F_n/F_n^{(k+1)})$. 
Let $A =(a_{ij})_{i,j} \in G_n^{(k)}$. 
Then $\sum_{i=1}^n (\gamma_i^{-1}-1)a_{ij}= \gamma_j^{-1}-1$ holds 
for $j=1,2,\ldots,n$. 
By \cite[Theorem 3.7]{bi}, there exists $v_j \in F_n$ satisfying 
$\mathfrak{p}_k (v_j)= \gamma_j \in F_n/F_n^{(k)}$ and 
$\mathfrak{p}_k \left( 
\overline{\deriv{v_j}{\gamma_i}}\right) = a_{ij} \in 
\Z [F_n/F_n^{(k)}]$. 
Moreover such a $v_k$ is unique 
up to $F_n^{(k+1)}$. 
Define an endomorphism 
$\varphi_A:F_n \to F_n$ by 
$\varphi_A (\gamma_j)=v_j$. By construction, 
it induces the identity map on $F_n/F_n^{(k)}$. 
Then Proposition \ref{prop:foxproperty} (3) 
shows that $A \mapsto \varphi_A$ defines a homomorphism $\Phi$ 
from $G_n^{(k)}$ to 
the monoid of endomorphisms of $F_n/F_n^{(k+1)}$. 
Since $G_n^{(k)}$ is a group, the image of $\Phi$ 
should be included in $\Aut(F_n/F_n^{(k+1)})$. 
Consequently we obtain a homomorphism 
$\Phi: G_n^{(k)} \to \Aut(F_n/F_n^{(k+1)})$. 
The composition $G_n^{(k)} \xrightarrow{\Phi} \Aut(F_n/F_n^{(k+1)}) \to 
\Aut(F_n/F_n^{(k)})$ is trivial by construction. 
On the other hand, the homomorphism 
$r_{\mathfrak{p}_k}: I^k A_n \longrightarrow G_n^{(k)}$ 
induces a homomorphism  
$\Ker (\Aut(F_n/F_n^{(k+1)}) \to \Aut(F_n/F_n^{(k)})) \to G_n^{(k)}$, 
which gives the inverse of $\Phi$. 
This shows the exactness of the sequence. 
\end{proof}
\begin{remark}
The above system can be regarded as 
a refinement of the Andreadakis filtration 
of $\Aut F_n$, which is defined by 
using the {lower central series} of $F_n$ 
(see Section \ref{subsec:JohnsonMorita} and 
Morita \cite[Section 7]{morita_survey}). 
\end{remark}

For a characteristic subgroup $G \subset F_n$, 
an automorphism of $F_n/G$ is said to be {\it tame} \index{tame automorphism} 
if it is induced from one of $F_n$. The study of tame automorphisms has 
been of great interest among researchers 
in combinatorial group theory. 
We refer to the surveys by Gupta \cite{gupta1} 
and Gupta-Shpilrain \cite{gupta_shpilrain} for details. 
As for $\Aut(F_n/F_n^{(k)})$, 
the following results are known.
Bachmuth \cite{bachmuth} 
and Bachmuth-Mochizuki \cite{bm1,bm2} showed that 
all the automorphisms of $F_n/F_n^{(2)}$ are tame if $n =2$ or $n \ge 4$ 
while there exists a non-tame automorphism when $n=3$. 
The latter fact was first shown by Chein \cite{chein}. Moreover 
it was shown by Shpilrain \cite{shpilrain} that 
non-tame automorphisms of $F_n/F_n^{(k)}$ 
do exist and the map 
$\Aut(F_n/F_n^{(k+1)}) \to \Aut(F_n/F_n^{(k)})$ is not 
surjective for $n \ge 4$ and $k\ge 3$. 

Recently, Satoh \cite{satoh_magnus} showed that 
$I^2 A_n$ is not finitely generated and moreover 
$H_1 (I^2 A_n)$ has infinite rank for $n \ge 2$ 
(see also Church-Farb \cite{cf} mentioned in the next section). 

We here pose the following problems: 
\begin{problems} 
(1) \ Determine the image of $r_{\mathfrak{p}_k}$. 

\noindent
(2) \ Find a structure on the filtration $\{I^k A_n\}$. 
For example, the associated graded module, 
namely the direct sum of the successive quotients of 
the Andreadakis filtration has 
a natural graded Lie algebra structure.
\end{problems}

\section{Magnus representations for mapping class groups}\label{sec:MagnusMCG}

Now we start our discussion on applications of Magnus representations to 
mapping class groups of surfaces. 

\subsection{Definition}\label{subsec:MCG}
Let $\Sigma_{g,n}$ be a compact oriented surface of genus 
$g$ with $n$ boundary components. 
We take a base point $p$ in $\partial \Sg{g}{n}$ when $n \ge 1$ and 
arbitrarily when $n=0$. 
The fundamental group 
$\pi_1 \Sigma_{g,n}$ of $\Sigma_{g,n}$ with respect to the 
base point $p$ is a free group of 
rank $2g+n-1$ except the cases $n=0$, 
when $\pi_1 \Sigma_{g,0}$ is given as a quotient of 
a free group of rank $2g$ by one relation. 

The {\it mapping class group} \index{mapping class group} 
of $\Sg{g}{n}$ is the group of 
all isotopy classes of orientation-preserving diffeomorphisms of 
$\Sg{g}{n}$, where all diffeomorphisms and 
isotopies are assumed to fix $\partial \Sg{g}{n}$ pointwise when $n >0$. 
For a general theory of mapping class groups, 
we refer to Birman \cite{bi}, 
Ivanov \cite{ivanov}, Farb-Margalit \cite{Farb_Margalit} as well as 
Morita's chapter \cite{morita_survey}. 

The case where $n=1$ is now of particular interest 
and we assume it hereafter. 
In our context, the following theorem, which is often called 
the \index{Dehn-Nielsen theorem}{\it Dehn-Nielsen} theorem, 
is crucial for applying the techniques 
mentioned in the previous sections to $\Mgb$. 
We put $\pi:=\pi_1 \Sigma_{g,1}$ for simplicity. 
\begin{theorem}[The Dehn-Nielsen theorem]\label{thm:DN_MCG}
The natural action of $\Mgb$ on $\pi$ induces an isomorphism
\[\Mgb \cong \{\varphi \in \Aut \pi \mid \varphi (\zeta) =\zeta\},\]
where $\zeta \in \pi$ corresponds the boundary loop $\partial \Sgb$. 
\end{theorem}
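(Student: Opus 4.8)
The plan is to establish the isomorphism in two directions. First I would construct the homomorphism $\Mgb \to \{\varphi \in \Aut\pi \mid \varphi(\zeta)=\zeta\}$. A diffeomorphism $h$ of $\Sgb$ fixing $\partial\Sgb$ pointwise fixes the base point $p \in \partial\Sgb$, hence induces an automorphism $h_* \in \Aut\pi$; since $h$ fixes the boundary curve pointwise, $h_*$ fixes the class $\zeta$ of the boundary loop. Isotopic diffeomorphisms (through isotopies fixing $\partial\Sgb$) induce the same automorphism, so this descends to a well-defined homomorphism on $\Mgb$. The nontrivial content is that this map is injective and surjective.

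For injectivity, suppose $h_* = \Id$ on $\pi$. The key point is that a homotopy equivalence of a surface with boundary that is the identity on $\pi_1$ is homotopic rel $\partial$ to the identity, and then by the theory of surface topology (Baer's theorem / the fact that homotopic diffeomorphisms of surfaces are isotopic, in the version fixing the boundary) $h$ is isotopic to the identity rel $\partial\Sgb$. Thus $h$ is trivial in $\Mgb$. Here one uses crucially that $\Sgb$ is a $K(\pi,1)$ with $\pi$ free, so homotopy classes of self-maps are controlled by $\Aut\pi$, together with the Alexander-type lemma upgrading homotopy to isotopy in dimension two.

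For surjectivity, given $\varphi \in \Aut\pi$ with $\varphi(\zeta)=\zeta$, one first realizes $\varphi$ by a continuous map $f:\Sgb \to \Sgb$ (possible since $\Sgb \simeq K(\pi,1)$ and $\pi$ is free of rank $2g$, so any endomorphism of $\pi$ is realized). Because $\varphi$ is an automorphism, $f$ is a homotopy equivalence; one then arranges, using $\varphi(\zeta)=\zeta$, that $f$ restricts to the identity on $\partial\Sgb$ up to homotopy, and finally homotopes $f$ rel $\partial$ to a diffeomorphism — again invoking two-dimensional smoothing/realization results (a homotopy equivalence of compact surfaces fixing the boundary is homotopic rel $\partial$ to a diffeomorphism). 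The resulting diffeomorphism represents a class in $\Mgb$ mapping to $\varphi$.

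The main obstacle is the passage from the homotopy-theoretic statement (which is essentially formal, given that $\Sgb$ is aspherical with free fundamental group) to the \emph{diffeomorphism}-level statement in both injectivity and surjectivity: one needs the genuinely two-dimensional input that homotopy equivalences of surfaces are realized by diffeomorphisms and that homotopic diffeomorphisms are isotopic, all in the version respecting the boundary pointwise. I would cite this from Farb--Margalit \cite{Farb_Margalit} or Birman \cite{bi} rather than reprove it, and the remaining bookkeeping — checking that $\varphi$ fixing $\zeta$ is exactly what is needed to make the boundary behavior work out — is routine.
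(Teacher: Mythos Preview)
The paper does not actually prove this theorem: it is stated as the classical Dehn--Nielsen theorem and invoked as input, with the reader referred to the standard sources (Birman \cite{bi}, Ivanov \cite{ivanov}, Farb--Margalit \cite{Farb_Margalit}, Morita \cite{morita_survey}) for the general theory of mapping class groups. So there is no proof in the paper to compare against.

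Your sketch is the standard argument and is correct in outline. The only point worth sharpening is the surjectivity step: the condition $\varphi(\zeta)=\zeta$ on the nose (not merely up to conjugacy) is exactly what allows you to realize $\varphi$ by a map that is the identity on $\partial\Sgb$ pointwise, not just setwise; you gesture at this but it is the one place where care is needed in the bordered version. With that caveat, citing Farb--Margalit or Birman for the homotopy-implies-isotopy and realization results is precisely what one does, and the paper itself points to those same references.
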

\noindent
The corresponding injection $\sigma: \Mgb \hookrightarrow \Aut \pi$ is 
called the {Dehn-Nielsen embedding}. 
\begin{remark}\label{rem:extension}
When $n=0$, $\Mg{g}{0}$ acts on $\pi_1 \Sg{g}{0}$ 
{\it up to conjugation} and the corresponding Dehn-Nielsen theorem is 
stated in terms of the {\it outer} 
automorphism group $\Out (\pi_1 \Sg{g}{0})$ of $\pi_1 \Sg{g}{0}$. 
When $n \ge 2$, the homomorphism $\Mg{g}{n} \to \Aut (\pi_1 \Sg{g}{n})$ 
is {\it not} injective since the Dehn twist along a loop parallel to 
one of the boundaries not containing $p$ 
acts trivially on $\pi_1 \Sg{g}{n}$. Therefore 
we need a special care in treating such boundaries. 
Filling these $n-1$ boundaries by $n-1$ copies of $\Sigma_{1,1}$, 
we have an embedding $\Sg{g}{n} \hookrightarrow \Sg{g+n-1}{1}$. 
A diffeomorphism of $\Sg{g}{n}$ naturally extends to 
one of $\Sg{g+n-1}{1}$ such that it restricts to the identity map 
on $\Sg{g+n-1}{1}-\Sg{g}{n}$. This induces a homomorphism 
$\mathcal{M}_{g,n} \to \mathcal{M}_{g+n-1,1}$, which is known to be 
injective. Hence we have $\Mg{g}{n} \subset \Mg{g+n-1}{1} 
\subset \Aut (\pi_1 \Sg{g+n-1}{1})$. 
\end{remark}

We now take $2g$ oriented loops 
$\gamma_1, \gamma_2, \ldots, \gamma_{2g}$ 
as in Figure \ref{fig:generator}.
They form a basis of $\pi$ and we often identify $\pi$ with 
the free group 
$F_{2g}=\langle \gamma_1, \gamma_2, \ldots, \gamma_{2g} \rangle$ 
of rank $2g$. We have 
$\zeta=[\gamma_1, \gamma_{g+1}][\gamma_2, \gamma_{g+2}] \cdots 
[\gamma_g, \gamma_{2g}]$.

\begin{figure}[htbp]
\begin{center}
\includegraphics{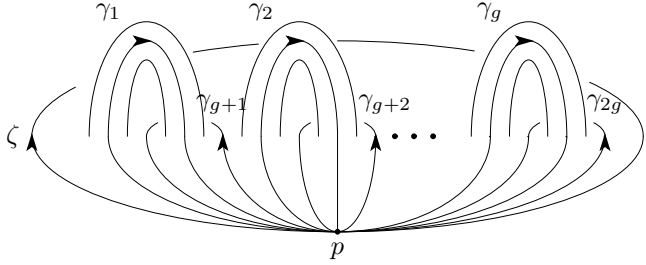}
\end{center}
\caption{Our basis of $\pi_1 \Sigma_{g,1}$}
\label{fig:generator}
\end{figure}

We put $H:=H_1 (\Sgb)=H_1 (\Sgb, \partial \Sgb)$. 
The group $H$ can be identified with $\Z^{2g}$ by choosing 
$\{\gamma_1, \gamma_2, \ldots, \gamma_{2g}\}$ as a 
basis of $H$, where 
we write $\gamma_j$ again for $\gamma_j$ as 
an element of $H=\pi/[\pi,\pi]$. 
Poincar\'e duality endows 
$H$ with a non-degenerate anti-symmetric bilinear form 
\[\mu: H \otimes H \longrightarrow \Z\]
called the \index{intersection pairing}{\it intersection pairing}. 
The above basis of $H$ 
is a symplectic basis with respect to $\mu$, 
namely we have 
\[\mu (\gamma_i, \gamma_j)=\mu (\gamma_{g+i},\gamma_{g+j})=0, 
\qquad 
\mu(\gamma_{i}, \gamma_{g+j}) = -\mu(\gamma_{g+j},\gamma_{i}) = 
\delta_{ij}\]
\noindent
for $i,j=1,2,\ldots,g$. 
The action of $\Mgb$ on $H \cong \Z^{2g}$ defines a homomorphism 
$\sigma : \Mgb \to \mathrm{GL}(2g,\Z)$. 
Since this action preserves the intersection pairing, 
the image of $\sigma$ is included in the {\it symplectic group} 
\index{symplectic group} (or the {\it Siegel modular group}) 
\[\mathrm{Sp} (2g,\Z) = \{ X \in \mathrm{GL}(2g,\Z) \mid X^T J X =J\},\]
where $J=\begin{pmatrix}
O & I_g \\ -I_g & O \end{pmatrix}$. Note that 
$\mathrm{Sp} (2g,\Z) \subset \mathrm{SL}(2g,\Z)$. It is classically known that 
$\sigma:\Mgb \to \mathrm{Sp}(2g,\Z)$ is surjective. 
Consequently we have an exact sequence
\[1 \longrightarrow \mathcal{I}_{g,1} \longrightarrow 
\mathcal{M}_{g,1} \stackrel{\sigma}{\longrightarrow} 
\mathrm{Sp}(2g,\mathbb{Z}) 
\longrightarrow 1,\]
where $\Igb:=\Ker \sigma$ is called \index{Torelli group} 
the {\it Torelli group}. 

Using the Dehn-Nielsen theorem, we define 
the (universal) Magnus representation 
\[r : \Mgb \longrightarrow \mathrm{GL}(2g,\Z [\pi])\]
for $\Mgb$ \index{Magnus representation!for $\mathcal{M}_{g,1}$} 
by assigning to 
$\varphi \in \Mgb$ the matrix \index{Magnus matrix} 
$r(\varphi):=\left(\overline{\left(\dis\frac{\partial \varphi(\gamma_j)}
{\partial \gamma_i} \right)}\right)_{i,j}$. 
The map $r$ is an injective 
crossed homomorphism by 
Proposition \ref{prop:MagnusAut_crossed} 
and the paragraph subsequent to it. 
By definition, 
$\sigma=r_\mathfrak{t}$ holds 
and we have $\Igb=\Mgb \cap IA_{2g}$. 
Also we have a crossed homomorphism 
\[r_\mathfrak{a}: \Mgb \longrightarrow \mathrm{GL}(2g,\Z [H]),\]
which restricts to a homomorphism 
\[r_\mathfrak{a}|_{\Igb} : \Igb \longrightarrow 
\mathrm{GL}(2g,\Z [H]),\]
called the \index{Magnus representation!for $\mathcal{I}_{g,1}$} 
{\it Magnus representation for the Torelli group}. 
Applications of these Magnus representations to $\Mgb$ were 
first given by Morita in \cite{mo9, mo}. 
After that, the study of the Magnus representation 
for the Torelli group has been intensively pursued by 
Suzuki \cite{su,suzuki_irred,suz,suz2}. 
\begin{remark}
In \cite{abp}, Andersen-Bene-Penner constructed 
{\it groupoid} lifts of the Dehn-Nielsen embedding 
$\sigma: \Mgb \hookrightarrow \Aut \pi$ and 
the Magnus representation $r$ to 
the {\it Ptolemy groupoid} $\mathfrak{Pt}(\Sgb)$ of $\Sgb$. 
This groupoid may be regarded as a discrete model of paths 
in the (decorated) Teichm\"uller space \cite{penner} and 
$\Mgb$ can be embedded as the oriented paths starting from a fixed 
vertex $v$ and reaching vertices in the same $\Mgb$-orbit as $v$. 
\end{remark}

\subsection{Symplecticity and its topological interpretation}
\label{subsec:magnus_MCG_symp}

Now we overview known properties of 
the Magnus representations $r$ and 
$r_\mathfrak{a}$. The first one is called the (twisted) symplecticity. 

\begin{theorem}[Morita \cite{mo}, Suzuki \cite{suz}, Perron \cite{perron}]
\label{thm:symplecticMCG}
For any $\varphi \in \mathcal{M}_{g,1}$, the Magnus matrix $r (\varphi)$ 
satisfies the equality 
\[\overline{r (\varphi)^T} \ \widetilde{J} \ r (\varphi) = 
{}^{\varphi} \widetilde{J},\]
where $\widetilde{J}=
\left( \begin{array}{cc} J_1 & J_2 \\ J_3 & J_4 \end{array}\right) 
\in \mathrm{GL} (2g, \Z [\pi])$ is defined by 

{\small 
\begin{align*}
J_1 &=
\left( \begin{array}{ccccc}
1-\gamma_1 & & & & \\
(1-\gamma_2)(1-\gamma_1^{-1})& 1-\gamma_2 &&&\hsymb{0}\\
(1-\gamma_3)(1-\gamma_1^{-1})& (1-\gamma_3)(1-\gamma_2^{-1})& 
1-\gamma_3&&\\
\vdots & \vdots && \ddots &\\
(1-\gamma_g)(1-\gamma_1^{-1})& (1-\gamma_g)(1-\gamma_2^{-1})& \cdots & 
& 1-\gamma_g
\end{array}\right),\\
J_2 &=
\left( \begin{array}{ccccc}
\gamma_1 \gamma_{g+1}^{-1}& & & & \\
(1-\gamma_2)(1-\gamma_{g+1}^{-1})& \gamma_2 \gamma_{g+2}^{-1} &&
&\hsymb{0}\\
(1-\gamma_3)(1-\gamma_{g+1}^{-1})& (1-\gamma_3)(1-\gamma_{g+2}^{-1})& 
\gamma_3 \gamma_{g+3}^{-1} &&\\
\vdots & \vdots && \ddots &\\
(1-\gamma_g)(1-\gamma_{g+1}^{-1})& (1-\gamma_g)(1-\gamma_{g+2}^{-1})& 
\cdots & & \gamma_g \gamma_{2g}^{-1}
\end{array}\right),\\
J_3 &=
\left( \begin{array}{cccc}
1-\gamma_1^{-1} - \gamma_{g+1} & & & \\
(1-\gamma_{g+2})(1-\gamma_1^{-1})& 1-\gamma_2^{-1} - \gamma_{g+2} 
&&\hsymb{0}\\
(1-\gamma_{g+3})(1-\gamma_1^{-1})& (1-\gamma_{g+3})(1-\gamma_2^{-1})& 
\ddots & \\ 
\vdots & \vdots & & \\ 
(1-\gamma_{2g})(1-\gamma_1^{-1})& (1-\gamma_{2g})(1-\gamma_2^{-1})& 
\cdots & 1-\gamma_g^{-1} - \gamma_{2g}
\end{array}\right),\\
J_4 &=
\left( \begin{array}{ccccc}
1-\gamma_{g+1}^{-1} & & & &\\
(1-\gamma_{g+2})(1-\gamma_{g+1}^{-1})& 1- \gamma_{g+2}^{-1} 
&&&\hsymb{0} \\
(1-\gamma_{g+3})(1-\gamma_{g+1}^{-1})& (1-\gamma_{g+3})(1-\gamma_{g+2}^{-1})& 
1- \gamma_{g+3}^{-1}&&\\
\vdots & \vdots && \ddots &\\
(1-\gamma_{2g})(1-\gamma_{g+1}^{-1})& (1-\gamma_{2g})(1-\gamma_{g+2}^{-1})& 
\cdots & & 1-\gamma_{2g}^{-1}
\end{array}\right).
\end{align*}
}
\end{theorem}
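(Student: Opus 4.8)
The plan is to read the identity off from twisted Poincar\'e-Lefschetz duality for the surface $\Sgb$, made $\Mgb$-equivariant. Concretely, $\widetilde J$ is to be identified with the matrix of the $\Z[\pi]$-equivariant (Reidemeister-type) intersection form of $\Sgb$ in the generating system $\gamma_1,\ldots,\gamma_{2g}$ of Figure \ref{fig:generator} --- equivalently, with the ``$\gamma$-block'' of the chain-level Poincar\'e-Lefschetz isomorphism with local coefficients in $\Z[\pi]$ --- and the asserted equality $\overline{r(\varphi)^T}\,\widetilde J\,r(\varphi)={}^{\varphi}\widetilde J$ will express that this form is invariant under $\varphi$ up to twisting the coefficients by $\varphi$.

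First I would record how $r(\varphi)$ sits on the chain level. Fixing a finite CW structure on $X:=\Sgb$ with $\gamma_1,\ldots,\gamma_{2g}$ among the $1$-cells and $\partial X$ a subcomplex, the Fox derivatives of $\zeta=[\gamma_1,\gamma_{g+1}]\cdots[\gamma_g,\gamma_{2g}]$ appear, over the universal cover $\widetilde X$, as the coefficients of the boundary of the $2$-cell on the $\gamma$-cells, exactly as in Section \ref{subsec:lowdim}. A diffeomorphism $\varphi\in\Mgb$ fixing $\partial X$ pointwise may be taken cellular, and by the path-lifting interpretation of Fox derivatives (as in the proof of Proposition \ref{prop:foxproperty}~(5) and in Example \ref{ex:aug}) its action on the free $\Z[\pi]$-module spanned by the lifts of the $\gamma$-cells is, up to the routine transpose/involution bookkeeping, multiplication by $\overline{r(\varphi)^T}$, while its action on the dual (relative) cochains is multiplication by $r(\varphi)$; the matrix $r(\varphi)$ here is exactly the one from Definition \ref{def:magnus_free}.

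Next I would set up the duality. A handle decomposition of $X$ and its dual realize the cap product with the relative fundamental class $[X,\partial X]$ as a chain isomorphism $C_*(\widetilde X)\xrightarrow{\ \cong\ } C^{2-*}(\widetilde X,\partial\widetilde X)$ of complexes of free $\Z[\pi]$-modules, given degree by degree by the $\Z[\pi]$-equivariant intersection matrices of the handles against their duals; restricted to the $1$-handles whose cores are $\gamma_1,\ldots,\gamma_{2g}$, its middle-degree component is a $\Z[\pi]$-isomorphism $\Z[\pi]^{2g}\to\Z[\pi]^{2g}$ that one computes to be $\widetilde J$. Since $\varphi$ preserves $\partial X$ and the orientation, it fixes $[X,\partial X]$ and only twists the local system by $\varphi$; hence the naturality square for this duality commutes, and reading it off in the middle degree with the two vertical arrows identified as $\overline{r(\varphi)^T}$ and $r(\varphi)$ by the previous paragraph produces precisely $\overline{r(\varphi)^T}\,\widetilde J\,r(\varphi)={}^{\varphi}\widetilde J$.

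The main obstacle --- indeed the only genuinely laborious step --- is checking that the middle-degree duality matrix is the displayed $\widetilde J$. This amounts to enumerating the intersection points of the standard curves $\gamma_1,\ldots,\gamma_{2g}$ of Figure \ref{fig:generator} with a transverse dual family, reading off at each intersection point the element of $\pi$ obtained by joining the incident arcs through the base point, and summing with signs; it is this bookkeeping --- together with a careful fixing of the boundary conventions for Lefschetz duality on a surface with boundary --- that produces the entries $1-\gamma_i$, $1-\gamma_i^{-1}$, $\gamma_i\gamma_{g+i}^{-1}$ of the blocks $J_1,\ldots,J_4$, the $1-\gamma_i$ factors reflecting the passage from a based loop to its image under $\gamma\mapsto\gamma-1$. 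Two independent consistency checks keep the computation honest: applying the chain rule (Proposition \ref{prop:foxproperty}~(3)) to the relation $\varphi(\zeta)=\zeta$ and then the involution yields the vector identity $r(\varphi)\cdot{}^{\varphi}\mathbf z=\mathbf z$ with $\mathbf z=\left(\overline{\deriv{\zeta}{\gamma_i}}\right)_i$, which expresses that $\varphi$ carries the $2$-cell onto itself; and applying the augmentation $\mathfrak t$ collapses $\widetilde J$ to $J$ and degenerates the identity to $\sigma(\varphi)^T J\,\sigma(\varphi)=J$, recovering the classical fact $\sigma(\varphi)\in\mathrm{Sp}(2g,\Z)$ and pinning down the overall normalization of $\widetilde J$.
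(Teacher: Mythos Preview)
Your approach is correct and essentially coincides with Suzuki's proof as summarized in the paper: identify $r(\varphi)$ as the action of $\varphi$ on $H_1(\Sgb,\{p\};\Z[\pi])$, identify $\widetilde J$ as the matrix of a $\Z[\pi]$-valued intersection pairing on this module, and read off the symplecticity as the $\Mgb$-invariance of that pairing. The paper's Remark~\ref{rem:decomp} (following Turaev) makes explicit that Suzuki's ``higher intersection number'' via Papakyriakopoulos' biderivations and the $A/B$ boundary decomposition is exactly the Poincar\'e--Lefschetz duality pairing you invoke, so your chain-level cap-product formulation and Suzuki's path-counting on $\widetilde\Sgb$ are two packagings of the same argument; the only real work in either version is the explicit computation of $\widetilde J$, which you correctly flag.
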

\noindent
Note that the matrix $\widetilde{J}$ 
first appeared in 
Papakyriakopoulos' paper \cite{papa} and it 
is mapped to the matrix $J$ by the trivializer 
$\mathfrak{t}: \Z [\pi] \to \Z$. 
Morita used a finite generating system of 
$\mathcal{M}_{g,1}$ to show that 
the equality holds for each element of the system. 
On the other hand, Suzuki gave a 
topological description of the Magnus representation 
and showed that the equality holds for any element of $\Mgb$. 
Perron's proof is similar to Suzuki's. 

Suzuki's description is as follows. 
First, consider the twisted homology 
$H_1 (\Sgb, \{p\};\Z [\pi])$, which 
coincides with the usual homology 
$H_1 (\widetilde{\Sgb}, f^{-1}(p))$ 
of the universal covering $f: \widetilde{\Sgb} \to \Sgb$. 
This module is isomorphic to $(\Z [\pi])^{2g}$ and 
the set of lifts $\widetilde{\gamma_1}, \widetilde{\gamma_2}, 
\ldots, \widetilde{\gamma_{2g}}$ 
(see Convention in Section \ref{subsec:lowdim}) 
of $\gamma_1, \gamma_2, \ldots, \gamma_{2g}$ forms 
a basis as a right $\Z [\pi]$-module. 
The action of $\varphi \in \Mgb$ on $\Sgb$ is uniquely lifted on 
$\widetilde{\Sgb}$ so that $\widetilde{p}$ is fixed. 
It induces a right $\Z [\pi]$-equivariant 
isomorphism of $H_1 (\Sgb, \{p\};\Z [\pi])$. 
Suzuki showed that the matrix representation of 
this equivariant isomorphism under the above basis 
coincides with $r (\varphi)$. 

Next Suzuki considered an intersection pairing 
\[\langle \ \cdot,\cdot \ \rangle: 
H_1 (\Sgb, \{p\};\Z [\pi]) \times H_1 (\Sgb, \{p\};\Z [\pi]) 
\longrightarrow \Z [\pi]\]
on $H_1 (\Sgb, \{p\};\Z [\pi])$ called the 
{\it higher intersection number} 
in \cite{suz} by using Papakyriakopoulos' idea of 
{\it biderivations} \cite{papa} in $\Z [\pi]$. 
Note that Turaev \cite{turaev} also 
gave a construction similar to biderivations. 
Let $c_1, c_2$ be paths on $\widetilde{\Sgb}$ 
connecting two points of $f^{-1}({p})$. 
We take another base point $q \in \partial \Sgb$ and 
decompose $\partial \Sgb$ into two segments $A$ and $B$ as in 
Figure \ref{fig:decomposition}. 

\begin{figure}[htbp]
\begin{center}
\includegraphics{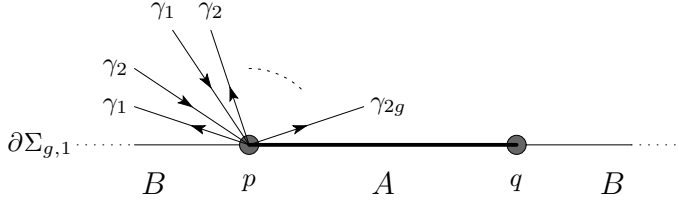}
\end{center}
\caption{Decomposition of $\partial \Sigma_{g,1}$}
\label{fig:decomposition}
\end{figure}

We slide $c_2$ along the lifts of $A$ so that the resulting path 
connects two points of $f^{-1}({q})$. Then we set 
\[\langle c_1, c_2 \rangle = \sum_{\gamma \in \pi} 
\widetilde{\mu}(c_1 \gamma, c_2) \gamma,\]
where $c_1 \gamma$ is the path obtained from $c_1$ by the right action 
of $\gamma$ and $\widetilde{\mu}(c_1 \gamma, c_2)$ 
is the usual intersection number 
of $c_1 \gamma$ and $c_2$ on $\widetilde{\Sgb}$. 
We can naturally extend this pairing of paths to 
the desired pairing of $H_1 (\Sgb, \{p\};\Z [\pi])$ so that 
\[\langle u f ,v \rangle = \overline{f} \langle u ,v \rangle, \qquad 
\langle u ,vf \rangle=\langle u  ,v \rangle f\]
holds for any $f \in \Z [\pi]$ and $u,v \in H_1 (\Sgb, \{p\};\Z [\pi])$. 
This pairing is clearly preserved by 
the action of $\mathcal{M}_{g,1}$. Then 
the twisted symplecticity is obtained by 
writing this invariance under our basis of 
$H_1 (\Sgb, \{p\};\Z [\pi])$. 
\begin{remark}\label{rem:decomp}
Sliding the path $c_2$ in the above procedure has 
the following homological meaning, which 
was pointed out in Turaev \cite{turaev}. 
In the source of the pairing $\langle \ \cdot,\cdot \ \rangle$, 
we identify the left $H_1 (\Sgb, \{p\};\Z [\pi])$ with 
$H_1 (\Sgb, A;\Z [\pi])$ by using the inclusion 
$(\Sgb,\{p\}) \hookrightarrow (\Sgb,A)$ and similarly the right with 
$H_1 (\Sgb, B;\Z [\pi])$ by 
\[(\Sgb,\{p\}) \hookrightarrow 
(\Sgb,A) \hookleftarrow (\Sgb, \{q\}) 
\hookrightarrow (\Sgb,B),\] 
which corresponds to the slide. 
Then we can take a homological intersection between the pairs 
$(\Sgb,A)$ and $(\Sgb,B)$ arising from Poincar\'e-Lefschetz duality. 
\end{remark}

\subsection{Non-faithfulness and decompositions}
\label{subsec:magnus_MCG_nonfaithful}
Here we focus on 
the Magnus representation 
$r_\mathfrak{a}:\mathcal{I}_{g,1} \to \mathrm{GL}(2g,\Z [H])$ 
for the Torelli group. 
We first mention the following fact first found by Suzuki: 
\begin{theorem}[Suzuki \cite{su}]\label{thm:Suzuki_notfaithful}
The Magnus representation for the Torelli group $\mathcal{I}_{g,1}$ 
is not faithful, namely $\ker r_\mathfrak{a} \neq \{1\}$, 
for $g \ge 2$.
\end{theorem}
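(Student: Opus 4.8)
The plan is to exhibit an explicit nontrivial element of $\mathcal{I}_{g,1}$ lying in the kernel of $r_{\mathfrak{a}}$. By Proposition~\ref{prop:MagnusAut1}, applied with $\rho = \mathfrak{a}\colon \pi \to H$ (whose kernel $[\pi,\pi]$ is characteristic in $\pi \cong F_{2g}$), the kernel of $r_{\mathfrak{a}}$ restricted to $\mathcal{I}_{g,1} = \mathcal{M}_{g,1} \cap IA_{2g}$ coincides with the intersection of $\mathcal{M}_{g,1}$ with the kernel of $\Aut F_{2g} \to \Aut(F_{2g}/[[\pi,\pi],[\pi,\pi]])$; that is, an element $\varphi \in \mathcal{I}_{g,1}$ lies in $\ker r_{\mathfrak{a}}$ precisely when $\gamma_i^{-1}\varphi(\gamma_i) \in [[\pi,\pi],[\pi,\pi]]$ for all $i$. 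So the whole problem reduces to the group-theoretic task of finding a surface mapping class $\varphi \neq 1$ fixing $\zeta$ (Dehn--Nielsen, Theorem~\ref{thm:DN_MCG}) and acting trivially on the metabelian quotient $\pi/\pi^{(2)}$.

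The natural source of candidates is a commutator of two mapping classes supported on disjoint or nicely intersecting subsurfaces whose $r_{\mathfrak{a}}$-images, although nontrivial, satisfy the crossed-homomorphism cocycle relation $r_{\mathfrak a}(\varphi\psi) = r_{\mathfrak a}(\varphi)\,{}^{\varphi}r_{\mathfrak a}(\psi)$ in such a way that the commutator's Magnus matrix becomes the identity. Concretely I would take a \emph{BSCC map} (a Dehn twist along a bounding simple closed curve, which already lies in $\mathcal{I}_{g,1}$) or, more promisingly, a commutator $[\tau_a,\tau_b]$ of Dehn twists along a bounding pair, and compute $r_{\mathfrak a}$ on it. Because $r_{\mathfrak a}$ on $\mathcal{I}_{g,1}$ is an honest homomorphism, $r_{\mathfrak a}([\tau_a,\tau_b]) = r_{\mathfrak a}(\tau_a)\,r_{\mathfrak a}(\tau_b)\,r_{\mathfrak a}(\tau_a)^{-1}\,r_{\mathfrak a}(\tau_b)^{-1}$, and one expects that for a suitably chosen configuration already in genus $2$ the twists $\tau_a,\tau_b$ have $r_{\mathfrak a}$-images that commute even though $\tau_a,\tau_b$ themselves do not. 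Verifying $[\tau_a,\tau_b]\neq 1$ in $\mathcal{M}_{g,1}$ is then a standard fact (e.g.\ via the action on curves, or on $\pi_1$ modulo a deeper term of the lower central series, or via the Johnson homomorphism which is known to detect such elements).

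The main obstacle is the explicit Fox-calculus computation: one must write down $\varphi(\gamma_i)$ for the chosen twists using the action on the $\gamma_j$ from Figure~\ref{fig:generator}, apply Definition~\ref{def:Fox} and the chain rule Proposition~\ref{prop:foxproperty}(3), reduce modulo $[\pi,\pi]$ via $\mathfrak a$, and check the resulting matrices in $\GL(2g,\Z[H])$ commute. This is where the choice of configuration matters: a poor choice gives noncommuting matrices and no kernel element. I would therefore first work in genus $g=2$, pick the bounding pair in a position where the two twists act on \emph{disjoint} collections of the $\gamma_j$'s after abelianization, so that the block structure of the Magnus matrices makes commutativity transparent, and only afterwards record the general-$g$ statement by embedding the genus-$2$ example via a standard subsurface inclusion $\Sigma_{2,1} \hookrightarrow \Sigma_{g,1}$ (cf.\ Remark~\ref{rem:extension}), under which both "nontriviality in $\mathcal{M}$" and "triviality of $r_{\mathfrak a}$" are preserved. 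A secondary point to be careful about is that one genuinely needs $\varphi$ to land in the metabelian kernel, not merely to have trivial action on $H_1$; this is exactly what the explicit $r_{\mathfrak a}$ computation certifies, via the characterization from Proposition~\ref{prop:foxproperty}(5) packaged into Proposition~\ref{prop:MagnusAut1}.
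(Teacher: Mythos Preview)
Your overall strategy---produce a commutator of two Torelli elements whose Magnus matrices commute in $\GL(2g,\Z[H])$ while the elements themselves do not commute in $\Mgb$, then invoke Proposition~\ref{prop:MagnusAut1} to conclude the commutator lies in $\ker r_{\mathfrak a}$---is exactly Suzuki's approach as summarized just after Theorem~\ref{thm:Suzuki_notfaithful}: he ``exhibits an example, which looks not so complicated but needs a long computation.'' The reduction to $g=2$ and the stabilization via a subsurface embedding are also how the general-$g$ statement is obtained.

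There is, however, a concrete slip in your candidate. In standard usage a \emph{bounding pair} $(a,b)$ consists of two disjoint non-separating simple closed curves cobounding a subsurface; hence $\tau_a$ and $\tau_b$ commute already in $\Mgb$ and $[\tau_a,\tau_b]=1$, so this cannot furnish a nontrivial kernel element. Moreover, the individual twists $\tau_a,\tau_b$ are not in $\Igb$, so the identity $r_{\mathfrak a}([\tau_a,\tau_b])=[r_{\mathfrak a}(\tau_a),r_{\mathfrak a}(\tau_b)]$ you wrote is not available (on $\Mgb$ the map $r_{\mathfrak a}$ is only a crossed homomorphism). What Suzuki actually uses is a pair of \emph{separating} simple closed curves in $\Sigma_{2,1}$ that intersect: the BSCC twists $T_{c_1},T_{c_2}$ lie in $\Igb$, their Magnus matrices happen to commute after a genuine Fox-calculus computation, yet $[T_{c_1},T_{c_2}]\neq 1$ in $\Mg{2}{1}$ because the curves are not disjoint. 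Your heuristic ``choose curves acting on disjoint collections of the $\gamma_j$ after abelianization'' is on the right track for guessing why the matrices might commute, but note that if the curves themselves were disjoint you would lose nontriviality of the commutator; the point is precisely that the curves intersect while the abelianized Fox derivatives nonetheless yield commuting matrices. That verification is the ``long computation'' the paper alludes to, and your proposal leaves it entirely at the level of expectation---so as written it is a correct plan with the right architecture but not yet a proof.
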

\noindent
Suzuki's proof exhibits an example, which looks not so complicated 
but needs a long computation. After that 
he gave an improvement \cite{suz2} based on the 
topological interpretation of $r_\mathfrak{a}$. 
See also Perron \cite{perron}. 
Along this line, the following remarkable result was 
recently shown by Church-Farb: 
\begin{theorem}[Church-Farb \cite{cf}]\label{thm:ChurchFarb}
$\Ker r_\mathfrak{a}$ is not finitely generated. Moreover, 
$H_1 (\Ker r_\mathfrak{a})$ has infinite rank for $g \ge 2$.
\end{theorem}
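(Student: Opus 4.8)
The plan is to exhibit an infinite family of elements in $\Ker r_\mathfrak{a}$ that is provably non-trivial in $H_1(\Ker r_\mathfrak{a})$ by detecting them through an auxiliary homomorphism that factors through the abelianization. The starting point is Suzuki's element $\varphi_0 \in \Ker r_\mathfrak{a}$ from Theorem \ref{thm:Suzuki_notfaithful}, or rather its topological refinement in \cite{suz2}: one wants a Torelli mapping class supported in a subsurface whose complement still carries enough room to produce infinitely many ``parallel copies'' of it. Concretely, first I would fix a separating curve $c \subset \Sgb$ cutting off a subsurface $S' \cong \Sigma_{h,1}$ with $h \ge 2$ and a complementary handle, and locate inside $S'$ a Torelli element $\varphi_0$ with $r_\mathfrak{a}(\varphi_0) = I$. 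Then, using a self-diffeomorphism $\psi$ of $\Sgb$ that pushes $S'$ off itself (for instance an appropriate point-pushing or handle-sliding map), I would form the conjugates $\varphi_k := \psi^k \varphi_0 \psi^{-k}$ for $k \in \Z$. Since $\Ker r_\mathfrak{a}$ is a normal subgroup of $\Igb$ only with respect to $\Igb$-conjugation, one must check $\psi \in \Igb$ (or at least that conjugation by $\psi$ preserves $\Ker r_\mathfrak{a}$, which it does because $r_\mathfrak{a}$ is a crossed homomorphism trivial on the kernel and its target transformation is $\mathrm{GL}(2g,\Z[H])$-conjugation by the image matrix). This gives a $\Z$-indexed family $\{\varphi_k\} \subset \Ker r_\mathfrak{a}$.

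Next I would construct the detecting invariant. The idea is that although $r_\mathfrak{a}$ kills all the $\varphi_k$, the \emph{universal} Magnus representation $r$ (over $\Z[\pi]$), or equivalently the Magnus matrix with values in $\Z[\pi/\pi^{(k)}]$ for larger $k$, does not; more precisely one uses a finer quotient of $\pi$ — e.g.\ the second nilpotent or second derived quotient — and the associated ``second Johnson-type'' homomorphism or the refined Magnus representation $r_{\mathfrak{p}_2}$ from Proposition \ref{prop:MagnusAut1}. The key point is to produce a homomorphism $\tau: \Ker r_\mathfrak{a} \to A$ to an abelian group $A$ such that $\tau(\varphi_k)$, $k \in \Z$, span an infinite-rank subgroup of $A$. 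Because $\tau$ is a homomorphism to an abelian group, it factors through $H_1(\Ker r_\mathfrak{a})$, and infinite rank of the image forces infinite rank of $H_1(\Ker r_\mathfrak{a})$; in particular $\Ker r_\mathfrak{a}$ cannot be finitely generated. The natural candidate for $\tau$ is the difference $r - r_\mathfrak{a} \circ (\text{lift})$ measured at the next filtration level, i.e.\ restrict the homomorphism $r_{\mathfrak{p}_2}|_{I^2 A_{2g}}$ (which lands in the group $G_{2g}^{(2)}$) and observe that the linear part of this ``crossed-to-homomorphism correction'' over the cyclic cover gives a genuine additive invariant on $\Ker r_\mathfrak{a}$. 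One then computes $\tau(\varphi_k)$ using the chain rule, Proposition \ref{prop:foxproperty}(3), together with the conjugation formula $r(\psi^k \varphi_0 \psi^{-k}) = r(\psi^k)\,{}^{\psi^k}r(\varphi_0)\,{}^{\psi^k}r(\psi^{-k})$ from Proposition \ref{prop:MagnusAut_crossed}, and extracts that the entries of $\tau(\varphi_k)$ differ from those of $\tau(\varphi_0)$ by the $k$-th power of a fixed group-ring element coming from $\psi$; linear independence over $\Z$ of these translates follows because the relevant monomials in $\Z[H]$ (or $\Z[\pi/\pi^{(2)}]$) are genuinely distinct for distinct $k$.

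The main obstacle, and where the real work lies, is exactly this last computation: showing that the translates $\tau(\varphi_k)$ are $\Z$-linearly independent rather than collapsing. This requires (i) a careful choice of $\psi$ so that conjugation genuinely ``moves'' the support of $\varphi_0$ through pairwise distinct homology classes — a support/disjointness argument ensuring the relevant Fox-derivative entries of $\psi^k\varphi_0\psi^{-k}$ involve monomials $t^k u$ with $u$ fixed and $t$ an element of infinite order in the coefficient group — and (ii) a non-vanishing check that $\tau(\varphi_0) \ne 0$, i.e.\ that $\varphi_0$ is \emph{not} killed at the next level even though it is killed by $r_\mathfrak{a}$; this is where one cites or reproves the refined non-faithfulness computation of \cite{suz2}. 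Secondary technical points are verifying that $\tau$ really is a homomorphism on $\Ker r_\mathfrak{a}$ (the crossed-homomorphism cocycle terms must vanish on the kernel, which follows from $r_\mathfrak{a}(\varphi)=I$ for $\varphi \in \Ker r_\mathfrak{a}$ combined with the $\mathfrak{p}_2$-vs-$\mathfrak{a}$ compatibility in Proposition \ref{prop:MagnusAut1}), and confirming that the abelian target has no hidden relations forcing $\{\tau(\varphi_k)\}$ into finite rank. Once these are in place, the two assertions — $\Ker r_\mathfrak{a}$ not finitely generated, and $H_1(\Ker r_\mathfrak{a})$ of infinite rank — both drop out immediately from the existence of $\tau$ with infinite-rank image, since any homomorphism to an abelian group factors through $H_1$ and a finitely generated group has $H_1$ of finite rank.
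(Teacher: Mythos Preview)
The paper does not give a proof of this theorem: it is a survey, and Theorem~\ref{thm:ChurchFarb} is simply stated with a citation to Church--Farb~\cite{cf}, followed only by the remark that their argument also applies to $I^2 A$. So there is no ``paper's own proof'' to compare against.

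That said, your broad strategy --- produce an infinite family in $\Ker r_\mathfrak{a}$ and detect it via a homomorphism to an abelian group --- is the correct shape, and it is indeed what Church--Farb do. But your implementation has a genuine gap. You propose $\tau$ to be built from $r_{\mathfrak{p}_2}|_{I^2 A_{2g}}$, yet the target $\mathrm{GL}(2g,\Z[F_{2g}/F_{2g}^{(2)}])$ is not abelian, and your ``linear part of the crossed-to-homomorphism correction'' is not a well-defined object as written; you never actually specify an abelian-valued $\tau$. Worse, the conjugation scheme $\varphi_k=\psi^k\varphi_0\psi^{-k}$ is dangerous: if your eventual $\tau$ is the restriction of any homomorphism defined on a subgroup containing $\psi$ and landing in an abelian group, then $\tau(\varphi_k)=\tau(\varphi_0)$ for every $k$ and you detect nothing. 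What is needed is a $\tau$ on $\Ker r_\mathfrak{a}$ that is \emph{equivariant} (not invariant) for the outer conjugation action of $\psi$, so that $\tau(\varphi_k)=\psi^k\cdot\tau(\varphi_0)$ in a module on which $\psi$ acts with infinite orbit. Church--Farb achieve exactly this by using a Johnson homomorphism (abelian-valued on the relevant filtration term) together with the $\mathrm{Sp}(2g,\Z)$-action on its target; the elements they feed in are explicit commutators of Dehn twists satisfying Suzuki's higher-intersection criterion, not conjugates of a single Suzuki element, which sidesteps the issue above. Your plan can be repaired along these lines, but as it stands the detecting invariant is undefined and the independence step would fail for the reason just given.
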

\noindent
Note that their argument can be also applied to 
$I^2 A$. 

On the other hand, 
whether the Gassner representation, 
the corresponding representation for braids, 
is faithful or not is unknown 
for $n \ge 4$. When $n=3$, it was shown to be faithful by 
Magnus-Peluso \cite{magnus-peluso} 
(see also \cite[Theorem 3.15]{bi}). 

A decisive difference between the Magnus representation 
for $\Igb$ and the Gassner representation 
for $P_n$ appears 
in their {\it irreducible} decompositions. 
Here, the word ``irreducible'' means that 
there exist no invariant {\it direct summands} of 
$(\Z [H])^{2g}$ (or $(\Z [H_1])^n$), which is a slight 
abuse of terminology. 
It is easily checked that the Gassner representation has a 
1-dimensional trivial subrepresentation 
(see \cite[Lemma 3.11.1]{bi}). Moreover, 
Abdulrahim \cite{abd} showed 
by using a technique of complex specializations 
that the Gassner representation is the direct sum of the trivial 
representation and an $(n-1)$-dimensional irreducible 
representation. 

As for the Magnus representation for $\Igb$, 
Suzuki gave the following decomposition of $r_{\mathfrak{a}}$ after 
extending the target: 
\begin{theorem}[Suzuki \cite{suzuki_irred}]
Let 
\[R=\Z [\gamma_1^{\pm1}, \ldots, \gamma_{2g}^{\pm1}, 
1/(1-\gamma_{g+1}), \ldots, 1/(1-\gamma_{2g})].\] 
Then the Magnus representation 
\[r_\mathfrak{a} : \Igb \longrightarrow \mathrm{GL}(2g,R)\] 
for the Torelli group with an extension of its target has a 
$1$-dimensional subrepresentation which is not a direct summand. 
Moreover the quotient $(2g-1)$-dimensional representation has 
a $(2g-2)$-dimensional subrepresentation which is not 
a direct summand and whose quotient is a 
$1$-dimensional trivial representation. 
\end{theorem}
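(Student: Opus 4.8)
The plan is to build the filtration explicitly from the topological model of $r_\mathfrak{a}$ described in Section~\ref{subsec:magnus_MCG_symp}, rather than from abstract representation theory. Recall that $r_\mathfrak{a}$ is obtained from the action of $\Igb$ on $H_1(\Sgb,\{p\};\Z[\pi])$ by pushing the coefficients forward along $\mathfrak{a}:\pi\to H$; after extending scalars to the localization $R$ we may work with $H_1(\Sgb,\{p\};R)\cong R^{2g}$ with the basis of lifts $\widetilde{\gamma_1},\ldots,\widetilde{\gamma_{2g}}$. First I would identify the $1$-dimensional subrepresentation coming from the fundamental formula of the Fox calculus (Proposition~\ref{prop:foxproperty}(4)): since every $\varphi\in\Igb$ fixes the boundary word $\zeta$ and its image in $H$ is trivial, the vector encoding $\partial\zeta/\partial\gamma_i$—equivalently the class of $\partial\Sgb$ in $H_1(\Sgb,\{p\};R)$—spans an $\Igb$-invariant line $L_1\subset R^{2g}$. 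The localization at $1-\gamma_{g+1},\ldots,1-\gamma_{2g}$ is exactly what is needed to make the coordinates of this vector units in enough places that $L_1$ is visibly \emph{not} a direct summand (the quotient has torsion over $\Z[H]$ that only becomes free after inverting those elements, but the splitting still fails for the reason Suzuki observed in the unlocalized case).

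Next I would pass to the quotient $R^{2g}/L_1$, a $(2g-1)$-dimensional $R$-module with an induced $\Igb$-action, and exhibit the $(2g-2)$-dimensional invariant submodule $W\subset R^{2g}/L_1$. The natural candidate for $W$ is (the image of) the kernel of the map $R^{2g}\to R$ coming from the augmentation-type pairing with the intersection form $\widetilde J$ of Theorem~\ref{thm:symplecticMCG}: the twisted symplecticity $\overline{r(\varphi)^T}\,\widetilde J\,r(\varphi)={}^\varphi\widetilde J$, combined with the fact that $\varphi\in\Igb$ acts trivially on $H$ so that ${}^\varphi\widetilde J$ becomes $\widetilde J$ over $\Z[H]$, forces the image under $\mathfrak{a}$ of $\widetilde J$ itself to be an $\Igb$-invariant tensor; contracting with it produces an equivariant functional $R^{2g}\to R$ whose kernel, reduced modulo $L_1$, gives $W$. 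One then checks that the composite $L_1\hookrightarrow R^{2g}\to R$ (this functional) is zero—so $L_1\subset W$ does hold after all, wait: rather $L_1$ lies in the \emph{radical} of the form, which is why we first quotient by it—and that $(R^{2g}/L_1)/W$ is $1$-dimensional with trivial action, the triviality again being read off from the mod-$\mathfrak{a}$ reduction of the symplecticity identity. That the inclusion $W\subset R^{2g}/L_1$ does not split is again a localization-sensitive statement, provable by specializing $\gamma_{g+1},\ldots,\gamma_{2g}$ to generic complex numbers (the ``complex specialization'' technique of Abdulrahim) and showing the resulting extension of $\mathbb{C}$-representations of $\Igb$ is non-split for at least one $\varphi$.

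The main obstacle I anticipate is the non-splitting of the two extensions, not the existence of the invariant submodules: finding $L_1$ and $W$ is essentially bookkeeping with $\partial\zeta/\partial\gamma_i$ and with $\widetilde J$, but proving that no equivariant projection exists requires producing, for each extension, an explicit $\varphi\in\Igb$ (a bounding-pair map or a Johnson-type generator will do) whose Magnus matrix has a genuinely off-diagonal block in the triangular form adapted to $L_1\subset W\subset R^{2g}$, and then verifying that this off-diagonal block cannot be killed by conjugating by an $R$-automorphism commuting with the whole image. I would handle this by reducing mod the maximal ideal at a well-chosen point of the torus $(\mathbb{C}^\times)^{2g}$—avoiding the loci $\gamma_{g+i}=1$, which is precisely why $R$ is the localization it is—so that the extension class lands in an ordinary $\mathrm{Ext}^1$ of $\mathbb{C}[\Igb]$-modules, and computing that this class is nonzero on the chosen generator. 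The genus restriction $g\ge 2$ (implicit throughout) enters here because for $g=1$ the module $R^{2}$ is too small for the $(2g-2)$-dimensional middle layer to be nontrivial.
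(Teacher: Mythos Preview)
Your identification of the invariant line $L_1$ is correct and matches the paper exactly: it is the span of the Fox-derivative vector of the boundary word $\zeta$, i.e.\ the class of $\partial\Sgb$ in $H_1(\Sgb,\{p\};R)$. Where your proposal diverges is in the construction of the $(2g-2)$-dimensional piece. You try to manufacture an equivariant functional $R^{2g}\to R$ out of the form $\widetilde J$ and symplecticity, and your own ``wait:'' betrays that you are not sure whether $L_1$ lands in its kernel or only in some radical. The paper's route (explained in the Remark following the theorem) is much simpler and avoids $\widetilde J$ entirely: the functional is just the boundary operator
\[
\partial_1: H_1(\Sgb,\{p\};\mathcal K_H)\longrightarrow H_0(\{p\};\mathcal K_H),\qquad
(a_1,\ldots,a_{2g})^T\longmapsto \textstyle\sum_i(\gamma_i^{-1}-1)a_i,
\]
whose kernel is $H_1(\Sgb;\mathcal K_H)\cong \mathcal K_H^{2g-1}$. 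This is automatically $\Igb$-equivariant (it is a map of chain complexes), and the inclusion $L_1\subset\Ker\partial_1$ is immediate because the boundary vector of $\zeta$ is a \emph{cycle}. So the filtration is $L_1\subset\Ker\partial_1\subset R^{2g}$, and your $W$ is $(\Ker\partial_1)/L_1$. No intersection form is needed to build it; symplecticity is used elsewhere in the paper but not here.

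Suzuki's original proof is in fact by explicit conjugation: he writes down a matrix $P\in\mathrm{GL}(2g,R)$ that puts $r_\mathfrak{a}(\varphi)$ in the displayed block-upper-triangular form for every $\varphi\in\Igb$, and verifies directly (using specific elements of $\Igb$) that the off-diagonal $\ast$ blocks are not identically zero, hence the extensions do not split. Your complex-specialization plan for non-splitting is a reasonable alternative in spirit, but you should be aware that the paper does not spell this argument out --- it cites \cite{suzuki_irred} --- so you would be reconstructing Suzuki's computation rather than the survey's. The key point you are missing is that $\partial_1$, not $\widetilde J$, is the source of the codimension-one invariant subspace.
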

\noindent
In the proof, Suzuki gave a matrix $P \in \mathrm{GL}(2g,R)$ such that 
\begin{equation}\label{eq:irred}
P^{-1} r_\mathfrak{a}(\varphi) P =
\left(\begin{array}{c|ccc|c}
1 & & \ast & & \ast \\
\hline
0 &&&&\\
\vdots & & r_\mathfrak{a}'(\varphi) & & \ast \\
0 &&&&\\
\hline
0 & 0 & \cdots & 0 & 1
\end{array}\right)
\end{equation}
\noindent
holds for any $\varphi \in \Igb$ 
with an irreducible representation 
$r_\mathfrak{a}':\Igb \to \mathrm{GL}(2g-2,R)$. 

\begin{remark}
Here we comment on the topological meaning of the above 
decomposition. For simplicity, 
we use the quotient field 
$\mathcal{K}_H:= \Z [H](\Z [H] -\{0\})^{-1}$ of $\Z [H]$ and 
consider $r_{\mathfrak{a}}: \Igb \to \mathrm{GL}(2g,\mathcal{K}_H)$. 
The homology exact sequence shows that 
\[0 \longrightarrow H_1 (\Sgb;\mathcal{K}_H) 
\longrightarrow H_1 (\Sgb,\{p\};\mathcal{K}_H) 
\longrightarrow H_0 (\{p\};\mathcal{K}_H) \longrightarrow 0\]
is exact and it can be written as 
\[0 \longrightarrow \mathcal{K}_H^{2g-1}
\longrightarrow \mathcal{K}_H^{2g} 
\longrightarrow \mathcal{K}_H \longrightarrow 0.\]
The map $\mathcal{K}_H^{2g} \to \mathcal{K}_H$ 
coincides with 
$\partial_1: C_1 (\Sgb,\{p\};\mathcal{K}_H) \to 
C_0 (\Sgb,\{p\};\mathcal{K}_H)$. 
Now the representation $r_{\mathfrak{a}}$ 
works as a transformation of $H_1 (\Sgb,\{p\};\mathcal{K}_H) \cong 
\mathcal{K}_H^{2g}$ and we can check that 
it preserves $H_1 (\Sgb;\mathcal{K}_H) \cong 
\mathcal{K}_H^{2g-1}$, namely we have a subrepresentation as 
a transformation of $H_1 (\Sgb;\mathcal{K}_H)$. Taking 
a basis of $\mathcal{K}_H^{2g}$ from 
ones of $\mathcal{K}_H^{2g-1}$ and $\mathcal{K}_H$, we obtain 
the first decomposition corresponding 
to the upper left $(2g-1)$-matrix of (\ref{eq:irred}). One more step 
is obtained by finding the vector 
\begin{align*}
&\begin{pmatrix}
\overline{\dis\deriv{\zeta}{\gamma_1}} & 
\overline{\dis\deriv{\zeta}{\gamma_2}} 
& \cdots & 
\overline{\dis\deriv{\zeta}{\gamma_{2g}}} \end{pmatrix}^T \\
=&\begin{pmatrix}
1-\gamma_{g+1}^{-1} & \cdots & 1-\gamma_{2g}^{-1} & 
\gamma_{1}^{-1}-1 & \cdots & \gamma_{g}^{-1}-1 \end{pmatrix}^T
\end{align*}
\noindent
to be an invariant vector belonging to $\Ker \partial_1 = 
H_1 (\Sgb;\mathcal{K}_H) \cong 
\mathcal{K}_H^{2g-1}$. A similar observation can be applied to 
the Gassner representation for $P_n$. 
In this case, however, the invariant vector 
corresponding to the trivial subrepresentation does not 
belong to the subspace $\Ker \partial_1$, 
so that we cannot obtain an $(n-2)$-dimensional 
subrepresentation from this.  
\end{remark}

The following observation might be useful 
for further comparison of the two representations. 
Let $L$ be a pure braid with $g$ strings. 
Consider a closed tubular neighborhood of 
the union of the loops $\gamma_{g+1}, \gamma_{g+2}, \ldots, 
\gamma_{2g}$ in $\Sgb$ (see Figure \ref{fig:generator}) 
to be the image of 
an embedding $\iota:\Sg{0}{g+1} \hookrightarrow \Sgb$ of 
a $g$ holed disk $\Sg{0}{g+1}$ as in Figure \ref{fig:ht}. 

\begin{figure}[htbp]
\begin{center}
\includegraphics[width=.95\textwidth]{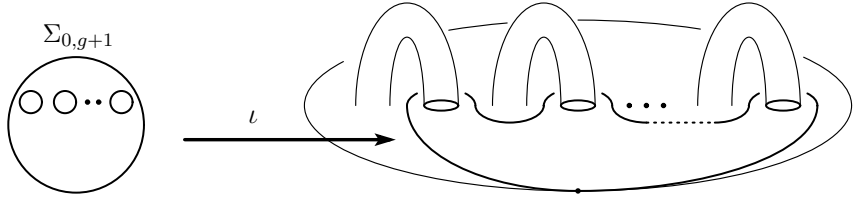}
\end{center}
\caption{The embedding $\iota:\Sigma_{0,g+1} \hookrightarrow 
\Sigma_{g,1}$}
\label{fig:ht}
\end{figure}

\noindent
Since $P_g$ can be regarded as a subgroup of $\Mg{0}{g+1}$, 
we have an injective homomorphism $I: P_g \hookrightarrow \Mgb$ 
by a method similar to that mentioned 
in Remark \ref{rem:extension}. 
The construction of the map $I$ is due to Oda \cite{oda} and 
Levine \cite{le3} (see also Gervais-Habegger \cite{GH}). 
As in the following way, 
we can compare the restriction of 
the universal Magnus representation $r$ for $\Mgb$ 
to $P_g$ with that for $\Aut F_g= \Aut (\pi_1\Sg{0}{g+1})$ 
denoted here by $r_G:P_g \to \mathrm{GL}(g, \Z [\pi_1\Sg{0}{g+1}])$. 
Note that we are now identifying $\pi_1 \Sg{0}{g+1}$ 
with the subgroup of $\pi$ generated by 
$\gamma_{g+1}, \ldots, \gamma_{2g}$. 
By construction, we obtain the following: 
\begin{proposition}\label{prop:conn}
For any pure braid $L \in P_g$, 
$r (I(L)) = 
\left(\begin{array}{cc}
I_{g} & 0_g \\ \ast & r_{G} (L)
\end{array}\right)$. 
\end{proposition}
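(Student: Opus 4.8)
The plan is to compute the Magnus matrix $r(I(L))$ columnwise using the chain rule (Proposition~\ref{prop:foxproperty}~(3)) and the explicit geometric description of the embedding $I$. The key point is that $I(L)$, as an automorphism of $\pi = \langle \gamma_1,\dots,\gamma_{2g}\rangle$, acts by the identity on $\gamma_1,\dots,\gamma_g$ and maps each of $\gamma_{g+1},\dots,\gamma_{2g}$ to a word lying entirely in the subgroup $\pi_1\Sigma_{0,g+1} = \langle\gamma_{g+1},\dots,\gamma_{2g}\rangle$. This follows from the construction of $I$ via a diffeomorphism supported inside $\iota(\Sigma_{0,g+1})$: such a diffeomorphism fixes the loops $\gamma_1,\dots,\gamma_g$ (which can be chosen disjoint from the support, as suggested by Figure~\ref{fig:ht}) and preserves the ``vertical'' subgroup generated by the remaining loops, acting on it exactly as the braid $L \in P_g \subset \Mg{0}{g+1} \hookrightarrow \Aut(\pi_1\Sigma_{0,g+1})$. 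I would first spell this out carefully, recording $I(L)(\gamma_i) = \gamma_i$ for $i \le g$ and $I(L)(\gamma_{g+j}) = L_\ast(\gamma_{g+j}) \in \pi_1\Sigma_{0,g+1}$ for $1 \le j \le g$, where $L_\ast$ is the automorphism underlying $r_G(L)$.

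Granting this, the matrix computation is immediate. For the columns indexed by $i \le g$: since $I(L)(\gamma_i)=\gamma_i$, we get $\partial(I(L)(\gamma_i))/\partial\gamma_k = \delta_{ik}$, so after applying the involution these columns give $(I_g \mid 0)^T$ stacked as the top $g$ rows / left $g$ columns pattern —concretely the first $g$ columns of $r(I(L))$ are the standard basis vectors $e_1,\dots,e_g$, which accounts for the block $\left(\begin{smallmatrix} I_g \\ \ast\end{smallmatrix}\right)$ having $0_g$ to its right in the top. For the columns indexed by $g+j$: the word $w = I(L)(\gamma_{g+j})$ lies in $\langle\gamma_{g+1},\dots,\gamma_{2g}\rangle$, hence $\partial w/\partial\gamma_k = 0$ for $k \le g$ (a Fox derivative of a word in a sub-basis with respect to a generator not appearing is zero, by the defining property $\partial\gamma_i/\partial\gamma_j = \delta_{ij}$ and the product rule). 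Therefore the top-right $g\times g$ block is $0_g$. The bottom-right $g\times g$ block has $(k,j)$-entry $\overline{\partial w/\partial\gamma_{g+k}} = \overline{\partial(L_\ast(\gamma_{g+j}))/\partial\gamma_{g+k}}$, which is by definition (Definition~\ref{def:magnus_free}, applied to $\Aut(\pi_1\Sigma_{0,g+1}) = \Aut F_g$ with basis $\gamma_{g+1},\dots,\gamma_{2g}$) exactly the $(k,j)$-entry of $r_G(L)$. The bottom-left block is simply whatever $\overline{\partial\gamma_i/\partial\gamma_{g+k}} = 0$ gives — wait, this is also zero, so I should double-check the claimed shape: in fact the lower-left block is the $\ast$ and need not vanish only if $I(L)(\gamma_i)$ for $i\le g$ had $\gamma_{g+k}$-dependence; since it equals $\gamma_i$, it does not, so that block is $0_g$ too. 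The statement as written allows $\ast$ there, so there is nothing to prove; one simply records the block form.

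The main obstacle is not the algebra but the \emph{geometry}: verifying that the embedding $I$ of Oda--Levine really does realize $L$ as an automorphism supported on the sub-surface in the precise sense needed — i.e.\ that with the chosen generating loops of Figure~\ref{fig:generator} one has both $I(L)(\gamma_i)=\gamma_i$ on the nose for $i\le g$ and $I(L)(\gamma_{g+j})\in\langle\gamma_{g+1},\dots,\gamma_{2g}\rangle$ with the induced map agreeing with the standard Artin-type action defining $r_G$. This requires care about basepoints (the basepoint $p\in\partial\Sgb$ must be compatibly placed on $\partial\iota(\Sigma_{0,g+1})$) and about the fact that a priori a diffeomorphism supported in a subsurface could conjugate the ``outside'' loops $\gamma_i$ by elements of $\pi_1$ of the subsurface; one must check that with the configuration of Figure~\ref{fig:ht} the loops $\gamma_1,\dots,\gamma_g$ can be isotoped off the support entirely so no such conjugation occurs. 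Once this geometric normalization is pinned down — appealing to the explicit construction in Oda~\cite{oda} and Levine~\cite{le3} — the chain rule and the vanishing of irrelevant Fox derivatives finish the proof with no real computation.
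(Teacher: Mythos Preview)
Your argument for the top-right and bottom-right blocks is correct: since $I(L)$ is supported in $\iota(\Sigma_{0,g+1})$ and $\gamma_{g+j}$ lies there, $I(L)(\gamma_{g+j})\in\langle\gamma_{g+1},\dots,\gamma_{2g}\rangle$ and the Fox derivatives behave as claimed.

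The gap is in your treatment of $\gamma_1,\dots,\gamma_g$. Your claim that these loops ``can be isotoped off the support entirely'' is false: $\mu(\gamma_j,\gamma_{g+j})=1$, and $\gamma_{g+j}\subset\iota(\Sigma_{0,g+1})$, so $\gamma_j$ cannot avoid the subsurface. More decisively, your conclusion $I(L)(\gamma_j)=\gamma_j$ would force $I(L)\in\Igb$ for every $L\in P_g$ (pure braids act trivially on $H_1(\Sigma_{0,g+1})$, so the action on all of $H$ would be trivial). But the paper states explicitly that $I(P_g)\not\subset\Igb$; and the Remark following the proposition says the matrix $r(I(L))$ genuinely depends on the framing --- which would be impossible if every block other than $r_G(L)$ were forced to be $0$ or $I_g$. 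So the lower-left $\ast$ is \emph{not} zero in general, and your derivation of the $I_g$ block collapses.

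What actually happens is that $\gamma_j$ (for $j\le g$) meets $\iota(\Sigma_{0,g+1})$ in a controlled way dictated by the specific configuration of Figures~\ref{fig:generator} and~\ref{fig:ht}: one obtains $I(L)(\gamma_j)=\gamma_j\cdot w_j$ with $w_j\in\langle\gamma_{g+1},\dots,\gamma_{2g}\rangle$ (right multiplication, not conjugation). Then $\partial(\gamma_j w_j)/\partial\gamma_i=\delta_{ij}+\gamma_j\,\partial w_j/\partial\gamma_i$, which equals $\delta_{ij}$ for $i\le g$ and contributes the nonzero $\ast$ for $i>g$. Establishing this precise form --- in particular that the extra word sits on the \emph{right}, not the left --- is the content hidden in the phrase ``by construction''; it requires tracking how $\gamma_j$ enters and exits the subsurface relative to the basepoint, not merely that the diffeomorphism is supported there.
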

\noindent
Here we must remark that 
the embedding $P_g \hookrightarrow \Mgb$ has an 
ambiguity due to framings, which count how many times 
one applies Dehn twists along each of the loops parallel to 
the inner boundary of $\Sigma_{0,g+1}$. 
However we can check that the lower right part of $r (I(L))$ 
is independent of the framings. 

While the entire image $I(P_g)$ is not included in $\Igb$, 
we can easily check that $I([P_g,P_g]) \subset \Igb$. 
Suppose $L \in P_g$ 
is in the kernel of the Gassner representation. 
Then $L \in [P_g,P_g]$
(see \cite[Theorem 3.14]{bi}), so that $I(L) \in \Igb$. 
The symplecticity of 
$r$ shows that the lower left part of $r_\mathfrak{a} (I(L))$ is $O$. 
Consequently, we have observed that for $L \in P_g$, 
$L$ is in the kernel of the Gassner representation if and only if 
$I(L)$ is in the kernel of 
the Magnus representation $r_\mathfrak{a}$ for $\Igb$.

\subsection{Determinant of the Magnus representation}\label{subsec:detmag1}

Now we focus on the Magnus representation 
$r_\mathfrak{a} : \Mgb \to \mathrm{GL}(2g,\Z [H])$ as a crossed homomorphism, 
whose importance was first pointed out by Morita. 
We put $k:= \det \circ r_\mathfrak{a} : \mathcal{M}_{g,1} \to 
(\Z [H])^\times = \pm H$, where $\pm H$ is regarded as 
the multiplicative group of monomials in $\Z [H]$. 
The image of $k$ is included in 
$H$ since $\mathfrak{t} (k (\varphi))=\det (\sigma (\varphi))=1$. 
We here turn the group $H$ as a multiplicative group into 
the additive one as usual. 
\begin{theorem}[Morita \cite{morita_jac1, mo9}]\label{thm:H1MgH}
$H^1 (\Mgb ; H) \cong \Z$ for $g \ge 2$ and it is generated 
by $k$.
\end{theorem}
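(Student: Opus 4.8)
The plan is to compute $H^1(\Mgb;H)$ by combining two ingredients: the crossed homomorphism $k = \det\circ r_\mathfrak{a}$, which gives a specific nonzero class, and a structural description of $H^1(\Mgb;H)$ coming from the exact sequence $1\to\Igb\to\Mgb\xrightarrow{\sigma}\mathrm{Sp}(2g,\Z)\to 1$ together with known computations about the $\mathrm{Sp}(2g,\Z)$-module structure of $H$ and the abelianization of the Torelli group. First I would set up the Lyndon--Hochschild--Serre spectral sequence, or just the five-term exact sequence, for this group extension with coefficients in $H$:
\[
0 \longrightarrow H^1(\mathrm{Sp}(2g,\Z);H) \longrightarrow H^1(\Mgb;H) \longrightarrow H^1(\Igb;H)^{\mathrm{Sp}(2g,\Z)} \longrightarrow H^2(\mathrm{Sp}(2g,\Z);H).
\]
Here $H\cong\Z^{2g}$ is the standard symplectic representation, on which $\Igb$ acts trivially, so $H^1(\Igb;H)=\Hom(\Igb,\Z)\otimes H = H^1(\Igb;\Z)\otimes H$. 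By Johnson's theorem $H_1(\Igb;\Q)\cong \wedge^3 H/H$ as an $\mathrm{Sp}$-module for $g\ge 3$ (with a correction for $g=2$), and one computes $\bigl(H^1(\Igb;\Z)\otimes H\bigr)^{\mathrm{Sp}(2g,\Z)}$: the invariants of $(\wedge^3 H_\Q/H_\Q)^{\ast}\otimes H_\Q$ form a one-dimensional space, detected by a single contraction. I would check that $H^1(\mathrm{Sp}(2g,\Z);H)=0$ (this follows from $H$ having no $\mathrm{Sp}$-invariants and standard vanishing/center arguments, or from Borel stability), so that the restriction map $H^1(\Mgb;H)\to H^1(\Igb;H)^{\mathrm{Sp}}$ is injective with image landing in a rank-one group.

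The next step is to identify the restriction of $k$ to $\Igb$ with a generator of that rank-one group. Since $\Igb$ acts trivially on $H$, the restriction $k|_{\Igb}:\Igb\to H$ is an honest homomorphism, hence factors through $H_1(\Igb)$, giving an element of $H^1(\Igb;\Z)\otimes H$ which I must show is $\mathrm{Sp}$-equivariant (automatic, since $k$ is defined on all of $\Mgb$) and nonzero. Nonvanishing can be extracted from Morita's explicit computation: applying $k$ to a bounding-pair map (a Johnson generator of $\Igb$) and using the Fox-calculus formula for $r_\mathfrak{a}$ on such a mapping class gives a nontrivial monomial in $\Z[H]$, so $k|_{\Igb}\ne 0$. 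Combined with the previous paragraph this shows $H^1(\Mgb;H)$ has rank exactly $1$ and $k$ spans it rationally.

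It remains to handle the integral (torsion-free, and generation on the nose) statement. Since $\Mgb$ is finitely generated and $H$ is finitely generated free abelian, $H^1(\Mgb;H)$ is finitely generated; I would rule out torsion by a transfer argument along $\sigma$ (the order of any torsion class divides some power related to $\mathrm{Sp}(2g,\Z)$, but also restricts injectively to the torsion-free group $H^1(\Igb;H)^{\mathrm{Sp}}$), so $H^1(\Mgb;H)\cong\Z$. Finally, to see $k$ is a \emph{generator} and not a proper multiple, I would exhibit $\varphi\in\Mgb$ with $k(\varphi)$ a primitive element of $H$ — again a bounding-pair or Dehn-twist computation via the Fox-derivative formula for $r_\mathfrak{a}$ suffices. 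The main obstacle I anticipate is the invariant-theory / Torelli-homology input: pinning down exactly which rank-one space $(H^1(\Igb;\Z)\otimes H)^{\mathrm{Sp}(2g,\Z)}$ is, handling the low-genus exceptional case $g=2$ where Johnson's description of $H_1(\Igb;\Q)$ differs, and confirming the relevant $\mathrm{Sp}(2g,\Z)$-cohomology groups vanish in the unstable range; the Fox-calculus verification that $k$ hits a primitive class is comparatively routine once the target module is understood.
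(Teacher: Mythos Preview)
The paper does not give its own proof of this theorem --- it is a survey, and Theorem~\ref{thm:H1MgH} is simply stated with a citation to Morita's original papers \cite{morita_jac1, mo9}. So there is no proof in the paper to compare your proposal against.

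That said, your outline is broadly reasonable and in the spirit of how such computations are done (five-term sequence for $1\to\Igb\to\Mgb\to\mathrm{Sp}(2g,\Z)\to 1$, vanishing of $H^1(\mathrm{Sp}(2g,\Z);H)$, and identification of $k|_{\Igb}$ with a contraction of the first Johnson homomorphism). However, there is a genuine error in the invariant-theory step. For $\Sgb$ with \emph{one boundary component}, Johnson's theorem gives $H_1(\Igb;\Q)\cong\wedge^3 H_\Q$, \emph{not} $\wedge^3 H_\Q/H_\Q$; the quotient by $H$ occurs only for the closed surface $\Sigma_{g,0}$. This distinction is not cosmetic: under $\mathrm{Sp}(2g,\Q)$ one has $\wedge^3 H_\Q \cong H_\Q \oplus \wedge^3_0 H_\Q$ with $\wedge^3_0 H_\Q$ irreducible and not isomorphic to $H_\Q$, so
\[
\Hom_{\mathrm{Sp}}(\wedge^3 H_\Q, H_\Q)\cong\Q,
\qquad\text{whereas}\qquad
\Hom_{\mathrm{Sp}}(\wedge^3 H_\Q/H_\Q, H_\Q)=0.
\]
With your stated module $\wedge^3 H/H$, the invariant space collapses and your argument would force $H^1(\Mgb;H)_\Q=0$, contradicting the theorem. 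Once you replace $\wedge^3 H/H$ by $\wedge^3 H$, the rank-one conclusion goes through; the remaining steps (nonvanishing of $k|_{\Igb}$ via a bounding-pair computation, primitivity of $k(\varphi)$ for a suitable $\varphi$ to get integral generation) are then routine. You are right to flag $g=2$ as needing separate treatment, since $\mathcal{I}_{2,1}$ is an infinitely generated free group and Johnson's description of its abelianization is not available in the same form.
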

\noindent
This cohomology class, which has many natural representatives 
as crossed homomorphisms arising from various contexts, 
is referred as to the {\it Earle class} in 
Kawazumi's chapter \cite{kawazumi} of the second volume of this handbook. 

Consider the composition 
\[H^1 (\mathcal{M}_{g,1};H) \otimes H^1 (\mathcal{M}_{g,1};H) 
\stackrel{\cup}{\longrightarrow} 
H^2 (\mathcal{M}_{g,1};H \otimes H) 
\stackrel{\mu}{\longrightarrow} 
H^2 (\mathcal{M}_{g,1})\]
and apply it to 
$k \otimes k$, where $\cup$ denotes the cup 
product and $\mu$ denotes the 
map which contracts the coefficient $H \otimes H$ to $\Z$ by 
the intersection pairing on $H$. 
At the cocycle level, $\mu (k \cup k)$ is given by 
\[\mu (k \cup k)([\varphi | \psi])=
\mu (k(\varphi), \varphi (k(\psi)))\]
for $\varphi$, $\psi \in \mathcal{M}_{g,1}$. 
Then the following was shown by Morita: 
\begin{theorem}[Morita \cite{morita_jac2}]\label{thm:det_to_e1}
For $g \ge 2$, we have 
\[\mu (k \cup k) = -e_1 \in H^2 (\mathcal{M}_{g,1}),\]
where $e_1$ is the first {\it Miller-Morita-Mumford} class. 
\end{theorem}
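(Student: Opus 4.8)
The plan is to compute the cohomology class $\mu(k\cup k)\in H^2(\Mgb)$ by exhibiting an explicit cocycle and identifying it with (minus) the first Miller--Morita--Mumford class $e_1$. The key point is that, by Theorem \ref{thm:H1MgH}, the class $k=\det\circ r_\mathfrak{a}$ generates $H^1(\Mgb;H)\cong\Z$, so $\mu(k\cup k)$ is a well-defined cohomology class, and it suffices to pin down its value. Since $H^2(\Mgb;\Z)\cong\Z$ is generated by $e_1$ for $g\ge 3$ (and the class is detected by a suitable characteristic-number computation), the statement reduces to computing a single integer: the coefficient of $\mu(k\cup k)$ against $e_1$.

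First I would recall the cocycle-level formula already recorded in the excerpt,
\[
\mu(k\cup k)([\varphi\,|\,\psi]) = \mu\bigl(k(\varphi),\,\varphi_\ast(k(\psi))\bigr),
\]
which exhibits $\mu(k\cup k)$ as the cup product of the crossed homomorphism $k$ with itself, contracted via the intersection form $\mu$ on $H$. The natural strategy, following Morita, is to relate this to the \emph{Earle class} interpretation of $k$: the crossed homomorphism $k:\Mgb\to H$ has a geometric origin (it is essentially the unique twisted $1$-cocycle up to scalar), and Morita's work identifies its square under the intersection pairing with a characteristic class of the universal surface bundle. Concretely, one realizes $k$ via the action on the relative homology $H_1(\Sgb,\{p\};\Z[H])$ and uses the fact that $\det r_\mathfrak{a}$ records how the "framing" or "winding number" defect of a mapping class behaves; this defect is exactly measured by $e_1$ when one passes to the total space of the surface bundle over a genus-$2$ surface (or more generally over any base with nonzero signature-type invariant).

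Concretely, the computation I would carry out is: take a closed surface bundle $\Sgb\to E\to B$ classified by a homomorphism $\rho:\pi_1 B\to\Mgb$, pull back the cocycle $\mu(k\cup k)$, and evaluate the resulting class in $H^2(B)$ against the fundamental class of a suitable closed oriented surface $B$. Using the bundle-theoretic description of $e_1=\int_{\text{fiber}} e^2$ (where $e$ is the Euler class of the vertical tangent bundle), one rewrites $\int_B e_1 = \int_E e^2$, and then identifies $\int_B \mu(k\cup k)$ with the same integral via the relation between $k$ and the Euler class of the vertical bundle restricted to a section — here the loop $\zeta$ (the boundary) provides exactly the section data, and $k$ records the obstruction to trivializing the vertical tangent line along it. The sign comes out to be $-1$, giving $\mu(k\cup k)=-e_1$.

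The hard part will be making the identification of $k$ with the relevant Euler-class obstruction precise at the cocycle level, rather than merely up to an unknown integer multiple. In particular, one must verify that the integer coefficient is exactly $-1$ and not some other constant; this requires either a careful normalization via an explicit low-genus example (evaluating both sides on a concrete surface bundle, e.g.\ one built from a genus-$2$ Lefschetz fibration, where $e_1$ and the signature are classically known) or a direct diagrammatic computation using the Dehn--Nielsen action on $\pi$ and the generators $\gamma_1,\dots,\gamma_{2g}$ together with the formula for $\zeta$. I expect the bookkeeping of signs — in the intersection pairing $\mu$, in the cup product convention, and in the orientation of the vertical bundle — to be the main source of difficulty, so I would fix all orientation conventions at the outset and cross-check against the known computation of $e_1$ on at least one explicit bundle.
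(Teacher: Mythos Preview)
The paper does not actually prove this theorem: it is stated as a result of Morita with a citation to \cite{morita_jac2} and no argument is given in the present survey. So there is nothing in the paper to compare your proposal against line by line.

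That said, as a standalone attempt your proposal is not yet a proof but a plan. The overall strategy you describe---pull back the cocycle to a surface bundle $\Sgb\to E\to B$, evaluate against a fundamental class, and compare with $\int_E e^2$ via the fiber-integration description of $e_1$---is indeed the circle of ideas in Morita's original argument. However, every step where the actual content lies is left as an intention rather than carried out: you do not perform the identification of $k$ with the Euler-class obstruction along the boundary section, you do not produce the explicit bundle on which both sides are evaluated, and you explicitly flag the determination of the coefficient $-1$ (as opposed to some other integer) as ``the hard part'' still to be done. Since $H^2(\Mgb)\cong\Z$ for $g\ge 3$, knowing that $\mu(k\cup k)$ is \emph{some} multiple of $e_1$ is nearly automatic; the entire substance of the theorem is that multiple, and your proposal defers precisely that computation. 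To turn this into a proof you would need to actually execute one of the two routes you mention: either a careful cocycle-level identification (Morita's approach in \cite{morita_jac2} goes through an explicit analysis of the crossed homomorphism and its relation to the first Johnson homomorphism and Meyer's signature cocycle), or an honest evaluation on a concrete bundle with known $e_1$.
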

\noindent
With Meyer's results \cite{me}, 
Harer \cite{harer} (see also Korkmaz-Stipsicz \cite{ks}) showed that 
$H^2 (\Mgb) \cong \Z$ for $g \ge 3$ and it is known that 
$e_1$ is twelve times the generator up to sign. 
We refer to Morita's paper \cite{mo1} and 
Kawazumi's chapter \cite{kawazumi} for the definition and 
generalities on 
the Miller-Morita-Mumford classes. 
\begin{remark}
Satoh \cite{satoh_twisted_H} 
proved that $H^1 (\Aut F_n;H_1 (F_n)) \cong \Z$ for $n \ge 3$ 
and we can check 
that the generator is represented by the crossed homomorphism
\[|\det r_\mathfrak{a}|: \Aut F_n \longrightarrow H_1 (F_n)\] 
sending $\varphi \in \Aut F_n$ to $h \in H_1 (F_n)$ with 
$\det (r_\mathfrak{a} (\varphi))=\pm h \in \pm H_1 (F_n)$. 
In particular, the pullback map 
$H^1 (\Aut F_{2g};H) \to H^1 (\Mgb;H)$ is an isomorphism 
under an identification $H_1 (F_{2g}) \cong H$. 
However, there exist 
no corresponding statement to Theorem \ref{thm:det_to_e1} since 
we do not have a natural intersection pairing on 
$H_1 (F_n)$. In fact, Gersten \cite{gersten} 
showed that $H^2(\Aut F_n) \cong \Z/2\Z$ for $n \ge 5$. 
See Kawazumi \cite[Theorem 7.2]{kawazumi_Magnus} for more details.
\end{remark}

\subsection{The Johnson filtration and Magnus representations}
\label{subsec:JohnsonMorita}

A filtration of $\Mgb$ is obtained by taking intersections with 
the filtration $\{I^k A_{2g}\}_{k=0}^\infty$ of 
$\Aut \pi=\Aut F_{2g}$. 
However, as far as the author knows, nothing is known about 
$I^k A_{2g} \cap \Mgb$ for $k \ge 3$. 
This reflects the difficulty in treating the derived series 
of $F_{2g}$. In the study of $\Mgb$, instead,  
the filtration arising from the 
{\it lower central series} \index{lower central series} of $\pi$ 
is frequently used. Recall that the lower central series 
\[\Gamma^1 (G) :=G \supset \Gamma^2 (G) \supset \Gamma^3 (G) 
\supset \cdots\]
of a group $G$ is defined by 
$\Gamma^{k+1} (G)=[G,\Gamma^{k} (G)]$ for $k \ge 1$. 
The group $\Gamma^k (G)$ is a characteristic subgroup of $G$. 
We denote the $k$-th {\it nilpotent quotient} \index{nilpotent quotient} 
$G/(\Gamma^k (G))$ of $G$ by $N_k(G)$. 
Here $N_2 (G)=H_1 (G)$. 

Let $\mathfrak{q}_k: \pi \to N_k (\pi)$ be the natural projection. 
Consider 
the composition
\[\sigma_k: \Mgb \stackrel{\sigma}{\hookrightarrow} \Aut \pi 
\to \Aut (N_k (\pi))\]
of the Dehn-Nielsen embedding and the map 
induced from $\mathfrak{q}_k$. 
This defines a filtration
\[\Mgb[1]:=\Mgb \supset \Mgb[2] \supset \Mgb[3] \supset \Mgb[4] 
\supset \cdots\]
called the {\it Johnson filtration} 
\index{Johnson filtration!for $\mathcal{M}_{g,1}$} of $\Mgb$ 
by setting $\Mgb[k]:=\Ker \sigma_k$. 
Note that $\Mgb[2]= \Igb$. 
The corresponding filtration for $\Aut F_n$ 
was studied earlier by Andreadakis \cite{andrea} and we here call it 
the {\it Andreadakis filtration} \index{Andreadakis filtration}. 
Since $\pi$ is known to be 
residually nilpotent, namely 
$\dis\bigcap_{k \ge 1}^\infty \Gamma^k (\pi) =\{1\}$, 
we have $\dis\bigcap_{k \ge 1}^\infty \Mgb[k]=\{1\}$. 
The Andreadakis filtration of $\Aut F_n$ has a 
similar property. 

In the above cited paper, Andreadakis constructed an exact sequence 
\[1 \longrightarrow \Hom (H,\Gamma^k (\pi)/\Gamma^{k+1}(\pi)) 
\longrightarrow \Aut (N_{k+1} (\pi))\longrightarrow 
\Aut (N_k (\pi)) \longrightarrow 1,\]
from which we obtain a homomorphism 
\[\tau_k:=\sigma_{k+2}|_{\Mgb[k+1]}: \Mgb[k+1] \longrightarrow 
\Hom (H,\Gamma^{k+1} (\pi)/\Gamma^{k+2}(\pi))\]
with $\Ker \tau_k=\Mgb[k+2]$, 
called the $k$-th {\it Johnson homomorphism} \index{Johnson homomorphism} 
for $k \ge 1$. 
That is, 
the successive quotients of the Johnson filtration are 
described by the Johnson homomorphisms. 
We refer to Johnson's survey \cite{jo4} for his original 
description and 
to the chapters of Morita \cite{morita_survey} and 
Habiro-Massuyeau \cite{habiromassuyeau} for the details of 
these homomorphisms. 
The theory of the Johnson homomorphisms has been studied 
intensively by many researchers and is now highly 
developed (see Morita \cite{morita_beyond} 
for example). 

\begin{remark}\label{rem:trace}
It is known that there exist non-tame automorphisms of 
$N_k (\pi)$. In fact, Bryant-Gupta \cite{bg} showed 
that if $n \ge k-2$, 
$\Aut (N_k (F_n))$ is generated by the tame automorphisms 
and {\it one} non-tame automorphism written explicitly. 
It follows from Andreadakis' exact sequence 
that we may use $\mathrm{Coker}\, \tau_k$ 
to detect the non-tameness. 
Morita \cite{mo} studied $\mathrm{Coker}\, \tau_k$ 
by using his {\it trace  maps} and showed 
they are non-trivial for general $k$. 
\end{remark}

Corresponding to the Johnson filtration, we have 
a crossed homomorphism 
\begin{align*}
r_{\mathfrak{q}_k}&: 
\Mgb \longrightarrow \mathrm{GL}(2g,\Z [N_k(\pi)]),\\
\intertext{whose restriction to $\Mgb[k]$ is a homomorphism}
r_{\mathfrak{q}_k}&: \Mgb[k] \longrightarrow \mathrm{GL}(2g,\Z [N_k(\pi)])
\end{align*}
\noindent
for each $k \ge 2$. 
Note that $r_{\mathfrak{q}_2}=r_\mathfrak{a}$, 
the Magnus representation for $\Igb$. 
\begin{problem}
Determine whether $r_{\mathfrak{q}_k}$ 
is faithful or not for $k \ge 3$. Also, 
determine the image of 
$r_{\mathfrak{q}_k}$ for $k \ge 2$.
\end{problem}
As for the relationship between the Johnson filtration and 
Magnus representations, Morita \cite{mo9} gave a method 
for computing $\tau_{k-1} (\varphi)$ from 
$r_{\mathfrak{q}_k} (\varphi)$ 
for $\varphi \in \Mgb$. For example, we can easily 
calculate $\tau_1 (\varphi)$ from 
$\det (r_{\mathfrak{q}_2} (\varphi))$. 
Note also that Morita's trace maps mentioned above are 
highly related to $\det r_{\mathfrak{q}_2}$. 

Here we pose the converse as a problem. 
\begin{problem}
Describe explicitly how we can reproduce 
$r_{\mathfrak{q}_k}$ from the ``totality'' of 
the Johnson homomorphisms. 
\end{problem}
\noindent
Suzuki \cite{suz} showed that 
$\Mgb[k] \not\subset \Ker r_{\mathfrak{q}_2}$ 
for every $k \ge 2$ by using 
the topological description of $r_{\mathfrak{q}_2}$. 

Another approach to the Johnson homomorphisms using 
the {\it Magnus expansion} is studied by Kawazumi \cite{kawazumi_Magnus} 
(see also the chapters by Kawazumi \cite{kawazumi} and 
Habiro-Massuyeau \cite{habiromassuyeau} in this handbook). 
It would be interesting to compare his construction 
with Magnus representations.

\subsection{Applications to three-dimensional topology}\label{subsec:3-dim}
We close the first part of this survey by briefly mentioning some 
relationships between the Magnus representation $r_{\mathfrak{q}_2}$ 
and three-dimensional topology. 
It also serves as an original model for the results discussed in the 
second part. 

There exist several methods for making a three-dimensional manifold 
from an element of $\Mgb$ such as Heegaard splittings, mapping tori and 
open book decompositions. We here recall the last two. 

For a diffeomorphism $\varphi$ of $\Sgb$ fixing $\partial \Sgb$ pointwise, 
the {\it mapping torus} \index{mapping torus} 
$T_\varphi^\partial$ of $\varphi$ is defined as 
\[T_\varphi^\partial := \Sgb \times [0,1]/((x,1) = (\varphi(x),0)) \quad 
x \in \Sgb.\]
The manifold $T_\varphi^\partial$ is a $\Sgb$-bundle over $S^1$. 
We fill the boundary of $T_\varphi^\partial$ by 
a solid torus $S^1\times D^2$, so that each disk $\{x\} \times D^2$ caps 
a fiber $\Sgb \times \{t\}$. Then we obtain a closed 
3-manifold $T_\varphi$ 
also called the {\it mapping torus} of $\varphi$. 
If we change the attaching of $S^1 \times D^2$ so that 
each disk $\{x\} \times D^2$ caps 
$\{q\} \times S^1 \subset (\partial \Sgb) \times S^1 = 
\partial T_\varphi^\partial$, then we have 
another closed 3-manifold $C_\varphi$ 
called  
the {\it closure} of $\varphi$. 
We also say that $C_\varphi$ has an \index{open book decomposition} 
{\it open book decomposition}. The core $S^1 \times \{(0,0)\}$ 
of the glued solid torus in $C_\varphi$ is 
called the {\it binding} and $\varphi$ is called the {\it monodromy}. 
Note that the above constructions of $T_\varphi^\partial$, $T_\varphi$ and 
$C_\varphi$ depend only on the isotopy class of $\varphi$, so that 
they are well-defined for each element of $\Mgb$. More precisely, 
they depend on the conjugacy class in $\Mgb$. 
From the presentation 
$\pi=\langle \gamma_1, \gamma_2, \ldots \gamma_{2g} \rangle$ of $\pi$, 
we can easily obtain 
\begin{align*}
\pi_1 T_\varphi^\partial &= 
\langle \gamma_1, \gamma_2, \ldots \gamma_{2g}, \lambda \mid 
\gamma_i \lambda \varphi(\gamma_i)^{-1} \lambda^{-1} 
(1\le i \le 2g) \rangle,\\
\pi_1 T_\varphi &= 
\langle \gamma_1, \gamma_2, \ldots \gamma_{2g}, \lambda \mid 
\textstyle\prod_{j=1}^g [\gamma_j,\gamma_{g+j}], 
\gamma_i \lambda \varphi(\gamma_i)^{-1} \lambda^{-1}, 
(1\le i \le 2g) \rangle,\\
\pi_1 C_\varphi &= \langle \gamma_1, \gamma_2, \ldots \gamma_{2g} 
\mid \gamma_i \varphi(\gamma_i)^{-1} (1\le i \le 2g) \rangle, 
\end{align*}
\noindent
where $\lambda$ corresponds to the loop $\{p\} \times S^1$ 
in $T_\varphi^\partial$ and $T_\varphi$. 

The {\it $($multi-variable$)$ Alexander polynomial} $\Delta_G$ 
is an invariant of finitely presentable groups. 
It can be regarded as an invariant of 
compact manifolds by considering their fundamental groups. 
For a finitely presentable group $G$, 
the polynomial $\Delta_G$ is computed from 
the Alexander module 
\[\mathcal{A}^\Z (G):= H_1 (G;\Z [H_1 (G)])\]
by a purely algebraic procedure. We here omit the details and refer 
to Turaev's book \cite{tu2} 
for the definition and its relationship to torsions. 
For a knot group $G(K)$, the polynomial $\Delta_{G(K)}$ with 
$\lambda$ replaced by $t$ 
coincides with 
the Alexander polynomial $\Delta_K (t)$ of $K$ mentioned 
in Example \ref{ex:alexanderpolyn}. 

When $\varphi \in \Igb$, $H=H_1 (\Sgb)$ is naturally embedded in 
$H_1 (T_\varphi^\partial)$, $H_1 (T_\varphi)$ and $H_1 (C_M)$. 
Then we can easily check that the Magnus representation 
$r_{\mathfrak{q}_2} (\varphi)$ 
can be used to describe the multi-variable Alexander polynomials of 
$T_\varphi^\partial$, $T_\varphi$ and $C_\varphi$. For example, 
we have 
\[\Delta_{\pi_1 T_\varphi} \doteq 
\frac{\det (\lambda I_{2g}-\overline{r_{\mathfrak{q}_2} 
(\varphi)})}{(1-\lambda)^2}
\in \Z [H_1 (T_\varphi)] = \Z [H \times \langle \lambda \rangle],\]
where $\doteq$ means that the equality holds up to multiplication by 
monomials. A generalization of this formula was given by 
Kitano-Morifuji-Takasawa \cite{kmt} in their study of 
$L^2$-torsion invariants of mapping tori. 

Another application is given when $C_\varphi=S^3$. In this case, 
we focus on the binding, which gives a knot $K$ in $S^3$, 
of the open book decomposition. Such a $K$ is 
called a \index{fibered knot}{\it fibered knot}. We can check that the 
Alexander polynomial $\Delta_K (t)$ is given by 
\begin{equation}\label{eq:alexander_fibered}
\Delta_K (t) \doteq \det (I_{2g}-t \cdot \sigma_2(\varphi)) = 
\det (I_{2g}-t \cdot r_{\mathfrak{t}} (\varphi)).
\end{equation}
\noindent
Since $H$ collapses to the trivial group in $H_1 (E(K)) \cong \Z$, 
we cannot readily have a formula which generalizes 
(\ref{eq:alexander_fibered}) by 
using $r_{\mathfrak{q}_2}$. 
In Section \ref{subsec:higher-order}, 
we discuss the details about this in a more general situation.

\section{Homology cylinders}\label{sec:HC}
Now we start the second half of our survey. In this section, we introduce 
homology cylinders over a surface 
and give a number of examples. We also describe 
how Johnson homomorphisms are extended to the monoid and group of 
homology cylinders. 

\subsection{Definition and examples}\label{subsec:defHC}
The definition of homology cylinders goes back to  
Goussarov \cite{gou}, Habiro \cite{habiro}, 
Garoufalidis-Levine \cite{gl} and Levine \cite{levine} in their study 
of finite type invariants of 3-manifolds. 
Strictly speaking, the definition below is closer to that in 
\cite{gl} and \cite{levine}. 
Note that homology cylinders are called ``homology cobordisms'' 
in the chapter of Habiro-Massuyeau \cite{habiromassuyeau}, where 
the terminology ``homology cylinders'' is used for a more restricted 
class of 3-manifolds. 

\begin{definition}\label{def:HC}
A {\it homology cylinder\/} \index{homology cylinder} 
{\it over} $\Sigma_{g,n}$ 
consists of a compact oriented 3-manifold $M$ 
with two embeddings $i_{+}, i_{-}: \Sigma_{g,n} \hookrightarrow \partial M$, 
called the {\it markings}, such that:
\begin{enumerate}
\renewcommand{\labelenumi}{(\roman{enumi})}
\item
$i_{+}$ is orientation-preserving and $i_{-}$ is orientation-reversing; 
\item 
$\partial M=i_{+}(\Sigma_{g,n})\cup i_{-}(\Sigma_{g,n})$ and 
$i_{+}(\Sigma_{g,1})\cap i_{-}(\Sigma_{g,1})
=i_{+}(\partial\Sigma_{g,n})=i_{-}(\partial\Sigma_{g,n})$;
\item
$i_{+}|_{\partial \Sigma_{g,n}}=i_{-}|_{\partial \Sigma_{g,n}}$; 
\item
$i_{+},i_{-} : H_{*}(\Sigma_{g,n})\to H_{*}(M)$ 
are isomorphisms, namely 
$M$ is a {\it homology product} over $\Sigma_{g,n}$. 
\end{enumerate}
We denote a homology cylinder by $(M,i_{+},i_{-})$ or simply $M$. 
\end{definition}
Two homology cylinders $(M,i_+,i_-)$ and $(N,j_+,j_-)$ over $\Sg{g}{n}$ 
are said to be {\it isomorphic} if there exists 
an orientation-preserving diffeomorphism $f:M \xrightarrow{\cong} N$ 
satisfying $j_+ = f \circ i_+$ and $j_- = f \circ i_-$. 
We denote by $\Cg{g}{n}$ the set of all isomorphism classes 
of homology cylinders over $\Sg{g}{n}$. 
We define a product operation on $\Cg{g}{n}$ by 
\[(M,i_+,i_-) \cdot (N,j_+,j_-)
:=(M \cup_{i_- \circ (j_+)^{-1}} N, i_+,j_-)\]
for $(M,i_+,i_-)$, $(N,j_+,j_-) \in \Cg{g}{n}$, 
which endows $\Cg{g}{n}$ with a monoid 
\index{monoid of homology cylinders} structure. 
Here the unit is 
$(\Sigma_{g,n} \times [0,1], \mathrm{id} \times 1, \mathrm{id} \times 0)$, 
where collars of $i_+ (\Sigma_{g,n})=(\mathrm{id} \times 1) (\Sigma_{g,n})$ 
and $i_- (\Sigma_{g,n})=(\mathrm{id} \times 0)(\Sg{g}{n})$ 
are stretched half-way along $(\partial \Sigma_{g,n}) \times [0,1]$ 
so that $i_+ (\partial \Sigma_{g,n})=i_- (\partial \Sigma_{g,n})$. 

\begin{example}\label{ex:mgtocg}
For each diffeomorphism $\varphi$ of 
$\Sigma_{g,n}$ which fixes $\partial \Sigma_{g,n}$ pointwise, 
we can construct a homology cylinder by setting 
\[(\Sigma_{g,n} \times [0,1], \mathrm{id} \times 1, 
\varphi \times 0)\]
with the same treatment of the boundary as above. 
It is easily checked that the isomorphism class of 
$(\Sigma_{g,n} \times [0,1], \mathrm{id} \times 1, \varphi \times 0)$ 
depends only on the (boundary fixing) 
isotopy class of $\varphi$ and that 
this construction gives a monoid homomorphism 
from the mapping class group $\mathcal{M}_{g,n}$ 
to $\mathcal{C}_{g,n}$. In fact, it is 
an injective homomorphism 
(see Garoufalidis-Levine \cite[Section 2.4]{gl}, Levine 
\cite[Section 2.1]{levine}, Habiro-Massuyeau's chapter 
\cite[Section2.2]{habiromassuyeau} 
and \cite[Proposition 2.3]{gs08}). 
\end{example}
\noindent
By this example, 
we may regard $\Cg{g}{n}$ as an {\it enlargement} of $\Mg{g}{n}$, 
where the usage of the word ``enlargement'' comes from the title of 
Levine's paper \cite{levine}. In fact, we will see that 
the Johnson homomorphisms and Magnus representations for 
$\Mg{g}{n}$ are naturally extended. 

In \cite{gl}, Garoufalidis-Levine further 
introduced {\it homology cobordisms} of homology cylinders, 
which give an equivalence relation among homology cylinders. 
\begin{definition}
Two homology cylinders $(M,i_+,i_-)$ and $(N,i_+,i_-)$ over 
$\Sigma_{g,n}$ are said to be {\it homology cobordant} 
if there exists a compact oriented smooth 4-manifold $W$ such that: 
\begin{enumerate}
\item $\partial W = M \cup (-N) /(i_+ (x)= j_+(x) , \,
i_- (x)=j_-(x)) \quad x \in \Sigma_{g,n}$; 
\item The inclusions $M \hookrightarrow W$, $N \hookrightarrow W$ 
induce isomorphisms on the integral homology.  
\end{enumerate}
\end{definition}
\noindent
We denote by $\mathcal{H}_{g,n}$ 
the quotient set of $\mathcal{C}_{g,n}$ with respect 
to the equivalence relation of homology cobordism. 
The monoid structure of $\mathcal{C}_{g,n}$ induces 
a group structure of $\mathcal{H}_{g,n}$. 
It is known that $\Mg{g}{n}$ can be embedded in $\Hg{g}{n}$ 
(see Cha-Friedl-Kim \cite[Section 2.4]{cfk}). 
We call $\Hg{g}{n}$ the {\it homology cobordism group} 
\index{homology cobordism group of homology cylinders} of 
homology cylinders. 

\begin{example}
Homology cylinders were originally introduced in the theory of 
clasper (clover) surgery and finite type invariants of 3-manifolds 
due to Goussarov \cite{gou} and 
Habiro \cite{habiro} independently. 
Since 
clasper surgeries do not change the homology of 3-manifolds, 
the theory fits well to the setting of homology cylinders. 
It is known that every homology cylinder is obtained from the 
trivial one by doing some clasper surgery and then changing 
the markings by the mapping class group 
(see Massuyeau-Meilhan \cite{mm}). While clasper surgery 
brings a quite rich structure to $\Cg{g}{n}$, 
here we do not take it up in detail. 
See the chapter of Habiro-Massuyeau \cite{habiromassuyeau} and 
references in it. 

Another approach from the theory of finite type invariants 
to homology cylinders was 
obtained by Andersen-Bene-Meilhan-Penner \cite{abmp}.
\end{example}

The following constructions give us direct methods for obtaining 
homology cylinders whose underlying 3-manifolds are not 
product manifolds. 
\begin{example}\label{ex:sphere}
For each homology 3-sphere $X$, the connected sum 
$((\Sigma_{g,n} \times [0,1]) \# X, \Id \times 1, \Id \times 0)$ 
gives a homology cylinder. It can be checked that 
this correspondence is an injective monoid homomorphism from 
the monoid $\theta_\Z^3$ of all (integral) homology 3-spheres 
whose product is given by connected sum to $\Cg{g}{n}$. 
In fact, it induces isomorphisms 
$\theta_\Z^3 \cong \mathcal{C}_{0,1} \cong \Cg{0}{0}$. 
Moreover, this construction is compatible with homology cobordisms, so 
that we have a homomorphism from the homology cobordism group $\Theta_\Z^3$ to 
$\Cg{g}{n}$, which is also shown to be injective 
(see Cha-Friedl-Kim \cite[Proof of Theorem 1.1]{cfk}). 
It is a challenging problem to extract new information on $\Theta_\Z^3$ 
from the theory of homology cylinders. 
At present, no result has been obtained. 
\end{example}

\begin{example}[Levine \cite{le3}]\label{sgtocg}
A string link is a generalization of a braid defined by Habegger-Lin 
\cite{habe_lin}. 
While we omit here the definition, the difference between 
the two notions is 
clear from Figure \ref{fig:braid-string}. 

\begin{figure}[htbp]
\begin{center}
\includegraphics{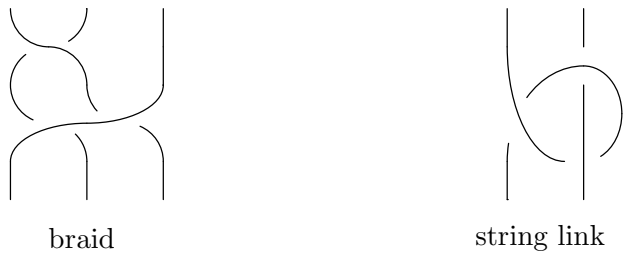}
\end{center}
\caption{Braid and string link}
\label{fig:braid-string}
\end{figure}

From a pure string link $L \subset D^2 \times [0,1]$ with 
$g$ strings, 
we can construct a homology cylinder as follows. 
Recall the embedding $\iota: \Sg{0}{g+1} \hookrightarrow \Sgb$ of 
a $g$ holed disk $\Sg{0}{g+1} \subset D^2$ 
in Section \ref{subsec:magnus_MCG_symp} 
and Figure \ref{fig:ht}. 
Let $C$ be the complement of an open tubular neighborhood of $L$ in 
$D^2 \times [0,1]$. For any choice of a framing of $L$, 
a homeomorphism $h:\partial C \xrightarrow{\cong} 
\partial (\iota(\Sg{0}{g+1}) \times [0,1])$ is fixed. 
(Note that a string link with a framing itself can be regarded as 
a homology cylinder over $\Sg{0}{g+1}$.) 
Then the manifold $M_L$ obtained from $\Sgb \times [0,1]$ 
by removing $\iota(\Sg{0}{g+1}) \times [0,1]$ and regluing 
$C$ by $h$ becomes a homology cylinder with the same marking as 
the trivial homology cylinder. 
This construction can be seen as a generalization of 
the embedding $P_g \hookrightarrow \Mgb$ in 
Section \ref{subsec:magnus_MCG_symp} and it 
gives an injective monoid homomorphism 
from the monoid of pure string links to $\Cgb$. 
Moreover it induces an injective group homomorphism from the 
concordance group of pure string links with $g$ strings to 
$\Hgb$. 

Habegger \cite{habe} gave another construction of homology cylinders 
from pure string links. 
\end{example}

\begin{example}[\cite{gs08,gs10}]\label{ex:HFknot}
Let $K$ be a knot in $S^3$ with a Seifert surface $R$ of genus $g$. 
By cutting open the knot exterior $E(K)$ along $R$, we obtain a 
manifold $M_R$. The boundary $\partial M_R$ is the union of two copies of 
$R$ glued along their boundary circles, which are just the knot $K$. 
The pair $(M_R,K)$ is called 
the {\it complementary sutured manifold} of $R$. 
We can check that the following properties are equivalent to each other: 
\begin{itemize}
\item[(a)] The Alexander polynomial $\Delta_K (t)$ is monic and 
its degree is equal to twice the genus of $g=g(K)$ of $K$; 

\item[(b)] The Seifert matrix $S$ of any minimal genus Seifert surface $R$ 
of $K$ is invertible over $\mathbb{Z}$; 

\item[(c)] The complementary sutured manifold $(M_R,K)$ 
for any minimal genus Seifert surface $R$ 
is a homology product over $R$. 
\end{itemize}
We call a knot having the above properties 
a \index{homologically fibered knot}{\it homologically fibered knot}, 
where the name 
comes from the fact that fibered knots satisfy them. 
Thus if we fix an identification 
of $\Sgb$ with $R$ for a homologically fibered knot, 
we obtain a homology cylinder over $\Sgb$. 
Note that aside from the name, 
the equivalence of the above conditions (a), (b), (c) 
was mentioned in Crowell-Trotter \cite{ct}. 
There exists 
a similar discussion for links. 
\end{example}

We close this subsection by two observations on connections between 
homology cylinders and 
the theory of 3-manifolds. 

First, the constructions of closed 3-manifolds mentioned in 
Section \ref{subsec:3-dim} have their analogue for homology cylinders. 
For example, the {\it closure} $C_M$ of a homology cylinder 
$(M,i_+,i_-) \in \Cgb$ is defined as 
\[C_M := M/(i_+ (x)= i_-(x)) \quad x \in \Sgb.\]
By a topological consideration, we see that 
this construction is the same as gluing 
$\Sgb \times [0,1]$ to $M$ along their boundaries and 
also as the description of Habiro-Massuyeau 
\cite[Definition 2.7]{habiromassuyeau}. 
The closure construction is 
compatible with the homology cobordism relation, 
denoted by H-cob, namely 
we have the following commutative diagram:
\begin{center}
{\small
\vspace{8pt}
\hspace{0pt}
\SelectTips{cm}{}
\xymatrix{
\displaystyle\bigsqcup_{g \ge 0}\, \Cgb 
\ar@{>>}[rr]^{\hspace{-25pt} \mbox{\scriptsize{closing}}} \ar@{>>}[d]& &
\mbox{\{closed 3-manifolds\}} \ar@{>>}[d]\\
\displaystyle\bigsqcup_{g \ge 0}\, \Hgb 
\ar@{>>}[rr]^{\hspace{-40pt} 
\mbox{\scriptsize{closing}}}& & 
\{\mbox{closed 3-manifolds}\}/
\mbox{(H-cob.)}
}}
\end{center}
\noindent
Therefore, roughly speaking, $\Hgb$ might be regarded 
as a group structure on 
the set of homology cobordism classes of closed 3-manifolds. 
We have a similar discussion for clasper surgery equivalence. 

Second, irreducibility of 3-manifolds often plays an important role in 
the theory of 3-manifolds (see Hempel's book \cite{hempel} 
for generalities). Correspondingly, we define: 
\begin{definition}\label{def:irred}
A homology cylinder $(M,i_+,i_-) \in \Cgb$ 
is said to be {\it irreducible} if 
the underlying 3-manifold $M$ is irreducible. We denote by 
$\Cgb^{\mathrm{irr}}$ the subset of $\Cgb$ 
consisting of all irreducible homology cylinders.
\end{definition}
\noindent
By a standard argument using irreducibility, we can show that 
$\Cgb^{\mathrm{irr}}$ is a submonoid of $\Cgb$. 
In particular, $\Cg{0}{0}^{\mathrm{irr}} \cong 
\Cg{0}{1}^{\mathrm{irr}} \cong \{1\}$. 
Irreducible homology cylinders are all Haken manifolds 
since $|H_1 (M)|=\infty$ for 
any $M \in \Cg{g}{n}$ 
unless $(g,n)=(0,0), (0,1)$, 

For every $(M,i_+,i_-) \in \Cgb$, 
the underlying 3-manifold $M$ has a prime decomposition of the form
\[M \cong M_0 \sharp X_1 \sharp X_2 \sharp \cdots \sharp X_n,\]
where $M_0$ is the unique prime factor having $\partial M$ and 
$X_1, X_2, \ldots, X_n$ are homology 3-spheres. 
Note that $(M_0,i_+,i_-) \in \Cgb^{\mathrm{irr}}$. 
Using Myers' theorem \cite[Theorem 3.2]{myers}, we have the following 
description on the homology cobordism group of 
irreducible homology cylinders: 
\begin{proposition}
Every homology cylinder in $\Cgb$ with $g \ge 1$ 
is homology cobordant to 
an irreducible one. That is, 
\[\Cgb^{\mathrm{irr}}/(\mbox{$\mathrm{H}$-$\mathrm{cob}$.}) = \Hgb.\]
\end{proposition}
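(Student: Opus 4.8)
The plan is to exhibit, in every homology cobordism class, a representative whose underlying $3$-manifold is irreducible, and then to note that the asserted equality is merely a reformulation of this. I would first observe that the natural map $\Cgb^{\mathrm{irr}}/(\mbox{$\mathrm{H}$-$\mathrm{cob}$.}) \to \Hgb$ is injective for formal reasons: it is induced by the inclusion of the submonoid $\Cgb^{\mathrm{irr}} \subset \Cgb$ followed by the quotient $\Cgb \to \Hgb$, and homology cobordism between two \emph{irreducible} homology cylinders is the very same relation as homology cobordism between them inside $\Cgb$ (the cobording $4$-manifold is never required to be irreducible). So the whole content is the surjectivity statement, i.e.\ the first sentence of the proposition.

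To prove surjectivity I would take $(M,i_+,i_-) \in \Cgb$ with $g \ge 1$. The prime decomposition $M \cong M_0 \sharp X_1 \sharp \cdots \sharp X_n$ recorded above already yields the irreducible homology cylinder $(M_0,i_+,i_-)$, but the homology-sphere summands $X_i \in \theta_\Z^3$ may be nonzero in $\Theta_\Z^3 \subset \Hgb$, so $(M_0,i_+,i_-)$ is in general \emph{not} homology cobordant to $(M,i_+,i_-)$ and a genuinely new representative is needed. This is exactly what Myers' Theorem~3.2 \cite{myers} provides: applied to the $3$-manifold $M$ --- whose boundary, since $g \ge 1$, is the closed surface of genus $2g \ge 2$ got by gluing two copies of $\Sgb$ along $\partial\Sgb$ --- it gives a compact oriented $4$-manifold $W$ that is a homology cobordism \emph{rel $\partial M$} from $M$ to a hyperbolic, in particular irreducible, $3$-manifold $M'$; thus $\partial M' = \partial M$, $W$ is a product $\partial M \times [0,1]$ near its boundary, and both inclusions $M \hookrightarrow W \hookleftarrow M'$ induce isomorphisms on integral homology. (One could equally run Myers' construction one summand at a time, absorbing each $X_i$ into the irreducible part by surgery on a suitable null-homologous knot; either way the point is that the surgery can be made intricate enough to destroy all essential $2$-spheres while staying homologically invisible, so that the homology cobordism class is preserved.)

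It would then remain to package $W$ as a homology cobordism of homology \emph{cylinders}. Since the cobordism is rel $\partial M$ and $\partial M' = \partial M$, I would keep the same markings, $i'_\pm := i_\pm : \Sgb \hookrightarrow \partial M = \partial M'$, so that conditions (i)--(iii) of Definition~\ref{def:HC} hold automatically. For condition (iv), the composite $H_*(\Sgb) \xrightarrow{(i_\pm)_*} H_*(M) \xrightarrow{\ \cong\ } H_*(W)$ coincides with $H_*(\Sgb) \xrightarrow{(i'_\pm)_*} H_*(M') \xrightarrow{\ \cong\ } H_*(W)$ because the two inclusions into $W$ agree on $\Sgb$; as $(i_\pm)_*$ is an isomorphism onto $H_*(M)$, so is $(i'_\pm)_*$ onto $H_*(M')$, whence $(M',i'_+,i'_-) \in \Cgb^{\mathrm{irr}}$. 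Finally $\partial W = M \cup (-M')$, glued precisely via $i_\pm = i'_\pm$, with both inclusions homology isomorphisms, so $W$ is a homology cobordism from $(M,i_+,i_-)$ to $(M',i'_+,i'_-)$; hence every class of $\Hgb$ has an irreducible representative.

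The single substantial input is Myers' theorem, and that is where I expect the real difficulty to lie: reconciling ``complicated enough to kill every reducing sphere'' with ``homologically invisible'', so that the homology cobordism class, and with it the markings, is untouched. Everything else is routine chasing in the exact sequences of $(W,M)$ and $(W,M')$. The hypothesis $g \ge 1$ enters only to guarantee that $\partial M$ has positive genus --- the regime where Myers' hyperbolic realization applies with the prescribed boundary --- consistently with the degenerate behaviour $\Cg{0}{0}^{\mathrm{irr}} \cong \Cg{0}{1}^{\mathrm{irr}} \cong \{1\}$ recorded above.
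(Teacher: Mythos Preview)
Your proposal is correct and follows exactly the approach the paper indicates: the paper simply cites Myers' theorem \cite[Theorem~3.2]{myers} without further detail, and you have supplied the routine verifications (that the markings transfer, that the resulting triple is again a homology cylinder, and that the rel-boundary cobordism is a homology cobordism of homology cylinders) which the paper leaves implicit. Your observation that the naive candidate $(M_0,i_+,i_-)$ from the prime decomposition does \emph{not} work in general is a useful clarification of why Myers' theorem is genuinely needed.
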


\subsection{Stallings' theorem and the 
Johnson filtration}
\label{subsec:JMhomHC}
From now on, we limit our discussion to the case where $n=1$ as in 
the first part of this chapter. In this subsection, 
we briefly recall how to extend the (reduced versions of) 
Dehn-Nielsen embedding and 
Johnson homomorphisms to homology 
cylinders. 
\begin{conventions}
We use the point $p \in \partial \Sgb$ 
as the common base point of $\Sgb$, $i_+(\Sgb)$, $i_-(\Sgb)$, 
a homology cylinder $M$, etc. 
\end{conventions}

For a given $(M,i_+,i_-) \in \Cgb$, 
two homomorphisms $i_+,i_-:\pi_1 \Sgb \to \pi_1 M$ are not 
generally isomorphisms. 
However, the following holds: 
\begin{theorem}[Stallings \cite{st}] \label{thm:st}
Let $A$ and $B$ be groups and $f:A \rightarrow B$ 
be a $2$-connected homomorphism. Then the induced map 
$f:N_k (A) \longrightarrow N_k (B)$ 
is an isomorphism for each $k \ge 2$.
\end{theorem}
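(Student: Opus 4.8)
The plan is to prove the statement by induction on $k$. For $k=2$ there is nothing to do, since $N_2(G)=G/\Gamma^2(G)=H_1(G;\Z)$, so that $N_2(f)$ is exactly the isomorphism $H_1(A;\Z)\to H_1(B;\Z)$ built into the hypothesis that $f$ is $2$-connected (i.e.\ it induces an isomorphism on $H_1$ and an epimorphism on $H_2$, with $\Z$-coefficients).

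For the inductive step, suppose $N_k(f)\colon N_k(A)\to N_k(B)$ is an isomorphism, and write $V_k(G):=\Gamma^k(G)/\Gamma^{k+1}(G)$. Then $f$ induces a morphism between the central extensions $1\to V_k(A)\to N_{k+1}(A)\to N_k(A)\to 1$ and $1\to V_k(B)\to N_{k+1}(B)\to N_k(B)\to 1$ (centrality holds because $[G,\Gamma^k(G)]=\Gamma^{k+1}(G)$). By the short five lemma for groups it is then enough to prove that the induced map $V_k(f)\colon V_k(A)\to V_k(B)$ is an isomorphism; once this is known, $N_{k+1}(f)$ is an isomorphism and the induction closes.

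To get hold of $V_k$, I would feed the extension $1\to\Gamma^k(G)\to G\to N_k(G)\to 1$ into the five-term exact sequence in integral homology,
\[H_2(G)\longrightarrow H_2(N_k(G))\longrightarrow \Gamma^k(G)/[G,\Gamma^k(G)]\longrightarrow H_1(G)\longrightarrow H_1(N_k(G))\longrightarrow 0.\]
Here $[G,\Gamma^k(G)]=\Gamma^{k+1}(G)$, so the middle term is $V_k(G)$, and the map $H_1(G)\to H_1(N_k(G))$ is an isomorphism for $k\ge 2$ (both sides are $G/\Gamma^2(G)$); hence the arrow out of $V_k(G)$ is zero and we are left with a \emph{natural} right-exact sequence $H_2(G)\to H_2(N_k(G))\to V_k(G)\to 0$. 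Applying naturality to $f$ produces a commutative ladder with exact rows whose middle vertical arrow $H_2(N_k(A))\to H_2(N_k(B))$ is an isomorphism --- it is induced by the group isomorphism $N_k(f)$ --- and whose left vertical arrow $H_2(A)\to H_2(B)$ is surjective. Surjectivity of $V_k(f)$ is then immediate, and injectivity follows by the usual chase: lift a class in $\ker V_k(f)$ to $H_2(N_k(A))$, push it to $H_2(N_k(B))$ where it is the image of some class from $H_2(B)$, pull that class back to $H_2(A)$ using surjectivity on $H_2$, and compare the two lifts in $H_2(N_k(A))$ via injectivity of the middle arrow to see that the original class already maps to $0$ in $V_k(A)$.

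The one genuinely delicate point is this last injectivity of $V_k(f)$: it is the only step that uses that $f$ is surjective (and not merely injective) on $H_2$, and it is exactly what prevents the successive lower central quotients from being detected only modulo the image of $H_2$. Everything else --- the identity $[G,\Gamma^k(G)]=\Gamma^{k+1}(G)$, exactness and naturality of the five-term sequence, and the short five lemma --- is formal.
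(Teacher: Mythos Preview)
Your argument is correct and is essentially Stallings' original proof via the five-term exact sequence and induction on $k$; the diagram chase for injectivity of $V_k(f)$ is carried out cleanly. The paper itself does not reproduce any proof of this theorem but only cites \cite{st}, so there is nothing further to compare.
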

\noindent
Here, a homomorphism $f:A \to B$ is 
said to be {\it $2$-connected} \index{$2$-connected homomorphism} 
if $f$ induces an isomorphism on 
the first homology, and an epimorphism on the second homology. 
In this chapter, the words ``Stallings' theorem'' \index{Stallings' theorem} 
always means Theorem \ref{thm:st}. 
Using the epimorphism (\ref{eq:secondhomology}), 
we can see that two homomorphisms 
$i_+,i_-:\pi = \pi_1 \Sgb \rightarrow \pi_1 M$ 
are both $2$-connected for any 
$(M,i_+,i_-) \in \Cgb$. 
Therefore, they induce isomorphisms on the 
nilpotent quotients of $\pi$ and $\pi_1 M$. 
For each $k \ge 2$, 
we can define a map 
$\sigma_k : \Cgb \rightarrow \Aut (N_k (\pi))$ by 
\[\sigma_k (M,i_+,i_-):= (i_+)^{-1} \circ i_-,\]
which gives a monoid homomorphism. 
It can be checked 
that $\sigma_k (M,i_+,i_-)$ depends only on 
the homology cobordism class of $(M,i_+,i_-)$, 
so that we have a group homomorphism 
$\sigma_k : \Hgb \rightarrow \Aut (N_k (\pi))$. 
The restriction of $\sigma_k$ 
to the subgroup $\Mgb \subset \Cgb$ 
coincides with the homomorphism $\sigma_k$ 
mentioned in Section \ref{subsec:JohnsonMorita}. 
The homomorphisms $\sigma_k$ $(k=2,3,\ldots)$ define 
filtrations 
\begin{align*}
\Cgb[1]&:=\Cgb \supset \Cgb[2] \supset \Cgb[3] \supset \Cgb[4] 
\supset \cdots\\
\Hgb[1]&:=\Hgb \supset \Hgb[2] \supset \Hgb[3] \supset \Hgb[4] 
\supset \cdots
\end{align*}
called the {\it Johnson filtration} 
\index{Johnson filtration!for homology cylinders} 
of $\Cgb$ and $\Hgb$ by 
setting $\Cgb[k]:=\Ker \sigma_k$ and $\Hgb[k]:=\Ker \sigma_k$. 

By definition, the image of 
the homomorphism $\sigma_k$ is included in 
\[\mathrm{Aut}_0 (N_k (\pi)):=
\left\{ \varphi \in \Aut (N_k (\pi)) \biggm| 
\begin{array}{c}
\mbox{There exists a lift 
$\widetilde{\varphi} \in \End \pi$ of $\varphi$}\\
\mbox{satisfying $\widetilde{\varphi}(\zeta) \equiv \zeta 
\bmod \Gamma^{k+1}(\pi)$.} 
\end{array}\right\}. \]
On the other hand, Garoufalidis-Levine 
and Habegger independently showed the following:
\begin{theorem}[Garoufalidis-Levine \cite{gl}, 
Habegger \cite{habe}]
\label{thm:gl} For $k \ge 2$, 
the image of $\sigma_k$ coincides with 
$\mathrm{Aut}_0 (N_k (\pi))$. 
\end{theorem}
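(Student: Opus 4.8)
The plan is to prove Theorem~\ref{thm:gl} by establishing the two inclusions separately, the inclusion $\mathrm{Im}\,\sigma_k \subset \mathrm{Aut}_0(N_k(\pi))$ being essentially already noted above, so that the real content is surjectivity onto $\mathrm{Aut}_0(N_k(\pi))$. First I would recall the easy direction in a little more detail: for $(M,i_+,i_-) \in \Cgb$, the marking $i_+$ sends the boundary loop $\zeta$ of $\Sgb$ to a fixed element of $\pi_1 M$ (the actual boundary of $M$), and since $i_-$ sends $\zeta$ to the \emph{same} curve in $\partial M$, the composite $(i_+)^{-1}\circ i_-$ on $N_k(\pi)$ lifts (through $i_+^{-1}$ applied to the representative in $\pi_1 M$, which one can pull back to an endomorphism of $\pi$ using that $\pi$ is free) to an endomorphism of $\pi$ fixing $\zeta$ modulo $\Gamma^{k+1}(\pi)$; hence $\sigma_k(M,i_+,i_-) \in \mathrm{Aut}_0(N_k(\pi))$.

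For the surjectivity I would argue by an explicit construction of a homology cylinder realizing a given $\varphi \in \mathrm{Aut}_0(N_k(\pi))$. Choose a lift $\widetilde{\varphi} \in \End\pi$ with $\widetilde{\varphi}(\zeta) \equiv \zeta \bmod \Gamma^{k+1}(\pi)$. The natural candidate is a mapping-torus-type or more precisely a ``relative'' construction: one wants a 3-manifold $M$ with two embeddings of $\Sgb$ in its boundary so that the resulting gluing data on fundamental groups realizes $\widetilde{\varphi}$ on $N_k$. I would take $M$ to be obtained from the trivial cylinder $\Sgb \times [0,1]$ by a suitable surgery (clasper/clover surgery in the spirit of Goussarov--Habiro, or equivalently a sequence of Borromean-type surgeries, cf.\ Example~\ref{ex:mgtocg} and the surrounding discussion), chosen so that $i_-$ is altered by exactly the prescribed amount on the $k$-th nilpotent quotient. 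The key input making this legitimate is Stallings' theorem (Theorem~\ref{thm:st}): once we produce a compact oriented $M$ with markings $i_+, i_-$ for which $i_+, i_-: \pi \to \pi_1 M$ are $2$-connected and the homology conditions (iv) of Definition~\ref{def:HC} hold, the maps automatically become isomorphisms on $N_k$, and $\sigma_k(M) = (i_+)^{-1}\circ i_-$ is a well-defined automorphism of $N_k(\pi)$ equal to $\varphi$ by construction.

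The technical heart is therefore the realization step: given $\widetilde{\varphi}$, build the 3-manifold and verify that (a) it is a homology product over $\Sgb$ (so that $i_\pm$ are $2$-connected, using the epimorphism $H_2(M) \twoheadrightarrow H_2(\pi_1 M)$ of \eqref{eq:secondhomology} together with the homology-isomorphism hypothesis to get $2$-connectivity), and (b) the induced automorphism on $N_k(\pi)$ is precisely $\varphi$ and not merely something congruent to it in some coarser sense. For (b) one proceeds by downward induction on the lower central series: an element $\psi$ of $\mathrm{Aut}_0(N_k(\pi))$ that is trivial on $N_{k-1}(\pi)$ is controlled by a homomorphism $H \to \Gamma^{k-1}(\pi)/\Gamma^k(\pi)$ (the Johnson-homomorphism picture from Section~\ref{subsec:JMhomHC}), and one realizes such ``elementary'' pieces by standard surgeries on embedded graph claspers of the appropriate grading, composing these to hit the general $\varphi$. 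The condition $\widetilde{\varphi}(\zeta) \equiv \zeta \bmod \Gamma^{k+1}$ is exactly what guarantees the surgery can be performed supported away from $\partial M$, i.e.\ rel boundary, so that the marking condition (iii) is preserved. I expect the main obstacle to be a clean bookkeeping of this inductive surgery realization --- matching the algebraic degree of the correction term with the clasper degree, and checking rel-boundary control --- rather than any conceptual difficulty; alternatively, one may invoke Habegger's string-link model (Example~\ref{sgtocg}) and the known surjectivity of the corresponding algebraic maps for string links to shortcut part of this argument.
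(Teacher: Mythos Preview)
The paper does not supply its own proof of this theorem: it is stated as a cited result of Garoufalidis--Levine and Habegger, with the inclusion $\mathrm{Im}\,\sigma_k \subset \mathrm{Aut}_0(N_k(\pi))$ recorded just before the statement as holding ``by definition'', and the surjectivity direction deferred entirely to \cite{gl,habe}. The only further hint in the paper is the remark after Theorem~\ref{thm:autsp} that the acyclic-closure analogue is proved by adapting the construction of \cite[Theorem~3]{gl}, and the observation in Section~\ref{subsec:observation} on how $\pi_1 M$ arises by adjoining generators and acyclic relations to $\pi$.

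Your outline is reasonable but differs in emphasis from the Garoufalidis--Levine argument. For the easy direction your phrasing is slightly loose: $i_+:\pi\to\pi_1 M$ is not invertible on $\pi$, only on nilpotent quotients, so the lift $\widetilde\varphi\in\End\pi$ is obtained by first passing to $N_{k+1}$ (where $i_+$ \emph{is} an isomorphism by Stallings) and then lifting freely back to $\pi$; the boundary condition $i_+(\zeta)=i_-(\zeta)$ in $\pi_1 M$ then gives $\widetilde\varphi(\zeta)\equiv\zeta\bmod\Gamma^{k+1}(\pi)$. For surjectivity, your inductive clasper-surgery scheme (realize the Johnson-level correction step by step) is closer to the Goussarov--Habiro/Habegger viewpoint and does work, but it imports the nontrivial fact that every element of the relevant Johnson image is realized by a tree-clasper surgery. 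Garoufalidis--Levine proceed more directly, in the spirit of Section~\ref{subsec:observation}: given a lift $\widetilde\varphi$, they build $M$ in one stroke by attaching $1$- and $2$-handles to $\Sgb\times[0,1]$ along words encoding $\widetilde\varphi$, and then check the homology-product condition; the hypothesis $\widetilde\varphi(\zeta)\equiv\zeta\bmod\Gamma^{k+1}(\pi)$ is what allows the two markings to agree along $\partial\Sgb$. Either route is valid; the direct handle construction avoids the induction and the appeal to clasper realization results.
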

\noindent
As seen in Section \ref{subsec:JohnsonMorita}, 
the $k$-th Johnson homomorphism is obtained by restricting 
$\sigma_{k+2}$ to $\Cgb[k+1]$ and $\Hgb[k+1]$. 
Garoufalidis-Levine \cite[Proposition 2.5]{gl} showed that 
Andreadakis' exact sequence in Section 
\ref{subsec:JohnsonMorita} (with $k$ shifted) 
restricts to the exact sequence 
\[1 \longrightarrow \mathfrak{h}_{g,1} (k) 
\longrightarrow \mathrm{Aut}_0 (N_{k+2} (\pi)) 
\longrightarrow \mathrm{Aut}_0 (N_{k+1} (\pi))
\longrightarrow 1.\]
Here $\mathfrak{h}_{g,1} (k) \subset 
\Hom (H,\Gamma^{k+1} (\pi)/\Gamma^{k+2} (\pi))$ is 
defined as the kernel of the composition 
\begin{align*}
\Hom (H,\Gamma^{k+1} (\pi)/\Gamma^{k+2} (\pi)) 
&\cong H^\ast \otimes (\Gamma^{k+1} (\pi)/\Gamma^{k+2} (\pi))\\ 
&\cong H \otimes (\Gamma^{k+1} (\pi)/\Gamma^{k+2} (\pi))\\
&= (\Gamma^1 (\pi)/\Gamma^2 (\pi)) 
\otimes (\Gamma^{k+1} (\pi)/\Gamma^{k+2} (\pi)) \\
&\to \Gamma^{k+2} (\pi)/\Gamma^{k+3} (\pi),
\end{align*}
\noindent
where we used the Poincar\'e duality $H^\ast =H$ in the second row 
and the last map is obtained by taking commutators. 
\begin{corollary}
The $k$-th Johnson homomorphisms 
$\tau_k: \Cgb[k+1] \to \mathfrak{h}_{g,1} (k)$ and 
$\tau_k: \Hgb[k+1] \to \mathfrak{h}_{g,1} (k)$ are 
surjective for any $k \ge 1$. 
\end{corollary}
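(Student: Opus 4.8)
The plan is to deduce surjectivity directly from Theorem \ref{thm:gl} together with the restricted short exact sequence of Garoufalidis--Levine displayed just above. Recall that by definition $\tau_k$ is the restriction of $\sigma_{k+2}$ to $\Cgb[k+1]=\Ker \sigma_{k+1}$ (and likewise on $\Hgb[k+1]$), and that the map $\mathrm{Aut}_0 (N_{k+2}(\pi)) \to \mathrm{Aut}_0 (N_{k+1}(\pi))$ occurring in that sequence is the one induced by the projection $N_{k+2}(\pi) \to N_{k+1}(\pi)$; hence it is compatible with $\sigma_{k+2}$ and $\sigma_{k+1}$ in the sense that the evident square commutes.

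First I would fix $k \ge 1$, so that $k+2 \ge 3 \ge 2$ and Theorem \ref{thm:gl} applies: $\sigma_{k+2}: \Cgb \twoheadrightarrow \mathrm{Aut}_0 (N_{k+2}(\pi))$ is surjective. Take an arbitrary $\xi \in \mathfrak{h}_{g,1}(k)$, viewed as an element of $\mathrm{Aut}_0 (N_{k+2}(\pi))$ via the inclusion in the exact sequence, and choose $(M,i_+,i_-) \in \Cgb$ with $\sigma_{k+2}(M,i_+,i_-)=\xi$. By commutativity of the square, $\sigma_{k+1}(M,i_+,i_-)$ equals the image of $\xi$ under $\mathrm{Aut}_0 (N_{k+2}(\pi)) \to \mathrm{Aut}_0 (N_{k+1}(\pi))$, which is trivial precisely because $\xi$ lies in the kernel $\mathfrak{h}_{g,1}(k)$ of that map. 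Hence $(M,i_+,i_-) \in \Ker \sigma_{k+1}=\Cgb[k+1]$, and $\tau_k(M,i_+,i_-)=\sigma_{k+2}(M,i_+,i_-)=\xi$. This gives surjectivity of $\tau_k$ on $\Cgb[k+1]$. For the $\Hgb$-statement the same one-line diagram chase applies verbatim with $\Cgb$ replaced by $\Hgb$, since $\sigma_{k+2}$ factors through the quotient $\Cgb \twoheadrightarrow \Hgb$ recalled in Section \ref{subsec:JMhomHC} and therefore $\sigma_{k+2}: \Hgb \twoheadrightarrow \mathrm{Aut}_0 (N_{k+2}(\pi))$ is again surjective; alternatively, the surjection $\Cgb[k+1] \to \mathfrak{h}_{g,1}(k)$ just produced descends along $\Cgb[k+1] \twoheadrightarrow \Hgb[k+1]$.

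I do not expect a genuine obstacle here: the real content is entirely contained in Theorem \ref{thm:gl} and in Garoufalidis--Levine's verification that the Andreadakis exact sequence of Section \ref{subsec:JohnsonMorita} restricts to the displayed short exact sequence, both of which we are entitled to assume. The only point that needs a little care is bookkeeping --- confirming that the target $\mathfrak{h}_{g,1}(k)$ of $\tau_k$ is, under the identification used to define the Johnson homomorphism, literally the kernel of $\mathrm{Aut}_0 (N_{k+2}(\pi)) \to \mathrm{Aut}_0 (N_{k+1}(\pi))$, so that ``the image of $\sigma_{k+2}$ restricted to $\Ker \sigma_{k+1}$'' and ``the kernel of the restriction map'' name the same subgroup. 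Once that identification is pinned down, surjectivity is immediate.
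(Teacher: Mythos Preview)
Your proof is correct and is exactly the intended argument: the paper states the result as an unproved corollary of Theorem~\ref{thm:gl} together with the Garoufalidis--Levine short exact sequence, and your diagram chase makes that implicit deduction explicit. The bookkeeping point you flag about identifying $\mathfrak{h}_{g,1}(k)$ with the kernel of $\mathrm{Aut}_0(N_{k+2}(\pi)) \to \mathrm{Aut}_0(N_{k+1}(\pi))$ is precisely the content of the displayed exact sequence, so there is nothing further to verify.
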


Recall that in the case of 
the mapping class group, all the 
$\sigma_k:\Mgb \to \Aut (N_k (\pi))$ for $k \ge 2$ 
are induced from a single homomorphism 
$\sigma:\Mgb \to \Aut \pi$. 
Then we pose the following question: 
Does there exist a homomorphism $\Hgb \to \Aut \, G$ for 
some group $G$ which induces $\sigma_k:\Hgb \to \Aut (N_k(\pi))$ 
for all $k \ge 2$? 
One of the answers is to use the map 
$\sigma^{\mathrm{nil}}:\Hgb \to \Aut (\Nil{\pi})$ obtained 
by combining the homomorphisms $\sigma_k$ for all $k \ge 2$, 
where $\Nil{\pi}:=
\varprojlim_k N_k(\pi)$ is 
the nilpotent completion of $\pi$. 
In fact, it was shown by Bousfield \cite{bo} that 
$N_k(G) \cong N_k (\Nil{G})$ holds for any finitely generated 
group $G$ and hence we have a natural homomorphism 
$\Aut (\Nil{G}) \to \Aut (N_k(G))$. 
However, $\Nil{G}$ is in general enormous and difficult to treat. 
In Section \ref{sec:universal}, 
we introduce  the {\it acyclic closure} 
(or {\it HE-closure}) 
$\AC{G}$ of a group $G$ as a reasonable extension and apply it 
to our situation.

\section{Magnus representations for homology cylinders I}
\label{sec:universal}
In this section, we extend the (universal) Magnus representation $r$ to 
$\Cgb$ and $\Hgb$. The construction is based on 
Le Dimet's argument \cite{ld} for the extension of the 
Gassner representation for string links. 
However what we present here is its generalized version: 
We construct our extended representations as crossed homomorphisms 
and use more general (not necessarily commutative) rings. 

In the construction, there are two key ingredients: 
the acyclic closure of a group $G$ 
and the Cohn localization $\Lambda_G$ of $\Z [G]$. 
The former is used to give 
a generalization of the Dehn-Nielsen theorem with no reduction 
and then we construct the (universal) 
Magnus representation $r$ for $\Cgb$ and $\Hgb$ 
with the aid of the latter. 

\subsection{Observation on fundamental groups of homology cylinders}
\label{subsec:observation}
The definition of the acyclic closure of a group is given 
purely in terms of group theory, whose relationship to 
topology seems to be unclear at first glance. 
Here we digress and give 
an observation to see the background. 

For a homology cylinder $(M,i_+,i_-) \in \Cgb$, 
if we could have a natural assignment of an automorphism of $\pi$, 
there would be no problem. 
However, it seems in general difficult (maybe impossible) to do so because 
$\pi_1 M$ can be ``bigger'' than $\pi_1 \Sgb$:  

\medskip
\hspace{100pt}
\SelectTips{cm}{}
\xymatrix{
\pi_1 \Sgb \ar@/^4mm/[r]^{i_+} \ar@/_4mm/[r]_{i_-} & 
\pi_1 M \ar@{.>}[l]|\times
}

\medskip
\noindent
The observation we now start is intended to give an ``estimation'' 
of how big $\pi_1 M$ can be. 

The usual handle decomposition theory and Morse theory say that 
$M$, a homology cobordism over $\Sgb$, 
is obtained from $\Sgb \times [0,1]$ by attaching 
a number of $1$-handles $h_1^1, h_2^1, \ldots, h_m^1$ and 
the same number of $2$-handles $h_1^2, h_2^2, \ldots, h_m^2$ to 
$\Sgb \times \{1\}$. 
Then $\pi_1 M$ can be written as 
\[\pi_1 M \cong \frac{\pi \ast \langle x_1,x_2,\ldots,x_m \rangle}
{\langle r_1, r_2,\ldots, r_m\rangle},\]
where $x_i$ corresponds to attaching $h_i^1$ and 
$r_j$ to $h_j^2$, and 
$\pi \ast \langle x_1,x_2,\ldots,x_m \rangle$ denotes the free product 
of $\pi$ and $\langle x_1,x_2,\ldots,x_m \rangle$. 
Put $F_m=\langle x_1,x_2,\ldots,x_m \rangle$. 
The condition $H_\ast (M,i_-(\Sgb))=0$ implies that 
the image of $\{r_1, r_2, \ldots, r_m\}$ under the map 
\[\pi \ast F_m \xrightarrow{\mathrm{proj.}} F_m 
\longrightarrow H_1 (F_m)\]
forms a basis of $H_1 (F_m) \cong \Z^m$. 
Repeating Tietze transformations, 
we can rewrite the above presentation into one of the form
\[\pi_1 M \cong \frac{\pi \ast F_m}
{\langle x_1 v_1^{-1}, x_2 v_2^{-1}, \ldots, x_m v_m^{-1}\rangle}\]
with $v_j \in \Ker (\pi \ast F_m \xrightarrow{\mathrm{proj.}} F_m 
\longrightarrow H_1 (F_m))$ for $j=1,2,\ldots,m$. 

On the other hand, given a group of the form 
\begin{equation}\label{eq:pi1}
G=\frac{\pi \ast F_m}
{\langle x_1 w_1^{-1}, x_2 w_2^{-1}, \ldots, x_m w_m^{-1}\rangle}
\end{equation}
\noindent
with $w_j \in \Ker (\pi \ast F_m \xrightarrow{\mathrm{proj.}} F_m 
\to H_1 (F_m))$ for $j=1,2,\ldots,m$, 
we can construct a cobordism $W$ over $\Sgb \times [0,1]$ 
with $\pi_1 W \cong G$ by 
attaching ($4$-dimensional) 1-handles 
$h_1^1, h_2^1, \ldots, h_m^1$ and 
$2$-handles $h_1^2, h_2^2, \ldots, h_m^2$ to 
$(\Sgb \times [0,1]) \times \{1\} \subset 
(\Sgb \times [0,1]) \times [0,1]$ according to 
the words $x_i w_i^{-1}$. 
We denote by $M$ the opposite side of 
$(\Sgb \times [0,1]) \times \{0\}$ in $\partial W$, namely 
$\partial W = (\Sgb \times [0,1]) \cup (-M)$. 
By construction, the manifold 
$M$ with the same markings as 
$(\Sgb \times [0,1], \Id \times 1, \Id \times 0)$ 
defines a homology cylinder in $\Cgb$ and $W$ is 
a homology cobordism between $M$ and $\Sgb \times [0,1]$. 
By the duality of handle decompositions, the cobordism 
$W$ is also obtained from $M \times [0,1]$ by attaching 
$2$-handles and $3$-handles. Therefore we have a surjective 
homomorphism $\pi_1 M \twoheadrightarrow 
\pi_1 W \cong G$. That is, roughly speaking, 
$\pi_1 M$ is ``bigger'' than $G$. (In higher-dimensional cases 
discussed in Section \ref{subsec:higher}, 
we have an isomorphism $\pi_1 M \cong G$.) Consequently, we have: 
\begin{proposition}
The fundamental group $\pi_1 M$ of $(M,i_+,i_-) \in \Cgb$ 
can be written in 
the form $(\ref{eq:pi1})$ for some $m$ 
with $w_j$'s in $\Ker (\pi \ast F_m \xrightarrow{\mathrm{proj.}} F_m 
\to H_1 (F_m))$. 
Conversely, for any group $G$ having 
such a form, there exists a homology cylinder 
$(M,i_+,i_-) \in \Cgb$ such that 
$\pi_1 M$ surjects onto $G$. 
\end{proposition}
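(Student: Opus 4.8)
The plan is to turn the handle-theoretic discussion preceding the statement into an honest proof, treating the two implications separately.

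For the forward implication I would begin with a handle decomposition of the $3$-dimensional cobordism $M$ over $\Sgb$. Since $M$ is a homology product over $\Sgb$, a standard cancellation argument in Morse theory lets me assume that $M$ is obtained from $\Sgb \times [0,1]$ by attaching, to $\Sgb \times \{1\}$, only $1$-handles and $2$-handles, and that the numbers of $1$- and $2$-handles are equal (the relative homology $H_\ast(M, i_-(\Sgb))$ vanishes, so the chain complex generated by the handles has equal ranks in the two relevant degrees). Writing $F_m = \langle x_1, \ldots, x_m\rangle$ for the generators coming from the $1$-handles, van Kampen gives $\pi_1 M \cong (\pi \ast F_m)/\langle r_1, \ldots, r_m\rangle$, where $r_j$ is the attaching word of the $j$-th $2$-handle. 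The vanishing of $H_\ast(M, i_-(\Sgb))$ says, after abelianizing and killing the image of $\pi$, that the images of $r_1, \ldots, r_m$ form a basis of $H_1(F_m) \cong \Z^m$. A sequence of Tietze transformations — replacing the free generators and relators by suitable products — then brings the presentation into the shape $(\ref{eq:pi1})$, with each relator of the form $x_j v_j^{-1}$ and $v_j$ lying in $\Ker(\pi \ast F_m \to F_m \to H_1(F_m))$.

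For the converse, given a group $G$ presented as in $(\ref{eq:pi1})$, I would build a $4$-dimensional cobordism $W$ from $\Sgb \times [0,1]$ to an as-yet-unspecified $3$-manifold $M$: attach $4$-dimensional $1$-handles $h_1^1, \ldots, h_m^1$ and $2$-handles $h_1^2, \ldots, h_m^2$ to $(\Sgb \times [0,1]) \times \{1\}$ inside $(\Sgb \times [0,1]) \times [0,1]$, gluing the $2$-handles along framed curves that realize the words $x_i w_i^{-1}$ (any framing will do). By construction $\pi_1 W \cong G$, and because the $w_i$'s lie in the stated kernel, a Mayer--Vietoris computation shows $H_\ast(W, \Sgb \times [0,1]) = 0$; hence the opposite boundary piece $M$ (so that $\partial W = (\Sgb \times [0,1]) \cup (-M)$) inherits the product markings of $\Sgb \times [0,1]$ and defines a homology cylinder in $\Cgb$. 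Finally, reading $W$ from the $M$-side, handle duality presents $W$ as $M \times [0,1]$ with only $2$- and $3$-handles attached; since no $0$- or $1$-handles appear, the inclusion $M \hookrightarrow W$ induces a surjection $\pi_1 M \twoheadrightarrow \pi_1 W \cong G$, as required.

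The main obstacle I expect is bookkeeping rather than conceptual depth. In the forward direction one must verify carefully that "homology product over $\Sgb$" is exactly what forces the equal count of $1$- and $2$-handles and the basis condition on the relators, and that the Tietze transformations keep the new generators inside the $F_m$ factor and the new relators in the prescribed form. In the converse one must check that the kernel condition on the $w_i$'s is precisely what makes $H_\ast(W, \Sgb \times [0,1]) = 0$, so that $M$ is again a homology cylinder with the standard markings, and that the dual handle decomposition genuinely has no handles of index $0$ or $1$ so that the $\pi_1$-surjection holds.
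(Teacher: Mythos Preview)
Your proposal is correct and follows essentially the same route as the paper: the forward direction via a handle decomposition with equal numbers of $1$- and $2$-handles, the basis condition from $H_\ast(M,i_-(\Sgb))=0$, and Tietze moves; the converse via the $4$-dimensional cobordism $W$ built from $(\Sgb\times[0,1])\times[0,1]$ with $1$- and $2$-handles realizing the words $x_i w_i^{-1}$, then handle duality to obtain the surjection $\pi_1 M\twoheadrightarrow\pi_1 W\cong G$. The obstacles you flag are exactly the bookkeeping points the paper glosses over.
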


\subsection{The acyclic closure of a group}
\label{subsec:acyclic}

The concept of the acyclic closure \index{acyclic closure} 
(or HE-closure in \cite{le2}) 
of a group was defined as a variation of the algebraic closure of a group 
by Levine \cite{le1,le2}. Topologically, 
the algebraic (acyclic) closure of a group $G$ can be 
obtained as the fundamental group of (a variation of) the 
{\it Vogel localization} of any CW-complex $X$ with $\pi_1 X \cong G$ 
(see Le Dimet's book \cite{ld0}). 
We summarize here the 
definition and fundamental properties. 
We also refer to Hillman's book \cite{hi} and Cha's paper \cite{cha}. 
The proofs of the propositions in this subsection 
are almost the same as those for the algebraic closures in \cite{le1}. 

\begin{definition}
Let $G$ be a group, and let $F_m=\langle x_1, x_2, \ldots, x_m 
\rangle$ be a free group of rank $m$. \\
(i) $w=w(x_1,x_2, \ldots, x_m) \in G \ast F_m$, 
a word in $x_1, x_2, \ldots, x_m$ and elements of $G$, 
is said to be  {\it acyclic} if 
\[w \in \Ker \left( G \ast F_m \xrightarrow{\mathrm{proj}} F_m 
\longrightarrow H_1 (F_m) \right).\]
(ii) Consider the following ``equation'' with variables 
$x_1,x_2,\ldots,x_m$:
\[
\left\{\begin{array}{ccl}
x_1 & = & w_1(x_1, x_2, \ldots, x_m)\\
x_2 & = & w_2(x_1, x_2, \ldots, x_m)\\
& \vdots & \\
x_m & = & w_m(x_1, x_2, \ldots, x_m)
\end{array}\right. .
\]
When all words $w_1,w_2,\ldots,w_m \in G \ast F_m$ are 
acyclic, we call such an equation an {\it acyclic system} 
over $G$.\\
(iii) A group $G$ is said to be {\it acyclically closed} 
(AC, for short) if 
every acyclic system over $G$ with $m$ variables has 
a unique ``solution'' in $G$ for any $m \ge 0$, where 
a ``solution'' means a homomorphism $\varphi$ that makes 
the following diagram commutative:

\medskip
\hspace{70pt}
\SelectTips{cm}{}
\xymatrix{
G \ar[drr]^{\Id} \ar[d]& & \\
\displaystyle\frac{G \ast F_m}{\langle x_1 w_1^{-1},\ldots, 
x_m w_m^{-1}\rangle} \ar[rr]_-\varphi & & G
}
\end{definition}
\noindent
\begin{example}
Let $G$ be an abelian group. For $g_1,g_2,g_3 \in G$, 
consider the equation
\[\left\{\begin{array}{l}
x_1=g_1 x_1 g_2 x_2 x_1^{-1} x_2^{-1}\\
x_2=x_1 g_3 x_1^{-1}
\end{array}\right. ,
\]
which is an acyclic system. Then we have a unique solution 
$x_1=g_1 g_2 , \, x_2=g_3$.
\end{example}
\noindent
As we can expect from this example, all abelian groups are AC. 
Moreover, all nilpotent groups and the nilpotent completion 
of a group are AC, which can be deduced from the following 
fundamental properties of AC-groups: 
\begin{proposition}[{\cite[Proposition 1]{le1}}] 
${\rm (a)}$ Let $\{ G_{\alpha} \}_\alpha$ be a family of 
AC-subgroups of an AC-group $G$. Then $\bigcap_\alpha G_\alpha$ is 
also an AC-subgroup of $G$.\\
${\rm (b)}$ Let $\{ G_{\alpha} \}_\alpha$ be a family of AC-groups. 
Then $\prod_\alpha G_\alpha$ is also an AC-group.\\
${\rm (c)}$ When $G$ is a central extension of $H$, then 
$G$ is an AC-group if and only if $H$ is an AC-group.\\
${\rm (d)}$ For any direct system $($resp. inverse system$)$ 
of AC-groups, the direct limit $($resp. inverse limit$)$ is 
also an AC-group.
\end{proposition}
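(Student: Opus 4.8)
The plan is to prove all four parts by a single device: given an acyclic system over the ``assembled'' group, push it forward to (or pull it back into) the constituent AC-groups, where unique solutions are available, reassemble a candidate solution, and then invoke uniqueness to conclude that it is the only one. The one routine ingredient used everywhere is the observation that a homomorphism $\Phi : A \ast F_m \to B \ast F_m$ that restricts to the identity on $F_m$ — such as the map induced by a group homomorphism $A \to B$ or by an inclusion $A \hookrightarrow B$ — carries acyclic words to acyclic words, since acyclicity of $w \in A \ast F_m$ is detected solely by the vanishing in $H_1(F_m) \cong \Z^m$ of the tuple of exponent sums of the $x_j$ in $w$, and $\Phi$ leaves this tuple unchanged. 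Recall also that a ``solution'' of the system $x_i = w_i(x_1,\ldots,x_m)$ over a group $K$ is nothing but a tuple $(k_1,\ldots,k_m) \in K^m$ with $k_i = w_i(k_1,\ldots,k_m)$, and that $K$ being AC means such a tuple exists and is unique for every acyclic system over $K$.

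For (b), given an acyclic system over $\prod_\alpha G_\alpha$, project it through each $G_\alpha$ to an acyclic system over $G_\alpha$, take its unique solution $(g_i^\alpha)_i \in G_\alpha^m$, and set $g_i := (g_i^\alpha)_\alpha$; a coordinatewise verification shows $(g_i)$ solves the original system, and any solution projects in each coordinate to the unique factor solution, hence is determined. For (a), put $K = \bigcap_\alpha G_\alpha$ and view a given acyclic system over $K$ both over the AC-group $G$ — producing a unique solution $(g_i) \in G^m$ — and over each AC-group $G_\alpha$, the words lying in $K \ast F_m \subset G_\alpha \ast F_m$; the resulting solution in $G_\alpha^m$ is again a solution over $G$, so by uniqueness in $G$ it coincides with $(g_i)$, forcing $g_i \in G_\alpha$ for all $\alpha$, i.e.\ $(g_i) \in K^m$, and uniqueness over $K$ is inherited from that over $G$. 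For (d), a direct system $G = \varinjlim G_\alpha$ is handled by remarking that the finitely many coefficients from $G$ appearing in an acyclic system over $G$ all lift to some single stage $G_\beta$, yielding an acyclic system over $G_\beta$ whose unique solution maps down to a solution over $G$; uniqueness holds because any two solutions over $G$, together with the finitely many equations they satisfy, already live over some common stage where uniqueness applies. An inverse system $G = \varprojlim G_\alpha$ is handled as in the product case, with the additional (and immediate) remark that the unique solutions at the various stages are compatible under the bonding maps and so assemble to an element of $G$.

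Part (c) is the step requiring a genuine idea and is where I expect the main difficulty to lie. The crux is the lemma that acyclic words are insensitive to central modifications of the variables: if $w \in G \ast F_m$ is acyclic, $g_1,\ldots,g_m \in G$, and $z_1,\ldots,z_m$ lie in the centre of $G$, then $w(g_1 z_1,\ldots,g_m z_m) = w(g_1,\ldots,g_m)$, because pulling the central factors $z_j^{\pm 1}$ out of the word produces $w(g_1,\ldots,g_m)\cdot\prod_j z_j^{e_j}$ with $e_j$ the exponent sum of $x_j$ in $w$, and $e_j = 0$ by acyclicity. Granting this, suppose $1 \to Z \to G \to H \to 1$ is central. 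If $G$ is AC, an acyclic system over $H$ lifts (choose lifts of the coefficients) to an acyclic system over $G$ whose unique solution projects to a solution over $H$, and conversely any $H$-solution lifts to a $G$-solution — correct an arbitrary lift by the central ``defect'', which is possible precisely by the lemma — so the $H$-solution is unique by uniqueness over $G$. If $H$ is AC, given an acyclic system over $G$ we project it to $H$, take the unique solution $(h_i)$, lift it arbitrarily to $(g_i^0)$, note that $w_i(g^0)$ and $g_i^0$ differ by a central element $z_i$, and conclude by the lemma that $(g_i^0 z_i)$ solves the system over $G$; any $G$-solution projects to $(h_i)$ and, compared with $(g_i^0 z_i)$ via the lemma, is forced to equal it. With (c) in hand, the standard consequences quoted afterwards — that nilpotent groups and nilpotent completions are AC — follow by combining (b), (c) and (d) with the trivial observation that abelian groups are AC.
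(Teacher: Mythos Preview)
The paper does not supply its own proof of this proposition: it is quoted from Levine \cite{le1}, with the blanket remark preceding it that ``the proofs of the propositions in this subsection are almost the same as those for the algebraic closures in \cite{le1}''. So there is nothing in the paper to compare against beyond that citation.

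Your argument is correct and is the standard one. The bookkeeping for (a), (b) and (d) --- pushing an acyclic system through a homomorphism, solving in the constituent AC-groups, and using uniqueness to force compatibility or coincidence --- is exactly how these proofs go, and your opening observation that acyclicity depends only on the exponent-sum vector of the variables (hence is preserved under any coefficient-group homomorphism) is the right engine. For (c), your central-insensitivity lemma, $w(g_1 z_1,\ldots,g_m z_m)=w(g_1,\ldots,g_m)$ for acyclic $w$ and central $z_j$, is precisely the device Levine uses; your two-way argument (lift a system, solve, project; conversely lift a solution and correct by the central defect $z_i=(g_i^0)^{-1}\tilde w_i(g^0)$) is clean and complete. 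One small point worth making explicit in (d) for direct limits: when you lift the finitely many coefficients to a stage $G_\beta$, the resulting words are again acyclic for the same exponent-sum reason, so the AC hypothesis on $G_\beta$ applies; you use this implicitly but it is the only place one might pause.
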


Let us define the acyclic closure of a group. 
\begin{proposition}[{\cite[Proposition 3]{le1}}]
For any group $G$, there exists a pair of 
a group $G^{\mathrm{acy}}$ and 
a homomorphism $\iota_G:G \rightarrow G^{\mathrm{acy}}$ 
satisfying the following properties:
\begin{enumerate}
\item $G^{\mathrm{acy}}$ is an AC-group.
\item Let $f:G \rightarrow A$ be a homomorphism 
and suppose that $A$ is an AC-group. 
Then there exists a unique homomorphism 
$f^{\mathrm{acy}}:G^{\mathrm{acy}} \rightarrow A$ 
which satisfies $f^{\mathrm{acy}} \circ \iota_G = f$. 
\end{enumerate}
\noindent
Moreover such a pair is unique up to isomorphism.
\end{proposition}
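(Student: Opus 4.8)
The plan is to realize $\AC{G}$ as a small sub-object of a large product of AC-groups, exactly in the spirit of the classical construction of algebraically closed groups, using only the closure properties of the class of AC-groups listed above. Uniqueness of the pair $(\AC{G},\iota_G)$ will then be formal: the universal property exhibits it as an initial object in the category whose objects are homomorphisms from $G$ to AC-groups and whose morphisms are the obvious commuting triangles, and initial objects are unique up to unique isomorphism. So the whole content is existence.

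The preliminary I would establish first is a cardinality bound. For a subset $S$ of an AC-group $A$, call the intersection of all AC-subgroups of $A$ containing $S$ the \emph{AC-subgroup generated over $S$}; by property~(a) above this intersection is again an AC-subgroup, hence the smallest one containing $S$. I would show it can be built by transfinite iteration: put $A_0=\langle S\rangle$, let $A_{\alpha+1}$ be generated by $A_\alpha$ together with the solutions \emph{in $A$} of all acyclic systems over $A_\alpha$, and take unions at limit ordinals. Because an acyclic system involves only finitely many elements of its base group, at a limit ordinal $\lambda$ every acyclic system over $\bigcup_{\alpha<\lambda}A_\alpha$ already lives over some $A_\alpha$; hence the chain stabilizes, and the stable value $A_\infty$ is closed under solving acyclic systems. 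Crucially, since $A$ is AC the relevant solutions are \emph{unique} in $A$, hence unique in $A_\infty$, so $A_\infty$ is itself AC and equals the AC-subgroup generated over $S$. Since at most $\max(|A_\alpha|,\aleph_0)$ acyclic systems live over $A_\alpha$, each contributing finitely many new generators, one gets $|A_\infty|\le\kappa:=\max(|S|,\aleph_0)$. In particular, for any homomorphism $f\colon G\to A$ into an AC-group, the AC-subgroup of $A$ generated over $f(G)$ has cardinality at most $\kappa_0:=\max(|G|,\aleph_0)$, a cardinal depending only on $G$. The one point needing genuine care here --- and the step I regard as the main obstacle --- is the set-theoretic bookkeeping guaranteeing this bound is independent of $A$, which is exactly where the finitary nature of acyclic systems and the uniqueness of solutions in AC-groups are indispensable.

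Granting the bound, I would finish with a reflection argument. Let $\mathcal{S}$ be a set of representatives for the isomorphism classes of pairs $(B,h\colon G\to B)$ with $B$ an AC-group of cardinality at most $\kappa_0$ (this is a set, by the bound), form $P=\prod_{(B,h)\in\mathcal{S}}B$, which is AC by property~(b) above, and let $d\colon G\to P$ be the homomorphism with components the $h$. Define $\AC{G}$ to be the AC-subgroup of $P$ generated over $d(G)$; it is AC by the discussion above, giving property~(1), and I take $\iota_G$ to be the corestriction of $d$. For the universal property, given $f\colon G\to A$ with $A$ an AC-group, I replace $A$ by the AC-subgroup generated over $f(G)$; by the bound this has cardinality at most $\kappa_0$, so $(A,f)$ is isomorphic to a member of $\mathcal{S}$, and the corresponding projection $P\to A$ restricts to $f^{\mathrm{acy}}\colon\AC{G}\to A$ with $f^{\mathrm{acy}}\circ\iota_G=f$ by construction of $d$. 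Uniqueness of $f^{\mathrm{acy}}$ is then immediate: by construction $\AC{G}$ is generated over $\iota_G(G)$ by iteratively solving acyclic systems, so any homomorphism extending $f$ is forced stage by stage --- the image of the unique solution of an acyclic system over an already-determined subgroup of $\AC{G}$ must be the unique solution of the transported system in the AC-group $A$. This yields the pair $(\AC{G},\iota_G)$ with all the required properties.
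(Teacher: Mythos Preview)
Your argument is correct. The paper does not give its own proof but cites Levine~\cite{le1}, whose construction proceeds differently: rather than realizing $\AC{G}$ from above as an AC-subgroup of a large product, Levine builds it from below as a direct limit. One forms $P_{k+1}$ from $P_k$ (starting with $P_0=G$) by freely adjoining a formal solution to each acyclic system over $P_k$ --- that is, by passing to quotients of the form $(P_k\ast F_m)/\langle x_i w_i^{-1}\rangle$ --- and sets $\AC{G}=\varinjlim_k P_k$; the AC property and universality are then verified directly. Your reflection argument is cleaner and leans entirely on the closure properties~(a)--(d), but Levine's approach yields the additional structural statement recorded here as Proposition~\ref{prop:seq}: when $G$ is finitely presentable so is each $P_k$, and each map $G\to P_k$ is $2$-connected. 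That extra information is genuinely used later (see the sketch of Lemma~\ref{lem:key}(2), where one reduces from $\AC{F_n}$ to the finitely presentable $P_k$'s and commutes homology with the direct limit). Your internal filtration $A_0\subset A_1\subset\cdots$ of $\AC{G}$ is not a substitute: the $A_n$ live as subgroups of a huge product rather than as intrinsically presented groups, and there is no reason they should be finitely presentable even when $G$ is.
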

\begin{definition}
We call $\iota_G$ (or $G^{\mathrm{acy}}$) obtained above 
the {\it acyclic closure} of $G$.
\end{definition}
\noindent
Taking the acyclic closure of a group is functorial, namely, 
for each group homomorphism $f:G_1 \to G_2$, we have 
the induced homomorphism $f^{\mathrm{acy}}:G_1^{\mathrm{acy}} \to 
G_2^{\mathrm{acy}}$ by applying the universal property of 
$G_1^{\mathrm{acy}}$ to the homomorphism 
$\iota_{G_2} \circ f$, and the composition of homomorphisms 
induces that of the corresponding homomorphisms 
on acyclic closures. 

The most important properties of the acyclic closure 
are the following: 
\begin{proposition}[{\cite[Proposition 4]{le1}}]\label{prop:2cn}
For every group $G$, the acyclic closure 
$\iota_G:G \rightarrow G^{\mathrm{acy}}$ is $2$-connected.
\end{proposition}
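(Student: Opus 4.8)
The plan is to trace the transfinite construction of $G^{\mathrm{acy}}$ recalled above (from \cite{le1}) and observe that $2$-connectedness is created at each elementary step and survives both composition and passage to the limit. Recall that $G^{\mathrm{acy}}$ is obtained as a colimit of a chain $G=G_0\to G_1\to\cdots\to G_\alpha\to\cdots$ of groups in which each successor $G_{\alpha+1}$ arises from $G_\alpha$ by adjoining a solution to an acyclic system over $G_\alpha$ (adjoining solutions to a whole family of systems may be arranged one system at a time by well-ordering the family), and in which one forms direct limits at limit ordinals. It therefore suffices to check three things: (i) the composite of two $2$-connected homomorphisms is $2$-connected; (ii) if $G\to G_\alpha$ is $2$-connected for every $\alpha$ in such a chain with $G_0=G$, then $G\to\varinjlim_\alpha G_\alpha$ is $2$-connected; and (iii) the elementary step ``adjoin a solution to one acyclic system'' is $2$-connected. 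Items (i) and (ii) are formal: on $H_1$ one is composing, resp.\ taking a colimit of, isomorphisms; on $H_2$ a composite of epimorphisms is an epimorphism, and since integral group homology commutes with directed colimits (e.g.\ via the bar complex), an element of $H_2(\varinjlim_\alpha G_\alpha)=\varinjlim_\alpha H_2(G_\alpha)$ is represented at some finite stage and is hit there.

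The heart of the matter is (iii). Fix $F_m=\langle x_1,\dots,x_m\rangle$, acyclic words $w_1,\dots,w_m\in G\ast F_m$, put
\[
G' := \frac{G\ast F_m}{\langle x_1w_1^{-1},\dots,x_mw_m^{-1}\rangle},
\]
and let $\iota\colon G\to G'$ be the canonical map. For $H_1$ I would argue directly: $H_1(G\ast F_m)\cong H_1(G)\oplus\Z^m$, and because each $w_i$ is acyclic its image in the $\Z^m$-summand is $0$, so $x_iw_i^{-1}$ has image $(-a_i,e_i)$ with $e_i$ the $i$-th basis vector of $\Z^m$ and $a_i\in H_1(G)$ the class of $w_i$. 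The subgroup generated by the $(-a_i,e_i)$ is a graph subgroup, and projection onto $H_1(G)$ identifies the quotient with $H_1(G)$ in a way compatible with $\iota$; hence $H_1(\iota)$ is an isomorphism. For $H_2$ I would pass to a CW-model: take $X$ a $K(G,1)$, build $Y$ from $X$ by wedging on $m$ circles for $x_1,\dots,x_m$ and attaching $m$ two-cells along the words $x_iw_i^{-1}$, so that $\pi_1Y=G'$ and $X\hookrightarrow Y$ realizes $\iota$. The relative cellular complex $C_\ast(Y)/C_\ast(X)$ is free of rank $m$ in degrees $1$ and $2$ and zero elsewhere, and $\partial_2$ sends the $i$-th two-cell to the vector of $x_j$-exponent sums in $x_iw_i^{-1}$, which is exactly $e_i$, again by acyclicity of $w_i$. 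So $\partial_2$ is an isomorphism, $H_\ast(Y,X)=0$, and the inclusion induces an isomorphism $H_\ast(G)=H_\ast(X)\xrightarrow{\cong}H_\ast(Y)$; composing with the canonical epimorphism $H_2(Y)\twoheadrightarrow H_2(\pi_1Y)=H_2(G')$ from (\ref{eq:secondhomology}) shows that $H_2(\iota)$ is surjective.

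Granting (i)--(iii), the proof closes by transfinite induction: $G\to G_0$ is trivially $2$-connected; successors inherit it from (iii) and (i); limit stages inherit it from (ii); and since $G^{\mathrm{acy}}$ is one of the $G_\alpha$ (the stage at which the chain stabilizes), the map $\iota_G\colon G\to G^{\mathrm{acy}}$ is $2$-connected. I expect the only real obstacle to lie inside step (iii), and there specifically in the $H_2$-claim: one must set up the CW-model and the attaching words carefully enough to be sure the relative $\partial_2$ is precisely the matrix of $x$-exponent sums, after which the definition of ``acyclic'' does all the work. A purely algebraic substitute for (iii) is available via the five-term homology exact sequence of the extension $1\to N\to G\ast F_m\to G'\to 1$ (with $N$ the normal closure of the $x_iw_i^{-1}$) together with $H_\ast(G\ast F_m)\cong H_\ast(G)\oplus H_\ast(F_m)$, but the CW-model makes the role of acyclicity the most transparent.
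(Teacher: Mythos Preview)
The paper does not actually supply a proof of this proposition; it simply quotes it from Levine \cite[Proposition~4]{le1} and remarks that ``the proofs of the propositions in this subsection are almost the same as those for the algebraic closures in \cite{le1}.'' So there is no in-paper argument to compare against.

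Your argument is correct and is precisely the kind of proof Levine gives. The construction of $G^{\mathrm{acy}}$ is indeed a transfinite iteration of elementary steps of the form $G_\alpha\to G_{\alpha+1}=(G_\alpha\ast F_m)/\langle x_iw_i^{-1}\rangle$, and your verification of (iii) via the CW-model is the clean way to see $2$-connectedness of each such step: acyclicity of the $w_i$ forces the relative $\partial_2$ to be the identity on $\Z^m$, so $H_\ast(Y,X)=0$. Your items (i) and (ii) are formal, as you say. This is also consistent with what little the paper does record about the construction: Proposition~\ref{prop:seq} asserts, for finitely presentable $G$, a countable chain $G=P_0\to P_1\to\cdots$ with each $G\to P_k$ $2$-connected and $G^{\mathrm{acy}}=\varinjlim P_k$, which is exactly the shape of your inductive scheme specialized to that case.

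One small point worth making explicit when you write this up: the acyclic closure must also enforce \emph{uniqueness} of solutions, not just existence, so in general the chain may include quotient steps (identifying two pre-existing solutions of the same system). Such a step is a surjection $G_\alpha\twoheadrightarrow G_{\alpha+1}$ whose kernel is normally generated by elements of the form $g_ih_i^{-1}$ with $g_i,h_i$ both solving the same acyclic equation $x_i=w_i$; in particular $g_ih_i^{-1}\in[G_\alpha,G_\alpha]$ (again by acyclicity), so the map is an isomorphism on $H_1$ and, being surjective on $\pi_1$, is automatically surjective on $H_2$. Thus these steps are also $2$-connected and slot into your induction without change.
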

\begin{proposition}[{\cite[Proposition 5]{le1}}]\label{prop:isom}
Let $G$ be a finitely generated group and $H$ 
a finitely presentable group. For each 
$2$-connected homomorphism 
$f:G \rightarrow H$, the induced homomorphism 
$f^{\mathrm{acy}}:G^{\mathrm{acy}} 
\to H^{\mathrm{acy}}$ on acyclic closures is an isomorphism. 
\end{proposition}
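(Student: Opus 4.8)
The plan is to reduce the assertion to a presentation statement for $H$ and then to deduce that $f^{\mathrm{acy}}$ is an isomorphism from the universal property of the acyclic closure alone. The key input I would establish first is the following: since $f$ is $2$-connected, $G$ finitely generated and $H$ finitely presentable, the group $H$ admits a presentation over $G$, compatible with $f$, of the \emph{efficient} form
\[
H \;\cong\; \frac{G \ast F_m}{\langle\langle\, x_1 w_1^{-1},\, x_2 w_2^{-1},\, \ldots,\, x_m w_m^{-1}\,\rangle\rangle},
\]
where $F_m=\langle x_1,\ldots,x_m\rangle$, the composite $G\hookrightarrow G\ast F_m \to H$ equals $f$, and each $w_j\in G\ast F_m$ is acyclic, i.e.\ $w_j\in\Ker\bigl(G\ast F_m\xrightarrow{\mathrm{proj}}F_m\to H_1(F_m)\bigr)$. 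To obtain this I would choose a finite generating set of $H$, write $H\cong (G\ast F_p)/K$ with the evident map, and observe — just as in the handle decomposition discussion of Section~\ref{subsec:observation} — that finite presentability of $H$ makes $K$ finitely normally generated, even though $G$ itself need not be finitely presentable: the finitely many generators of $G$ can be expressed by words in the generators of $H$, and after substituting these, the relations of $G$ become consequences of the defining relators of $H$, so $K$ is the normal closure of those defining relators together with the finitely many substitution words. One then performs Tietze transformations: the hypothesis that $H_1(f)$ is an isomorphism is precisely what lets the relators be put, after correction by elements of $G$, into the acyclic shape $x_jw_j^{-1}$ with exactly one relator per new generator, and the hypothesis that $H_2(f)$ is onto is precisely what lets any surplus relator be cancelled against a freshly introduced generator. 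This last manoeuvre is a group-theoretic handle cancellation, entirely parallel to the Morse-theoretic picture behind Section~\ref{subsec:observation}; I expect this bookkeeping to be the main obstacle.

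Granting the lemma, the remainder is formal. Because $G^{\mathrm{acy}}$ is an AC-group and $\iota_G$ carries acyclic words to acyclic words, the acyclic system $x_j=w_j(x_1,\ldots,x_m)$ read over $G^{\mathrm{acy}}$ has a unique solution $\bar x_1,\ldots,\bar x_m\in G^{\mathrm{acy}}$; sending $g\mapsto\iota_G(g)$ and $x_j\mapsto\bar x_j$ annihilates every relator $x_jw_j^{-1}$, hence descends to a homomorphism $\phi\colon H\to G^{\mathrm{acy}}$ with $\phi\circ f=\iota_G$, which by the universal property of $H^{\mathrm{acy}}$ extends uniquely to $\phi^{\mathrm{acy}}\colon H^{\mathrm{acy}}\to G^{\mathrm{acy}}$ with $\phi^{\mathrm{acy}}\circ\iota_H=\phi$. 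I would then check that $\phi^{\mathrm{acy}}$ is a two-sided inverse of $f^{\mathrm{acy}}$. Precomposing $\phi^{\mathrm{acy}}\circ f^{\mathrm{acy}}$ with $\iota_G$ gives $\phi^{\mathrm{acy}}\circ\iota_H\circ f=\phi\circ f=\iota_G$, so by uniqueness in the universal property of $G^{\mathrm{acy}}$ this composite is the identity (in particular $f^{\mathrm{acy}}$ is injective). For $f^{\mathrm{acy}}\circ\phi^{\mathrm{acy}}$ it suffices, again by the universal property of $H^{\mathrm{acy}}$, to verify $f^{\mathrm{acy}}\circ\phi=\iota_H$ on the generating set $f(G)\cup\{x_1,\ldots,x_m\}$ of $H$: on $f(G)$ this is $f^{\mathrm{acy}}\circ\iota_G=\iota_H\circ f$, while on the $x_j$ both tuples $(f^{\mathrm{acy}}(\bar x_j))_j$ and $(\iota_H(x_j))_j$ solve one and the same acyclic system $y_j=w_j(y_1,\ldots,y_m)$ over $H^{\mathrm{acy}}$ — the first since $f^{\mathrm{acy}}$ transports the defining equations of the $\bar x_j$, the second since the relations $x_j=w_j$ hold in $H$ — so they coincide by AC-ness of $H^{\mathrm{acy}}$. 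Hence both composites are identities and $f^{\mathrm{acy}}$ is an isomorphism.

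A purely nilpotent approach, via Stallings' theorem (Theorem~\ref{thm:st}) and the identification $N_k(G)\cong N_k(G^{\mathrm{acy}})$, would only show that $f^{\mathrm{acy}}$ is an isomorphism on every nilpotent quotient, which does not suffice to conclude; the efficient presentation lemma seems to be the point where finite presentability of $H$ and finite generation of $G$ must genuinely enter, and it is also what distinguishes this statement from Proposition~\ref{prop:2cn} (mere $2$-connectedness of $\iota_G$).
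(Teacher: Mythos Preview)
Your argument is correct and is exactly Levine's approach in \cite{le1}, which is all the paper invokes here: it states the proposition with the citation and remarks that the proofs ``are almost the same as those for the algebraic closures in \cite{le1}'' without spelling anything out. The efficient-presentation lemma you isolate (rewriting $H$ as $(G\ast F_m)/\langle\langle x_jw_j^{-1}\rangle\rangle$ with acyclic $w_j$, using $2$-connectedness of $f$ and finite presentability of $H$) followed by the universal-property inversion is precisely the structure of Levine's proof, so there is nothing to add.
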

\noindent
From Proposition \ref{prop:2cn} and Stallings' theorem, the 
nilpotent quotients of a group and 
those of its acyclic closure are isomorphic. Note that 
the homomorphism $\iota_G$ is not necessarily injective: 
consider a perfect group $G$ and the 2-connected homomorphism 
$G \to \{1\}$. As for a free group $F_m$, 
its residual nilpotency shows 
that $\iota_{F_m}$ is injective. 

\begin{proposition}[{\cite[Proposition 6]{le1}}]\label{prop:seq}
For any finitely presentable group $G$, 
there exists a sequence of finitely presentable groups 
and homomorphisms
\[G=P_0 \rightarrow P_1 \rightarrow P_2 
\rightarrow \cdots \rightarrow P_k \rightarrow P_{k+1} 
\rightarrow \cdots \] 
satisfying the following properties:
\begin{enumerate}
\item $G^{\mathrm{acy}}= 
\displaystyle\varinjlim_k P_k$, and 
$\iota_G : G \rightarrow G^{\mathrm{acy}}$ 
coincides with the limit map of the 
above sequence. 
\item $G \rightarrow P_k$ is a $2$-connected homomorphism 
for any $k \ge 1$. 
\end{enumerate}
\end{proposition}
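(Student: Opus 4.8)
The plan is to refine the iterative construction of $G^{\mathrm{acy}}$ from the proof of the existence statement (\cite[Proposition 3]{le1}) into a single countable tower of finitely presentable groups. That construction builds $G^{\mathrm{acy}}$ out of $G$ by repeatedly applying two kinds of \emph{elementary moves}: given an acyclic system $\{x_i=w_i\}_{i=1}^m$ over the current group $P$, either pass to $(P\ast F_m)/\langle x_i w_i^{-1}\rangle$, adjoining a formal solution, or, given two solutions $(a_i)$ and $(b_i)$ of such a system in $P$, pass to $P/\langle\langle a_i b_i^{-1}\rangle\rangle$, identifying them. The first thing I would record is that each elementary move, applied to a finitely presented group, again produces one (finitely many generators and relations are added), and -- this is the homological point -- each elementary move is $2$-connected. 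Acyclicity means the total exponent of every $x_j$ in every $w_i$ is zero, so the class of $w_i(c_1,\dots,c_m)$ in $H_1(P)$ is independent of the tuple $(c_i)$; this forces every new generator, and every difference $\overline{a_i b_i^{-1}}$, to vanish in $H_1$, so $H_1$ is preserved, and surjectivity on $H_2$ follows from the five-term exact sequence of the relevant quotient together with the same vanishing -- exactly as in the proof that $\iota_G$ is $2$-connected (Proposition \ref{prop:2cn}).

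Next I would set up the bookkeeping. Since $G$ is finitely generated, every group appearing in the construction is countable, so there are only countably many acyclic systems over the various intermediate groups and countably many pairs of solutions of them; enumerating these data and processing them one at a time by a diagonal scheme -- so that a datum living over $P_j$ is processed only at some later stage, and so that every acyclic system receives a solution and every pair of solutions is eventually identified -- yields a tower
\[G=P_0\longrightarrow P_1\longrightarrow P_2\longrightarrow\cdots\]
in which each arrow is a $2$-connected elementary move between finitely presentable groups. It follows at once that $G\to P_k$ is $2$-connected for every $k$ (a composite of maps inducing isomorphisms on $H_1$ and epimorphisms on $H_2$ is again such), and, since homology commutes with direct limits, that $G\to\varinjlim_k P_k$ is $2$-connected; this is assertion (2).

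For assertion (1) I would identify $L:=\varinjlim_k P_k$ with $G^{\mathrm{acy}}$. First, $L$ is acyclically closed: an acyclic system over $L$ involves only finitely many elements, so it descends to some $P_j$ and is therefore given a solution at a later stage, and any two solutions over $L$ likewise descend to a common $P_j$ and are identified at a later stage. Second, $G\to L$ is universal among homomorphisms of $G$ into AC-groups: given an AC-group $A$ and $f\colon G\to A$, one extends $f$ up the tower step by step, the extension over an adjunction step being forced by the unique solution in $A$ of the image acyclic system and an identification step causing no obstruction because the two solutions already agree in $A$; the colimit is then the unique factorization $L\to A$ through $G\to L$. Hence $(L,G\to L)$ and $(G^{\mathrm{acy}},\iota_G)$ are both initial objects of the category of AC-groups under $G$, so they are canonically isomorphic and the limit map is identified with $\iota_G$.

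The hard part will be the bookkeeping fused with the homological verification of the identification moves: one must be sure that \emph{every} pair of solutions of \emph{every} acyclic system over \emph{every} $P_k$ is eventually identified, so that $L$ is genuinely acyclically closed rather than merely possessing solutions, and that these identification moves really do remain $2$-connected, which hinges on the precise interaction between acyclicity and the five-term exact sequence. This is exactly the delicate point already present in \cite[Proposition 3]{le1} for the mere existence of $G^{\mathrm{acy}}$, and it must now be carried out while keeping every group in the tower finitely presentable; I would follow Levine's treatment of algebraic closures in \cite{le1} here.
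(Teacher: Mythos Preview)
The paper does not supply its own proof of this proposition: it is stated as a citation from Levine \cite[Proposition~6]{le1}, with the blanket remark that ``the proofs of the propositions in this subsection are almost the same as those for the algebraic closures in \cite{le1}.'' So there is nothing in the paper to compare your argument against beyond that reference.

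That said, your reconstruction is the right one and matches Levine's approach. One point worth making explicit, since you flag it as the delicate part: the identification move $P\to P/N$ with $N=\langle\!\langle a_ib_i^{-1}\rangle\!\rangle$ really is $2$-connected, and the reason is sharper than the five-term sequence alone. Writing $a_ib_i^{-1}=w_i(a)w_i(b)^{-1}$ and telescoping the substitution $a\rightsquigarrow b$ one variable occurrence at a time, one gets in $N/[P,N]$ the identity $[a_ib_i^{-1}]=\sum_j e_j(w_i)\,[a_jb_j^{-1}]$, where $e_j(w_i)$ is the total $x_j$-exponent of $w_i$; acyclicity says every $e_j(w_i)=0$, so each generator of $N/[P,N]$ vanishes and $N/[P,N]=0$. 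The five-term sequence then gives surjectivity on $H_2$ immediately. With this in hand, your bookkeeping and colimit arguments go through as written.
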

\noindent
From this proposition, we see, in particular, that the acyclic 
closure of a finitely presentable group is a countable set.

\subsection{Dehn-Nielsen type theorem}
Now we return to our discussion on 
homology cylinders. 
For each homology cylinder $(M,i_+,i_-) \in \Cgb$, 
we have a commutative diagram
\[\begin{CD}
\pi @>i_->> \pi_1 M @<i_+<< \pi\\ 
@V\iota_{\pi}VV @V\iota_{\pi_1 M}VV @VV\iota_{\pi}V \\
\AC{\pi} @>i_-^{\mathrm{acy}}>\cong> (\pi_1 M)^{\mathrm{acy}} 
@<i_+^{\mathrm{acy}}<\cong< \AC{\pi} 
\end{CD}\]
by Proposition \ref{prop:isom}. 
From this, we obtain a monoid homomorphism defined by
\[\sigma^{\mathrm{acy}}:\Cgb \longrightarrow \Aut (\AC{\pi}) \qquad 
\left( \, (M,i_+,i_-) \mapsto 
(i_+^{\mathrm{acy}})^{-1} \circ i_-^{\mathrm{acy}} \, \right)\]
and it induces a group homomorphism 
$\sigma^{\mathrm{acy}}:\Hgb \rightarrow \Aut (\AC{\pi})$. 

Here we describe a generalization of the Dehn-Nielsen theorem. 
Recall that $\zeta \in \pi \subset \AC{\pi}$ 
is a word corresponding to the boundary loop of $\Sgb$. 
\begin{theorem}[{\cite[Theorem 6.1]{sa2}}]\label{thm:autsp}
The image of $\sigma^{\mathrm{acy}}:\Hgb \to \Aut (\AC{\pi})$ is 
\[\mathrm{Aut}_0 (\AC{\pi}) :=
\{ \varphi \in \Aut (\AC{\pi}) \mid
\varphi(\zeta) = \zeta \in \AC{\pi} \}.\]
\end{theorem}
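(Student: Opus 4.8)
The statement is an equality of two subsets of $\Aut(\AC{\pi})$, and the plan is to prove the two inclusions separately. The inclusion into $\mathrm{Aut}_0(\AC{\pi})$ is immediate: for $(M,i_+,i_-)\in\Cgb$, condition (iii) of Definition \ref{def:HC} forces $i_+(\zeta)=i_-(\zeta)$ in $\pi_1 M$ (both are the boundary loop of $M$ based at $p$), so by functoriality of the acyclic closure $i_+^{\mathrm{acy}}(\zeta)=i_-^{\mathrm{acy}}(\zeta)$, and therefore $\sigma^{\mathrm{acy}}(M)(\zeta)=(i_+^{\mathrm{acy}})^{-1}i_-^{\mathrm{acy}}(\zeta)=\zeta$, recalling that $\zeta\in\AC{\pi}$ means $\iota_\pi(\zeta)$.

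For the reverse inclusion, fix $\varphi\in\mathrm{Aut}_0(\AC{\pi})$; we must produce a homology cylinder realizing it. The first (soft) step is to approximate $\varphi$ by a map to a finitely presentable group. By Proposition \ref{prop:seq}, write $\AC{\pi}=\varinjlim_k P_k$ with each $P_k$ finitely presentable and each structure map $\alpha_k\colon\pi\to P_k$ $2$-connected ($P_0=\pi$, $\alpha_0=\mathrm{id}$), and let $q_k\colon P_k\to\AC{\pi}$ be the colimit map, so $q_k\alpha_k=\iota_\pi$. As $\pi$ is finitely generated, $\varphi\circ\iota_\pi$ carries a basis of $\pi$ into the image of some $q_N$, and choosing preimages defines a homomorphism $\psi\colon\pi\to P:=P_N$ with $q_N\psi=\varphi\iota_\pi$ (any choice works since $\pi$ is free); since $\varphi(\zeta)=\zeta=\iota_\pi(\zeta)$ and $\AC{\pi}$ is a direct limit, after enlarging $N$ we may also arrange $\psi(\zeta)=\alpha_N(\zeta)$ in $P$. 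Put $\alpha:=\alpha_N$. Applying $(-)^{\mathrm{acy}}$ to $q_N\alpha=\iota_\pi$ and invoking Proposition \ref{prop:isom} shows $\alpha^{\mathrm{acy}}$ is an isomorphism with inverse $q_N^{\mathrm{acy}}$; and since $(\varphi\iota_\pi)^{\mathrm{acy}}=\varphi$ by the uniqueness clause of the universal property of $\AC{\pi}$, one gets $q_N^{\mathrm{acy}}\psi^{\mathrm{acy}}=\varphi$, in particular $\psi^{\mathrm{acy}}$ is an isomorphism. Consequently $\psi$ is $2$-connected: it is an $H_1$-isomorphism (as $\psi^{\mathrm{acy}}$ is one and $\iota$ induces isomorphisms on $H_1$), and it is trivially onto on $H_2$ since $H_2(\pi)=0$ forces $H_2(P)=0$ through the $2$-connected $\alpha$.

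The main step is the topological realization: build a homology cylinder $(M,i_+,i_-)$ over $\Sgb$ together with an epimorphism $\rho\colon\pi_1 M\twoheadrightarrow P$ with $\rho\circ(i_+)_\ast=\alpha$ and $\rho\circ(i_-)_\ast=\psi$. Here I would follow the handle description of Section \ref{subsec:observation}: $\alpha$ and $\psi$ being $2$-connected, each presents $P$ in the form $(\pi\ast F_m)/\langle x_jw_j^{-1}\rangle$ with the $w_j$ acyclic; starting from $\Sgb\times[-1,1]$ and attaching $1$- and $2$-handles near the middle slice according to such a presentation --- arranged so that the upper boundary copy of $\Sgb$ reads off the relators through $\alpha$ and the lower copy through $\psi$, the two being glued coherently along $\partial\Sgb$ precisely because $\alpha(\zeta)=\psi(\zeta)$ --- yields $M$. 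As in Section \ref{subsec:observation}, $2$-connectedness together with the $2$-dimensionality of the presentation data makes $M$ a homology product over $\Sgb$, so $(M,i_+,i_-)\in\Cgb$. Finally, $\pi\to\pi_1 M$ is $2$-connected for any homology cylinder by \eqref{eq:secondhomology} (cf.\ Section \ref{subsec:JMhomHC}) and $\pi\to P$ is $2$-connected, so $\rho$ is $2$-connected; hence $i_+^{\mathrm{acy}}$ and $\rho^{\mathrm{acy}}$ are isomorphisms (Proposition \ref{prop:isom}), and $\sigma^{\mathrm{acy}}(M)=(i_+^{\mathrm{acy}})^{-1}i_-^{\mathrm{acy}}=(\rho^{\mathrm{acy}}i_+^{\mathrm{acy}})^{-1}(\rho^{\mathrm{acy}}i_-^{\mathrm{acy}})=(\alpha^{\mathrm{acy}})^{-1}\psi^{\mathrm{acy}}=q_N^{\mathrm{acy}}\psi^{\mathrm{acy}}=\varphi$.

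I expect the real difficulty to lie in this last construction --- arranging the handle attachments so that \emph{both} markings induce the prescribed homomorphisms while keeping $M$ a homology product and the boundary identification coherent. The algebraic part, i.e.\ the approximation of $\varphi$ and the $2$-connectedness of $\psi$, is soft, using only that $\pi$ is free (so $H_2(\pi)=0$) and the universal property of the acyclic closure.
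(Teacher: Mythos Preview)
Your proposal is correct and follows essentially the same route as the paper: the easy containment is exactly as you say, and your finitely-presentable approximation of $\varphi$ via Proposition~\ref{prop:seq} is precisely the ``extra attention'' the paper alludes to for handling the non-surjectivity of $\iota_\pi\colon\pi\to\AC{\pi}$, after which the topological realization proceeds along Garoufalidis--Levine's construction for Theorem~\ref{thm:gl}. One small caution: the handle construction of Section~\ref{subsec:observation} yields a cylinder with the \emph{same} markings on both ends, so the step you rightly flag as difficult genuinely requires the GL argument (not just Section~\ref{subsec:observation}) to produce \emph{distinct} markings inducing $\alpha$ and $\psi$.
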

\noindent
In the proof, we immediately see that the image of 
$\sigma^{\mathrm{acy}}$ 
is included in $\mathrm{Aut}_0 (\AC{\pi})$ since  
$i_+ (\zeta)=i_- (\zeta) \in \pi_1 M$ for every 
$(M,i_+,i_-) \in \Cgb$. 
Conversely, given an element $\varphi \in 
\mathrm{Aut}_0 (\AC{\pi})$, 
we need to construct a homology cylinder $M=(M,i_+,i_-)$ satisfying 
$\sigma^{\mathrm{acy}}(M)=\varphi$. 
The construction is based on that of Theorem \ref{thm:gl} 
\cite[Theorem 3]{gl} due to Garoufalidis-Levine. 
In our context, however, we must pay extra attention because 
there is a difference between our situation and theirs: 
Although the composition 
$\pi \to \AC{\pi} \to N_k(\AC{\pi}) \cong N_k (\pi)$ 
is surjective, $\iota_{\pi}:\pi \to \AC{\pi}$ is not. 

Note that 
$\Hgb[[\infty]]:= \Ker ( \sigma^{\mathrm{acy}})$ 
is non-trivial in contrast with the case of the mapping class group. 
Indeed the homology cobordism group $\Theta_{\Z}^3$ 
of homology 3-spheres is included in it. 
See also Section \ref{subsec:signature} 
for more about $\Hgb[[\infty]]$. 
As for the mysterious group $\Hgb[[\infty]]$, 
we have the following problem:
\begin{problem}
Determine whether $\Hgb[[\infty]]$ 
coincides with the group 
\begin{align*}
\Hgb[\infty]:=&\bigcap_{k \ge 2}\Ker 
( \sigma_k: \Hgb \to \Aut (N_k(\pi)) )\\
=&\Ker ( \sigma^{\mathrm{nil}}: \Hgb \to \Aut (\Nil{\pi}) ),
\end{align*}
\noindent
in which $\Hgb[[\infty]]$ is included. 
This is closely related to the question whether 
the natural map $\AC{\pi} \to \Nil{\pi}$ is injective or not.
\end{problem}

\subsection{Extension of the Magnus representation}

As in the original case, the (universal) Magnus representation for 
homology cylinders is obtained from that for 
$\Aut (\Acy_n)$ through the Dehn-Nielsen type theorem. 
The resulting representation is also a crossed homomorphism. 

For the construction, 
we need another tool called (a special case of) 
the {\it Cohn localization} \index{Cohn localization} 
or the {\it universal localization}. 
We refer to \cite[Section 7]{co} for details. 
\begin{proposition}[Cohn \cite{co}]\label{prop:cohn} 
Let $G$ be a group with the trivializer $\mathfrak{t}:\Z [G] \to \Z$. 
Then there exists a pair of 
a ring $\Lambda_G$ and a ring homomorphism 
$l_G:\Z [G] \to \Lambda_G$ satisfying the following properties:
\begin{enumerate}
\item For every matrix $m$ with coefficients in $\Z [G]$, 
if \,${}^\mathfrak{t}m$ is invertible 
then \,${}^{l_G}m$ is also invertible.
\item The pair $(\Lambda_G, l_G)$ is universal among all pairs 
having the property $(1)$. 
\end{enumerate}
Furthermore the pair $(\Lambda_G, l_G)$ is unique up to isomorphism. 
\end{proposition}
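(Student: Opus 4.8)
The plan is to recognize $(\Lambda_G, l_G)$ as the \emph{universal localization} of $\Z[G]$ at the set $\Sigma$ of all square matrices over $\Z[G]$ whose image under $\mathfrak{t}$ is invertible over $\Z$, and to construct it directly by generators and relations. Concretely, for each $m \in \Sigma$, say of size $n_m \times n_m$, I would introduce a family of $n_m^2$ formal symbols arranged into an $n_m \times n_m$ matrix $Y^m = (y^m_{ij})$, form the free product (coproduct of unital rings) of $\Z[G]$ with the free associative $\Z$-algebra on all the symbols $y^m_{ij}$, and let $\Lambda_G$ be its quotient by the two-sided ideal generated by the entries of $m\cdot Y^m - I_{n_m}$ and $Y^m\cdot m - I_{n_m}$ for every $m \in \Sigma$, where $m$ is read as a matrix over the free product via the canonical inclusion of $\Z[G]$. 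Let $l_G : \Z[G] \to \Lambda_G$ be the induced ring homomorphism. Since $G$ is a set, $\Z[G]$ is a set and hence so is $\Sigma$, so this construction is legitimate.

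To verify property (1): if $m$ is a square matrix over $\Z[G]$ with ${}^{\mathfrak{t}}m$ invertible over $\Z$, then $m \in \Sigma$ by the very definition of $\Sigma$, and in $\Lambda_G$ the matrix ${}^{l_G}m$ admits the two-sided inverse $Y^m$ by construction, hence is invertible. The one point to keep in mind is that ``invertible'' must be read throughout as possessing a two-sided inverse; over any (possibly noncommutative) ring a matrix with a two-sided inverse has a \emph{unique} such inverse, a fact used again below.

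For the universal property (2), given any pair $(\Lambda', l')$ satisfying (1), I must produce a unique ring homomorphism $\phi : \Lambda_G \to \Lambda'$ with $\phi \circ l_G = l'$. On the image of $\Z[G]$ one is forced to take $\phi$ equal to $l'$. For each $m \in \Sigma$ the matrix ${}^{l'}m$ over $\Lambda'$ is invertible by hypothesis, so it has a unique two-sided inverse $({}^{l'}m)^{-1}$, and I define $\phi(y^m_{ij})$ to be its $(i,j)$ entry. These assignments respect the defining relations of $\Lambda_G$, so $\phi$ descends to a well-defined ring homomorphism with $\phi \circ l_G = l'$. Conversely, any $\psi : \Lambda_G \to \Lambda'$ with $\psi \circ l_G = l'$ sends ${}^{l_G}m$ to ${}^{l'}m$, so applying $\psi$ to ${}^{l_G}m \cdot Y^m = I$ yields ${}^{l'}m \cdot \psi(Y^m) = I$, forcing $\psi(Y^m) = ({}^{l'}m)^{-1}$ by uniqueness of two-sided matrix inverses; hence $\psi$ agrees with $\phi$ on all generators, so $\psi = \phi$. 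Uniqueness of $(\Lambda_G, l_G)$ up to isomorphism is then the usual formal consequence of a universal property: two pairs satisfying (1) and (2) admit mutually inverse comparison maps fixing $\Z[G]$.

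The argument is thus largely formal; the main thing to watch is the bookkeeping with matrices over noncommutative rings — checking that ``inverting $\Sigma$'' is exactly the same requirement as ``inverting every matrix sent to an invertible one by $\mathfrak{t}$'', and that one-sided pathologies never intervene (handled by working consistently with two-sided inverses and their uniqueness). This is a special case of Cohn's general universal localization of a ring at a set of matrices; see \cite[Section~7]{co}.
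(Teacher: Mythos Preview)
Your construction is correct and is precisely the standard proof of Cohn's universal localization: freely adjoin a matrix of formal inverses for each $m\in\Sigma$ and quotient by the two-sided inverse relations, then verify the universal property using uniqueness of two-sided matrix inverses. There is nothing to compare against here, because the paper does not supply its own proof of this proposition; it is stated as a result of Cohn with a reference to \cite[Section~7]{co}, and the surrounding text only uses the universal property as a black box (e.g.\ to extend automorphisms of $G$ to $\Lambda_G$). Your write-up is exactly the argument one finds in Cohn's book, so it is entirely appropriate as a self-contained justification.
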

\noindent
Note that any automorphism of a group $G$ induces an 
automorphism of $\Z [G]$ 
and moreover of $\Lambda_{G}$ 
by the universal property of $\Lambda_{G}$. 
\begin{example}
When $G=H_1 (F_n)$, we have 
\[\Lambda_{H_1 (F_n)} \cong \left\{ \, 
\displaystyle\frac{f}{g} \, \biggm| 
f,g \in \Z [H_1 (F_n)], \ 
\mathfrak{t}(g)=\pm 1 \right\}.\]
\end{example}

We write $\gamma_i$ again for the image of $\gamma_i$ by 
$\iota_{F_n}:F_n=\langle \gamma_1,\gamma_2,\ldots,\gamma_n \rangle 
\hookrightarrow \Acy_n$. Now we can check the following 
facts on $\Lambda_{\Acy_n}$: 
\begin{lemma}\label{lem:key}
$(1)$ \ The composition $\Z [F_n] \xrightarrow{\iota_{F_n}} \Z [\Acy_n] 
\xrightarrow{l_{\Acy_n}} \Lambda_{\Acy_n}$ is injective. 

\noindent
$(2)$ \ Let $G$ be a finitely presentable group and let $f:F_n \to G$ 
be a $2$-connected homomorphism. Then 
$H_i (G,f(F_n);\Lambda_G)=0$ holds for $i =0,1,2$. Moreover, we may 
take $\Acy_n$ as $G$. 
\end{lemma}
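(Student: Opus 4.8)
The plan is to treat the two parts in order, since part (2) will rely on the injectivity in part (1). For part (1), the key observation is that $\Lambda_G$ is by construction the universal localization of $\Z[G]$ inverting all matrices that become invertible under the trivializer, and that for $G = \Acy_n$ this trivializer factors through $\mathfrak{t}:\Z[F_n] \to \Z$ via the functorial map $\Z[F_n] \to \Z[\Acy_n]$. First I would recall that $\iota_{F_n}:F_n \hookrightarrow \Acy_n$ is injective (stated in the text, from residual nilpotency of $F_n$), so $\Z[F_n] \to \Z[\Acy_n]$ is injective. Then the real content is that $\Z[F_n] \to \Lambda_{\Acy_n}$ remains injective. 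The standard way to see this is to produce a ring $R$ and a homomorphism $\Z[\Acy_n] \to R$ under which every matrix that is $\mathfrak t$-invertible becomes invertible, while $\Z[F_n] \to R$ is injective; then the universal property of $\Lambda_{\Acy_n}$ gives a factorization $\Z[\Acy_n] \to \Lambda_{\Acy_n} \to R$, and injectivity of the composite $\Z[F_n] \to R$ forces injectivity of $\Z[F_n] \to \Lambda_{\Acy_n}$. A natural candidate for $R$ is the skew field of fractions of $\Z[\Acy_n]$ (or of a suitable quotient), using that $\Acy_n$ is locally indicable / bi-orderable-like enough that $\Z[\Acy_n]$ embeds in a division ring (via Malcev--Neumann or the Hughes-free embedding), into which $\Z[F_n]$ embeds compatibly. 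Alternatively, and more in the spirit of the references, one invokes the fact from Levine/Cohn that for the acyclic closure the localization map is injective on $\Z[F_n]$ because $F_n \to \Acy_n$ induces an isomorphism on lower central quotients and the relevant matrices are already invertible over $\Z[F_n/\Gamma^k]$ for every $k$, so one can compare with the (injective) maps $\Z[F_n] \to \Z[N_k(F_n)][\,\cdot\,]^{-1}$ and pass to the limit.

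For part (2), I would argue homologically. Given a $2$-connected homomorphism $f:F_n \to G$ with $G$ finitely presentable, consider the twisted chain complex $C_\ast(G, f(F_n); \Lambda_G)$, or equivalently, by Example \ref{ex:1st}-type reasoning, the relative complex computing $H_\ast(G, F_n; \Lambda_G)$. The point is that $2$-connectedness of $f$ means $H_1(f)$ is an isomorphism and $H_2(f)$ is an epimorphism, equivalently $H_0(G,F_n;\Z)=H_1(G,F_n;\Z)=0$ (and $H_2$ considerations control presentations). Using a CW model: take a finite $2$-complex $X$ for $G$ and a subcomplex $Y$ for $F_n$ (a wedge of circles) mapping in via $f$; after Tietze moves as in Section \ref{subsec:observation}, the relative cellular chain complex $C_\ast(\widetilde{X}, \widetilde{Y})$ over $\Z[G]$ can be arranged to be a based complex $\Z[G]^m \xrightarrow{D} \Z[G]^m$ concentrated in degrees $2$ and $1$, where $D$ is the Jacobian-type matrix built from the Fox derivatives of the attaching words $x_i w_i^{-1}$. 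Acyclicity of $f$ on $\Z$-homology says exactly that $\mathfrak t(D)$ — which one computes from the acyclicity condition $w_i \in \ker(\,\cdot\to H_1(F_m))$ to be of the form $I_m$ minus a strictly-triangular-ish nilpotent contribution, hence invertible — is invertible over $\Z$. Hence by the defining property of $\Lambda_G$ (Proposition \ref{prop:cohn}(1)), ${}^{l_G}D$ is invertible over $\Lambda_G$, so the complex $\Lambda_G^m \xrightarrow{{}^{l_G}D} \Lambda_G^m$ is acyclic, giving $H_i(G, f(F_n);\Lambda_G)=0$ for $i=0,1,2$. The final sentence, "we may take $\Acy_n$ as $G$", follows because $\iota_{F_n}:F_n \to \Acy_n$ is $2$-connected by Proposition \ref{prop:2cn}, and although $\Acy_n$ is not finitely presentable, Proposition \ref{prop:seq} writes $\Acy_n = \varinjlim P_k$ as a direct limit of finitely presentable groups each receiving a $2$-connected map from $F_n$; applying the finitely-presentable case to each $P_k$ and passing to the colimit (homology and Cohn localization both commute suitably with these direct limits) yields the claim for $\Acy_n$.

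The main obstacle I expect is part (1): making precise that the composite $\Z[F_n] \to \Lambda_{\Acy_n}$ is injective rather than merely that $\Z[F_n] \to \Z[\Acy_n]$ is. The Cohn localization can in principle kill elements of $\Z[F_n]$ if inverting the $\mathfrak t$-invertible matrices over the larger ring $\Z[\Acy_n]$ introduces relations pulling back to $\Z[F_n]$. The cleanest fix is to exhibit an honest target — a division ring or a sufficiently large localization — through which everything factors and in which $\Z[F_n]$ embeds; verifying that such a compatible embedding exists for $\Acy_n$ (which is a somewhat exotic group) is the delicate point, and is presumably where one leans hardest on Levine's papers \cite{le1,le2} and on the structural Proposition \ref{prop:seq}, reducing to the finitely presentable stages $P_k$ where the corresponding statement is more classical. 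Part (2), by contrast, is essentially a formal consequence of Proposition \ref{prop:cohn}(1) once the chain complex is set up and the augmentation matrix is seen to be unimodular, with only the colimit bookkeeping for $\Acy_n$ requiring care.
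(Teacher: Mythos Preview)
Your treatment of part~(2) is essentially what the paper does: the paper refers out to \cite[Lemma~5.11]{sa2} for the finitely presentable case and then invokes Proposition~\ref{prop:seq} together with commutation of homology with direct limits to handle $G=\Acy_n$. Your CW/Fox-Jacobian argument, showing that the relative boundary matrix has $\mathfrak t$-image invertible and therefore becomes invertible in $\Lambda_G$, is exactly the standard proof of that lemma, and your colimit step matches the paper.

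For part~(1) you have the right architecture (produce a ring $R$ receiving $\Z[\Acy_n]$ in which $\mathfrak t$-invertible matrices invert and into which $\Z[F_n]$ still embeds, then use universality of $\Lambda_{\Acy_n}$), but neither of your proposed $R$'s works as stated. The first candidate assumes orderability/local indicability of $\Acy_n$, which is not established anywhere and would be a serious undertaking on its own. The second candidate is closer in spirit but has a gap: the maps $\Z[F_n]\to\Z[N_k(F_n)]$ are \emph{not} injective for fixed $k$ (elements of $\Gamma^k(F_n)$ die), so you cannot ``compare with the (injective) maps $\Z[F_n]\to\Z[N_k(F_n)][\cdot]^{-1}$'' term by term. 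The paper's fix is to pass all the way to the nilpotent completion and use the Magnus expansion: since $\Nil{F_n}$ is acyclically closed there is a ring map $\Z[\Acy_n]\to\Z[\Nil{F_n}]$, and the Magnus expansion extends to $\Z[\Nil{F_n}]$ with values in a (formal power series) ring where any matrix whose $\mathfrak t$-image is invertible is itself invertible (geometric-series argument in the augmentation filtration). The classical fact that the Magnus expansion is injective on $\Z[F_n]$ then forces $\Z[F_n]\to\Lambda_{\Acy_n}$ to be injective via the universal factorization. So the missing idea in your (1) is precisely naming this concrete target; once you replace your $R$ by the Magnus expansion of $\Z[\Nil{F_n}]$, your outline becomes a complete proof.
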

\begin{proof}[Sketch of Proof]
Consider the composition of the ring homomorphism
$\Z [\Acy_n] \to \Z [\Nil{F_n}]$ with the Magnus expansion, 
which can be extended to $\Z [\Nil{F_n}]$. 
It is known that the Magnus expansion is injective on $\Z [F_n]$. 
We can check that this composition 
satisfies Property $(1)$ of Proposition \ref{prop:cohn}, so that 
the Magnus expansion can be extended to $\Lambda_{\Acy_n}$. 
Hence $(1)$ follows. 

For the proof of the first assertion of $(2)$, see \cite[Lemma 5.11]{sa2}. 
We may put $G=\Acy_n$ by Proposition \ref{prop:seq} and 
commutativity of homology and direct limits. 
\end{proof}
Lemma \ref{lem:key} $(2)$ leads us to show the following, 
which can be regarded as a generalization 
of the isomorphism (\ref{eq:aug}). 
The proof is almost the same as that 
of \cite[Proposition 1.1]{ld}. 
\begin{proposition}\label{extfree} 
The homomorphism
\[\chi: \Lambda_{\Acy_n}^n \longrightarrow 
I(\Acy_n) \otimes_{\Z [\Acy_n]} \Lambda_{\Acy_n}\]
sending $(a_1,\ldots,a_n)^T \in \Lambda_{\Acy_n}^n$ to \ 
$\displaystyle\sum_{i=1}^n (\gamma_i^{-1}-1) \otimes a_i$ 
is an isomorphism of right $\Lambda_{\Acy_n}$-modules, where 
$I(\Acy_n):=\Ker (\mathfrak{t}:\Z [\Acy_n] \to \Z)$.
\end{proposition}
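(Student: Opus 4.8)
The plan is to recognize $\chi$ as a change-of-coefficients homomorphism and then to control its kernel and cokernel by comparing the homology exact sequences attached to the augmentation ideals of $F_n$ and of $\Acy_n$, with Lemma~\ref{lem:key}~(2) furnishing the only substantial ingredient.

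First I would record that, by the isomorphism~(\ref{eq:aug}) of Example~\ref{ex:aug}, the set $\{\gamma_1^{-1}-1,\ldots,\gamma_n^{-1}-1\}$ is a free right $\Z [F_n]$-basis of $I(F_n)$. Since $\iota_{F_n}\colon F_n\hookrightarrow\Acy_n$ is injective (residual nilpotency of $F_n$) and $\Lambda_{\Acy_n}$ is a left $\Z [F_n]$-module via $l_{\Acy_n}\circ\iota_{F_n}$, tensoring this basis gives an isomorphism of right $\Lambda_{\Acy_n}$-modules $\Lambda_{\Acy_n}^n\xrightarrow{\ \cong\ }I(F_n)\otimes_{\Z [F_n]}\Lambda_{\Acy_n}$ sending the $i$-th standard vector to $(\gamma_i^{-1}-1)\otimes 1$. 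Composing with the natural map
\[b\colon\ I(F_n)\otimes_{\Z [F_n]}\Lambda_{\Acy_n}\ \longrightarrow\ I(\Acy_n)\otimes_{\Z [\Acy_n]}\Lambda_{\Acy_n}\]
induced by the inclusion $I(F_n)\hookrightarrow I(\Acy_n)$ reproduces precisely the map $\chi$ of the statement. Thus the proposition is equivalent to the assertion that $b$ is an isomorphism.

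For an arbitrary group $G$, tensoring the short exact sequence $0\to I(G)\to\Z [G]\xrightarrow{\mathfrak t}\Z\to 0$ of right $\Z [G]$-modules over $\Z [G]$ with the left module $\Lambda_{\Acy_n}$, and using $\mathrm{Tor}_\ast^{\Z [G]}(\Z,\Lambda_{\Acy_n})\cong H_\ast(G;\Lambda_{\Acy_n})$ together with freeness of $\Z [G]$, yields a four-term exact sequence
\[0\longrightarrow H_1(G;\Lambda_{\Acy_n})\longrightarrow I(G)\otimes_{\Z [G]}\Lambda_{\Acy_n}\longrightarrow\Lambda_{\Acy_n}\longrightarrow H_0(G;\Lambda_{\Acy_n})\longrightarrow 0\]
that is natural in $G$. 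Specializing to $G=F_n$ and $G=\Acy_n$ and invoking functoriality along $\iota_{F_n}$ produces a commutative ladder whose middle vertical arrow is $b$, whose $\Lambda_{\Acy_n}$-column is the identity, and whose two outer columns are the coefficient-change maps $H_i(F_n;\Lambda_{\Acy_n})\to H_i(\Acy_n;\Lambda_{\Acy_n})$ for $i=1,0$. Next I would apply Lemma~\ref{lem:key}~(2) with the $2$-connected homomorphism $\iota_{F_n}$ (Proposition~\ref{prop:2cn}); it gives $H_i(\Acy_n,F_n;\Lambda_{\Acy_n})=0$ for $i=0,1,2$, and feeding this into the long exact sequence of the group pair $(\Acy_n,F_n)$ with coefficients in $\Lambda_{\Acy_n}$ forces both outer columns of the ladder to be isomorphisms. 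Splitting each four-term row at its image in $\Lambda_{\Acy_n}$ into two short exact sequences and running the short five lemma twice then shows that $b$, and hence $\chi$, is an isomorphism of right $\Lambda_{\Acy_n}$-modules.

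The part that needs genuine care, and which I expect to be the only real obstacle, is the bookkeeping around the ladder: one must verify that $b$ intertwines the connecting homomorphisms of the two augmentation sequences and the restriction maps of the $(\Acy_n,F_n)$ sequence, and one must keep the left/right module conventions consistent throughout (chain complexes of right modules, coefficients acting on the left, and the involution implicit in the formula for $\chi$). All of the topological and group-theoretic substance is already packaged in Lemma~\ref{lem:key}~(2); what remains is diagram chasing.
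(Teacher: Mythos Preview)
Your proposal is correct and follows essentially the approach the paper indicates: the paper states that Lemma~\ref{lem:key}~(2) leads to the result and that the proof is almost the same as that of \cite[Proposition~1.1]{ld}, and your argument is precisely a detailed elaboration of this---identifying $\chi$ with the comparison map between the augmentation-ideal tensor products and then using the vanishing of $H_i(\Acy_n,F_n;\Lambda_{\Acy_n})$ for $i\le 2$ together with the five lemma. The bookkeeping you flag (naturality of the Tor connecting maps under change of rings, left/right conventions) is routine and does not hide any genuine difficulty.
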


\begin{definition}
For $1 \le i \le n$, we define the {\it extended Fox derivative} 
\index{Fox derivative!extended} 
\[\frac{\partial}{\partial \gamma_i}:\Acy_n 
\longrightarrow \Lambda_{\Acy_n}\]
by the formula
\[\begin{array}{rccc}
\left( \displaystyle\frac{\partial}{\partial \gamma_1}, 
\frac{\partial}{\partial \gamma_2}, \ldots, 
\frac{\partial}{\partial \gamma_n} \right)^T:& \Acy_n & 
\longrightarrow &
\Lambda_{\Acy_n}^n \\
& \mbox{\rotatebox[origin=c]{90}{$\in$}} & & 
\mbox{\rotatebox[origin=c]{90}{$\in$}} \\
& v & \longmapsto & \overline{\chi^{-1} ((v^{-1}-1) \otimes 1)}.
\end{array}\]
\end{definition}
\noindent
The extended Fox derivatives 
coincide with the original ones 
if we restrict them to $F_n$ (cf. Example \ref{ex:aug}). 
They share many properties 
as mentioned in \cite[Proposition 1.3]{ld}. 
In particular, we have the equality
\[(v^{-1} -1) \otimes 1= \sum_{i=1}^n (\gamma_i^{-1} -1) \otimes 
\overline{\left(\frac{\partial v}{\partial \gamma_i}\right)} \ \ 
\in 
I(\Acy_n) \otimes_{\Z [\Acy_n]} \Lambda_{\Acy_n}
\]
for any $v \in \Acy_n$. 
\begin{definition}
The {\it $($universal\,$)$ Magnus representation} 
\index{Magnus representation!for 
$\mathrm{Aut}\,F^{\mathrm{acy}}_n$} 
for $\Aut (\Acy_n)$ is 
the map 
\[r:\Aut (\Acy_n) \to M(n,\Lambda_{\Acy_n})\]
assigning to $\varphi \in \Aut (\Acy_n)$ the matrix 
\[r(\varphi):=\left(\overline{\left(\frac{\partial \varphi(\gamma_j)}
{\partial \gamma_i} \right)}\right)_{i,j}.\]
\end{definition}
We can easily check that 
the Magnus representation $r$ is a crossed homomorphism and 
the image of $r$ is included in the set $\mathrm{GL}(n,\Lambda_{\Acy_n})$ 
of invertible matrices. 
By Lemma \ref{lem:key} $(1)$, we see that 
the Magnus representation defined 
here gives a generalization of the original. 
\begin{example}\label{twoconn}
Consider the monoid $\End_2 (F_n)$ of all $2$-connected 
endomorphisms of $F_n$. We have a natural homomorphism 
$\End_2 (F_n) \to \Aut (\Acy_n)$ by Proposition \ref{prop:isom}. 
For any $f \in \End_2 (F_n)$, the Magnus matrix $r(f)$ can be 
obtained by using the original Fox derivatives. In particular, 
$r$ is injective on $\End_2 (F_n)$. Therefore, 
we see that $\End_2 (F_n)$ is a submonoid of $\Aut (\Acy_n)$. 
Every automorphism of $\Aut (N_k(F_n))$ can be lifted to 
a $2$-connected endomorphism of $F_n$. Hence the homomorphisms 
$\Aut (\Acy_n) \to \Aut (N_k (\Acy_n)) \cong \Aut (N_k(F_n))$ 
are surjective for all $k \ge 2$. 
\end{example}

Finally, by using the Dehn-Nielsen type theorem for $n=2g$, 
we obtain the (universal) Magnus representation 
\index{Magnus representation!for homology cylinders}
\[r: \Cgb \longrightarrow \mathrm{GL}(2g, \Lambda_{\AC{\pi}})\]
for homology cylinders, 
which induces $r: \Hgb \to \mathrm{GL}(2g, \Lambda_{\AC{\pi}})$. 


\section{Magnus representations for homology cylinders II}
\label{sec:extension}
In this section, we discuss another method for extending 
Magnus representations by using 
twisted homology of homology cylinders. 
This time, we follow the Kirk-Livingston-Wang's construction 
\cite{klw}. 
In connection with it, we also mention another invariant 
of homology cylinders arising 
from torsion. 

For our purpose, we first recall the setting of 
{\it higher-order Alexander invariants} 
\index{higher-order Alexander invariants} originating in 
Cochran-Orr-Teichner \cite{cot}, 
Cochran \cite{coc} and Harvey \cite{har,har2}, 
where PTFA groups play an important role. 
A group $\Gamma$ is said 
to be {\it poly-torsion-free abelian $($PTFA$)$} 
\index{poly-torsion-free abelian (PTFA) group} 
if it has a sequence 
\[\Gamma=\Gamma_1 \triangleright \Gamma_2 \triangleright 
\cdots \triangleright \Gamma_n = \{1\}\]
whose successive quotients $\Gamma_i/\Gamma_{i+1}$ $(i \ge 1)$ 
are all torsion-free abelian. An advantage of using PTFA groups is that 
the group ring $\mathbb{Z} [\Gamma]$ of $\Gamma$ 
is known to be an {\it Ore domain} so that it can be embedded into 
the field (skew field in general) 
\[\mathcal{K}_\Gamma:= \mathbb{Z}[\Gamma] 
(\mathbb{Z}[\Gamma] - \{0\})^{-1}\]
called the {\it right field of fractions}. 
We refer to the books of Cohn \cite{co} and Passman \cite{pa} 
for generalities of localizations of non-commutative rings. 
A typical example of a PTFA group is $\mathbb{Z}^n$, 
where $\mathcal K_{\mathbb{Z}^n}$ is isomorphic to 
the field of rational functions with $n$ variables. 
More generally, 
free nilpotent quotients $N_k(F_n)$ and $N_k (\pi)$ are 
known to be PTFA. 

Let $M=(M,i_+,i_-) \in \Cgb$. We fix a homomorphism 
$\rho: \pi_1 M \to \Gamma$ into a PTFA group $\Gamma$. 
The following lemma is crucial in our construction of 
Magnus matrices (cf. Lemma \ref{lem:key}). 
For the direct proof, see \cite[Proposition 2.1]{klw}. 
See also Cochran-Orr-Teichner \cite[Section 2]{cot} 
for a more general treatment. 
\begin{lemma}\label{lem:relative}
For $\pm \in \{+,-\}$, 
we have $H_\ast (M,i_\pm (\Sgb);\mathcal{K}_\Gamma)=0$. 
Equivalently, 
\[i_\pm: 
H_\ast (\Sgb,\{p\} ;i_{\pm}^\ast \mathcal{K}_{\Gamma}) 
\longrightarrow 
H_\ast (M,\{p\} ;\mathcal{K}_{\Gamma})\]
is an isomorphism of 
right $\mathcal{K}_\Gamma$-vector spaces. 
\end{lemma}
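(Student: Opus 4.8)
The plan is to reduce the vanishing to a single statement about invertibility of an integral matrix over the skew field $\mathcal{K}_\Gamma$, and then to feed in the standard localization properties of group rings of PTFA groups. Since $p$ is the common base point of $\Sgb$, $i_\pm(\Sgb)$ and $M$, we have $\{p\}\subset i_\pm(\Sgb)$, so the quotient of $C_\ast(M,\{p\};\mathcal{K}_\Gamma)$ by $C_\ast(i_\pm(\Sgb),\{p\};\mathcal{K}_\Gamma)$ is exactly $C_\ast(M,i_\pm(\Sgb);\mathcal{K}_\Gamma)$; the long exact sequence of the triple $\{p\}\subset i_\pm(\Sgb)\subset M$ then shows that the second assertion (that $i_\pm$ is an isomorphism in all degrees) is equivalent to $H_\ast(M,i_\pm(\Sgb);\mathcal{K}_\Gamma)=0$. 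So it is enough to prove this vanishing.

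First I would handle $i_-$. Viewing $M$ as a cobordism from $i_-(\Sgb)$ to $i_+(\Sgb)$ relative to $i_+(\partial\Sgb)=i_-(\partial\Sgb)$, Morse theory (with the usual handle cancellation, both ends being non-empty and connected) provides a handle decomposition of $M$ on a collar of $i_-(\Sgb)$ using only $1$- and $2$-handles, exactly as in Section~\ref{subsec:observation}. Because $M$ is a homology product over $\Sgb$, the Euler characteristics of $M$ and $\Sgb$ agree, so the number $m$ of $1$-handles equals the number of $2$-handles. Consequently the twisted relative chain complex $C_\ast(M,i_-(\Sgb);\mathcal{K}_\Gamma)$ is chain homotopy equivalent to a two-term complex
\[
\mathcal{K}_\Gamma^{\,m}\ \xrightarrow{\ B\,\cdot\ }\ \mathcal{K}_\Gamma^{\,m}\qquad(\text{in degrees }2\text{ and }1),
\]
where $B\in M(m,\mathcal{K}_\Gamma)$ is the image of the attaching matrix $A\in M(m,\Z[\pi_1 M])$ (recording how the $2$-handles run over the $1$-handles) under $\Z[\pi_1 M]\xrightarrow{\rho}\Z[\Gamma]\hookrightarrow\mathcal{K}_\Gamma$. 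Applying the trivializer of $\Z[\pi_1 M]$ (which factors through $\rho$) to $A$ gives precisely the boundary map of the untwisted complex $C_\ast(M,i_-(\Sgb);\Z)$, and the homology-product hypothesis $H_\ast(M,i_-(\Sgb);\Z)=0$ says exactly that ${}^{\mathfrak{t}}A\in\GL(m,\Z)$.

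The crux is now the following property of PTFA groups: if a square matrix over $\Z[\Gamma]$ becomes invertible under the trivializer $\Z[\Gamma]\to\Z$, then it is already invertible over $\mathcal{K}_\Gamma$. This is exactly where the PTFA hypothesis enters, and it is the step I expect to be the main obstacle for a self-contained treatment: it is proved by induction along a PTFA series for $\Gamma$, using that $\Z[\Gamma]$ is an Ore domain with $\mathcal{K}_\Gamma$ flat over it, and that passing to a skew Laurent extension by a torsion-free abelian quotient preserves $\mathcal{K}_\Gamma$-torsion-ness of a cokernel. Since the excerpt permits citing earlier work, I would simply invoke \cite[Proposition~2.1]{klw} (and \cite[Section~2]{cot} for the general framework). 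Granting it, $B\in\GL(m,\mathcal{K}_\Gamma)$, so the displayed complex is acyclic and $H_\ast(M,i_-(\Sgb);\mathcal{K}_\Gamma)=0$.

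Finally the $i_+$ case follows formally. The compact oriented $3$-manifold $M$ has $\partial M=i_+(\Sgb)\cup i_-(\Sgb)$, glued along $i_+(\partial\Sgb)$, so Poincar\'e--Lefschetz duality with coefficients in $\mathcal{K}_\Gamma$ gives $H_k(M,i_+(\Sgb);\mathcal{K}_\Gamma)\cong H^{3-k}(M,i_-(\Sgb);\mathcal{K}_\Gamma)$, and the latter vanishes because $H_{3-k}(M,i_-(\Sgb);\mathcal{K}_\Gamma)$ does, $\mathcal{K}_\Gamma$ being a skew field (so there are no Ext terms in universal coefficients). Alternatively one repeats the handle argument with the upside-down decomposition of $M$. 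This yields $H_\ast(M,i_\pm(\Sgb);\mathcal{K}_\Gamma)=0$, as required, and hence the equivalent isomorphism statement.
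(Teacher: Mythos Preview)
Your proposal is correct and follows essentially the same route the paper indicates: the paper does not give its own argument but refers to \cite[Proposition~2.1]{klw} and \cite[Section~2]{cot}, and its Remark~\ref{rem:acyclic} singles out exactly the key step you isolate, namely that for a PTFA target any square matrix over $\Z[\pi_1 M]$ whose augmentation is invertible over $\Z$ becomes invertible over $\mathcal{K}_\Gamma$ (there attributed to Strebel~\cite{strebel}). Your handle-decomposition reduction to a two-term complex and the duality/upside-down argument for $i_+$ are the standard way this is carried out in the cited references.
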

\noindent
\begin{remark}\label{rem:acyclic}
The same conclusion as in the above lemma holds 
for the homology with coefficients in any 
$\Z [\pi_1 M]$-algebra $\mathcal{R}$ satisfying: 
Every matrix with entries in $\Z [\pi_1 M]$ sent to 
an invertible one by the trivializer $\mathfrak{t}: 
\Z [\pi_1 M] \to \Z$ is 
also invertible in $\mathcal{R}$ (cf. Proposition \ref{prop:cohn}). 
By a theorem of Strebel \cite{strebel}, we see that 
$\mathcal{K}_{\Gamma}$ satisfies this property for any 
homomorphism $\pi_1 M \to \Gamma$ into a PTFA group $\Gamma$. 
\end{remark}

Since $S:=\dis\bigcup_{i=1}^{2g} \gamma_i \subset \Sgb$ 
(see Figure \ref{fig:generator}) 
is a deformation 
retract of $\Sgb$ relative to $p$, we have 
$\pi \cong \pi_1 S$ and 
\[H_1(\Sgb,\{p\};i_{\pm}^\ast \mathcal{K}_\Gamma) \cong 
H_1 (S,\{p\};i_{\pm}^\ast \mathcal{K}_\Gamma) = 
C_1 (\widetilde{S}) \otimes_{\Z [\pi]} 
i_{\pm}^\ast \mathcal{K}_\Gamma 
\cong \mathcal{K}_\Gamma^{2g}\]
with basis
$\{ \widetilde{\gamma_1} \otimes 1, \ldots , 
\widetilde{\gamma_{2g}} \otimes 1\} 
\subset C_1 (\widetilde{S}) \otimes_{\Z [\pi]} 
i_{\pm}^\ast \mathcal{K}_\Gamma$ as a right 
$\mathcal{K}_\Gamma$-vector space 
(see Section \ref{subsec:magnus_MCG_symp}). 

\begin{definition}\label{def:Mag2}
For $M=(M,i_+,i_-) \in \Cgb$ and 
a homomorphism $\pi_1 M \to \Gamma$ into a PTFA group $\Gamma$, 
the {\it Magnus matrix} \index{Magnus matrix} 
$r_\rho (M) \in \mathrm{GL}(2g,\mathcal{K}_\Gamma)$ 
{\it associated with} $\rho$ is 
defined as the representation matrix of 
the right $\mathcal{K}_\Gamma$-isomorphism
\begin{align*}
\mathcal{K}_\Gamma^{2g} \cong 
H_1 (\Sgb,\{p\};i_-^\ast \mathcal{K}_\Gamma) 
&\xrightarrow[\cong]{i_-} 
H_1 (M,\{p\};\mathcal{K}_\Gamma) \\
&\xrightarrow[\cong]{i_+^{-1}} 
H_1 (\Sgb,\{p\};i_+^\ast \mathcal{K}_\Gamma) 
\cong \mathcal{K}_\Gamma^{2g},
\end{align*}
\noindent
where the first and the last isomorphisms use 
the basis mentioned above. 
\end{definition}

A method for computing 
$r_\rho (M)$ is given in \cite[Section 4]{gs08}, 
which is based on one of 
Kirk-Livingston-Wang \cite{klw}. 
An {\it admissible presentation} of $\pi_1 M$ is defined to be 
one of the form 
\begin{align}\label{admissible}
\langle i_- (\gamma_1),\ldots,i_- (\gamma_{2g}), 
z_1 ,\ldots, z_l, 
i_+ (\gamma_1),\ldots,i_+ (\gamma_{2g}) \mid 
r_1, \ldots, r_{2g+l}
\rangle
\end{align}
for some integer $l \ge 0$. 
That is, it is a finite presentation with deficiency $2g$ 
whose generating set 
contains $i_- (\gamma_1),\ldots,i_- (\gamma_{2g}), 
i_+ (\gamma_1),\ldots,i_+ (\gamma_{2g})$ and is ordered as above. 
Such a presentation always exists. For any admissible presentation, 
we define $2g \times (2g+l)$, $l \times (2g+l)$ and 
$2g \times (2g+l)$ matrices $A,B,C$ by 
\[A=\overline{
\left(\frac{\partial r_j}{\partial i_-(\gamma_i)}
\right)}_{\begin{subarray}{c}
{}1 \le i \le 2g\\
1 \le j \le 2g+l
\end{subarray}}, \ 
B=\overline{
\left(\frac{\partial r_j}{\partial z_i}
\right)}_{\begin{subarray}{c}
{}1 \le i \le l\\
1 \le j \le 2g+l
\end{subarray}}, \ 
C=\overline{
\left(\frac{\partial r_j}{\partial i_+(\gamma_i)}
\right)}_{\begin{subarray}{c}
{}1 \le i \le 2g\\
1 \le j \le 2g+l
\end{subarray}}\]
over $\Z [\Gamma] \subset \mathcal{K}_\Gamma$. 
\begin{proposition}[{\cite[Propositions 4.5, 4.6]{gs08}}]
\label{prop:MagnusFormula}
The square matrix $\begin{pmatrix} A \\ B \end{pmatrix}$ 
is invertible over $\mathcal{K}_\Gamma$ and we have 
\begin{equation}\label{eq:mag_formula}
r_\rho(M) = 
-C \begin{pmatrix} A \\ B \end{pmatrix}^{-1} \!
\begin{pmatrix} I_{2g} \\ 0_{(l,2g)}\end{pmatrix} 
\in \mathrm{GL}(2g,\mathcal{K}_\Gamma).
\end{equation}
\end{proposition}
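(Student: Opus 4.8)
The plan is to realize both sides on the presentation $2$-complex of the given admissible presentation and then read off the Magnus matrix from its twisted cellular chain complex, in the same spirit as the computation of $H_1(G;M)$ through Fox derivatives in Section~\ref{subsec:lowdim}. Let $X$ be the $2$-complex with a single $0$-cell $p$, one oriented $1$-cell for each generator, ordered $i_-(\gamma_1),\dots,i_-(\gamma_{2g}),z_1,\dots,z_l,i_+(\gamma_1),\dots,i_+(\gamma_{2g})$, and one $2$-cell attached along each relator $r_j$, so that $\pi_1 X\cong\pi_1 M$. The cellular chain complex of $X$ with coefficients in $\mathcal{K}_\Gamma$, twisted through $\rho\colon\pi_1 X\cong\pi_1 M\to\Gamma$, is computed exactly as in Section~\ref{subsec:lowdim}: it is
\[
\mathcal{K}_\Gamma^{2g+l}\xrightarrow{\ D_2\ }\mathcal{K}_\Gamma^{4g+l}\xrightarrow{\ \partial_1\ }\mathcal{K}_\Gamma\longrightarrow 0,\qquad D_2=\begin{pmatrix}A\\ B\\ C\end{pmatrix},
\]
the three row-blocks corresponding to the three blocks of generators. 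Passing to the pair $(X,\{p\})$ makes $C_0$ vanish, so $H_1(X,\{p\};\mathcal{K}_\Gamma)=\operatorname{coker}(D_2)$; this group depends only on $\pi_1$, hence is canonically identified with $H_1(M,\{p\};\mathcal{K}_\Gamma)$. Under this identification the wedge subcomplexes $X_\pm\subset X$ spanned by $p$ and the $1$-cells $i_\pm(\gamma_i)$ model $i_\pm(\Sgb)$ (each of $i_\pm(\Sgb)$ deformation retracts onto a wedge of $2g$ circles, and $\gamma_1,\dots,\gamma_{2g}$ is a basis of $\pi$), so that the two maps $i_\pm$ of Definition~\ref{def:Mag2} become the block inclusions $\iota_\pm\colon\mathcal{K}_\Gamma^{2g}\hookrightarrow\mathcal{K}_\Gamma^{4g+l}$ onto the first, respectively last, $2g$ coordinates, followed by the projection $\mathcal{K}_\Gamma^{4g+l}\twoheadrightarrow\operatorname{coker}(D_2)$; moreover the bases used in Definition~\ref{def:Mag2} correspond to the standard bases (the lifts $\widetilde{\gamma_i}\otimes 1$) under these identifications.

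With this dictionary in place, I would invoke Lemma~\ref{lem:relative}: it says precisely that both composites $\iota_\pm$ followed by the projection to $\operatorname{coker}(D_2)$ are isomorphisms onto $\operatorname{coker}(D_2)$. Everything else is linear algebra over the skew field $\mathcal{K}_\Gamma$. From the $-$ case, $\mathcal{K}_\Gamma^{4g+l}=\operatorname{im}(D_2)\oplus\iota_-(\mathcal{K}_\Gamma^{2g})$, so $\dim\operatorname{im}(D_2)=2g+l$ and $D_2$ is injective. From the $+$ case, $\operatorname{im}(D_2)\cap\iota_+(\mathcal{K}_\Gamma^{2g})=\{0\}$. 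Now the square matrix $\begin{pmatrix}A\\ B\end{pmatrix}$ is $D_2$ followed by the coordinate projection $\mathcal{K}_\Gamma^{4g+l}\to\mathcal{K}_\Gamma^{2g+l}$ whose kernel is exactly $\iota_+(\mathcal{K}_\Gamma^{2g})$; hence if $\begin{pmatrix}A\\ B\end{pmatrix}w=0$ then $D_2w\in\operatorname{im}(D_2)\cap\iota_+(\mathcal{K}_\Gamma^{2g})=\{0\}$, so $w=0$ by injectivity of $D_2$. Thus $\begin{pmatrix}A\\ B\end{pmatrix}$ is injective, and being a square matrix over $\mathcal{K}_\Gamma$ it is invertible.

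Finally I would extract the formula. By definition $r_\rho(M)=i_+^{-1}\circ i_-$, so for each $v\in\mathcal{K}_\Gamma^{2g}$ the classes of $\iota_-(v)$ and $\iota_+(r_\rho(M)v)$ agree in $\operatorname{coker}(D_2)$, i.e.\ there is $w\in\mathcal{K}_\Gamma^{2g+l}$ with $D_2w=\iota_-(v)-\iota_+(r_\rho(M)v)$. Reading off the three row-blocks gives $Aw=v$, $Bw=0$, $Cw=-r_\rho(M)v$. The first two equations say $\begin{pmatrix}A\\ B\end{pmatrix}w=\begin{pmatrix}I_{2g}\\ 0_{(l,2g)}\end{pmatrix}v$, whence $w=\begin{pmatrix}A\\ B\end{pmatrix}^{-1}\begin{pmatrix}I_{2g}\\ 0_{(l,2g)}\end{pmatrix}v$; substituting into the third equation and letting $v$ range over $\mathcal{K}_\Gamma^{2g}$ yields $r_\rho(M)=-C\begin{pmatrix}A\\ B\end{pmatrix}^{-1}\begin{pmatrix}I_{2g}\\ 0_{(l,2g)}\end{pmatrix}$, which is \eqref{eq:mag_formula}.

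The step I expect to be the real work is the first paragraph: making the dictionary between the abstract homological maps $i_\pm$ of Definition~\ref{def:Mag2} and the combinatorial block maps genuinely precise, in exactly the bases used to define $r_\rho(M)$ and $A,B,C$. This means fixing a map $X\to M$ inducing the identity on $\pi_1$ and carrying $X_\pm$ into $i_\pm(\Sgb)$, using that twisted $H_1$ of a space relative to a point depends only on $\pi_1$, and tracking the involution (the bars decorating $A,B,C$) consistently with the conventions of Section~\ref{subsec:lowdim}; the remaining orientation and sign bookkeeping is routine. Once the dictionary is fixed, the invertibility of $\begin{pmatrix}A\\ B\end{pmatrix}$ and the formula are the two short arguments above.
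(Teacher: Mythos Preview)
The paper does not actually supply its own proof of this proposition: it is stated as a citation to \cite[Propositions~4.5, 4.6]{gs08}, with the surrounding text noting only that the method is ``based on one of Kirk--Livingston--Wang \cite{klw}''. So there is no in-paper argument to compare against line by line.

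That said, your approach is exactly the one implicit in the paper's setup and in the cited sources: build the presentation $2$-complex, identify its twisted relative $H_1$ with $H_1(M,\{p\};\mathcal{K}_\Gamma)$ via the $\pi_1$-dependence of low-degree group homology (Section~\ref{subsec:lowdim}), read off the boundary matrix as the stacked Fox Jacobian $\begin{pmatrix}A\\B\\C\end{pmatrix}$, and then use Lemma~\ref{lem:relative} to convert the two isomorphisms $i_\pm$ into the linear-algebra statements you need. Your invertibility argument (injectivity of $D_2$ from the $-$ splitting, then kernel of the projection onto the first $2g+l$ coordinates meeting $\operatorname{im}D_2$ trivially from the $+$ splitting) and your extraction of the formula from $D_2w=\iota_-(v)-\iota_+(r_\rho(M)v)$ are both clean and correct over a skew field.

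You have correctly identified where the genuine care is required: pinning down the dictionary so that the bases $\{\widetilde{\gamma_i}\otimes 1\}$ of Definition~\ref{def:Mag2} really do correspond to the standard coordinate vectors under $\iota_\pm$, and that the involution conventions on $A,B,C$ match those of Section~\ref{subsec:lowdim}. Once a map $X\to M$ is chosen inducing the identity on $\pi_1$ and carrying the subwedges $X_\pm$ to $i_\pm(S)$ (with $S$ the spine of $\Sgb$), this is routine bookkeeping; your proof plan is complete.
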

\begin{remark}
We shall meet the same formula (\ref{eq:mag_formula}) when 
we compute the Magnus matrix following the definition in the 
previous section. From this, we can conclude that 
the definitions in this and the previous sections are the same. 
\end{remark}

Formula (\ref{eq:mag_formula}) gives 
the following properties of Magnus matrices:
\begin{proposition}\label{prop:mag_rho}
Let $\Gamma$ be a PTFA group. 
\begin{itemize}
\item[$(1)$] For $\varphi \in \Mgb \hookrightarrow \Aut \pi$ and 
a homomorphism $\rho :\pi_1 (\Sgb \times [0,1]) =\pi \to \Gamma$, 
we have
\[r_\rho ((\Sgb \times [0,1], 
\Id \times 1, \varphi \times 0)) = 
\sideset{^\rho\!}{_{i,j}}
{\Tmatrix{
\overline{\left(
\displaystyle\frac{\partial \varphi(\gamma_j)}{\partial \gamma_i}
\right)}}}.\]

\item[$(2)$] $($Functoriality\,$)$ 
For $M,N \in \Cgb$ and a homomorphism 
$\rho: \pi_1 (M \cdot N) \to \Gamma$, we have 
\[r_{\rho} (M \cdot N) = 
r_{\rho \circ i} (M) \cdot r_{\rho \circ j} (N),\]
where $i:\pi_1M \to \pi_1 (M \cdot N)$ and 
$j:\pi_1 N \to \pi_1 (M \cdot N)$ are the induced maps from 
the inclusions $M \hookrightarrow M \cdot N$ and 
$N \hookrightarrow M \cdot N$. 

\item[$(3)$] $($Homology cobordism invariance\,$)$ Suppose 
$M, N \in \Cgb$ are homology cobordant by a homology cobordism 
$W$. For any homomorphism $\rho:\pi_1 W \to \Gamma$, we have 
\[r_{\rho \circ i} (M) = r_{\rho \circ j} (N),\]
where $i:\pi_1 M \to \pi_1 W$ and 
$j:\pi_1 N \to \pi_1 W$ are the induced maps from 
the inclusions $M \hookrightarrow W$ and $N \hookrightarrow W$. 
\end{itemize}
\end{proposition}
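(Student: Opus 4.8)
The plan is to treat the three assertions in turn, using both the twisted–homology description of $r_\rho$ in Definition \ref{def:Mag2} and the explicit formula (\ref{eq:mag_formula}) of Proposition \ref{prop:MagnusFormula}, and to keep careful track of the coefficient systems at each step; write $i_\pm$ for the markings of the first cylinder and $j_\pm$ for those of the second. For $(1)$, note that $(\Sgb\times[0,1],\Id\times 1,\varphi\times 0)$ has $\pi_1=\pi$ with $i_+(\gamma_j)=\gamma_j$ and $i_-(\gamma_j)=\varphi(\gamma_j)$, so the presentation with $l=0$, generators $i_-(\gamma_1),\dots,i_-(\gamma_{2g}),i_+(\gamma_1),\dots,i_+(\gamma_{2g})$ and relators $r_j=i_-(\gamma_j)\,w_j^{-1}$ — where $w_j$ is $\varphi(\gamma_j)$ rewritten in the letters $i_+(\gamma_\bullet)$ — is admissible. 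Here $B$ is empty, $A=I_{2g}$ by Proposition \ref{prop:foxproperty} (2), and the entries of $C$ are computed from the Fox derivatives of the $w_j$; since each relator $r_j$ maps to $1$ in $\Gamma$, substituting into (\ref{eq:mag_formula}) collapses to $r_\rho(M)=-C={}^{\rho}\!\left(\overline{\partial\varphi(\gamma_j)/\partial\gamma_i}\right)_{i,j}$. Equivalently, since $M$ deformation retracts onto $i_+(\Sgb)$, the composite of Definition \ref{def:Mag2} is literally the $\rho$-twisted action of $\varphi$ on $H_1(\Sgb,\{p\};\mathcal{K}_\Gamma)$, which is Suzuki's topological realization of the Magnus matrix recalled in Section \ref{subsec:magnus_MCG_symp}.

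For $(2)$ I would argue homologically. With $M\cdot N=M\cup N$ glued along $i_-(\Sgb)=j_+(\Sgb)$, excision together with Lemma \ref{lem:relative} gives $H_\ast(M\cdot N,M;\mathcal{K}_\Gamma)\cong H_\ast(N,j_+(\Sgb);\mathcal{K}_\Gamma)=0$ and, symmetrically, $H_\ast(M\cdot N,N;\mathcal{K}_\Gamma)=0$, so both inclusions $M\hookrightarrow M\cdot N\hookleftarrow N$ induce isomorphisms on twisted homology relative to $\{p\}$. Because $i_-(\Sgb)$ and $j_+(\Sgb)$ are identified, the two resulting identifications of $H_1(M,\{p\};\mathcal{K}_\Gamma)$ and $H_1(N,\{p\};\mathcal{K}_\Gamma)$ with $H_1(M\cdot N,\{p\};\mathcal{K}_\Gamma)$ differ by $i_-\circ j_+^{-1}$; feeding this into the composite of Definition \ref{def:Mag2} for $M\cdot N$ (with markings $i_+$ and $j_-$) factors it as the map for $N$ followed by the map for $M$, with the two pieces of $\rho$ restricted along $j$ and $i$ respectively — which is precisely $r_\rho(M\cdot N)=r_{\rho\circ i}(M)\,r_{\rho\circ j}(N)$. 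Alternatively one may glue admissible presentations of $\pi_1 M$ and $\pi_1 N$ along $i_-(\gamma_\bullet)=j_+(\gamma_\bullet)$ into an admissible presentation of $\pi_1(M\cdot N)$ whose matrix $\begin{pmatrix}A\\B\end{pmatrix}$ is block triangular, and read off the product from (\ref{eq:mag_formula}) by block inversion.

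For $(3)$, the inclusions $M\hookrightarrow W\hookleftarrow N$ are integral homology isomorphisms by hypothesis, so $H_\ast(W,M;\Z)=H_\ast(W,N;\Z)=0$; since $\Gamma$ is PTFA, the same mechanism as in Lemma \ref{lem:relative} (Strebel's theorem, Remark \ref{rem:acyclic}) upgrades these to $H_\ast(W,M;\mathcal{K}_\Gamma)=H_\ast(W,N;\mathcal{K}_\Gamma)=0$, hence to isomorphisms $H_1(M,\{p\};\mathcal{K}_\Gamma)\xrightarrow{\cong}H_1(W,\{p\};\mathcal{K}_\Gamma)\xleftarrow{\cong}H_1(N,\{p\};\mathcal{K}_\Gamma)$. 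Because the positive (resp.\ negative) markings of $M$ and $N$ are glued in $\partial W$, the pulled-back coefficient systems on $\Sgb$ agree and the maps $i_\pm$, $j_\pm$ become equal after composing with the inclusions into $W$; a diagram chase then gives $(i_+)^{-1}_\ast\circ(i_-)_\ast=(j_+)^{-1}_\ast\circ(j_-)_\ast$ on $H_1(\Sgb,\{p\};\mathcal{K}_\Gamma)$, i.e.\ $r_{\rho\circ i}(M)=r_{\rho\circ j}(N)$.

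The Fox-calculus computation in $(1)$ and the matrix bookkeeping in $(2)$ are routine. The step I expect to demand the most care is the upgrade from integral to $\mathcal{K}_\Gamma$-twisted homology used in $(2)$ and $(3)$ — furnished by Lemma \ref{lem:relative} and the PTFA/Strebel acyclicity of Remark \ref{rem:acyclic} — and, inseparably, the discipline of matching all the coefficient systems $i_\pm^\ast\mathcal{K}_\Gamma$, $j_\pm^\ast\mathcal{K}_\Gamma$ and their pullbacks under the gluing maps, since it is exactly this matching that makes the twist vanish from the functoriality formula rather than reappearing as a twisting correction.
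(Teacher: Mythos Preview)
Your proposal is correct and matches the paper's approach: the paper gives no detailed proof here, only the one-line pointer ``Formula (\ref{eq:mag_formula}) gives the following properties of Magnus matrices,'' and your argument is exactly the natural fleshing-out of that pointer --- the admissible-presentation computation for $(1)$, and for $(2)$ and $(3)$ the twisted-homology definition combined with the PTFA/Strebel acyclicity of Remark \ref{rem:acyclic} (which is what underlies both Lemma \ref{lem:relative} and the formula itself). Your careful bookkeeping with the pulled-back coefficient systems $i_\pm^\ast\mathcal{K}_\Gamma$, $j_\pm^\ast\mathcal{K}_\Gamma$ is precisely the point, and your alternative block-triangular argument for $(2)$ is the literal ``formula'' route the paper has in mind.
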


Hence Magnus matrices are invariants of {\it pairs} of 
a homology cylinder and a homomorphism $\rho$. 
To obtain a map from a submonoid of $\Cgb$ solely, 
we need a natural 
choice of $\rho$ for all homology cylinders involved 
that have some compatibility with respect 
to the product operation in $\Cgb$. 
For that purpose, we here use 
the nilpotent quotient $N_k(\pi)$ with 
fixed $k \ge 2$. Using Stallings' theorem, 
we can consider the composition
\[\mathfrak{q}_k: \pi_1 M \longrightarrow 
N_k( \pi_1 M) \xrightarrow[\cong]{i_+^{-1}}
N_k (\pi)\]
for every $(M,i_+,i_-) \in \Cgb$. 
Then we have a map 
\[r_{\mathfrak{q}_k}: \Cgb \longrightarrow 
\mathrm{GL}(2g,\mathcal{K}_{N_k (\pi)})\]
and Proposition \ref{prop:mag_rho} can be rewritten 
as follows: 
\begin{proposition}[{\cite{sakasai08}}] Let $k \ge 2$ 
be an integer. 
\begin{itemize}
\item[$(1)$] The map $r_{\mathfrak{q}_k}$ extends 
the corresponding 
Magnus representation for $\Mgb$. 

\item[$(2)$] The map $r_{\mathfrak{q}_k}$ is a crossed homomorphism, 
namely, the equality 
\[r_{\mathfrak{q}_k}(M_1 \cdot M_2) = 
r_{\mathfrak{q}_k}(M_1) \cdot 
{}^{\sigma_k(M_1)} (r_{\mathfrak{q}_k}(M_2))\]
holds for any $M_1, M_2 \in \Cgb$ 
by using $\sigma_k:\Cgb \to \Aut (N_k(\pi))$. 

\item[$(3)$] $r_{\mathfrak{q}_k}$ induces a crossed homomorphism 
$r_{\mathfrak{q}_k}: \Hgb \to \mathrm{GL}(2g,\mathcal{K}_{N_k (\pi)})$. 
\end{itemize}
\end{proposition}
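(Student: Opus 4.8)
The plan is to deduce all three assertions from Proposition~\ref{prop:mag_rho}, whose parts (1)--(3) already record the analogous facts for the two-variable invariant $r_\rho$ of a pair $(M,\rho)$; the only thing left to supply is, in each case, the identification of the homomorphism into $\Gamma=N_k(\pi)$ that one feeds into $r_\rho$. These identifications are pure bookkeeping with the markings, Stallings' theorem (Theorem~\ref{thm:st}) being used throughout to interpret the various inverse isomorphisms on nilpotent quotients.

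For (1), observe that the homology cylinder attached to $\varphi\in\Mgb$ is $M=(\Sgb\times[0,1],\,\Id\times1,\,\varphi\times0)$, for which $\pi_1 M=\pi$ and the $+$-marking induces the identity of $\pi$. Thus the composite $\mathfrak{q}_k\colon\pi_1 M\to N_k(\pi_1 M)\xrightarrow{\cong}N_k(\pi)$ used in defining $r_{\mathfrak{q}_k}$ is simply the natural projection $\pi\to N_k(\pi)$, and Proposition~\ref{prop:mag_rho}\,(1) with $\rho=\mathfrak{q}_k$ gives $r_{\mathfrak{q}_k}(M)={}^{\mathfrak{q}_k}\overline{(\partial\varphi(\gamma_j)/\partial\gamma_i)}_{i,j}$, which is exactly the value at $\varphi$ of the Magnus representation $r_{\mathfrak{q}_k}$ of $\Mgb$ from Section~\ref{subsec:JohnsonMorita}.

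For (2), write the markings of $M_1$ as $i_\pm\colon\pi\to\pi_1 M_1$ and those of $M_2$ as $j_\pm\colon\pi\to\pi_1 M_2$, and let $i\colon\pi_1 M_1\to\pi_1(M_1\cdot M_2)$, $j\colon\pi_1 M_2\to\pi_1(M_1\cdot M_2)$ be the inclusion-induced maps, so that the $+$-marking of $M_1\cdot M_2$ is $i\circ i_+$ and the gluing along $i_-\circ(j_+)^{-1}$ gives $i\circ i_-=j\circ j_+$ on fundamental groups. Taking $\rho=\mathfrak{q}_k$ for $M_1\cdot M_2$, Proposition~\ref{prop:mag_rho}\,(2) gives $r_\rho(M_1\cdot M_2)=r_{\rho\circ i}(M_1)\cdot r_{\rho\circ j}(M_2)$. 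By Stallings' theorem $i$, $i_+$, $i_-$, $j_+$ all induce isomorphisms on $N_k$, and a short diagram chase then shows $\rho\circ i=\mathfrak{q}_k$ for $M_1$, while $\rho\circ j=\sigma_k(M_1)\circ\mathfrak{q}_k$ for $M_2$, where $\sigma_k(M_1)=(i_+)^{-1}\circ i_-$ on $N_k(\pi)$. Hence $r_{\rho\circ i}(M_1)=r_{\mathfrak{q}_k}(M_1)$, and it remains to identify $r_{\rho\circ j}(M_2)$ with ${}^{\sigma_k(M_1)}(r_{\mathfrak{q}_k}(M_2))$. For this I would use that post-composing any $\rho'\colon\pi_1 N\to\Gamma$ with an automorphism $\psi$ of $\Gamma$ turns the matrices $A,B,C$ of formula~(\ref{eq:mag_formula}) into ${}^\psi A,{}^\psi B,{}^\psi C$, so that $r_{\psi\circ\rho'}(N)={}^\psi\bigl(r_{\rho'}(N)\bigr)$; taking $\psi=\sigma_k(M_1)$ completes the crossed-homomorphism identity.

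For (3), given a homology cobordism $W$ from $M$ to $N$, both inclusions $\pi_1 M\to\pi_1 W$ and $\pi_1 N\to\pi_1 W$ are $2$-connected, so I would set $\rho\colon\pi_1 W\to N_k(\pi)$ to be $\pi_1 W\to N_k(\pi_1 W)\xrightarrow{\cong}N_k(\pi_1 M)\xrightarrow{(i_+)^{-1}}N_k(\pi)$. Since $M$ and $N$ carry matching $+$-markings inside $W$, the same marking/Stallings bookkeeping as in (2) shows that $\rho$ restricts along the two inclusions to $\mathfrak{q}_k$ for $M$ and to $\mathfrak{q}_k$ for $N$; Proposition~\ref{prop:mag_rho}\,(3) then gives $r_{\mathfrak{q}_k}(M)=r_{\mathfrak{q}_k}(N)$, so $r_{\mathfrak{q}_k}$ descends to $\Hgb$, and it stays a crossed homomorphism there because $\sigma_k$ descends to $\sigma_k\colon\Hgb\to\Aut(N_k(\pi))$ as already recalled. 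The one genuinely delicate point I expect is the marking bookkeeping in (2) — tracking which copy of $\pi$ sits where under the gluing and confirming that the automorphism appearing in the cocycle relation is $\sigma_k(M_1)$ rather than its inverse or $\sigma_k(M_2)$ — but once the square of induced maps on $N_k$ is drawn, everything else is a direct appeal to Proposition~\ref{prop:mag_rho} together with the transformation rule for~(\ref{eq:mag_formula}).
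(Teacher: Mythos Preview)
Your proposal is correct and follows exactly the approach the paper intends: the paper introduces this proposition with the words ``Proposition~\ref{prop:mag_rho} can be rewritten as follows'' and gives no further argument, so the entire content is precisely the specialization $\rho=\mathfrak{q}_k$ together with the marking/Stallings bookkeeping you supply. Your identification of $\rho\circ j$ with $\sigma_k(M_1)\circ\mathfrak{q}_k$ via the gluing relation $i\circ i_-=j\circ j_+$ and the transformation rule $r_{\psi\circ\rho'}={}^{\psi}r_{\rho'}$ from formula~(\ref{eq:mag_formula}) are exactly the missing details the paper leaves implicit.
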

\noindent
As in the case of $\Mgb$, the restrictions of $r_{\mathfrak{q}_k}$ to 
$\Cgb [k]$ and $\Hgb [k]$ give genuine homomorphisms.

We can naturally generalize the arguments 
in Sections \ref{subsec:magnus_MCG_symp} and 
\ref{subsec:magnus_MCG_nonfaithful}. For example, 
the (twisted) symplecticity 
\begin{equation}\label{eq:symplecticity}
\overline{r_{\mathfrak{q}_k} (M)^T} \ 
{}^{\mathfrak{q}_k} \!\widetilde{J} \ 
r_{\mathfrak{q}_k} (M) = 
{}^{\sigma_k (M)} ({}^{\mathfrak{q}_k} \!\widetilde{J}) 
\in \mathrm{GL}(2g,\mathcal{K}_{N_k (\pi)})
\end{equation}
\noindent
holds. 
Note that the proof in \cite{sa3} is also applicable to 
the universal Magnus representation $r$. 

\begin{remark}
The author does not know whether we can define 
(crossed) homomorphisms from $\Cgb$ and $\Hgb$ by using 
derived quotients of $\pi_1 M$. 
This is because there are no results for derived 
quotients of groups corresponding completely to 
Stallings' theorem except that 
Cochran-Harvey \cite{coc_har} gave a partial result, 
which was 
used to define homology cobordism invariants of 3-manifolds 
arising from $L^2$-signature invariants 
(see Harvey \cite{har3} for example). 
\end{remark}

\begin{example}[{\cite[Example 4.4]{sakasai08}}]\label{ex:string_comp} 
Let $L$ be the pure string link of Figure \ref{fig:ex4_4} with 
$2$ strings. 

\begin{figure}[htbp]
\begin{center}
\includegraphics{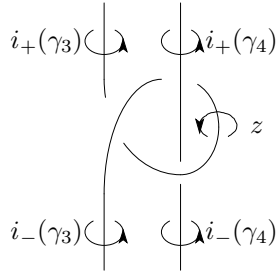}
\end{center}
\caption{String link $L$}
\label{fig:ex4_4}
\end{figure}

By Levine's construction in Example \ref{sgtocg}, 
$L$ yields a homology cylinder 
$(M_L,i_+,i_-) \in \Cgb$ with 
$\pi_1 M_L$ having an admissible presentation: 
\[{\small \left\langle
\begin{array}{c|c}
\begin{array}{c}
i_-(\gamma_1),\ldots,i_-(\gamma_4)\\
z\\
i_+(\gamma_1),\ldots,i_+(\gamma_4)\\
\end{array} &
\begin{array}{l}
i_+(\gamma_1) i_-(\gamma_3)^{-1} i_+(\gamma_4) i_-(\gamma_1)^{-1},\\
{[i_+(\gamma_1),i_+(\gamma_3)]} i_+(\gamma_2) z i_-(\gamma_2)^{-1} 
{[i_-(\gamma_3),i_-(\gamma_1)]},\\
i_+(\gamma_4) i_-(\gamma_3) i_+(\gamma_4)^{-1} z^{-1},\\
i_-(\gamma_3) i_+(\gamma_3)^{-1} i_-(\gamma_3)^{-1} z, \ 
i_-(\gamma_4) z^{-1} i_+(\gamma_4)^{-1} z

\end{array}
\end{array}\right\rangle}.\]

Let us compute the Magnus matrix $r_{\mathfrak{q}_2}(M_L)$. 
We identify $H=N_2 (\pi)$ and $N_2 (\pi_1 M_L)=H_1 (M_L)$ by 
using $i_+$. From the presentation, we have $z=i_-(\gamma_3)=\gamma_3$, 
$i_-(\gamma_4)=\gamma_4$, $i_-(\gamma_2)=\gamma_2\gamma_3$ and 
$i_-(\gamma_1)=\gamma_1\gamma_3^{-1}\gamma_4$ in $H$. Then 
\begin{align*}
\begin{pmatrix}A \\ B \end{pmatrix} &=
\begin{pmatrix}
-1&\gamma_3^{-1}-1& 0 & 0 & 0 \\
0 & -1 & 0 & 0 & 0 \\
-\gamma_1^{-1}\gamma_3 & 
1- \gamma_1^{-1} \gamma_3 \gamma_4^{-1} &
\gamma_4^{-1}& 1-\gamma_3 & 0\\
0 & 0 & 0 & 0 & 1 \\
0 & \gamma_2^{-1}& -1 & \gamma_3 & \gamma_3-\gamma_3 \gamma_4^{-1}
\end{pmatrix}, \\
C &=\begin{pmatrix}
1& 1-\gamma_3^{-1} &0&0&0\\
0&1&0&0&0\\
0&\gamma_1^{-1} -1&0&-1&0\\
\gamma_1^{-1}\gamma_3&0&1-\gamma_3^{-1}& 0&-\gamma_3 
\end{pmatrix},
\end{align*}
\noindent
over $\Z [H]$. The Magnus matrix $r_{\mathfrak{q}_2}(M_L) = -C 
\begin{pmatrix}A \\ B \end{pmatrix}^{-1} \!
\begin{pmatrix}I_4 \\ 0_{(1,4)}\end{pmatrix}$ is given by
\[
\left(\begin{array}{cccc}
1&0&0&0\\
&&&\\
0&1&0&0
\\
&&&\\
\frac{-\gamma_1^{-1}}{\gamma_3^{-1}+\gamma_4^{-1}-1}
&\frac{\gamma_2^{-1} \gamma_3^{-1} \gamma_4^{-1} 
-\gamma_4^{-1}+1}{\gamma_3^{-1}+\gamma_4^{-1}-1} &
\frac{\gamma_3^{-1}}{\gamma_3^{-1}+\gamma_4^{-1}-1}
&\frac{\gamma_4^{-1}(\gamma_4^{-1}-1)}{\gamma_3^{-1}+\gamma_4^{-1}-1}\\
&&&\\
\frac{\gamma_1^{-1}\gamma_3\gamma_4^{-1}}{\gamma_3^{-1}+\gamma_4^{-1}-1}
& \frac{(1-\gamma_3^{-1})(\gamma_2^{-1} \gamma_3^{-1} 
-\gamma_2^{-1}-1)}{\gamma_3^{-1}+\gamma_4^{-1}-1} 
&\frac{\gamma_3^{-1}-1}{\gamma_3^{-1}+\gamma_4^{-1}-1}
&\frac{-\gamma_3^{-1}\gamma_4^{-1}+\gamma_3^{-1}
+2\gamma_4^{-1}-1}{\gamma_3^{-1}+\gamma_4^{-1}-1}
\end{array}\right).\]
Note that 
\[\det (r_{\mathfrak{q}_2}(M_L))= \gamma_3^{-1} \gamma_4^{-1} 
\frac{\gamma_3 + \gamma_4-1}{\gamma_3^{-1}+\gamma_4^{-1}-1}.\]
Since $r_{\mathfrak{q}_2}(M_L)$ has an entry not belonging to $\Z [H]$, 
we see that $M_L$ is not in $\Mgb$. 
In other words, $L$ is not a braid. 
\end{example}

We close this section by introducing another invariant of 
homology cylinders called the {\it $\Gamma$-torsion}. 
We refer to Milnor \cite{milnor}, Turaev \cite{turaev} 
and Rosenberg \cite{ros} 
for generalities of torsions and basics of $K_1$-group. 
Here we only recall that for a ring $R$, the abelian group $K_1 (R)$ is 
defined as the abelianization of the group $\mathrm{GL}(R)=
\dis\varinjlim_{n} \mathrm{GL}(n,R)$ 
of invertible matrices with entries in $R$. 
By Lemma \ref{lem:relative}, the relative complex 
$C_\ast (M,i_+(\Sigma_{g,1});\mathcal{K}_\Gamma)$ 
obtained from any cell decomposition of $(M, i_+(\Sigma_{g,1}))$ 
is acyclic, so that the torsion 
$\tau(C_\ast (M,i_+(\Sigma_{g,1}); \mathcal{K}_\Gamma))$ 
can be defined. 
\begin{definition}
Let $M=(M,i_+,i_-) \in \Cgb$ with a homomorphism 
$\rho:\pi_1 M \to \Gamma$ into a PTFA group $\Gamma$. 
The $\Gamma$-{\it torsion} $\tau_{\rho}^+ (M)$ 
of $M$ is defined by 
\[\tau_{\rho}^+ (M):=
\tau(C_\ast (M,i_+(\Sgb);\mathcal{K}_\Gamma))
\in K_1 (\mathcal{K}_\Gamma)/\pm \rho (\pi_1 M).\]
\end{definition}
\noindent
Note that for any field $\mathcal{K}$, 
the {\it Dieudonn\'e determinant} 
gives an isomorphism $K_1(\mathcal{K}) \cong H_1 (\mathcal{K}^\times)$, 
where $\mathcal{K}^\times=\mathcal{K}-\{0\}$ denotes the unit group. 
The $\Gamma$-torsion is trivial when 
$(M,i_+,i_-) \in \Cgb$ is contained in $\Mgb$ 
since $M=\Sgb \times [0,1]$ collapses 
to $i_+(\Sgb)$. 

The $\Gamma$-torsion satisfies the following properties: 
\begin{proposition}
Suppose $\Gamma$ is a PTFA group and 
$M, N \in \Cgb$. 
\begin{itemize}
\item [$(1)$] 
For a homomorphism $\rho:\pi_1 M \to \Gamma$, 
the $\Gamma$-torsion $\tau_\rho^+ (M)$ 
can be computed from 
any admissible presentation of $\pi_1 M$ and is given by 
\ $\begin{pmatrix}A \\ B \end{pmatrix} \in 
K_1 (\mathcal{K}_\Gamma)/\pm \rho (\pi_1 M)$.  

\item [$(2)$] $($Functoriality\,$)$ For a homomorphism 
$\rho:\pi_1 (M \cdot N) \to \Gamma$, we have 
\[\tau_{\rho}^+ (M \cdot N) = 
\tau_{\rho \circ i}^+ (M) \cdot \tau_{\rho \circ j}^+ (N),\]
where $i:\pi_1M \to \pi_1 (M \cdot N)$ and 
$j:\pi_1 N \to \pi_1 (M \cdot N)$ are the induced maps from 
the inclusions $M \hookrightarrow M \cdot N$ and 
$N \hookrightarrow M \cdot N$. 
\end{itemize}
\end{proposition}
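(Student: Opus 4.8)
The plan is to prove (1) and (2) by realizing $(M,i_+(\Sgb))$ and $(M\cdot N,i_+(\Sgb))$ by explicit relative CW (equivalently, handle) structures, identifying the associated twisted chain complexes with the Fox--derivative matrices of Proposition~\ref{prop:MagnusFormula}, and then reading off torsions as $K_1$-classes. For (1) I would first build a handle decomposition of $M$ relative to $i_+(\Sgb)$: since $M$ is a homology product, it is obtained from $i_+(\Sgb)\times[0,1]$ by attaching equal numbers of $1$- and $2$-handles, and after deformation retracting $i_+(\Sgb)$ onto the wedge $S$ of the circles $i_+(\gamma_1),\dots,i_+(\gamma_{2g})$ this yields a relative $2$-complex realizing an admissible presentation: the cells not lying in $S$ are the $2g+l$ one-cells $i_-(\gamma_i),z_k$ and the $2g+l$ two-cells $r_j$, while the one-cells $i_+(\gamma_i)$ lie in $S$ and contribute nothing to the relative complex. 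Lifting loops to the $\Gamma$-cover exactly as in the computation of the matrix $D_2$ in Section~\ref{subsec:lowdim}, the relative chain complex $C_\ast(M,i_+(\Sgb);\mathcal{K}_\Gamma)$ becomes $0\to\mathcal{K}_\Gamma^{2g+l}\xrightarrow{\left(\begin{smallmatrix}A\\B\end{smallmatrix}\right)}\mathcal{K}_\Gamma^{2g+l}\to 0$ with vanishing degree-zero part.

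By Lemma~\ref{lem:relative} this complex is acyclic, so $\left(\begin{smallmatrix}A\\B\end{smallmatrix}\right)$ is invertible over $\mathcal{K}_\Gamma$ (recovering the invertibility asserted in Proposition~\ref{prop:MagnusFormula}), and the torsion of a two-term acyclic complex concentrated in degrees $2,1$ is, with the standard sign conventions, the $K_1$-class of its single differential; hence $\tau_\rho^+(M)=\left(\begin{smallmatrix}A\\B\end{smallmatrix}\right)\in K_1(\mathcal{K}_\Gamma)/\pm\rho(\pi_1 M)$ for this particular presentation. To pass to an \emph{arbitrary} admissible presentation I would use that any two admissible presentations of $\pi_1 M$ are connected by admissible Tietze moves (insertion or deletion of a generator together with a defining relation, conjugation and inversion of relators, reordering), each inducing a simple homotopy equivalence of the associated relative $2$-complexes; since Reidemeister torsion over $\mathcal{K}_\Gamma$ is a simple homotopy invariant, $\tau_\rho^+(M)$ is the same class in $K_1(\mathcal{K}_\Gamma)/\pm\rho(\pi_1 M)$ however the admissible presentation is chosen.

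For (2) I would concatenate admissible presentations of $\pi_1 M$ and $\pi_1 N$ and identify the generator symbols $i_-^M(\gamma_i)$ with $j_+^N(\gamma_i)$ (the gluing defining $M\cdot N$); by van Kampen this is an admissible presentation of $\pi_1(M\cdot N)$ whose $i_-$-generators are those of $N$, whose interior $z$-generators are $z^N_\bullet$, then $i_-^M(\gamma_\bullet)=j_+^N(\gamma_\bullet)$, then $z^M_\bullet$, whose $i_+$-generators are those of $M$, and whose relators are $r^N_\bullet$ followed by $r^M_\bullet$. Writing out $\left(\begin{smallmatrix}A\\B\end{smallmatrix}\right)$ for this presentation — evaluating $N$'s Fox derivatives through $\rho\circ j$ and $M$'s through $\rho\circ i$ — it is block lower triangular with diagonal blocks $\left(\begin{smallmatrix}A^N\\B^N\end{smallmatrix}\right)$ and $\left(\begin{smallmatrix}A^M\\B^M\end{smallmatrix}\right)$, the off-diagonal block being the $C^N$-part of $N$ (arising from $\partial r^N_j/\partial j_+^N(\gamma_i)=\partial r^N_j/\partial i_-^M(\gamma_i)$). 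Multiplicativity of the Dieudonn\'e determinant, together with (1), then gives $\tau_\rho^+(M\cdot N)=\tau_{\rho\circ i}^+(M)\cdot\tau_{\rho\circ j}^+(N)$ in $K_1(\mathcal{K}_\Gamma)/\pm\rho(\pi_1(M\cdot N))$, after noting that $(\rho\circ i)(\pi_1 M)$ and $(\rho\circ j)(\pi_1 N)$ both map into $\rho(\pi_1(M\cdot N))$ so that the two factors make sense in this quotient. Equivalently, one can deduce (2) from the short exact sequence $0\to C_\ast(M,i_+^M(\Sgb))\to C_\ast(M\cdot N,i_+^M(\Sgb))\to C_\ast(M\cdot N,M)\to 0$ with the identification $C_\ast(M\cdot N,M)\cong C_\ast(N,j_+^N(\Sgb))$ coming from the cell structure: all three complexes are acyclic by Lemma~\ref{lem:relative}, so Milnor's multiplicativity of torsion in short exact sequences applies with no correction term.

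I expect the main obstacle to be the well-definedness bookkeeping in (1): passing from one handle-theoretic admissible presentation to an arbitrary one requires checking that the admissible Tietze moves genuinely implement simple homotopy equivalences of the pairs (not merely homotopy equivalences), so that the $K_1$-class $\left(\begin{smallmatrix}A\\B\end{smallmatrix}\right)$ is literally independent of the admissible presentation modulo the unavoidable $\pm\rho(\pi_1 M)$. Everything else — the lift-of-loops identification of the differential, acyclicity from Lemma~\ref{lem:relative}, the block-triangular structure used in (2), and multiplicativity of the Dieudonn\'e determinant — is routine once the CW models are in place.
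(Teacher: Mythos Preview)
The paper does not prove this proposition; it is stated without argument, the methods being parallel to those behind Propositions~\ref{prop:MagnusFormula} and~\ref{prop:mag_rho} (cited from \cite{gs08,sakasai08}). Your outline is correct and is essentially the standard route taken there: realize $(M,i_+(\Sgb))$ by a relative handle structure with only $1$- and $2$-handles, identify the twisted relative chain complex with the two-term complex $\mathcal{K}_\Gamma^{2g+l}\xrightarrow{\left(\begin{smallmatrix}A\\B\end{smallmatrix}\right)}\mathcal{K}_\Gamma^{2g+l}$, read off the torsion, and for (2) use either the block-triangular matrix from the concatenated admissible presentation or Milnor's multiplicativity for the short exact sequence $0\to C_\ast(M,i_+^M(\Sgb))\to C_\ast(M\cdot N,i_+^M(\Sgb))\to C_\ast(N,j_+^N(\Sgb))\to 0$ of acyclic complexes.

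Your flagged concern --- independence of $\left[\begin{smallmatrix}A\\B\end{smallmatrix}\right]\in K_1(\mathcal{K}_\Gamma)/\pm\rho(\pi_1 M)$ from the admissible presentation --- is the right thing to worry about, but you are making it harder than necessary by phrasing it as simple-homotopy invariance of \emph{pairs of spaces}. It is cleaner to argue directly at the matrix level: any two admissible presentations of $\pi_1 M$ (with the same distinguished generators $i_\pm(\gamma_i)$) are connected by (i) insertion/deletion of a $z$-generator together with a defining relation, (ii) replacing a relator $r_j$ by $r_j\cdot(w\,r_k^{\pm1}\,w^{-1})$, by $w\,r_j\,w^{-1}$, or by $r_j^{-1}$, and (iii) re-indexing. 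Each of these changes $\left(\begin{smallmatrix}A\\B\end{smallmatrix}\right)$ by an elementary row/column operation, a stabilization by $\left(\begin{smallmatrix}1&0\\ \ast&\ast\end{smallmatrix}\right)$, or multiplication of a row/column by an element of $\pm\rho(\pi_1 M)$ (using that $\rho(r_j)=1$), hence leaves the $K_1$-class unchanged. Since the handle-theoretic presentation already identifies this class with $\tau_\rho^+(M)$, you are done --- no need to produce simple-homotopy equivalences between the abstract $2$-complexes and $M$ itself.
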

By an argument similar to $r_{\mathfrak{q}_k}$, 
we can obtain a crossed homomorphism 
\[\tau_{\mathfrak{q}_k}^+ : 
\Cgb \longrightarrow 
K_1 (\mathcal{K}_{N_k (\pi)})/(\pm N_k (\pi))\]
for $\Gamma=N_k (\pi)$.

\begin{example}
For the homology cylinder $M_L$ in Example \ref{ex:string_comp}, we 
have 
\[\det (\tau_{\mathfrak{q}_2}^+ (M_L))
= -1 + \gamma_3 -\gamma_3 \gamma_4^{-1}.\]
Since it is non-trivial, we see again that $M_L \notin \Mgb$.
\end{example}

\section{Applications of Magnus representations to homology cylinders}
\label{sec:application}
The final section presents a number of applications of Magnus 
representations to homology cylinders. 
The following subsections are 
independent of each other.

\subsection{Higher-order Alexander invariants and 
homologically fibered knots}\label{subsec:higher-order}

Let $G$ be a group and let 
$\rho:G \to \Gamma$ be a homomorphism 
into a PTFA group $\Gamma$. 
For a pair $(G,\rho)$, the {\it higher-order Alexander module} 
$\mathcal{A}^\rho (G)$ is defined by
\[\mathcal{A}^\rho (G):= H_1 (G;\Z [\Gamma]),\]
where $\Z [\Gamma]$ is regarded as a $\Z [G]$-module through 
$\rho$. 
{\it Higher-order Alexander invariants} generally indicate 
invariants derived from $\mathcal{A}^\rho (G)$. 
After having been defined 
and developed by Cochran-Orr-Teichner \cite{cot}, 
Cochran \cite{coc} and Harvey \cite{har,har2}, 
many applications to the theory of knots and 3-manifolds were 
obtained. 
In the theory of higher-order Alexander invariants, 
one of the important problems was to find methods 
for computing the invariants explicitly and extract 
topological information from them. 
This problem arises from 
the difficulty in non-commutative rings involved in the definition. 

Let $K$ be a knot in $S^3$. 
We fix a homomorphism 
$\rho:G(K)=\pi_1 (E(K)) \to \Gamma$ 
into a PTFA group $\Gamma$. 
It was shown in Cochran-Orr-Teichner \cite[Section 2]{cot} 
and Cochran \cite[Section 3]{coc} that 
$H_\ast (E(K);\mathcal{K}_\Gamma)=0$ if $\rho$ is non-trivial. 
Then we can define the torsion 
\[\tau_{\rho} (E(K)):=
\tau (C_\ast (E(K);\mathcal{K}_\Gamma)) 
\in K_1 (\mathcal{K}_\Gamma)/\pm \rho (G(K)). \]
Friedl \cite{fri} observed that this torsion $\tau_{\rho} (E(K))$ can be 
regarded as a higher-order Alexander invariant for $G(K)$. In the case where 
$\rho$ is the the abelianization map 
$\rho_1:G(K) \to \langle t \rangle$, 
Milnor's formula \cite{milnor2} 
$\tau_{\rho_1}(E(K))=\displaystyle\frac{\Delta_K (t)}{1-t}$ is recovered. 

We now try to understand the higher-order invariant $\tau_\rho (E(K))$ 
for a homologically fibered knot $K$ by factorizing it 
into the invariants we have seen in the previous section. 
The formula is given as follows:
\begin{theorem}[{\cite[Theorem 3.6]{gs10}}]\label{thm:factorization}
Let $K$ be a homologically fibered knot with a minimal genus 
Seifert surface $R$ of genus $g$ and let $(M_R,i_+,i_-) \in \Cgb$ be 
the corresponding homology cylinder. 
For any non-trivial homomorphism 
$\rho:G (K) \to \Gamma$ into 
a PTFA group $\Gamma$, a loop $\mu$ representing the meridian of 
$K$ satisfies $\rho(\mu) \neq 1 \in \Gamma$ and we have 
a factorization 
\begin{equation}\label{eq:factorization_eq}
\tau_\rho (E(K)) = \frac{\tau_{\rho}^+ (M_R) \cdot 
(I_{2g} -\rho (\mu) r_{\rho} (M_R))}{1-\rho(\mu)} \ \ 
\in K_1(\mathcal{K}_\Gamma)/\pm\rho(G(K))
\end{equation}
of the torsion $\tau_\rho (E(K))$. 
\end{theorem}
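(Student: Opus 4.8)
The plan is to realize the knot exterior $E(K)$ as a gluing of the homology cylinder $M_R$ with a model piece coming from a tubular neighborhood of $K$, and then to compute the torsion $\tau_\rho(E(K))$ multiplicatively using the multiplicativity of Reidemeister torsion along a Mayer--Vietoris decomposition. Recall that $M_R$ is obtained by cutting $E(K)$ along the minimal genus Seifert surface $R\cong\Sgb$; conversely $E(K)$ is recovered from $M_R$ by identifying $i_+(\Sgb)$ with $i_-(\Sgb)$ in a way that records how the two copies of $R$ sit in $\partial M_R$ and how the meridian $\mu$ threads through. Concretely, $E(K)$ is the mapping-torus-type closure: one glues $\Sgb\times[0,1]$ to $M_R$ along $i_+(\Sgb)\sqcup i_-(\Sgb)$ with the interval direction carrying the meridian $\mu$. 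First I would fix an admissible presentation (\ref{admissible}) of $\pi_1 M_R$ and use it, together with a Wirtinger-type or CW-description of the gluing region, to write down a cell structure on $E(K)$ adapted to the decomposition $E(K)=M_R\cup_{i_+(\Sgb)\sqcup i_-(\Sgb)}(\Sgb\times[0,1])$.

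The key computational step is the short exact sequence of $\mathcal{K}_\Gamma$-complexes
\[
0\longrightarrow C_\ast(\Sgb\times\{0,1\};\mathcal{K}_\Gamma)\longrightarrow C_\ast(M_R;\mathcal{K}_\Gamma)\oplus C_\ast(\Sgb\times[0,1];\mathcal{K}_\Gamma)\longrightarrow C_\ast(E(K);\mathcal{K}_\Gamma)\longrightarrow 0,
\]
which by Lemma \ref{lem:relative} (and the Strebel-type acyclicity of Remark \ref{rem:acyclic}) has acyclic outer terms once we pass to the relative complexes rel the basepoint, since $\rho$ is non-trivial and $\Gamma$ is PTFA. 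Milnor's multiplicativity of torsion then gives $\tau_\rho(E(K))$ as the alternating product of the torsions of the three relative complexes together with the torsion of the long exact homology sequence — but here all the complexes involved are acyclic, so $\tau_\rho(E(K))=\tau_\rho^+(M_R)\cdot\tau(\Sgb\times[0,1])\cdot\tau(\text{gluing})^{-1}$ in $K_1(\mathcal{K}_\Gamma)/\pm\rho(G(K))$. The factor $1/(1-\rho(\mu))$ is exactly the torsion of the "interval with monodromy $\mu$" piece, i.e. of the twisted complex computing $H_\ast(S^1;\mathcal{K}_\Gamma)$-type data, generalizing Milnor's $\Delta_K(t)/(1-t)$ formula; this is where the hypothesis $\rho(\mu)\ne 1$ enters to guarantee acyclicity of that piece. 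Along the way I must verify that $\rho(\mu)\ne 1$: since $\mu$ normally generates $H_1(E(K))\cong\Z$ and $\rho$ is non-trivial with PTFA target (hence torsion-free abelianization), $\rho(\mu)=1$ would force $\rho$ trivial, a contradiction.

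The remaining factor $I_{2g}-\rho(\mu)r_\rho(M_R)$ is identified by tracking the gluing map on $H_1(\Sgb,\{p\};\mathcal{K}_\Gamma)\cong\mathcal{K}_\Gamma^{2g}$. The two inclusions $i_+,i_-$ induce, after composing with the identifications of Lemma \ref{lem:relative}, precisely the identity and the Magnus matrix $r_\rho(M_R)$ (Definition \ref{def:Mag2}); the meridian direction twists the $i_-$-side by $\rho(\mu)$, so the "difference" operator controlling the gluing torsion is $I_{2g}-\rho(\mu)r_\rho(M_R)$, whose Dieudonné determinant is the contribution to $\tau_\rho(E(K))$. Here I would lean on the explicit matrix formula (\ref{eq:mag_formula}) of Proposition \ref{prop:MagnusFormula} and the block structure of the matrices $A,B,C$ built from an admissible presentation: the $\begin{pmatrix}A\\B\end{pmatrix}$-block yields $\tau_\rho^+(M_R)$ by Proposition \ref{prop:mag_rho}(1) and its analogue for $\tau^+$, while the $C$-block together with the meridian relation assembles into $I_{2g}-\rho(\mu)r_\rho(M_R)$. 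Combining the three contributions gives (\ref{eq:factorization_eq}).

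The main obstacle, I expect, is the bookkeeping of basepoints, Euler-structure/sign ambiguities, and the normalization of torsion so that the equality holds on the nose in $K_1(\mathcal{K}_\Gamma)/\pm\rho(G(K))$ rather than merely up to an unspecified unit — in particular, matching the torsion of the interval-with-monodromy piece to $1/(1-\rho(\mu))$ with the correct orientation of $\mu$, and checking that the stated quotient by $\pm\rho(G(K))$ absorbs exactly the indeterminacies introduced by the choice of admissible presentation and cell structure. A secondary technical point is ensuring the Mayer--Vietoris sequence of based chain complexes is compatible with the chosen bases (the lifts $\widetilde{\gamma_i}$) on all three pieces, so that no spurious unit appears when applying multiplicativity; this is routine but must be done carefully given the non-commutativity of $\mathcal{K}_\Gamma$ and the use of the Dieudonné determinant.
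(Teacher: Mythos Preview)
This survey does not prove Theorem~\ref{thm:factorization}; it is quoted from \cite[Theorem~3.6]{gs10} with no argument given here, so there is no in-paper proof to compare against. That said, your strategy is the standard one for such factorization formulas and is almost certainly what is carried out in \cite{gs10}: decompose $E(K)$ along the Seifert surface, invoke multiplicativity of Reidemeister torsion, and identify the three resulting factors with $\tau_\rho^+(M_R)$, the determinant of the gluing operator $I_{2g}-\rho(\mu)r_\rho(M_R)$ on $H_1(\Sgb,\{p\};\mathcal{K}_\Gamma)$, and the $0$-dimensional contribution $(1-\rho(\mu))^{-1}$. Your argument that $\rho(\mu)\neq 1$ is correct: $\mu$ generates $H_1(E(K))\cong\Z$, and a PTFA group has torsion-free abelianization, so a non-trivial $\rho$ cannot kill $\mu$.

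On the bookkeeping obstacle you flag: the cleanest route is not the Mayer--Vietoris sequence of chain complexes but the presentation-level computation you already gesture at. Starting from an admissible presentation (\ref{admissible}) of $\pi_1 M_R$, one obtains a deficiency-one presentation of $\pi_1 E(K)$ by adjoining a generator $\mu$ together with $2g$ relations identifying each $i_+(\gamma_j)$ with the $\mu$-conjugate of $i_-(\gamma_j)$. The Fox Jacobian of this enlarged presentation over $\mathcal{K}_\Gamma$ is block-triangular: one diagonal block is $\begin{pmatrix}A\\B\end{pmatrix}$, contributing $\tau_\rho^+(M_R)$, and the other is exactly $I_{2g}-\rho(\mu)\, r_\rho(M_R)$ via formula (\ref{eq:mag_formula}). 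Deleting the $\mu$-column (as in the Wirtinger computation of Example~\ref{ex:alexanderpolyn}) and comparing with the torsion of $E(K)$ supplies the remaining factor $(1-\rho(\mu))^{-1}$. Because torsion computed from a presentation is defined only up to $\pm\rho(\pi_1)$ from the start, this route absorbs all the sign, basepoint and basis ambiguities into the stated indeterminacy $\pm\rho(G(K))$ automatically, sidestepping the delicate basis-matching across the three Mayer--Vietoris pieces.
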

\noindent
When $K$ is a fibered knot and 
$\rho=\rho_1$, the abelianization map, 
we recover the formula (\ref{eq:alexander_fibered}) by using 
Milnor's formula mentioned above. 

The explicit computation of $\tau_\rho (E(K))$ is still difficult 
after the factorization (\ref{eq:factorization_eq}) in general. 
However, when 
we consider the projection $\rho_2: G(K) \to G(K)/G(K)^{(2)}$ to 
the metabelian quotient, which is known to be PTFA 
(see Strebel \cite{strebel}), 
then the situation gets interesting as follows. 

In the group extension
\[1\longrightarrow G(K)^{(1)}/G(K)^{(2)} 
 \longrightarrow G(K)/G(K)^{(2)} 
 \longrightarrow G(K)/G(K)^{(1)} \cong \mathbb{Z}
 \longrightarrow 1,\]
we have $G(K)^{(1)}/G(K)^{(2)} \cong H_{1}(R) \cong H_{1}(M_{R})$ 
since it coincides with 
the first homology of the infinite cyclic 
covering of $E(K)$, which can be seen as 
the product of infinitely many copies of $M_R$. 
In particular, we may regard $H \cong H_1 (M_{R})$ as 
a natural, independent of choices of minimal genus 
Seifert surfaces, 
subgroup of $G(K)/G(K)^{(2)}$. 
We can easily observe that 
$\tau_{\rho_2}^+ (M_R)=\tau_{\mathfrak{q}_2}^+ (M_R)$ and 
$r_{\rho_2} (M_{R}) = r_{\mathfrak{q}_2} (M_{R})$, 
namely they can be 
determined by computations on 
a commutative subfield $\mathcal{K}_{H} \cong \mathcal{K}_{H_1 (M_R)}$ in 
$\mathcal{K}_{G(K)/G(K)^{(2)}}$.

\begin{remark}
From the formula (\ref{eq:factorization_eq}) 
with the above observation, it seems reasonable 
to say that after applying the Dieudonn\'e determinant, 
$\tau_{\rho_2}^+ (M_R)=\tau_{\mathfrak{q}_2}^+ (M_R)$ 
is the ``bottom coefficient'' of 
$\tau_{\rho_2} (E(K))$ with respect to $\rho (\mu)$. 
Note that $\tau_{\mathfrak{q}_2}^+ (M_R)$ may be regarded as 
a special case of a {\it decategorification} of 
the sutured Floer homology as shown by Friedl-Juh\'asz-Rasmussen \cite{fjr}. 
\end{remark}

\begin{example}[{\cite[Example 6.7]{gs10}}]\label{ex:concordance}
Let $K$ and $K'$ be the knots obtained as 
the boundaries of the Seifert surfaces $R$ and $R'$ 
in Figure \ref{fig:concordance}. 
Here the side with the darker color in $R$ and $R'$ 
means the $+$-side. 

\begin{figure}[htbp]
\begin{center}
\includegraphics[width=.8\textwidth]{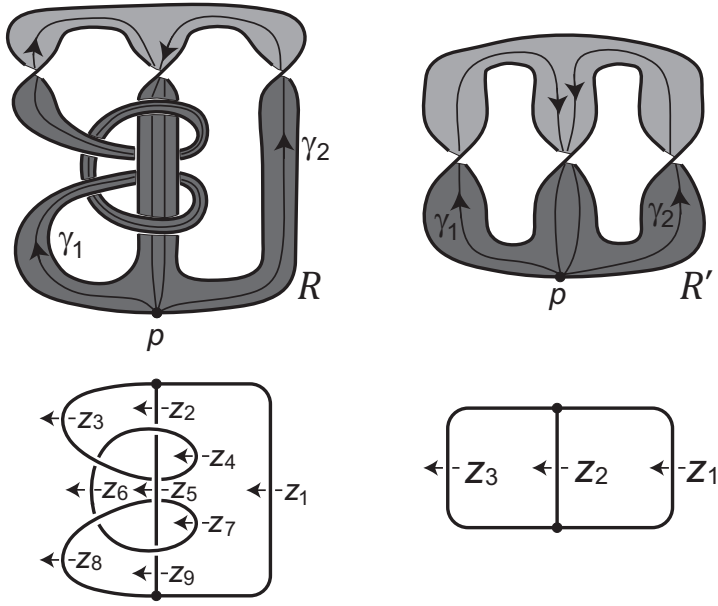}
\end{center}
\caption{Homologically fibered knots $K$ and $K'$ (Pictures are 
taken from \cite{gs10}.)}
\label{fig:concordance}
\end{figure}

$K'$ is the trefoil knot, which is a fibered knot with fiber $R'$. 
We can easily check that $K$ is a homologically fibered 
knot with a minimal genus Seifert surface $R$. 
It is known that $(M_R,i_+,i_-)$, $(M_{R'},j_+,j_-)$ 
give homology cobordant 
homology cylinders in $\mathcal{C}_{1,1}$. 
An admissible presentation of $\pi_1 M_R$ is given by 
\[{\small \left\langle
\begin{array}{c|c}
\begin{array}{c}
i_-(\gamma_1), i_-(\gamma_2)\\
z_1, \ldots, z_9\\
i_+(\gamma_1) , i_+(\gamma_2)\\
\end{array} &
\begin{array}{l}
z_{1}z_{2}z_{3},\,z_{1}z_{9}z_{8},\ 
z_{4}z_{5}z_{4}^{-1}z_2^{-1},\ 
z_{4}^{-1}z_{5}z_{3}^{-1}z_5^{-1},\\
z_{3}z_{6}z_{3}^{-1}z_4,\ 
z_{7}z_{5}z_{8}z_{5}^{-1},\ 
z_{7}^{-1}z_{9}z_{7}z_{5}^{-1},\\
i_{-}(\gamma_{1})z_{1}z_7z_{4}^{-1}z_2z_5^{-1}z_3z_8^{-1}z_5,\ 
i_{-}(\gamma_{2})z_{8}^{-1}z_7z_4^{-1}z_1^{-1},\\
i_{+}(\gamma_{1})z_7 z_{4}^{-1} z_2 z_5^{-1} z_3 z_8^{-1} z_5, \ 
i_{+}(\gamma_{2})z_{7}z_4^{-1}z_{1}^{-1}
\end{array}
\end{array}\right\rangle}.\]
\noindent
From this, we have 
\begin{align*}
\det (\tau_{\mathfrak{q}_2}^+ (M_R)) &= 3-\frac{1}{\gamma_1}-\gamma_1
-\frac{\gamma_1}{\gamma_2}+\frac{\gamma_1^2}{\gamma_2} 
+\frac{\gamma_2}{\gamma_1^2}-\frac{\gamma_2}{\gamma_1},\\
r_{\mathfrak{q}_2} (M_R) &= 
\begin{pmatrix}
1 & \gamma_2^{-1}\\
-\gamma_1^{-1} \gamma_2 & 1-\gamma_1^{-1}
\end{pmatrix},
\end{align*}
\noindent
where the value of $\det (\tau_{\mathfrak{q}_2}^+ (M_R))$ 
shows that $K$ is not fibered. 
On the other hand, an admissible presentation of 
$\pi_1 M_{R'}$ is given by 
\[{\small \left\langle
\begin{array}{c|c}
\begin{array}{c}
i_-(\gamma_1), i_-(\gamma_2)\\
z_1, z_2, z_3\\
i_+(\gamma_1) , i_+(\gamma_2)\\
\end{array} &
\begin{array}{l}
z_{1}z_{2}z_{3},\ i_-(\gamma_1)z_{3}^{-1},\ 
i_-(\gamma_2)z_3^{-1}z_1^{-1}, \\
i_+(\gamma_1) z_2,\ 
i_+(\gamma_2)z_1^{-1}
\end{array}
\end{array}\right\rangle}\]
and we have
\begin{align*}
\det (\tau_{\mathfrak{q}_2}^+ (M_R)) &= \frac{1}{\gamma_2},\\
r_{\mathfrak{q}_2} (M_R) &= 
\begin{pmatrix}
1 & \gamma_2^{-1}\\
-\gamma_1^{-1} \gamma_2 & 1-\gamma_1^{-1}
\end{pmatrix}.
\end{align*}
\end{example}
From this example, we see that the torsion $\tau_\rho^+$ is not 
preserved under homology cobordism relation in general. 
See also the formula (\ref{formula:hcob}) mentioned later. 
More examples are exhibited in \cite{gs10} with particular 
interest in non-fiberedness of homologically fibered knots. 

\subsection{Bordism invariants and signature invariants}
\label{subsec:signature}

In this subsection, we introduce two kinds of invariants 
of homology cylinders of topological nature: {\it bordism 
invariants} and {\it signature invariants}. Then we discuss 
how Magnus matrices behave in their interrelationship. 

Let us first introduce bordism invariants, 
which naturally generalize 
those for $\Mgb$ given by Heap \cite{he}. 
Since (the infinitesimal version of) those homomorphisms are 
fully discussed in the chapter of Habiro-Massuyeau 
\cite[Section 3.3]{habiromassuyeau}, 
we here recall it briefly. 

Let $(M,i_+,i_-) \in \Cgb[k]$. Then 
we have $i_+=i_-:N_k (\pi) \xrightarrow{\cong} N_k (\pi_1 M)$. 
Consider the composition 
\[f_M: M \longrightarrow K(\pi_1 M,1) \longrightarrow 
K(N_k(\pi_1 M),1) \xrightarrow{(i_+)^{-1}=
(i_-)^{-1}} K(N_k (\pi),1) \]
of continuous maps. We can assume 
$f_M \circ i_+ = f_M \circ i_-:\Sgb \to K(N_k (\pi),1)$ 
after adjusting by homotopy, if necessary. 
$f_M$ induces a continuous map 
$\widetilde{f}_M:C_M \to K(N_k,1)$ from the closure $C_M$ of $M$. 
Define a map $\theta_k : \Cgb [k] \to \Omega_3 (N_k (\pi))$ by 
\[\theta_k (M,i_+,i_-) := (C_M,\widetilde{f}_M),\]
where $\Omega_3 (N_k (\pi))$ denotes the third bordism group of 
$K(N_k (\pi),1)$. 
Then we have the following. 
\begin{theorem}[{\cite[Theorem 7.1]{sa2}}]\label{thm:invariant}
For $k \ge 2$, $\theta_k$ is a homomorphism and 
factors through $\Hgb [k]$. Moreover, 
the induced homomorphism 
$\theta_k :\Hgb [k] \to \Omega_3 (N_k (\pi))$ 
gives an exact sequence
\[1 \longrightarrow \Hgb[2k-1] \longrightarrow \Hgb[k] 
\stackrel{\theta_k}{\longrightarrow} 
\Omega_3 (N_k (\pi)) \longrightarrow 1.\]
\end{theorem}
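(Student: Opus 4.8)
The plan is to verify, in order, that $\theta_k$ is well defined on $\Cgb[k]$, that it is a monoid homomorphism, that it descends to $\Hgb[k]$, and that the induced homomorphism is surjective with kernel exactly $\Hgb[2k-1]$; this last point carries essentially all of the difficulty. (The argument is carried out in \cite[Theorem 7.1]{sa2}.)

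The formal part is straightforward. Since $K(N_k(\pi),1)$ is aspherical, $f_M$ is determined up to homotopy by the homomorphism $\pi_1M\to N_k(\pi_1M)\xrightarrow{(i_+)^{-1}}N_k(\pi)$, which is canonical because $i_+$ (hence also $i_-$, as $M\in\Cgb[k]$) is an isomorphism on $N_k$ by Stallings' theorem (Theorem \ref{thm:st}); as $f_M\circ i_+\simeq f_M\circ i_-$, the induced $\widetilde f_M\colon C_M\to K(N_k(\pi),1)$ is well defined up to homotopy, so with the orientation inherited from $M$ the class $\theta_k(M)=[C_M,\widetilde f_M]\in\Omega_3(N_k(\pi))$ depends only on the isomorphism class of $M$; for the trivial cylinder $\widetilde f_M$ factors through the projection to the two-complex $\Sgb$, so $\theta_k$ sends the unit to $0$. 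The equality $\theta_k(M_1M_2)=\theta_k(M_1)+\theta_k(M_2)$ is additivity of the closure operation in a cobordism category: I would build a compact oriented $4$-manifold over $K(N_k(\pi),1)$ with boundary $C_{M_1M_2}\sqcup(-C_{M_1})\sqcup(-C_{M_2})$ by gluing copies of $M_1\times[0,1]$, $M_2\times[0,1]$ and $(\Sgb\times[0,1])\times[0,1]$ along parts of their boundaries, the classifying maps fitting together because all markings are trivial modulo $\Gamma^k(\pi)$. Descent to $\Hgb[k]$ is the same mechanism: a homology cobordism $W$ from $M$ to $M'$ has $M\hookrightarrow W\hookleftarrow M'$ $2$-connected, so Stallings identifies $N_k(\pi_1M)\cong N_k(\pi_1W)\cong N_k(\pi_1M')$ all with $N_k(\pi)$; extending the classifying maps over $W$ and then closing $W$ up gives a bordism over $K(N_k(\pi),1)$ between $(C_M,\widetilde f_M)$ and $(C_{M'},\widetilde f_{M'})$.

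For the exact sequence I would first identify $\Omega_3(N_k(\pi))\cong H_3(N_k(\pi);\Z)$ (immediate from the Atiyah--Hirzebruch spectral sequence, since $\Omega^{SO}_i(\mathrm{pt})=0$ for $0<i\le3$) and observe that $\theta_k(M)$ is the image under $f_M$ of the relative fundamental class $[M,\partial M]$, the $(\Sgb\times[0,1])$-part of $C_M$ contributing nothing in degree three. Surjectivity then goes as follows: represent a prescribed class by a map $g\colon(Y,\partial Y)\to(K(N_k(\pi),1),\Sgb)$ with $Y$ a compact oriented $3$-manifold, $\partial Y$ the double of $\Sgb$, and $g|_{\partial Y}$ the standard map on each half, so that $g_\ast[Y,\partial Y]$ is that class; then perform interior surgeries on $Y$ along loops in the kernel of $\pi_1Y\to N_k(\pi)$ --- which change neither the relative bordism class nor $g_\ast$ --- until the two halves of $\partial Y$ induce isomorphisms on homology, so that $Y$ becomes a homology cylinder $M\in\Cgb[k]$ with $\theta_k(M)$ the prescribed class. (Alternatively one can run the handle-attachment realization underlying Theorems \ref{thm:gl} and \ref{thm:autsp}.)

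The decisive and hardest point is $\ker\theta_k=\Hgb[2k-1]$. The heart of it is the lemma that $\theta_k(M)$ depends only on $\sigma_{2k-1}(M)$: the image of $[M,\partial M]$ in $H_3(N_k(\pi))$ turns out to be a function of the Johnson homomorphisms $\tau_{k-1}(M),\dots,\tau_{2k-3}(M)$ alone --- a product-type expression whose lowest term, for $k=2$, is $\tau_1$ itself --- so it vanishes once $M\in\Cgb[2k-1]$; I would prove this by exhibiting a null-bordism of $(C_M,\widetilde f_M)$ assembled from handles dual to a presentation $\pi_1M\cong(\pi\ast F_m)/\langle x_iw_i^{-1}\rangle$ with acyclic $w_i$ as in (\ref{eq:pi1}). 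Given the lemma, $\Hgb[2k-1]\subseteq\ker\theta_k$ follows by comparison with the unit. For the reverse inclusion, start from $\theta_k(M)=0$ and a compact oriented $4$-manifold $V$ over $K(N_k(\pi),1)$ with $\partial V=C_M$ extending $\widetilde f_M$; perform surgery on $V$ below the middle dimension so that $\pi_1V\to N_k(\pi)$ becomes an isomorphism (possible since $N_k(\pi)$ is finitely presented, being finitely generated nilpotent), cut $V$ along a properly embedded $3$-manifold with boundary the separating surface $\partial M\subset C_M$, and read off a homology cobordism from $M$ to a homology cylinder $M'$ whose two markings, once $\pi_1V$ has been collapsed, are forced to agree modulo $\Gamma^{2k-1}(\pi)$ --- so $M'$, hence $M$, lies in $\Hgb[2k-1]$. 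The main obstacle throughout is keeping tight control of the nilpotent quotients of all the pieces under these surgeries and the cutting; the remaining verifications are bookkeeping. As a consistency check, for $k=2$ one has $N_2(\pi)=H$, $\Omega_3(N_2(\pi))=H_3(\Z^{2g})=\wedge^3H$ and $2k-1=3$, and the sequence becomes $1\to\Hgb[3]\to\Hgb[2]\xrightarrow{\tau_1}\wedge^3H\to1$, i.e.\ $\theta_2$ is the first Johnson homomorphism for homology cylinders.
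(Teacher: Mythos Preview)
Your treatment of well-definedness, the homomorphism property, descent to $\Hgb[k]$, and the identification $\Omega_3(N_k(\pi))\cong H_3(N_k(\pi))$ is fine and matches the paper. Surjectivity via surgery on a representing $3$-manifold is also in the right spirit, and your alternative through Theorems~\ref{thm:gl} and~\ref{thm:autsp} is in fact the route the paper indicates (via Orr~\cite{orr} and Levine~\cite{le1}).

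The genuine gap is in the kernel computation. For the inclusion $\Hgb[2k-1]\subset\ker\theta_k$, the paper does \emph{not} construct an explicit null-bordism; it invokes the theorem of Igusa--Orr~\cite{io} that the natural map $H_3(N_{2k-1}(\pi))\to H_3(N_k(\pi))$ is zero. If $M\in\Cgb[2k-1]$, then $\widetilde f_M$ factors through $K(N_{2k-1}(\pi),1)$, so $\theta_k(M)$ lies in the image of that map and hence vanishes. Your claim that $\theta_k(M)$ ``turns out to be a function of $\tau_{k-1}(M),\dots,\tau_{2k-3}(M)$ alone'' is true, but it is a \emph{consequence} of Igusa--Orr together with the rank computation, not something one can establish by assembling handles from an acyclic presentation as in~(\ref{eq:pi1}); that construction gives no control over which nilpotent quotient the resulting bordism respects. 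You are, in effect, proposing to reprove Igusa--Orr by hand, and nothing in your sketch comes close to doing so.

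For the reverse inclusion your surgery-and-cut argument has no mechanism producing the exponent $2k-1$. After surgery you arrange $\pi_1V\cong N_k(\pi)$, and cutting $V$ along a separating $3$-manifold gives at best a homology cobordism from $M$ to some $M'$ whose markings agree modulo $\Gamma^k(\pi)$ --- precisely the information you started with. Nothing in the construction forces agreement modulo $\Gamma^{2k-1}(\pi)$. The paper sidesteps this entirely: once $\Hgb[2k-1]\subset\ker\theta_k$ is known, the induced map $\Hgb[k]/\Hgb[2k-1]\to\Omega_3(N_k(\pi))$ is a surjection between free abelian groups of the \emph{same} finite rank (both ranks being computed, again essentially via Igusa--Orr and the known structure of the Johnson images $\mathfrak{h}_{g,1}(j)$), hence an isomorphism, and $\ker\theta_k=\Hgb[2k-1]$ follows. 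The rank-counting step is what makes the argument work; your proposal is missing both it and the Igusa--Orr input.
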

\noindent
\begin{proof}[Sketch of Proof]
The proof is divided into the following steps:
\begin{enumerate}
\item $\theta_k$ factors through $\Hgb [k]$; 
\item $\theta_k$ is actually a homomorphism;
\item $\theta_k$ is onto; 
\item $\Ker \theta_k = \Hgb [2k-1]$.
\end{enumerate}
(1) and (2) follow from standard topological constructions. 
We use arguments in Orr \cite{orr} 
and Levine \cite{le1} to 
reduce the proof of (3) to that of Theorem \ref{thm:autsp}. 
The proof of (4) proceeds as follows. 
We have a natural 
isomorphism $\Omega_3 (N_k (\pi)) \cong H_3 (N_k (\pi))$ 
by assigning $f([X]) \in H_3 (N_k (\pi))$ to 
$(X,f) \in \Omega_3 (N_k (\pi))$, 
where $[X] \in H_3 (X)$ is the fundamental class of a closed 
oriented 3-manifold $X$. 
Igusa-Orr \cite{io} showed that 
the homomorphism $H_3 (N_{2k-1} (\pi)) \to H_3 (N_k (\pi))$ 
induced by the natural projection $N_{2k-1} (\pi) \to N_k (\pi)$ 
is trivial. From this, we see that 
$\Hgb[2k-1] \subset \Ker \theta_k$. On the other hand, 
the induced homomorphism 
$\theta_k :\Hgb[k]/\Hgb[2k-1] \to \Omega_3 (N_k (\pi))$ 
turns out to be an epimorphism between 
free abelian groups of the same rank, which shows that 
it is an isomorphism. In particular, the identity 
$\Hgb[2k-1] = \Ker \theta_k$ follows. 
\end{proof}

Next, we briefly review Atiyah-Patodi-Singer's 
$\rho\,$-invariant in \cite{aps1,aps2}. 
Let $(M,g)$ be a $(2l-1)$-dimensional compact 
oriented Riemannian manifold, 
and let $\alpha:\pi_1 M \to U(m)$ be a unitary representation. 
Consider the self-adjoint operator 
$B_\alpha:\Omega^{\mathrm{even}}(M;V_\alpha) \to 
\Omega^{\mathrm{even}}(M;V_\alpha)$ 
on the space of all differential 
forms of even degree on $M$ with values in the flat bundle $V_\alpha$ 
associated with $\alpha$ defined by 
\[B_\alpha \varphi :=(\sqrt{-1})^l (-1)^{p+1} 
(\ast d_\alpha -d_\alpha \ast) \varphi\]
for $\varphi \in \Omega^{2p} (M;V_\alpha)$. 
Here, $\ast$ is the Hodge star operator. 
Then we define the spectral function $\eta_\alpha (s)$ of $B_\alpha$ by 
\[\eta_\alpha (s):=\sum_{\lambda \neq 0} (\mathrm{sign} \lambda) 
| \lambda |^{-s}, \]
where $\lambda$ runs over all non-zero eigenvalues of $B_\alpha$ with 
multiplicities. This function converges to an analytic function 
for $s \in \mathbb{C}$ having sufficiently large real part, 
and is continued analytically 
as a meromorphic function on the complex plane so that 
it takes a finite value 
at $s=0$. 
The value $\eta_{\alpha} (0)$ is called the {\it $\eta\,$-invariant} 
of $(M,g)$ associated with $\alpha$. We simply write $\eta (0)$ for 
the $\eta\,$-invariant associated with the trivial representation 
$\pi_1 M \to U(1)$. 
\begin{theorem}[Atiyah-Patodi-Singer \cite{aps2}]
The value 
\[\rho_{\alpha} (M):=\eta_{\alpha} (0)-m \cdot \eta (0)\]
does not 
depend on a metric of $M$, so that it defines a diffeomorphism 
invariant of $M$ called the $\rho\,$-invariant 
associated with $\alpha$. 
Moreover, if there exists a compact smooth manifold $N$ such that 
$M=\partial N$ and if $\alpha$ can be extended 
to a unitary representation $\widetilde{\alpha}:\pi_1 N \to U(m)$ of 
$\pi_1 N$, then 
\[\rho_\alpha (M) = m \cdot \mathrm{sign} (N) - 
\mathrm{sign}_{\widetilde{\alpha}} (N)\]
holds, where $\mathrm{sign} (N)$ and $\mathrm{sign}_{\widetilde{\alpha}} (N)$ 
denote the signature and the twisted signature of $N$. 
\end{theorem}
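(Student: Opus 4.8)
The plan is to obtain both assertions as formal consequences of the Atiyah--Patodi--Singer index theorem for manifolds with boundary, applied to the (twisted) signature operator; the analytic heart of the matter --- the heat-kernel analysis producing the non-local boundary correction, the meromorphic continuation of $\eta_\alpha(s)$ and its finiteness at $s=0$, and the transgression formula under deformation of the metric --- is the content of \cite{aps1,aps2} and is quoted here rather than reproved. Throughout write $\dim M = 2l-1$; the substance lies in the case $2l\equiv 0 \bmod 4$, where, for $N$ with $\partial N = M$, the quantities $\mathrm{sign}(N)$ and $\mathrm{sign}_{\widetilde\alpha}(N)$ are the ordinary and the $V_{\widetilde\alpha}$-twisted signatures of the intersection form of $N$, and $B$ (resp.\ $B_\alpha$) is the tangential signature operator on $M$ appearing in the boundary condition.

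I would treat the second assertion first, since it is the shorter deduction. The APS signature theorem for $(N,M)$ with the APS boundary condition reads $\mathrm{sign}(N) = \int_N L(p) - \eta(0)$, while its twist by the flat unitary bundle $V_{\widetilde\alpha}$ of rank $m$ reads $\mathrm{sign}_{\widetilde\alpha}(N) = m\int_N L(p) - \eta_\alpha(0)$: the index density is local, so it changes only by the overall factor $m$, because a flat bundle has vanishing Chern--Weil forms and so $\mathrm{ch}(V_{\widetilde\alpha})$ contributes nothing beyond its rank. Subtracting $m$ times the first identity from the second gives
\[
m\cdot\mathrm{sign}(N) - \mathrm{sign}_{\widetilde\alpha}(N) \;=\; \eta_\alpha(0) - m\cdot\eta(0) \;=\; \rho_\alpha(M),
\]
which is precisely the claimed gluing-type formula. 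It is essential here that $\alpha$ extends to $\widetilde\alpha$ over $N$: this is what allows one to form the twisted de Rham complex on $N$ with APS boundary conditions and to invoke the twisted index theorem at all.

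For the first assertion --- independence of the Riemannian metric --- I would use the variational formula of \cite{aps1} for the $\eta$-invariant. For a smooth one-parameter family $g_t$ of metrics on $M$, the derivative $\tfrac{d}{dt}\eta_\alpha(0)$ is the integral over $M$ of a universal local density built from $g_t$, $\dot g_t$, their covariant derivatives and the curvature of $V_\alpha$; since $V_\alpha$ is flat, this density is exactly $m$ times the corresponding density for the untwisted operator, whence $\tfrac{d}{dt}\bigl(\eta_\alpha(0) - m\,\eta(0)\bigr)=0$. As the space of metrics on $M$ is connected, $\rho_\alpha(M)$ is independent of $g$ and thus a diffeomorphism invariant. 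Alternatively, one can deduce this from the second assertion applied to $N = M\times[0,1]$ with a metric interpolating between two given metrics on the two ends and with $\widetilde\alpha$ the pullback of $\alpha$ along $\pi_1(M\times[0,1]) = \pi_1 M$, using that $\mathrm{sign}(N) = \mathrm{sign}_{\widetilde\alpha}(N) = 0$ because $M\times[0,1]$ collapses onto $M$ and its middle-dimensional intersection form vanishes; the resulting identity says exactly that the two boundary contributions to $\rho_\alpha$ cancel.

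The main obstacle is entirely analytic and lies outside the scope of a survey: establishing the APS index theorem for the signature operator on a manifold with boundary --- in particular the appearance of the boundary term $\eta(0)$ (resp.\ $\eta_\alpha(0)$), the meromorphic continuation of $\eta_\alpha(s)$ to a value finite at $s=0$, and the variational formula invoked above --- all of which is taken from \cite{aps1,aps2}. Once this input is granted, both halves of the statement reduce to the short formal manipulations indicated.
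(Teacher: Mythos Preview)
The paper is a survey and does not supply its own proof of this theorem: it is stated as a classical result with a citation to \cite{aps2} and then used as a black box. Your sketch is correct and is exactly the standard derivation --- subtracting the twisted APS signature formula from $m$ times the untwisted one to obtain the bounding formula, and using either the locality of the variational formula for $\eta$ or the cylinder $M\times[0,1]$ trick for metric independence --- which is precisely the argument in the original Atiyah--Patodi--Singer papers you cite.
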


Levine \cite{le4} applied the theory of $\rho\,$-invariants 
to the following situation and obtained some invariants of links. 
Let $R_m (G)$ be the space of all unitary representations 
$G \to U(m)$ of a group $G$. If $G$ is generated by $l$ elements, 
$R_m (G)$ can be realized as a real algebraic subvariety of 
the direct product $U(m)^{\times l}$ of $l$-tuples of $U(m)$. 
We endow $R_m (G)$ with the usual (Hausdorff) topology 
as a subspace of $U(m)^{\times l}$. 

For a pair $(M,\alpha)$ consisting of 
an odd-dimensional closed manifold $M$ and a group 
homomorphism $\alpha:\pi_1 M \to G$, we define a function 
\[\sigma (M,\alpha):R_m (G) \longrightarrow \R\]
by $\sigma (M,\alpha)(\theta) := \rho_{\theta \circ \alpha} (M)$. 
This function has the following properties. 
\begin{theorem}[Levine \cite{le4}]
$(1)$ \ For each pair $(M,\alpha)$, there exists a proper algebraic 
subvariety $\Sigma$ of $R_m (G)$ such that 
$\sigma (M,\alpha) \bigl|_{R_m (G) - \Sigma}$ is a continuous 
real valued function. \\
$(2)$ \ If $(M,\alpha)$ and $(M',\alpha')$ are homology $G$-bordant, 
there exists an algebraic subvariety $\Sigma'$ of $R_m (G)$ 
such that 
\[\sigma (M,\alpha)\bigl|_{R_m (G) - \Sigma'} = 
\sigma (M',\alpha')\bigl|_{R_m (G) - \Sigma'}.\]
\end{theorem}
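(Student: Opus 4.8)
The plan is to run both parts through the Atiyah--Patodi--Singer theory of the $\eta$-invariant, feeding it one soft algebraic input. If $G=\langle g_1,\dots,g_l\rangle$ and $w\in G$ is a word, then the entries of $\theta(w)$, and of $\theta(w)^{-1}=\overline{\theta(w)}^{\,T}$, are real polynomials in the coordinates of $R_m(G)\subset U(m)^{\times l}$; hence, for any finite CW-complex, the twisted cellular boundary matrices in the local system $\mathbb{C}^m_{\theta\circ\alpha}$ have entries that are real polynomials on $R_m(G)$. Therefore the twisted (co)homology dimensions, being governed by the ranks of these matrices, are lower semicontinuous in $\theta$ and attain their minimum off a proper (real-)algebraic subvariety, outside of which they are locally constant. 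This is the only piece of real algebraic geometry the argument needs.

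For part (1) I would work directly on $M$, without passing to a bounding manifold. By Hodge theory for the flat connection attached to $\theta\circ\alpha$, the kernel of $B_{\theta\circ\alpha}$ on even forms is the space of twisted harmonic forms, isomorphic to $\bigoplus_p H^{2p}(M;\mathbb{C}^m_{\theta\circ\alpha})$; by the remark above, $\dim\ker B_{\theta\circ\alpha}$ is constant on the complement of a proper algebraic subvariety $\Sigma\subset R_m(G)$ (after first restricting to the smooth locus, resp.\ stratifying $R_m(G)$, so that one genuinely has a smooth family of operators). Atiyah--Patodi--Singer's analysis shows that $\eta_{\theta\circ\alpha}(0)$ varies continuously along any smooth family of flat unitary connections with $\dim\ker B_{\theta\circ\alpha}$ constant, the jumps of $\eta_{\theta\circ\alpha}(0)$ being caused precisely by eigenvalues crossing $0$, which would force $\dim\ker B_{\theta\circ\alpha}$ to jump. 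Since $\rho_{\theta\circ\alpha}(M)=\eta_{\theta\circ\alpha}(0)-m\,\eta(0)$ and $\eta(0)$ does not depend on $\theta$, this gives continuity of $\sigma(M,\alpha)(\theta)=\rho_{\theta\circ\alpha}(M)$ on $R_m(G)-\Sigma$.

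For part (2) I would use the APS bordism (index) formula. Let $(W,\Phi)$ be a homology $G$-bordism from $(M,\alpha)$ to $(M',\alpha')$: so $\partial W=M\sqcup(-M')$, the homomorphism $\Phi\colon\pi_1 W\to G$ restricts to $\alpha$ and $\alpha'$, and $W$ is a homology product over its two ends. Applying the signature theorem to $\partial W$ gives
\[
\rho_{\theta\circ\alpha}(M)-\rho_{\theta\circ\alpha'}(M')=m\,\mathrm{sign}(W)-\mathrm{sign}_{\theta\circ\Phi}(W).
\]
Because $W$ is a homology cobordism, its ordinary middle intersection form has full radical, so $\mathrm{sign}(W)=0$; and the homology-product hypothesis should force $H_\ast(W,\partial W;\mathbb{C}^m_{\theta\circ\Phi})=0$ for $\theta$ outside a proper algebraic subvariety $\Sigma'$ (again because the relevant twisted Betti numbers have generic value $0$), so $\mathrm{sign}_{\theta\circ\Phi}(W)=0$ there. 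Hence $\sigma(M,\alpha)$ and $\sigma(M',\alpha')$ agree on $R_m(G)-\Sigma'$.

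The hard part will be verifying that these jump loci are genuinely \emph{proper} algebraic subvarieties, not all of $R_m(G)$. For part (1) one must know that the generic twisted Betti numbers are actually attained, i.e.\ that the minors cutting out the jump locus do not all vanish identically on some component of $R_m(G)$. For part (2) the delicate point is upgrading ``$\mathbb{Z}$-homology product'' to ``$\mathbb{C}^m_{\theta}$-acyclic rel boundary for generic $\theta$'', which is exactly where the homological strength of a homology bordism over $BG$ must be invoked (in spirit the same mechanism as Lemma~\ref{lem:relative}). Secondary care is needed in phrasing the APS continuity statement as a statement about a smooth family over the possibly singular $R_m(G)$, and in fixing the signature conventions when $2l\equiv 2\pmod 4$.
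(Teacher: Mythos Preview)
The paper does not prove this theorem; it is quoted from Levine \cite{le4} and used as a black box. So there is no ``paper's proof'' to compare against, and I can only comment on your sketch directly.

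Your outline for part (1) is sound. The kernel of $B_{\theta\circ\alpha}$ is identified with twisted cohomology by Hodge theory, the twisted Betti numbers are upper semicontinuous and constant off a proper real algebraic subset, and APS show that $\eta_{\theta\circ\alpha}(0)$ varies continuously where no eigenvalue crosses zero. Your worry that the jump locus might fill a whole component is unfounded here: on each component the Betti numbers have a generic value attained on a Zariski-open set, and that is all you need.

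For part (2) there is a genuine slip. You claim that the homology-bordism hypothesis forces $H_\ast(W,\partial W;\mathbb{C}^m_{\theta\circ\Phi})=0$ generically, but this is false already at the trivial representation: $H_{2l}(W,\partial W;\mathbb{C})\cong\mathbb{C}$ carries the relative fundamental class, so $H_\ast(W,\partial W;\mathbb{C}^m)\neq 0$ there and hence not generically either. The correct target is $H_\ast(W,M;\mathbb{C}^m_{\theta\circ\Phi})$: if this vanishes then $H_l(W;\mathbb{C}^m_\theta)\cong H_l(M;\mathbb{C}^m_\theta)$ via $i_\ast$, so every middle-dimensional class comes from the boundary and the twisted intersection form on $H_l(W;\mathbb{C}^m_\theta)$ is identically zero, whence $\mathrm{sign}_{\theta\circ\Phi}(W)=0$. (This is the same mechanism that gives $\mathrm{sign}(W)=0$ untwisted.) Now the key point you flagged but did not resolve: the trivial representation $\theta_0$ lies in $R_m(G)$ and there $H_\ast(W,M;\mathbb{C}^m_{\theta_0})=H_\ast(W,M;\mathbb{C})^m=0$ by hypothesis, so by upper semicontinuity the generic value of $\dim H_\ast(W,M;\mathbb{C}^m_\theta)$ is zero on the component of $\theta_0$, and the locus where it is positive is a proper subvariety $\Sigma'$. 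This is how properness is obtained; the analogy with Lemma~\ref{lem:relative} is misleading, since that lemma uses a Strebel-type property of PTFA group rings which is not available for arbitrary $G$.
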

\noindent
Here, two pairs $(M,\alpha), (M',\alpha')$ are said to be 
{\it homology $G$-bordant} 
if there exists a pair $(N,\widetilde{\alpha})$ such that 
$\partial N=M' \cup -M$, 
$H_\ast (N,M)=H_\ast (N,M')=0$, and the pullback of 
$\widetilde{\alpha}$ on $\pi_1 M$ (resp. $\pi_1 M'$) coincides 
with $\alpha$ (resp. $\alpha'$) up to conjugation in $G$. Note that 
by an argument in \cite{aps3}, $\sigma (M,\alpha) \bmod \Z$ is 
continuous on $R_m (G)$. From this, we can show that 
$\sigma (M,\alpha)$ is a bounded function on $R_m (G)$. 

Now we return to our situation. 
We now consider $R_1 (N_2 (\pi))=R_1 (H)$  
to construct an invariant of $\Hgb [2]$. 
Fix a diffeomorphism $R_1 (H) \cong T^{2g}$, 
where $T^{2g}$ denotes the $2g$-dimensional torus, by using a basis of 
$H$. We give a standard measure $d\theta$ normalized by 
$\int_{T^{2g}} \, d\theta =1$ to $T^{2g}$. Then we define 
\[\rho_{H,1} : \Hgb [2] \longrightarrow \R\]
by
\[\rho_{H,1}(M,i_+,i_-) := \int_{T^{2g}} \sigma 
(C_M,\widetilde{f}_M)(\theta) \, d\theta. \]
Note that for each element of $\Hgb [2]$, 
$(C_M,\widetilde{f}_M)$ is uniquely determined up to 
homology $H$-bordism. Since 
$\sigma (C_M,\widetilde{f}_M)$ is bounded, continuous 
and takes the same value for two homology $H$-bordant 
manifolds almost everywhere in $T^{2g}$, 
the map $\rho_{H,1}$ is well-defined. 

\begin{theorem}\label{thm:mainthm} 
The map $\rho_{H,1} : \Hgb [2] \to \R$ has the following properties: 
\begin{itemize}
\item[$(1)$] The restriction of $\rho_{H,1}$ to 
$\Ker r_{\mathfrak{q}_2}$ 
is a homomorphism; 

\item[$(2)$] $\rho_{H,1}(\Hgb [[\infty]])$ is an infinitely generated 
$($over $\Z \,)$ subgroup of $\R$. 
\end{itemize}
\end{theorem}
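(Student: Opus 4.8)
The plan is to prove (1) by a signature--additivity argument in which the Wall non-additivity correction turns out to be governed by the Magnus matrices, and to prove (2) by exhibiting a concrete family of homology cylinders inside $\Hgb[[\infty]]$ obtained by knot infection. For (1), take $M_1,M_2\in\Ker r_{\mathfrak{q}_2}$; these lie in $\Cgb[2]$, so by Theorem~\ref{thm:invariant} the map $\theta_2$ is a homomorphism and $\theta_2(M_1M_2)=\theta_2(M_1)+\theta_2(M_2)$ in $\Omega_3(H)$. Hence the closed $3$-manifolds $C_{M_1M_2}$ and $C_{M_1}\sqcup C_{M_2}$, with their reference maps, cobound a compact oriented $4$-manifold $W$ equipped with a map $W\to K(H,1)$ extending them, so every character $\theta\in T^{2g}\cong R_1(H)$ pulls back to $\pi_1 W$. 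The signature formula for the $\rho$-invariant (the displayed identity in the Atiyah--Patodi--Singer theorem above, with $m=1$) gives, for each $\theta$,
\[
\sigma(C_{M_1M_2},\widetilde f)(\theta)-\sigma(C_{M_1},\widetilde f)(\theta)-\sigma(C_{M_2},\widetilde f)(\theta)=\rho_\theta(\partial W)=\mathrm{sign}(W)-\mathrm{sign}_\theta(W).
\]
If one chooses for $W$ the ``thin'' cobordism assembled from copies of $\Sgb\times[0,1]\times[0,1]$ glued according to the markings and applies Wall non-additivity, the term $\mathrm{sign}_\theta(W)-\mathrm{sign}(W)$ is a Maslov triple index of Lagrangian subspaces of the $\theta$-twisted homology $H_1(\Sgb;\mathbb C^{2g}_\theta)$ cut out by the two markings of each $M_i$; since $r_{\mathfrak{q}_2}(M_i)=I$ the corresponding $\theta$-specializations are trivial for almost every $\theta$, so each such pair of Lagrangians collapses to the diagonal one and the index vanishes pointwise. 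Integrating over $T^{2g}$ yields $\rho_{H,1}(M_1M_2)=\rho_{H,1}(M_1)+\rho_{H,1}(M_2)$. (One can also avoid Wall's formula: integrating the $\mathrm{APS}$ identity directly and using that a $4$-manifold with $\pi_1 W\to H\cong\Z^{2g}$ has averaged twisted signature equal to its ordinary signature, $\int_{T^{2g}}\mathrm{sign}_\theta(W)\,d\theta=\mathrm{sign}(W)$, again makes the right-hand side integrate to zero.)

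For (2), first observe that $\Hgb[[\infty]]=\Ker\sigma^{\mathrm{acy}}$ is contained in the kernel of the universal Magnus representation $r\colon\Hgb\to\mathrm{GL}(2g,\Lambda_{\AC{\pi}})$, because on $\Hgb$ the map $r$ is the composite of $\sigma^{\mathrm{acy}}$ with the Magnus representation of $\Aut(\Acy_{2g})$; specializing the coefficients, $\Hgb[[\infty]]\subseteq\Ker r_{\mathfrak{q}_2}$, so by part (1) the restriction $\rho_{H,1}|_{\Hgb[[\infty]]}$ is a homomorphism and its image is a subgroup of $\R$. To see this subgroup is infinitely generated, fix a non-separating simple closed curve $\eta\subset\Sgb$ and, for a knot $K\subset S^3$, let $M_K:=(\Sgb\times[0,1])_{\eta}(K)$ be the homology cylinder obtained by infecting the trivial cylinder along $\eta\times\{1/2\}$ by $K$. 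As $E(K)$ is a homology $S^1\times D^2$, the infection changes neither the homology nor the markings, so $M_K\in\Cgb$; moreover the natural degree-one collapse $p\colon M_K\to\Sgb\times[0,1]$ is $2$-connected --- it is a homology isomorphism, and surjectivity on $H_2$ of fundamental groups is automatic since $H_2(\pi)=H_2(F_{2g})=0$ --- hence $p^{\mathrm{acy}}$ is an isomorphism by Proposition~\ref{prop:isom}. Since $p$ carries the markings of $M_K$ to those of the trivial cylinder, $\sigma^{\mathrm{acy}}(M_K)=\mathrm{id}$ and $M_K\in\Hgb[[\infty]]$.

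It then remains to compute $\rho_{H,1}(M_K)$. The closure is $C_{M_K}=(C_{\Sgb\times[0,1]})_{\eta}(K)$, the infection locus being disjoint from the gluing, and the standard additivity of the $\rho$-invariant under infection gives, for almost every $\theta$,
\[
\sigma(C_{M_K},\widetilde f)(\theta)=\sigma(C_{\Sgb\times[0,1]},\widetilde f)(\theta)+\sigma_{\theta([\eta])}(K),
\]
where $\sigma_\omega(K)$ is the Levine--Tristram signature and $\theta([\eta])\in U(1)$ is the value of $\theta$ on the nonzero class $[\eta]\in H$. Because $\theta\mapsto\theta([\eta])$ pushes the Haar measure of $T^{2g}$ onto that of $U(1)$, integrating and using $\rho_{H,1}(\Sgb\times[0,1])=\rho_{H,1}(1)=0$ yields $\rho_{H,1}(M_K)=\int_0^1\sigma_{e^{2\pi i s}}(K)\,ds=:\widetilde\sigma(K)$. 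Thus $\rho_{H,1}(\Hgb[[\infty]])$ contains the image of the concordance homomorphism $\widetilde\sigma$; since $\widetilde\sigma(T(2,2n+1))=-\tfrac{2n(n+1)}{2n+1}$ is a fraction in lowest terms, that image contains elements of arbitrarily large denominator, hence is not finitely generated, and so neither is $\rho_{H,1}(\Hgb[[\infty]])$.

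The main obstacle is the bookkeeping in (1): presenting $C_{M_1M_2}$ explicitly as a re-gluing of $C_{M_1}$ and $C_{M_2}$ along a genus-$2g$ surface, identifying the Wall non-additivity term with a Maslov index in the $\theta$-twisted homology of $\Sgb$, and verifying that the Lagrangians entering it are exactly the ones recorded by the $\theta$-specializations of $r_{\mathfrak{q}_2}(M_i)$, so that the kernel hypothesis kills the correction for almost every $\theta$. The auxiliary inputs --- the amenable (equivalently $L^2$) signature identity and the precise infection formula for $\rho$-invariants --- are standard but must be invoked in the generality used here.
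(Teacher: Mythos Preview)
Your approach is essentially the paper's: for (1) you invoke Wall non-additivity with the correction term governed by the Magnus matrices, and for (2) you produce elements of $\Hgb[[\infty]]$ by knot infection and identify $\rho_{H,1}(M_K)$ with the integrated Levine--Tristram signature. The argument for (2) matches the paper almost verbatim; the paper cites \cite{cot2} for the infinite generation of the image, while your explicit torus-knot computation is a perfectly good substitute.

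Two points deserve comment. First, your parenthetical ``alternative'' for (1) is incorrect as stated. The identity $\int_{T^{2g}}\mathrm{sign}_\theta(W)\,d\theta=\mathrm{sign}(W)$ is Atiyah's $L^2$-index theorem for \emph{closed} $W$; it fails for $4$-manifolds with boundary. Indeed, if it held in your situation it would force $\int\rho_\theta(\partial W)\,d\theta=0$ for every $3$-manifold bounding over $H$, and since $\Omega_3(\Z)=0$ this would make the integrated Levine--Tristram signature of every knot vanish---contradicting precisely the examples you use in (2). You must keep the Wall argument.

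Second, the paper organizes (1) slightly differently and more cleanly: rather than bounding the difference $C_{M_1M_2}-C_{M_1}-C_{M_2}$ by a ``thin'' cobordism, it uses $\Ker r_{\mathfrak{q}_2}\subset\Hgb[3]=\Ker\theta_2$ (Theorem~\ref{thm:invariant}) to bound each $C_{M_i}$ individually by some $W_{M_i}$ (which may be surgered so that $\pi_1 W_{M_i}\cong H$), and then forms $W=W_{M_1}\cup_{\Sgb\times[0,1]}W_{M_2}$, which bounds $C_{M_1\cdot M_2}$. This reduces the Wall computation to a \emph{single} gluing along $\Sgb\times[0,1]$, so the correction term is a single Maslov index in $H_1(\Sgb\sqcup\Sgb;\mathbb{C}_\theta)$; the three Lagrangians are the diagonal (from $\Sgb\times[0,1]$) and the graphs of the $\theta$-specialized Magnus matrices (from $M_1$, $M_2$), which collapse to the diagonal when $r_{\mathfrak{q}_2}(M_i)=I$. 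Your ``thin'' cobordism requires several gluings and hence iterated applications of Wall, making the bookkeeping you flag at the end genuinely heavier; the paper's packaging avoids this.
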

\begin{proof}
For $k=2$, the bordism invariant $\theta_2$ gives an exact sequence
\[1 \longrightarrow \Hgb[3] \longrightarrow 
\Hgb[2] \stackrel{\theta_2}{\longrightarrow} 
\Omega_3 (H) \longrightarrow 1.\]
From this, we see that if $(M,i_+,i_-) \in \Hgb [3]$, then 
the pair consisting of the closure $C_M$ of $M$ and 
the homomorphism $\widetilde{f}_M:\pi_1 C_M \to H$ induced from 
the continuous map $\widetilde{f}_M:C_M \to K(H,1)$ is 
the boundary of a pair $(W_M,f_{W_M})$. 
Then the function $\sigma (C_M,\widetilde{f}_M)$ 
has an interpretation as a signature defect and 
\begin{align*}
\rho_{H,1} (M,i_+,i_-) &= \int_{T^{2g}} \sigma 
(C_M,\widetilde{f}_M)(\theta) \, d\theta \\
&= \int_{T^{2g}} \bigl( \mathrm{sign}(W_M) - 
\mathrm{sign}_{\theta \circ f_{W_M}}(W_M) \bigl)\, d\theta \\
&= \mathrm{sign}(W_M) - \int_{T^{2g}} 
\mathrm{sign}_{\theta \circ f_{W_M}}(W_M) \, d\theta
\end{align*}
follows, where $\mathrm{sign}_{\theta \circ f_{W_M}}(W_M)$ is the 
signature of the intersection form induced on 
$H_2 (W_M;\mathbb{C}_{\theta \circ f_{W_M}})$ with coefficients 
in the left $\pi_1 W_M$-module $\mathbb{C}$ 
on which $\pi_1 W_M$ acts 
through $\theta \circ f_{W_M}:\pi_1 W_M \to U(1)$. 
To show (1), it suffices to show that both 
$\mathrm{sign}(W_M)$ and $\mathrm{sign}_{\theta \circ f_{W_M}}(W_M)$ 
are additive. 

Let $M_1=(M_1,i_+,i_-)$, $M_2=(M_2,j_+,j_-) \in \Ker r_{\mathfrak{q}_2}$. 
Note that $\Ker r_{\mathfrak{q}_2} \subset \Hgb [3]$. We take 
a pair $(W_{M_i},f_{W_{M_i}})$ satisfying 
$(C_{M_i},\widetilde{f}_{M_i}) = \partial (W_{M_i},f_{W_{M_i}})$. 
By performing surgeries on $W_{M_i}$ preserving the $H$-bordism class, 
if necessary, we can assume that 
$\pi_1 W_{M_i} \cong H_1 (W_{M_i}) \cong H$. 
Then the manifold 
\[W:=W_{M_1} \cup_{\Sgb \times [0,1]} W_{M_2}\]
obtained from $W_{M_1}$ and $W_{M_2}$ by 
gluing along $\Sgb \times [0,1] \subset C_{M_i}$ 
together with the homomorphism $f_W:=f_{W_{M_1}} \cup f_{W_{M_2}}$ 
satisfy 
$\partial (W,f_W)=(M_1 \cdot M_2, \widetilde{f}_{M_1 \cdot M_2})$. 
See Figure \ref{fig:wall}. 

\begin{figure}[htbp]
\begin{center}
\includegraphics{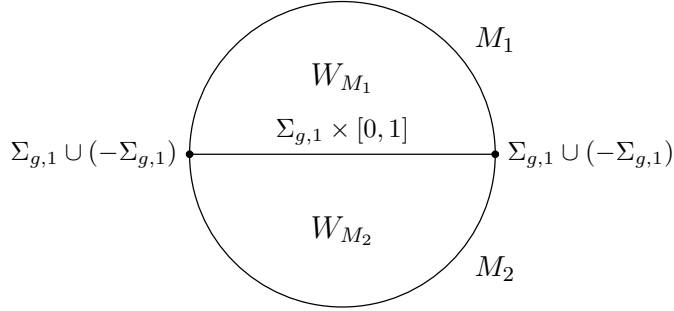}
\end{center}
\caption{The manifold $W$}
\label{fig:wall}
\end{figure}

\noindent
If we apply Wall's non-additivity theorem \cite{wa} of 
signatures to $W,W_{M_1},W_{M_2}$, we see 
that the correction term is zero when $M_1,M_2 \in \Hgb [2]$ by 
an argument associated with the Meyer cocycle \cite{me}, and 
therefore the additivity of signatures follows. 

For the additivity of $\mathrm{sign}_{\theta \circ f_{W_M}}(W_M)$, 
we need to use a local coefficient system version of Wall's theorem 
in \cite{me}. We can see that if the Magnus matrix 
$r_{\mathfrak{q}_2}$ is trivial, 
the correction term is zero. Indeed, 
under the observation that $\mathbb{C}_{\theta \circ f_{W_{M_i}}}$ becomes 
a $\mathcal{K}_H$-vector space almost everywhere in $R_1 (H)$, 
the vector spaces which appear in the calculation of the 
correction term coincide with each other 
if their Magnus matrices are trivial, and 
therefore the correction term is zero. 
Since the correction term is zero almost everywhere in $R_1 (H)$, 
their integration on $R_1 (H)$ becomes additive. 

(2) is shown by the following explicit examples. 
Note that these examples are based on those in 
Cochran-Orr-Teichner \cite{cot,cot2} and Harvey \cite{har3} 
to show the infinite generation of 
some subgroups of the knot (or string link) concordance group. 
For $(\Sgb \times [0,1], \Id \times 1, \Id \times 0) \in \Cgb$, 
we take a loop $l$ in the interior of $\Sgb \times [0,1]$ representing 
$\gamma_1 \in H \cong H_1 (\Sgb \times [0,1])$. 
We remove an open tubular neighborhood $N(l)$ of $l$ from 
$\Sgb \times [0,1]$ and then glue 
the exterior $E(K)$ of a knot $K \subset S^3$ so that 
the canonical longitude (resp. the meridian) of $E(K)$ 
corresponds to the meridian (resp. the inverse of the longitude) 
of $N(l)$. 
We can check that the resulting manifold $M_K$ becomes 
a homology cylinder. Moreover it belongs to  $\Hgb [[\infty]]$ since 
$\pi_1 (\Sgb \times [0,1]-N(l)) \to \pi_1 (\Sgb \times [0,1]) 
\cong \pi \to \AC{\pi}$ extends to $\pi_1 M_K$. 
Then we can show that 
\[\rho_{H,1}(M_K)=\int_{\theta \in S^1} \sigma_\theta (K)\, 
d\theta, \]
where $\sigma_\theta (K)$ is the Levine-Tristram signature of 
the knot $K$ associated with $\theta \in S^1$. 
It was shown in \cite[Section 5]{cot2} that 
the above values move around an infinitely generated subgroup of $\R$ 
when $K$ runs over all knots. Therefore (2) follows. 
\end{proof}
\begin{corollary}
The groups $\Ker r_{\mathfrak{q}_2}$, 
$\Hgb [[\infty]]$ and their abelianizations are 
all infinitely generated. 
\end{corollary}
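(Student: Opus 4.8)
The plan is to deduce the Corollary directly from Theorem~\ref{thm:mainthm}, the only preliminary being the inclusion $\Hgb[[\infty]] \subseteq \Ker r_{\mathfrak{q}_2}$. First I would establish this inclusion. If $(M,i_+,i_-) \in \Hgb[[\infty]] = \Ker(\sigma^{\mathrm{acy}})$, then $\sigma^{\mathrm{acy}}(M) = \mathrm{id}_{\AC{\pi}}$; since the universal Magnus representation $r\colon \Hgb \to \mathrm{GL}(2g,\Lambda_{\AC{\pi}})$ is obtained from the crossed homomorphism $r\colon \Aut(\AC{\pi}) \to \mathrm{GL}(2g,\Lambda_{\AC{\pi}})$ by precomposition with $\sigma^{\mathrm{acy}}$, and a crossed homomorphism sends the identity to $I_{2g}$, we get $r(M) = I_{2g}$. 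Because $r_{\mathfrak{q}_2}$ is a reduction of this universal $r$ along the ring homomorphism $\Lambda_{\AC{\pi}} \to \mathcal{K}_{N_2(\pi)}$ supplied by Cohn's universal property (using, via Remark~\ref{rem:acyclic}, that $\mathcal{K}_{N_2(\pi)}$ inverts every matrix the trivializer inverts) — the two constructions of the Magnus matrix agreeing as noted after Proposition~\ref{prop:MagnusFormula} — it follows that $r_{\mathfrak{q}_2}(M) = I_{2g}$, i.e.\ $M \in \Ker r_{\mathfrak{q}_2}$.

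Next I would invoke Theorem~\ref{thm:mainthm}. Part~(1) says that the restriction $\rho_{H,1}\colon \Ker r_{\mathfrak{q}_2} \to \R$ is a genuine group homomorphism, and part~(2) says that the subgroup $\rho_{H,1}(\Hgb[[\infty]]) \subseteq \R$ is infinitely generated over $\Z$. Restricting the homomorphism of part~(1) further to the subgroup $\Hgb[[\infty]] \subseteq \Ker r_{\mathfrak{q}_2}$ yields a homomorphism whose image is exactly $\rho_{H,1}(\Hgb[[\infty]])$, an infinitely generated abelian group. A finitely generated group admits no surjection onto an infinitely generated abelian group, so $\Hgb[[\infty]]$ is not finitely generated; and since the same infinitely generated group lies in the image of $\rho_{H,1}$ on the larger group $\Ker r_{\mathfrak{q}_2}$, that group is not finitely generated either.

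Finally, for the abelianizations I would use that any homomorphism from a group to an abelian group factors through its first homology. Thus $\rho_{H,1}$ induces homomorphisms $H_1(\Ker r_{\mathfrak{q}_2}) \to \R$ and $H_1(\Hgb[[\infty]]) \to \R$ whose images still contain the infinitely generated group $\rho_{H,1}(\Hgb[[\infty]])$, so both $H_1(\Ker r_{\mathfrak{q}_2})$ and $H_1(\Hgb[[\infty]])$ are infinitely generated. There is no real obstacle at this final stage: the substantive work — building the homology cylinders $M_K$ from knots $K$ and identifying $\rho_{H,1}(M_K)$ with $\int_{S^1}\sigma_\theta(K)\,d\theta$, whose values sweep out an infinitely generated subgroup of $\R$ by Cochran-Orr-Teichner — is already carried out inside the proof of Theorem~\ref{thm:mainthm}; the one point meriting care here is precisely the inclusion $\Hgb[[\infty]] \subseteq \Ker r_{\mathfrak{q}_2}$ needed to let the two parts of that theorem interact.
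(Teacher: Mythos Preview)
Your proposal is correct and follows exactly the natural deduction the paper leaves implicit: the corollary is stated without proof, and you have supplied the routine argument from Theorem~\ref{thm:mainthm}, including the one non-automatic ingredient, the inclusion $\Hgb[[\infty]] \subseteq \Ker r_{\mathfrak{q}_2}$, which you justify correctly via the factorisation of $r_{\mathfrak{q}_2}$ through the universal Magnus representation and $\sigma^{\mathrm{acy}}$.
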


We can consider results of Cochran-Harvey-Horn \cite{coc_har_horn} 
to be a further generalization of the invariant $\rho_{H,1}$. 
They constructed 
{\it von Neumann $\rho$-invariants} for homology cylinders 
by using the theory of $L^2$-signature invariants. 
Note that Magnus matrices also appear in their context as 
obstructions to the additivity of invariants. In fact, 
we can see that the correction term vanishes under the triviality 
of the corresponding Magnus matrix by rewriting 
Wall's argument \cite{wa} word-by-word in terms of $L$-groups. 

We close this subsection by posing the following problem:
\begin{problem}
Determine $H_3 (\Acy_n)$.
\end{problem}
\noindent
This problem is an analogue of a similar problem 
for the algebraic closure of a free group. 
It was shown in \cite{sa2} that 
the bordism invariant similar to $\theta_k$ gives an epimorphism 
$\theta: \Hgb [[\infty]] \twoheadrightarrow H_3 (\AC{\pi})$. 
At present, however, 
we cannot extract any information on $\Hgb [[\infty]]$ from $\theta$ 
since it is not known even whether $H_3 (\Acy_n)$ is trivial or not.

\subsection{Abelian quotients of groups of homology cylinders}
\label{subsec:abelian}
Abelian quotients of a monoid or group are helpful 
not only to know how big the monoid or group is, but 
to extract information on its structure. In this subsection, we focus on 
abelian quotients of monoids and homology cobordism groups 
of homology cylinders and we compare them to 
the corresponding results for mapping class groups. 
We assume that $g \ge 1$. 

As we have seen in Sections \ref{subsec:JohnsonMorita} and 
\ref{subsec:JMhomHC}, 
the Johnson homomorphisms give finite rank 
abelian quotients of $\Mgb[k]$, $\Cgb[k]$ and $\Hgb[k]$ for 
each $k \ge 2$. 
Indeed the image of $\Cgb[k]$ and $\Hgb[k]$ is generally bigger than 
that of $\Mgb[k]$. 

Before discussing further, as commented in \cite{gs09}, 
we point out that $\Cgb$ has the monoid $\theta_\Z^3$ 
of homology 3-spheres as an abelian quotient. 
In fact, we have 
a {\it forgetful} homomorphism $F:\mathcal{C}_{g,1} 
\longrightarrow \theta_\Z^3$ defined by 
$F(M,i_+,i_-) = S^3 \sharp X_1 \sharp X_2 \sharp 
\cdots \sharp X_n$ for the prime decomposition 
$M=M_0 \sharp X_1 \sharp X_2 \sharp 
\cdots \sharp X_n$ of $M$ with $M_0 \in \Cgb^\mathrm{irr}$ and 
$X_i \in \theta_\Z^3$ $(1 \le i \le n)$ 
(recall Section \ref{subsec:defHC}). 
The map $F$ owes its well-definedness to 
the uniqueness of the prime decomposition of 3-manifolds. 
The map $F$ gives a splitting of the construction of 
Example \ref{ex:sphere} and is surjective. 

The underlying 3-manifolds of homology cylinders obtained 
from $\mathcal{M}_{g,1}$ are all 
$\Sgb \times [0,1]$ and, in particular, irreducible. 
Therefore it seems more reasonable to compare $\Mgb$ with 
$\Cgb^\mathrm{irr}$. Until now, 
many infinitely generated abelian quotients for 
monoids and homology cobordism groups of irreducible 
homology cylinders have been given, which are completely 
different from the corresponding cases 
for mapping class groups. We present them in order. 

\begin{theorem}[{\cite[Corollary 6.16]{sakasai08}}]
The submonoids $\Cgb^\mathrm{irr} \cap \Cgb[k]$ for $k \ge 2$ and 
$\Ker (\Cgb^\mathrm{irr} \to \Hgb)$ have abelian quotients 
isomorphic to $(\Z_{\ge 0})^\infty$.
\end{theorem}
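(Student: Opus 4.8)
\emph{Strategy.} Both submonoids $A$ in question are contained in $\Cgb[2]=\Ker(\sigma_2\colon\Cgb\to\Aut N_2(\pi))$: the first because $\Cgb^{\mathrm{irr}}\cap\Cgb[k]\subseteq\Cgb[k]\subseteq\Cgb[2]$, the second because its elements are homology cobordant to the trivial cylinder and $\sigma_2$ is a homology cobordism invariant equal to the identity there. Hence the crossed homomorphism $\tau^{+}_{\mathfrak{q}_2}$ of Section~\ref{sec:extension} restricts to an honest monoid homomorphism on $A$. The key structural point is that $\tau^{+}_{\mathfrak{q}_2}$ is \emph{polynomial}-valued: computed from an admissible presentation it equals $\det\begin{pmatrix}A\\B\end{pmatrix}$ (Proposition~\ref{prop:MagnusFormula}), a nonzero element of $\Z[H]$, well defined up to the units $\pm H$ of the UFD $\Z[H]$. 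Thus $\tau^{+}_{\mathfrak{q}_2}$ is a monoid homomorphism
\[A\longrightarrow (\Z[H]\smallsetminus\{0\})/(\pm H)\cong\bigoplus_{\mathfrak{p}}\Z_{\ge0},\]
the direct sum running over the (countably many) primes $\mathfrak{p}$ of $\Z[H]$. Composing with the projection onto a chosen countable subfamily $\{\mathfrak{p}_i\}_{i\ge1}$ of primes yields a monoid homomorphism $\Psi\colon A\to(\Z_{\ge0})^{\infty}$, and everything reduces to producing irreducible homology cylinders $M_1,M_2,\dots\in A$ with $\operatorname{ord}_{\mathfrak{p}_i}(\tau^{+}_{\mathfrak{q}_2}(M_j))=\delta_{ij}$, for then $\Psi(M_j)=e_j$ and $\Psi$ is onto.

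\emph{The examples.} For these I would use the ``infection'' construction from the proof of Theorem~\ref{thm:mainthm}: remove an open tubular neighbourhood of an interior loop $l\simeq\gamma_1$ from $\Sgb\times[0,1]$ and reglue the exterior $E(K)$ of a knot $K\subset S^3$ as described there, obtaining $M_K\in\Cgb$. If $K$ is non-trivial then $M_K$ is irreducible (knot exteriors are irreducible, $\Sgb\times[0,1]\smallsetminus N(l)$ is irreducible, and they are glued along the torus $\partial N(l)$, which is incompressible on both sides), and $M_K\in\Hgb[[\infty]]\subseteq\Cgb[k]$ for all $k$; a Mayer--Vietoris computation of $H_1(M_K,i_+(\Sgb);\Z[H])$ identifies $\tau^{+}_{\mathfrak{q}_2}(M_K)$ with $\Delta_K(\gamma_1)$ up to units and up to a fixed power of the prime $1-\gamma_1$. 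For $A=\Cgb^{\mathrm{irr}}\cap\Cgb[k]$ I take $K_i:=T(2,q_i)$ for distinct odd primes $q_i$, so $\Delta_{K_i}\doteq\Phi_{2q_i}$ is irreducible, palindromic, with $\Phi_{2q_i}(1)=1$; then $\mathfrak{p}_i:=(\Phi_{2q_i}(\gamma_1))$ is a prime of $\Z[H]$ (irreducible in $\Z[\gamma_1^{\pm1}]$, hence in $\Z[H]$ by Gauss), distinct from $1-\gamma_1$, and $\operatorname{ord}_{\mathfrak{p}_i}(\tau^{+}_{\mathfrak{q}_2}(M_{K_j}))=\delta_{ij}$. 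For $A=\Ker(\Cgb^{\mathrm{irr}}\to\Hgb)$ I instead take the $K_i$ to be non-trivial \emph{slice} knots with $\Delta_{K_i}\doteq g_i(t)g_i(t^{-1})$ for pairwise non-associate, non-palindromic irreducible Laurent polynomials $g_i$ with $g_i(1)=\pm1$ and with $g_i$ not associate to any $\overline{g_j}$ (such knots exist by the realization of Fox--Milnor polynomials as Alexander polynomials of ribbon knots): being slice, $M_{K_i}$ is homology cobordant to the trivial cylinder via the slice-disk exterior, so $M_{K_i}\in\Ker(\Cgb^{\mathrm{irr}}\to\Hgb)$, and again $\operatorname{ord}_{\mathfrak{p}_i}(\tau^{+}_{\mathfrak{q}_2}(M_{K_j}))=\delta_{ij}$ with $\mathfrak{p}_i:=(g_i(\gamma_1))$, since $g_i$ occurs with multiplicity exactly one in $g_i\overline{g_i}$.

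\emph{Conclusion and the main obstacle.} Assembling the two ingredients gives, in each case, a surjective monoid homomorphism $A\twoheadrightarrow(\Z_{\ge0})^{\infty}$, which moreover automatically factors through the abelianization of $A$; this is precisely the assertion. The formal skeleton is routine, and the genuine content lies in three verifications. First, one must know that $\tau^{+}_{\mathfrak{q}_2}$ is really $\Z[H]$-valued, so that $\operatorname{ord}_{\mathfrak{p}}$ is non-negative on \emph{all} of $A$ and not merely on the listed examples; this is exactly what the square presentation matrix of Proposition~\ref{prop:MagnusFormula} provides. Second, one must check irreducibility of the infection cylinders and---for the kernel statement---reconcile irreducibility, null-homology-cobordism, and non-triviality of $\tau^{+}$ simultaneously, which is possible only because $\tau^{+}$ is \emph{not} a homology cobordism invariant (the discussion following Example~\ref{ex:concordance}). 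Third, and this is the step I expect to be the most delicate, one must compute $\tau^{+}_{\mathfrak{q}_2}(M_K)$ precisely enough to pin down its prime factorization, control the spurious $1-\gamma_1$ factor, and confirm that the chosen primes $\mathfrak{p}_i$ really are pairwise non-associate (and, in the slice case, not conjugate to one another), so that the matrix $\bigl(\operatorname{ord}_{\mathfrak{p}_i}(\tau^{+}_{\mathfrak{q}_2}(M_{K_j}))\bigr)$ is the identity. The rest follows from the functoriality and polynomiality of $\tau^{+}_{\mathfrak{q}_2}$ recorded in Section~\ref{sec:extension}.
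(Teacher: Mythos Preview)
Your proposal is correct and follows essentially the same approach as the paper, which states only that ``the proof uses homomorphisms constructed from the torsions $\tau_{\mathfrak{q}_k}^+$''; you have supplied a concrete instantiation of that idea, using $\tau_{\mathfrak{q}_2}^+$ together with the UFD structure of $\Z[H]$ and the infection construction $M_K$ (already invoked in the proof of Theorem~\ref{thm:mainthm}) to hit infinitely many independent primes. Your identification of the three delicate points---polynomiality of $\tau_{\mathfrak{q}_2}^+$ via the presentation matrix $\begin{pmatrix}A\\B\end{pmatrix}$, irreducibility of $M_K$ for non-trivial $K$, and the computation $\tau_{\mathfrak{q}_2}^+(M_K)\doteq\Delta_K(\gamma_1)$ (in fact the putative $(1-\gamma_1)$ factor cancels)---is accurate, and your choice of torus knots and ribbon knots realising prescribed Fox--Milnor factorizations handles both submonoids as required.
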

\noindent
The proof uses homomorphisms constructed from the 
torsions $\tau_{\mathfrak{q}_k}^+$. Precisely speaking, 
irreducibility was not discussed in \cite{sakasai08}. However, 
we can modify the argument. 

\begin{theorem}[{Morita \cite[Corollary 5.2]{morita_GD}}]
\label{thm:moritaGD}
$\Hgb [2]$ has an abelian quotient isomorphic to $\Z^\infty$. 
\end{theorem}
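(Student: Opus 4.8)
The plan is to push Morita's study of the Jacobian-type crossed homomorphism $\det r_{\mathfrak a}$ (the Earle class, Theorems~\ref{thm:H1MgH}--\ref{thm:det_to_e1}) from $\Mgb$ to homology cylinders, and to observe that over $\Hgb[2]$ its values, which for the mapping class group are mere monomials, now fill out a free abelian group of infinite rank. By the results of Section~\ref{sec:extension}, $r_{\mathfrak q_2}\colon\Hgb\to\GL(2g,\mathcal K_{N_2(\pi)})$ is a crossed homomorphism whose restriction to $\Hgb[2]=\Ker\sigma_2$ is a genuine group homomorphism; since $N_2(\pi)=H\cong\Z^{2g}$ is abelian, $\mathcal K_H$ is the commutative field $\Q(t_1,\dots,t_{2g})$, and taking determinants produces a homomorphism $k_2\colon\Hgb[2]\to\mathcal K_H^{\times}$. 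The twisted symplecticity~(\ref{eq:symplecticity}), specialized to $\sigma_2=\mathrm{id}$, forces $\overline{k_2(M)}\,k_2(M)=1$, so $k_2$ lands in the subgroup of norm-one rational functions for the involution $t_i\mapsto t_i^{-1}$.

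The next step is pure algebra. By unique factorization in $\Q[t_1,\dots,t_{2g}]$ the group $\mathcal K_H^{\times}$ is $\{\pm1\}$ times a free abelian group of countably infinite rank (with basis the monic irreducible polynomials together with the rational primes), and the monomial subgroup $\pm H$ is generated by $2g$ of those basis elements, hence is a direct summand; therefore $\mathcal K_H^{\times}/(\pm H)$ is free abelian of countably infinite rank, and so is every subgroup of it. Composing $k_2$ with the projection $\mathcal K_H^{\times}\to\mathcal K_H^{\times}/(\pm H)$ gives a homomorphism from $\Hgb[2]$ into a free abelian group; its image is automatically free abelian, and it is isomorphic to $\Z^{\infty}$ as soon as it is not finitely generated. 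Thus the theorem reduces to exhibiting, inside $\Hgb[2]$, homology cylinders whose Magnus determinants involve unboundedly many distinct irreducible factors. Note that this composite kills the Torelli group $\Igb=\Hgb[2]\cap\Mgb$ (where $k_2$ is the Earle class, valued in monomials), so the $\Z^{\infty}$ quotient is genuinely a phenomenon of homology cylinders.

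Carrying out that last reduction is the main obstacle, and for it I would use Levine's string-link construction (Example~\ref{sgtocg}): a pure string link all of whose pairwise linking numbers and framings vanish acts trivially on $H$, so the associated homology cylinder $M_L$ lies in $\Hgb[2]$---and it is nontrivial there precisely because $k_2$, being a homology cobordism invariant, detects it. By Proposition~\ref{prop:MagnusFormula} the matrix $r_{\mathfrak q_2}(M_L)$, and hence $\det r_{\mathfrak q_2}(M_L)$, is explicitly computable from an admissible presentation; the computation of Example~\ref{ex:string_comp} already shows that such determinants are genuinely non-monomial norm-one rational functions, reflecting the fact that the multivariable torsion of a string link, unlike the one-variable Alexander polynomial of a knot, need not be palindromic. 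It then remains to produce an infinite family of such string links---for instance by iterated Bing-type doubling or by the infection constructions of \cite{cot}, arranged so that at each stage a new irreducible polynomial factor appears in $\det r_{\mathfrak q_2}$---whence the classes $k_2(M_L)$ are $\Z$-linearly independent in $\mathcal K_H^{\times}/(\pm H)$ and the image of $k_2$ is infinitely generated. The delicate points here are the accumulation of irreducible factors along the family and the routine verifications that the chosen cylinders really act trivially on $H$ and survive to $\Hgb[2]$; the homomorphism property and the fact that the target is $\Z^{\infty}$ come essentially for free from the machinery already developed.
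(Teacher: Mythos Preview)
Your approach is correct in outline but it is \emph{not} Morita's proof. The paper itself says only that Morita ``used the trace maps mentioned in Remark~\ref{rem:trace} with a deep observation of the Johnson filtration of $\Hgb$''; that argument works with the successive quotients $\Hgb[k]/\Hgb[k+1]$ and the trace obstructions to tameness, and never touches $\det r_{\mathfrak q_2}$ or $\mathcal K_H^\times$. What you have rediscovered is the \emph{author's own} alternative route, stated later in Section~\ref{subsec:abelian} as part~(3) of the theorem from \cite{sakasai10}: the homomorphism $\widehat r_{\mathfrak q_2}\colon\Hgb[2]\to\mathcal K_H^\times/(\pm H)$ has image isomorphic to $\Z^\infty$. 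So you are on a genuine second proof, and you have correctly identified the symplecticity constraint $\overline{k_2}\,k_2=1$ and the unique-factorization structure of the target.

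Where your write-up is weaker than the paper's is in the production of examples. You propose to build infinitely many string links (via Bing doubling or infection) whose Magnus determinants involve unboundedly many irreducible factors; this is plausible but you have not actually carried it out, and the ``accumulation of irreducible factors'' you flag as delicate is the whole content. The paper's trick is much cleaner: take \emph{one} string-link cylinder $M_L$ (Example~\ref{ex:string_comp}), adjust it by some $g_1\in\Mgb$ so that $M_L\cdot g_1\in\Hgb[2]$, and then act by powers of a single mapping class $f$ whose symplectic image sends $\gamma_3\mapsto\gamma_2\gamma_3$. Since $\widehat r_{\mathfrak q_2}$ is a crossed homomorphism trivial on $\Mgb$, one gets
\[
\widehat r_{\mathfrak q_2}(f^m\!\cdot M_L\cdot g_m)
={}^{\sigma_2(f^m)}\!\bigl(\widehat r_{\mathfrak q_2}(M_L)\bigr)
=\frac{\gamma_2^m\gamma_3+\gamma_4-1}{\gamma_2^{-m}\gamma_3^{-1}+\gamma_4^{-1}-1},
\]
and the numerators $\gamma_2^m\gamma_3+\gamma_4-1$ are pairwise distinct irreducibles, so these classes are $\Z$-independent in $\mathcal K_H^\times/(\pm H)$. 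This replaces an infinite family of constructions by a single computation plus the $\mathrm{Sp}(2g,\Z)$-action on the coefficient ring; you should adopt it.
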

\noindent
For the construction, he used the trace maps mentioned 
in Remark \ref{rem:trace} 
with a deep observation of the Johnson filtration of $\Hgb$. 

It was shown by Harer \cite{harer} that 
$\Mgb$ is a perfect group for $g \ge 3$ 
(see also Farb-Margalit \cite{Farb_Margalit}). 
By taking into account the similarity between 
$\Mgb$, $\Cgb^\mathrm{irr}$ and $\Hgb$ as we have seen, 
it had been conjectured that 
$\Cgb^\mathrm{irr}$ and $\Hgb$ do not have 
non-trivial abelian quotients. However, Goda and 
the author showed the following:
\begin{theorem}[{\cite[Theorem 2.6]{gs09}}]
The monoid $\Cgb^\mathrm{irr}$ has an abelian quotient 
isomorphic to $(\Z_{\ge 0})^\infty$.
\end{theorem}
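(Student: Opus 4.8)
The plan is to construct an explicit infinitely generated family of homomorphisms from $\Cgb^{\mathrm{irr}}$ to an abelian group and then show that they are jointly surjective. First I would recall, following the construction in Example \ref{ex:HFknot}, that every homologically fibered knot $K$ with a chosen minimal genus Seifert surface $R$ identified with $\Sgb$ produces an irreducible homology cylinder $(M_R,i_+,i_-) \in \Cgb$, provided we know $M_R$ is irreducible; since the complementary sutured manifold of a minimal genus Seifert surface in $S^3$ is irreducible by the standard incompressibility argument for minimal genus surfaces, this is available. The key tool is the $\Gamma$-torsion invariant $\tau_{\mathfrak{q}_k}^+$ from Section \ref{sec:extension}, which by Proposition on functoriality is a crossed homomorphism $\Cgb \to K_1(\mathcal{K}_{N_k(\pi)})/(\pm N_k(\pi))$, reducing to a genuine homomorphism on $\Cgb[k]$. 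The subtlety is that $\Cgb^{\mathrm{irr}}$ is not contained in any $\Cgb[k]$, so one must first pin down the relevant abelian quotient of $\Cgb^{\mathrm{irr}}$ itself rather than of a deep filtration term.

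The concrete approach I would take is to use the simplest case $k=2$, where the target is (essentially) the field of fractions $\mathcal{K}_H$ of $\Z[H]$ and where, by the remark following Proposition \ref{prop:MagnusFormula} and the computation in Example \ref{ex:concordance}, $\det\tau_{\mathfrak{q}_2}^+(M_R)$ is a concretely computable element of $\mathcal{K}_H^\times$. Composing the crossed homomorphism $\tau_{\mathfrak{q}_2}^+$ with the Dieudonné determinant and then with a suitable quotient of $\mathcal{K}_H^\times$ that kills the crossed-homomorphism correction terms (for instance, passing to the quotient by the image of $\Z[H]$-units on which $\sigma_2$ acts, or better, restricting first to the submonoid where $\sigma_2$ acts trivially and then arguing that this detects an abelian quotient of all of $\Cgb^{\mathrm{irr}}$ via the forgetful-type structure) yields a homomorphism to an abelian monoid. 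Then I would exhibit an infinite family of homologically fibered knots $K_1, K_2, \ldots$ — for example boundary connected sums of suitably chosen twist knots or the examples already appearing in \cite{gs10} — whose associated torsions $\det\tau_{\mathfrak{q}_2}^+(M_{R_i})$ are $\Z_{\ge 0}$-linearly independent in the target, by a degree or Newton-polytope argument on the Laurent polynomials involved. Because the monoid operation corresponds to connected sum / stacking of homology cylinders and the torsion is multiplicative under it, distinct words in the $K_i$ map to distinct elements, giving the surjection onto $(\Z_{\ge 0})^\infty$.

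I would organize the proof in the following steps. First, verify irreducibility of the $M_{R_i}$ and that each lies in $\Cgb^{\mathrm{irr}}$. Second, set up the homomorphism: show that the appropriate composite of $\tau_{\mathfrak{q}_2}^+$ with Dieudonné determinant and a chosen abelianization of the target descends to a monoid homomorphism $\Cgb^{\mathrm{irr}} \to A$ for an abelian monoid $A \cong (\Z_{\ge 0})^\infty$, handling the crossed-homomorphism defect carefully (this requires choosing the target quotient so that the $\sigma_2$-twist becomes invisible, which is possible because the correction terms lie in the subgroup generated by monomials $\pm h \in \pm H$). Third, compute $\det\tau_{\mathfrak{q}_2}^+(M_{R_i})$ for the explicit family and prove $\Z_{\ge 0}$-independence. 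Fourth, conclude surjectivity onto $(\Z_{\ge 0})^\infty$.

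The main obstacle I expect is step two: taming the crossed-homomorphism nature of $\tau_{\mathfrak{q}_2}^+$. Unlike the mapping class group situation where one can restrict to the Torelli subgroup, here we want an abelian quotient of the full monoid $\Cgb^{\mathrm{irr}}$, so we cannot simply pass to $\Cgb[2]$. The resolution should be to exploit that the "twist" in $r_{\mathfrak{q}_2}(M_1\cdot M_2) = r_{\mathfrak{q}_2}(M_1)\cdot {}^{\sigma_2(M_1)}(r_{\mathfrak{q}_2}(M_2))$ affects only by units of $\Z[H]$, i.e.\ by elements of $\pm H$, which are precisely quotiented out in the definition of $\tau_{\mathfrak{q}_k}^+$ as living in $K_1(\mathcal{K}_{N_k(\pi)})/(\pm N_k(\pi))$; applying Dieudonné determinant and then the quotient by $(\pm N_2(\pi))$-classes together with a further quotient that records only, say, the "support mod units" of the Laurent polynomial should give a genuinely additive invariant. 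Making this last quotient precise while keeping it rich enough to separate the $K_i$ is the delicate balancing act, and it is essentially the content of \cite[Theorem 2.6]{gs09} whose proof I would follow.
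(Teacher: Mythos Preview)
Your approach via the $\Gamma$-torsion $\tau_{\mathfrak{q}_2}^+$ is not the one the paper uses, and it contains a real gap at exactly the point you flag as delicate. The crossed-homomorphism identity reads $\tau_{\mathfrak{q}_2}^+(M_1\cdot M_2)=\tau_{\mathfrak{q}_2}^+(M_1)\cdot{}^{\sigma_2(M_1)}\tau_{\mathfrak{q}_2}^+(M_2)$, where $\sigma_2(M_1)\in\mathrm{Sp}(2g,\Z)$ acts on $\mathcal{K}_H$ by substituting the variables $\gamma_i$. This action is \emph{not} multiplication by a monomial in $\pm H$; it moves an irreducible Laurent polynomial to a (generally different) irreducible polynomial in its $\mathrm{Sp}(2g,\Z)$-orbit. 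Thus quotienting only by $\pm H$ does not turn $\tau_{\mathfrak{q}_2}^+$ into a homomorphism on $\Cgb^{\mathrm{irr}}$. To kill the twist one must pass to $\mathcal{K}_H^\times/(\pm H\cdot A)$ with $A=\{f^{-1}\varphi(f):f\in\mathcal{K}_H^\times,\ \varphi\in\mathrm{Sp}(2g,\Z)\}$, exactly as Cha--Friedl--Kim do later in Section~\ref{subsec:abelian}; but that target is a \emph{group} (indeed $\cong\Z^\infty$ indexed by $\mathrm{Sp}$-orbits of irreducibles), and there is no evident reason the image of the monoid $\Cgb^{\mathrm{irr}}$ should be the positive cone $(\Z_{\ge0})^\infty$ rather than a subgroup. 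Your torsion idea does yield $(\Z_{\ge0})^\infty$ for the filtered pieces $\Cgb^{\mathrm{irr}}\cap\Cgb[k]$ (this is the earlier theorem cited from \cite{sakasai08}), precisely because on $\Cgb[k]$ the twist vanishes; but that is a strictly weaker statement than the one at hand.

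The paper's proof bypasses this obstruction by using an invariant that is a \emph{genuine} homomorphism on all of $\Cgb^{\mathrm{irr}}$ from the outset: the rank of sutured Floer homology. By results of Ni and Juh\'asz, $R(M)=\mathrm{rank}_\Z SFH(M,\zeta)$ is multiplicative under the stacking product on irreducible homology cylinders, giving a monoid homomorphism $R:\Cgb^{\mathrm{irr}}\to(\Z_{>0},\times)$. Prime factorization in $\Z$ then splits $R$ as $\bigoplus_p R_p:\Cgb^{\mathrm{irr}}\to\bigoplus_p\Z_{\ge0}$, and one checks that infinitely many $R_p$ are nontrivial and independent. The positivity of ranks is what produces the monoid $(\Z_{\ge0})^\infty$ rather than a group---something the torsion determinant, living in a field of fractions, cannot supply.
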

\begin{proof}[Sketch of Proof]
The proof uses 
some results of {\it sutured Floer homology} 
(a variant of Heegaard Floer homology) 
developed by Ni \cite{ni0,ni1} 
and Juh\'asz \cite{juhasz, juhasz2}. 

For each homology cylinder $(M,i_+,i_-) \in \Cgb$, 
we have a natural decomposition 
$\partial M = i_+(\Sgb) \cup_{i_+(\partial \Sgb)=i_-(\partial \Sgb)} 
i_-(\Sgb)$ of $\partial M$. Such a decomposition 
defines a {\it sutured manifold} $(M,\zeta)$ with the suture 
$\zeta =i_+(\partial \Sgb)=i_-(\partial \Sgb)$. 
Since the sutured manifold obtained from 
a homology cylinder is {\it balanced} in the sense of 
Juh\'asz \cite[Definition 2.2]{juhasz}, 
the sutured Floer homology $SFH(M,\zeta)$ is defined. 
By taking the rank of $SFH$, we obtain a map 
$R: \Cgb^\mathrm{irr} \longrightarrow \Z_{\ge 0}$ 
defined by $R(M,i_+,i_-) =  \mathrm{rank}_\Z (SFH(M,\zeta))$. 
Deep results of Ghiggini \cite{ghi}, Ni \cite{ni0,ni1} 
and Juh\'asz \cite{juhasz, juhasz2} show that the map 
$R$ is a monoid homomorphism from $\Cgb^\mathrm{irr}$ to 
the monoid $\Z_{>0}^\times$ of positive integers 
whose product is given by multiplication. 
By the uniqueness of the prime decomposition of an integer, 
we can decompose $R$ into prime factors
\[R=\bigoplus_{\scriptsize \mbox{$p$\,:\,prime}} 
R_p : \Cgb^\mathrm{irr} \longrightarrow 
\Z_{>0}^\times = 
\bigoplus_{\scriptsize \mbox{$p$\,:\,prime}} \Z_{\ge 0}^{(p)},\]
where $\Z_{\ge 0}^{(p)}$ is a copy of $\Z_{\ge 0}$, 
the monoid of non-negative integers 
whose product is given by sum. We can check that 
$\{R_p:\Cgb^\mathrm{irr} \to \Z_{\ge 0}\mid \mbox{$p$\,:\,prime}\}$ 
contains infinitely many non-trivial homomorphisms 
that are linearly independent as homomorphisms 
from $\Cgb^\mathrm{irr}$ to $\Z_{\ge 0}$. 
\end{proof}
\noindent
It was also observed in \cite{gs09} that the above homomorphisms 
$R_p$ are not homology cobordism invariants. 

As seen in Example \ref{ex:concordance}, 
the $\Gamma$-torsion generally changes 
under homology cobordism. 
However, Cha-Friedl-Kim \cite{cfk} found 
a way to extract homology cobordant invariants 
of homology cylinders from 
the torsion 
\[\tau_{\mathfrak{q}_2}^+: \Cgb \longrightarrow 
\mathcal{K}_H^\times/(\pm H),\]
which is a crossed homomorphism, as follows. 

First they consider the subgroup $A \subset \mathcal{K}_H^\times$ 
defined by
\[A:=\{f^{-1} \cdot \varphi (f) \mid f \in \mathcal{K}_H^\times, \ 
\varphi \in \mathrm{Sp} (2g,\Z)\},\]
by which we can obtain a {\it homomorphism} 
\begin{equation}\label{eq:torsionhom}
\tau_{\mathfrak{q}_2}^+ : 
\Cgb \longrightarrow \mathcal{K}_H^\times/(\pm H \cdot A).
\end{equation}
\noindent
Note that $f=\overline{f}$ holds in 
$\mathcal{K}_H^\times/(\pm H \cdot A)$ since $-I_{2g} \in 
\mathrm{Sp}(2g,\Z)$. 
Second, they observe that if $(M,i_+,i_-), (N,j_+,j_-) \in \Cgb$ are 
homology cobordant, then there exists 
$q \in \mathcal{K}_H^\times$ such that 
\begin{equation}\label{formula:hcob}
\tau_{\mathfrak{q}_2}^+(M)= 
\tau_{\mathfrak{q}_2}^+(N) \cdot q \cdot \overline{q} 
\in \mathcal{K}_H^\times/(\pm H)
\end{equation}
\noindent
by using torsion duality. Note that a similar formula treating 
general situations had been obtained 
by Turaev \cite[Theorem 1.11.2]{turaev_knot}.
From this, we see that if we put  
\[N:=\{f \cdot \overline{f} \mid f \in \mathcal{K}_H^\times \},\]
then we can finally obtain a homomorphism 
\[\tau_{\mathfrak{q}_2}^+: 
\Hgb \longrightarrow \mathcal{K}_H^\times/(\pm H \cdot A \cdot N).\]
Note that 
$f^2=1$ holds for any $f \in \mathcal{K}_H^\times/(\pm H \cdot A \cdot N)$. 

We can see the structure 
of $\mathcal{K}_H^\times/(\pm H \cdot A \cdot N)$ as follows. 
Recall that $\mathcal{K}_H = \Z [H](\Z [H] -\{0\})^{-1}$. 
The ring $\Z [H]$ is a Laurent polynomial ring of $2g$ variables and 
it is a unique factorization domain. 
Thus every Laurent polynomial $f$ is factorized 
into irreducible polynomials
uniquely up to multiplication by a unit in $\Z [H]$. 
In particular, for every irreducible polynomial $\lambda$, 
we can count the exponent of $\lambda$ in the factorization 
of $f$. 
This counting naturally extends to that for elements in 
$\mathcal{K}_H^\times$. Under the identification by 
$\pm H \cdot A \cdot N$, an element in 
$\mathcal{K}_H^\times/(\pm H \cdot A \cdot N)$ is determined by 
the exponents of all $\mathrm{Sp}(2g,\Z)$-orbits 
of irreducible polynomials 
(up to multiplication by a unit in $\Z [H]$) modulo $2$. 
Hence $\mathcal{K}_H^\times/(\pm H \cdot A \cdot N)$ is 
isomorphic to $(\Z/2\Z)^\infty$. 
Finally by using 
$\Z_2$-torsion of the knot concordance group, they show the following:
\begin{theorem}[Cha-Friedl-Kim \cite{cfk}]\label{thm:cfk}
The homomorphism 
\[\tau_{\mathfrak{q}_2}^+: 
\Hgb \longrightarrow \mathcal{K}_H^\times/(\pm H \cdot A \cdot N)\]
is not surjective but its image is isomorphic to $(\Z/2\Z)^\infty$. 
\end{theorem}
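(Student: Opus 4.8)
The plan is to exploit the explicit description of the target recalled above: since $\Z[H]$ is a Laurent polynomial ring, hence a unique factorization domain, $\mathcal{K}_H^\times/(\pm H\cdot A\cdot N)\cong\bigoplus_{O}\Z/2\Z$, the sum running over the $\mathrm{Sp}(2g,\Z)$-orbits $O$ of irreducible Laurent polynomials (taken up to units), where the $O$-coordinate of a class $[p]$ records modulo $2$ the total multiplicity of the irreducible factors of $p$ lying in $O$. In particular the image of $\tau_{\mathfrak{q}_2}^+$ is automatically a subgroup of exponent $2$, hence isomorphic to $(\Z/2\Z)^{\kappa}$ for some cardinal $\kappa$, and it suffices to prove that $\kappa$ is countably infinite (the ``image $\cong(\Z/2\Z)^\infty$'' part) and that $\kappa$ is not maximal, i.e.\ that some coordinate of the target is never realized (non-surjectivity). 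Throughout we assume $g\ge1$, the target being trivial when $g=0$.

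For non-surjectivity I would first extract a constraint directly from the definition of a homology cylinder. If $(M,i_+,i_-)\in\Cgb$ then, by condition (iv) of Definition~\ref{def:HC}, $i_+\colon H_*(\Sgb)\to H_*(M)$ is an isomorphism, so the pair $(M,i_+(\Sgb))$ has vanishing integral homology; its integral cellular chain complex is therefore acyclic and its Reidemeister torsion lies in $\widetilde K_1(\Z)=0$. On the other hand $\tau_{\mathfrak{q}_2}^+(M)$ is represented by a genuine nonzero element $p\in\Z[H]$ (a determinant built from an admissible presentation of $\pi_1M$), and the previous remark says precisely that $\mathfrak{t}(p)=\pm1$. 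As $\mathfrak{t}$ is a ring homomorphism and $\Z[H]$ is a UFD, every irreducible factor $\lambda$ of $p$ satisfies $\mathfrak{t}(\lambda)=\pm1$; in particular no such $\lambda$ lies in the $\mathrm{Sp}(2g,\Z)$-orbit of $1-\gamma_1$, every member $1-v$ ($v$ a primitive element of $H$) of which has augmentation $0$. Hence the $\Z/2\Z$-coordinate of $\tau_{\mathfrak{q}_2}^+(M)$ attached to the orbit of $1-\gamma_1$ — a genuine coordinate of the target, since $1-\gamma_1$ is irreducible in $\Z[H]$ and is not a unit times a norm — vanishes for every $M$, so $\tau_{\mathfrak{q}_2}^+$ misses it.

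For the infinite image I would use the satellite construction already exploited in the proof of Theorem~\ref{thm:mainthm}: given a knot $K\subset S^3$, take a loop $\ell$ in the interior of $\Sgb\times[0,1]$ representing the primitive class $\gamma_1\in H\cong H_1(\Sgb\times[0,1])$, delete an open tubular neighborhood of $\ell$, and glue in the exterior $E(K)$ matching the $0$-framed longitude of $K$ to the meridian of $\ell$ and the meridian of $K$ to the inverse of the longitude of $\ell$. This yields a homology cylinder $M_K\in\Cgb$, hence a class in $\Hgb$, and a torsion computation — built from Milnor's formula $\tau_{\rho_1}(E(K))\doteq\Delta_K(t)/(1-t)$, multiplicativity of torsion under gluing along tori and the formula of Proposition~\ref{prop:MagnusFormula}, with the gluing-torus factors cancelling the denominator (as they must, by the augmentation constraint above) — gives $\tau_{\mathfrak{q}_2}^+(M_K)\doteq\Delta_K(\gamma_1)$ in $\mathcal{K}_H^\times/(\pm H)$. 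Now pick knots $K_m$ ($m\ge1$) with $\Delta_{K_m}(t)\doteq m t^2-(2m-1)t+m$; such knots exist by Seifert's realization theorem, the polynomial is irreducible over $\Q$ (its discriminant $1-4m$ is negative) and is not of the form $f(t)f(t^{-1})$, and upon substituting $t\mapsto\gamma_1$ the polynomials $\lambda_m:=m\gamma_1^2-(2m-1)\gamma_1+m$ lie in pairwise distinct $\mathrm{Sp}(2g,\Z)$-orbits, a symplectic automorphism carrying $\gamma_1$ to another primitive vector and so being unable to turn $\lambda_m$ into $\lambda_{m'}$ for $m\ne m'$. Thus the classes $[\tau_{\mathfrak{q}_2}^+(M_{K_m})]=[\lambda_m]$ are distinct standard generators of $(\Z/2\Z)^\infty$, in particular $\Z/2$-linearly independent, so the image of $\tau_{\mathfrak{q}_2}^+$ on $\Hgb$ is infinitely generated; being a subgroup of $(\Z/2\Z)^\infty$ it is therefore isomorphic to $(\Z/2\Z)^\infty$. (If one wishes the detected classes in $\Hgb$ themselves to be $2$-torsion, one takes the $K_m$ inside the $\Z_2$-torsion subgroup of the knot concordance group, realizing the same Alexander polynomials by negative-amphichiral knots.)

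The step I expect to be the main obstacle is the torsion computation $\tau_{\mathfrak{q}_2}^+(M_K)\doteq\Delta_K(\gamma_1)$: one must track the framing of $\ell$, compute the relative chain complex of $(M_K,i_+(\Sgb))$ after the regluing, and verify the precise cancellation of the gluing-torus contributions against the $(1-t)$ in Milnor's formula, taking care throughout that one is computing the torsion of the pair $(M_K,i_+(\Sgb))$ and not the absolute torsion of $M_K$. Once that identification is in hand, the remaining ingredients — the structure of the target, the distinctness of the relevant $\mathrm{Sp}(2g,\Z)$-orbits, and the augmentation constraint forcing non-surjectivity — are routine.
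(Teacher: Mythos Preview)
The paper does not prove this theorem; it only attributes it to Cha--Friedl--Kim with the single hint that they use ``$\Z_2$-torsion of the knot concordance group.'' Your proposal is essentially the argument of \cite{cfk}: non-surjectivity via the augmentation constraint $\mathfrak{t}(\tau_{\mathfrak{q}_2}^+(M))=\pm 1$ (so that the $\mathrm{Sp}(2g,\Z)$-orbit of $1-\gamma_1$ is never hit), and infinite image via the infection construction $M_K$ together with the identification $\tau_{\mathfrak{q}_2}^+(M_K)\doteq\Delta_K(\gamma_1)$. The only difference is one of emphasis: CFK take the $K_m$ to be negative-amphichiral, so that the $M_{K_m}$ themselves are $2$-torsion in $\Hgb$, yielding the stronger statement that $(\Z/2\Z)^\infty$ already appears in $H_1(\Hgb)$ as the image of $2$-torsion elements; for the theorem as stated your choice of arbitrary $K_m$ realizing the polynomials $m t^2-(2m-1)t+m$ is enough, and you note this refinement yourself.

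Two small points to tighten. First, irreducibility of $\lambda_m=m\gamma_1^2-(2m-1)\gamma_1+m$ must hold in $\Z[H]$, not merely over $\Q$; this follows from Gauss's lemma (the coefficients are coprime) together with the fact that a prime element of a UFD $R$ remains prime in $R[x^{\pm 1}]$. Second, your augmentation argument implicitly uses that the $2$-complex built from an admissible presentation has $H_2(X;\Z)=0$, so that ${}^{\mathfrak t}\!\begin{pmatrix}A\\B\end{pmatrix}$ is genuinely invertible over $\Z$; this follows from an Euler-characteristic count (one $0$-cell, $4g+l$ $1$-cells, $2g+l$ $2$-cells, and $H_1(X)\cong\Z^{2g}$). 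With these in place, and the torsion identity $\tau_{\mathfrak{q}_2}^+(M_K)\doteq\Delta_K(\gamma_1)$ established by the Mayer--Vietoris multiplicativity you sketch, the argument is complete.
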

\begin{remark}
Cha-Friedl-Kim showed that the same statement as above holds for 
$\Hg{g}{0}$. Moreover, they considered abelian quotients of the other 
$\Hg{g}{n}$ and showed that a similar construction gives an epimorphism 
$\Hg{g}{n} \twoheadrightarrow 
(\Z/2\Z)^\infty \oplus \Z^\infty$ if $n \ge 2$ (when $g \ge 1$) or 
$n \ge 3$ (when $g=0$). 
\end{remark}

Now we return to our discussion on applications of 
Magnus representations. We use the above Cha-Friedl-Kim's idea. 
Since Magnus representations are homology cobordism invariant, 
we have two maps
\begin{align*}
\widehat{r}_{\mathfrak{q}_2} &: \Hgb 
\xrightarrow{r_{\mathfrak{q}_2}}
\mathrm{GL}(2g,\mathcal{K}_H)
\xrightarrow{\det} \mathcal{K}_H^\times 
\longrightarrow \mathcal{K}_H^\times/(\pm H),\\
\widetilde{r}_{\mathfrak{q}_2} &: \Hgb 
\xrightarrow{\widehat{r}_{\mathfrak{q}_2}}
\mathcal{K}_H^\times/(\pm H)
\longrightarrow \mathcal{K}_H^\times/(\pm H \cdot A).
\end{align*}
\noindent
While $\widehat{r}_{\mathfrak{q}_2}$ is a crossed homomorphism, 
its restriction to $\Hgb [2]$ and 
$\widetilde{r}_{\mathfrak{q}_2}$ are homomorphisms. 
Note that both $\mathcal{K}_H^\times/(\pm H)$ and 
$\mathcal{K}_H^\times/(\pm H \cdot A)$ are isomorphic to 
$\Z^\infty$. 

\begin{theorem}[{\cite{sakasai10}}]
\begin{itemize}
\item[$(1)$] For $g \ge 1$ and $(M,i_+,i_-) \in \Cgb$, 
the equality
\[\widehat{r}_{\mathfrak{q}_2}(M) = 
\overline{\tau_{\mathfrak{q}_2}^+(M)} \cdot 
(\tau_{\mathfrak{q}_2}^+(M))^{-1} \ \ 
\in \mathcal{K}_H^\times/(\pm H)\]
holds. 

\item[$(2)$] For $g \ge 1$, the homomorphism 
$\widetilde{r}_{\mathfrak{q}_2}: \Hgb \to 
\mathcal{K}_H^\times/(\pm H \cdot A)$ is trivial. 

\item[$(3)$] For $g \ge 2$, the homomorphism 
$\widehat{r}_{\mathfrak{q}_2}: \Hgb [2] \to 
\mathcal{K}_H^\times/(\pm H)$ is not surjective but 
its image is isomorphic to $\Z^\infty$. 
\end{itemize}
\end{theorem}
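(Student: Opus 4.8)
The plan is to derive all three parts from a single identity, namely part $(1)$, obtained by combining the admissible‑presentation formula for the Magnus matrix with Reidemeister torsion duality; granting $(1)$, part $(2)$ is purely formal and part $(3)$ reduces to elementary facts about the unique factorization domain $\Z [H]$ together with an explicit family of examples of the kind already constructed in the previous sections.

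For $(1)$, fix an admissible presentation $(\ref{admissible})$ of $\pi_1 M$ and form the matrices $A,B,C$ over $\Z [H]\subset\mathcal{K}_H$ as in Proposition $\ref{prop:MagnusFormula}$, with $\rho=\mathfrak{q}_2$. Since $H$ is abelian the Dieudonn\'e determinant is the ordinary determinant, and the same presentation computes both torsions: collapsing the generators $i_+(\gamma_i)$ gives $\tau_{\mathfrak{q}_2}^+(M)\doteq\det\binom{A}{B}$, while collapsing the $i_-(\gamma_i)$ gives the analogously defined torsion $\tau_{\mathfrak{q}_2}^-(M)$ relative to $i_-(\Sgb)$ as $\tau_{\mathfrak{q}_2}^-(M)\doteq\det\binom{B}{C}$, where $\doteq$ denotes equality in $\mathcal{K}_H^\times/(\pm H)$. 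Writing $(P\mid Q)=\binom{A}{B}^{-1}$ with $P$ the first $2g$ columns, the block identities $AP=I_{2g}$, $BP=0$, $BQ=I_l$ yield $\det\binom{B}{C}\cdot\det\binom{A}{B}^{-1}\doteq\det(CP)$, while $r_{\mathfrak{q}_2}(M)=-CP$ by Proposition $\ref{prop:MagnusFormula}$; hence $\widehat{r}_{\mathfrak{q}_2}(M)=\det r_{\mathfrak{q}_2}(M)\doteq\tau_{\mathfrak{q}_2}^-(M)/\tau_{\mathfrak{q}_2}^+(M)$. Finally, since $\partial M=i_+(\Sgb)\cup i_-(\Sgb)$ and $\mathcal{K}_H$ is acyclic relative to either piece (Lemma $\ref{lem:relative}$), Poincar\'e--Lefschetz duality for torsion in dimension $3$ gives $\tau_{\mathfrak{q}_2}^-(M)\doteq\overline{\tau_{\mathfrak{q}_2}^+(M)}$, and substituting proves $(1)$. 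The delicate point, which is the technical heart of the whole theorem, is to run this bookkeeping so that the two sides agree \emph{exactly} modulo $\pm H$ --- in particular to pin down the correct form $\tau^-\doteq\overline{\tau^+}$ (rather than its inverse) of the duality theorem.

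Part $(2)$ is then immediate: the involution $\overline{(\cdot)}$ on $\mathcal{K}_H$ is induced by $h\mapsto h^{-1}$, i.e.\ by the action on $H$ of $-I_{2g}\in\mathrm{Sp}(2g,\Z)$, so if $f\in\mathcal{K}_H^\times$ represents $\tau_{\mathfrak{q}_2}^+(M)$, then by $(1)$ the element $f^{-1}\cdot(-I_{2g})(f)$ represents $\widehat{r}_{\mathfrak{q}_2}(M)$ and lies in $A=\{f^{-1}\varphi(f)\mid f\in\mathcal{K}_H^\times,\ \varphi\in\mathrm{Sp}(2g,\Z)\}$; hence $\widehat{r}_{\mathfrak{q}_2}(M)\in\pm H\cdot A$ and $\widetilde{r}_{\mathfrak{q}_2}(M)=1$. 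Note that the twisted symplecticity alone only gives $\widetilde{r}_{\mathfrak{q}_2}(M)^2=1$, so $(1)$ is genuinely needed here.

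For $(3)$, non-surjectivity comes from the shape of the image: by $(1)$ every value is of the form $\overline{f}/f$, and in the factorization of such an element over the UFD $\Z [H]$ every symmetric irreducible polynomial (one with $\overline{\lambda}\doteq\lambda$, e.g.\ $\lambda=1-\gamma_1$) occurs with even multiplicity, whereas $1-\gamma_1$ occurs in itself with multiplicity one, so its class is not in the image. For the structure of the image, it is a subgroup of $\mathcal{K}_H^\times/(\pm H)$, which is free abelian (free on the irreducibles modulo units), hence itself free abelian; it then suffices to produce infinitely many linearly independent values. One first constructs, for $g\ge 2$, a homology cylinder $M_0\in\Hgb[2]$ with $\widehat{r}_{\mathfrak{q}_2}(M_0)\ne 1$ --- for instance one arising via Levine's construction from a pure string link with vanishing linking numbers (so that $\sigma_2(M_0)=\mathrm{id}$), whose Magnus determinant is computed as in Example $\ref{ex:string_comp}$ and is nontrivial because, via $(1)$, $\tau_{\mathfrak{q}_2}^+(M_0)$ has a non-symmetric irreducible factor; such examples are modelled on those of Cochran--Orr--Teichner and Cha--Friedl--Kim. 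Since $\widehat{r}_{\mathfrak{q}_2}$ is a crossed homomorphism and $M_0\in\Hgb[2]$, one has $\widehat{r}_{\mathfrak{q}_2}(\varphi M_0\varphi^{-1})=\sigma(\varphi)\bigl(\widehat{r}_{\mathfrak{q}_2}(M_0)\bigr)$ for $\varphi\in\Mgb$; choosing $\varphi$'s realizing symplectic transformations that carry a variable appearing in $\widehat{r}_{\mathfrak{q}_2}(M_0)$ to the infinitely many distinct primitive classes $\gamma_1\gamma_2^{k}$ $(k\ge 0)$ produces pairwise non-associate, non-self-conjugate irreducible factors and hence linearly independent values, so the image has infinite rank and is $\cong\Z^\infty$. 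The hypothesis $g\ge 2$ enters precisely in the existence of the base example $M_0$ (and the room in $\mathrm{Sp}(2g,\Z)$ to spread it). Overall the main obstacle is part $(1)$ --- especially the sign/unit bookkeeping in the torsion duality --- after which $(2)$ and $(3)$ follow from the arguments above together with the example constructions already available.
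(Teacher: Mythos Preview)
Your proposal is correct and follows essentially the same route as the paper: part $(1)$ is obtained by combining the admissible-presentation formula (\ref{eq:mag_formula}) with torsion duality, part $(2)$ is the immediate consequence that $\overline{f}\cdot f^{-1}\in A$ via $-I_{2g}\in\mathrm{Sp}(2g,\Z)$, and part $(3)$ is proved by spreading one nontrivial value under the $\mathrm{Sp}(2g,\Z)$-action. The only notable difference is in the construction for $(3)$: rather than positing a string link with vanishing linking numbers, the paper reuses the concrete $M_L$ of Example~\ref{ex:string_comp} (for which $\widehat{r}_{\mathfrak{q}_2}(M_L)=(\gamma_3+\gamma_4-1)/(\gamma_3^{-1}+\gamma_4^{-1}-1)$ is already computed), right-multiplies by some $g_m\in\Mgb$ to land in $\Cg{2}{1}[2]$, and left-multiplies by powers of a fixed $f\in\Mgb$ with $\sigma_2(f):\gamma_3\mapsto\gamma_2\gamma_3$ to obtain the explicit independent family $\{(\gamma_2^m\gamma_3+\gamma_4-1)/(\gamma_2^{-m}\gamma_3^{-1}+\gamma_4^{-1}-1)\}_{m\ge 0}$; your conjugation argument achieves the same effect and is equally valid.
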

\begin{proof}[Sketch of Proof]
$(1)$ can be shown by using the formula (\ref{eq:mag_formula}) 
and torsion duality. 
As mentioned above, the action of $\mathrm{Sp}(2g,\Z)$ implies that 
$f=\overline{f}$ for any $f \in \mathcal{K}_H^\times/(\pm H \cdot A)$. 
Then our claim $(2)$ immediately follows from $(1)$. 
To show $(3)$, we use 
the homology cylinder $M_L \in \Cg{2}{1}$ 
in Example \ref{ex:string_comp}. 
While $M_L \notin \Cg{2}{1}[2]$, we 
can adjust it by some $g_1 \in \Mg{2}{1}$ so that 
$M_L \cdot g_1 \in \Cg{2}{1}[2]$. Since 
$\widehat{r}_{\mathfrak{q}_2}$ is trivial on $\Mg{2}{1}$, 
we have 
\[\widehat{r}_{\mathfrak{q}_2} (M_L \cdot g_1) = 
\widehat{r}_{\mathfrak{q}_2} (M_L) =
\frac{\gamma_3 + \gamma_4-1}{\gamma_3^{-1}+\gamma_4^{-1}-1}
\in \mathcal{K}_H^\times/(\pm H).\] 
Take $f \in \Mg{2}{1}$ such that $\sigma_2 (f) \in \mathrm{Sp}(4,\Z)$ maps 
\[\gamma_1 \longmapsto \gamma_1 + \gamma_4, 
\quad \gamma_2 \longmapsto \gamma_2, 
\quad \gamma_3 \longmapsto \gamma_2 + \gamma_3, \quad 
\gamma_4 \longmapsto \gamma_4.\] 
Consider $f^m \cdot M_L \in \Cg{2}{1}$ 
and adjust it by 
some $g_m \in \Mg{2}{1}$ so that $f^m \cdot M_L \cdot g_m \in \Cg{2}{1}[2]$. 
Then we have 
\[\widehat{r}_{\mathfrak{q}_2} (f^m \cdot M_L \cdot g_m) = 
{}^{\sigma_2 (f^m)} (\widehat{r}_{\mathfrak{q}_2} (M_L))=
\frac{\gamma_2^m \gamma_3 + \gamma_4-1}
{\gamma_2^{-m}\gamma_3^{-1}+\gamma_4^{-1}-1} 
\in \mathcal{K}_H^\times/(\pm H).\] 
We can check that the values 
$\left\{\dis\frac{\gamma_2^m \gamma_3 + \gamma_4-1}
{\gamma_2^{-m}\gamma_3^{-1}+\gamma_4^{-1}-1}\right\}_{m=0}^\infty$ 
generate an infinitely generated subgroup of 
$\mathcal{K}_H^\times/(\pm H)$. This completes the proof when $g=2$. 
We can use the above computation 
for $g \ge 3$.
\end{proof}
\noindent
Consequently, we obtain a result similar to Theorem \ref{thm:moritaGD}.

\subsection{Generalization to higher-dimensional cases}\label{subsec:higher}

We can consider homology cylinders over $X$ for 
any compact oriented connected $k$-dimensional manifold $X$ 
with $k \ge 3$ by rewriting Definition \ref{def:HC} word-by-word. 
Let $\mathcal{M} (X)$, $\mathcal{C} (X)$ and $\mathcal{H} (X)$ denote 
the corresponding diffeotopy group, monoid of homology cylinders and 
homology cobordism group of homology cylinders. We have natural 
homomorphisms 
\[\SelectTips{cm}{}
\xymatrix{
\mathcal{M} (X) \ar[r] & \mathcal{C} (X) 
\ar@{>>}[r] & \mathcal{H} (X)}\]
and we can apply the argument in Section \ref{sec:universal} to 
$\mathcal{C} (X)$ and $\mathcal{H} (X)$. 

For $k \ge 2$ and $n \ge 1$, we put 
\[X_n^k := \mathop{\#}_{n} (S^1 \times S^{k-1}).\]
Since $X_n^2 = \Sigma_{n,0}$, the manifold 
$X_n^k$ is a natural generalization of a closed surface. 

Suppose $k \ge 3$. Then $\pi_1 (X_n^k - \mathrm{Int}\,D^k) \cong 
\pi_1 X_n^k \cong F_n$, 
where $\mathrm{Int}\,D^k$ is an open $k$-ball.  
We have homomorphisms 
\[\sigma^\mathrm{acy}: \mathcal{C} (X_n^k-\mathrm{Int}\,D^k) 
\longrightarrow \Aut (\Acy_n), \qquad 
\sigma^\mathrm{acy}: \mathcal{C} (X_n^k) 
\longrightarrow \Out (\Acy_n)\]
and similarly for $\mathcal{H} (X_n^k-\mathrm{Int}\,D^k)$ and 
$\mathcal{H} (X_n^k)$. 
Consider the composition 
\[\widetilde{r}_\mathfrak{q_2}:
\Aut (\Acy_n) 
\xrightarrow{r_\mathfrak{q_2}}
\mathrm{GL}(n,\mathcal{K}_{H_1}) 
\xrightarrow{\det} 
\mathcal{K}_{H_1}^\times 
\longrightarrow 
\mathcal{K}_{H_1}^\times/(\pm H_1 \cdot A') \cong \Z^\infty,\]
where $A':=\{f^{-1} \cdot \varphi (f) \mid 
f \in \mathcal{K}_{H_1}^\times, \ 
\varphi \in \mathrm{GL} (n,\Z)\}$. The map 
$\widetilde{r}_\mathfrak{q_2}$ 
is a homomorphism for the same reason 
mentioned in the previous subsection. 
\begin{theorem}[{\cite{sakasai10}}]\label{thm:acyMag}
For any $k \ge 3$ and $n \ge 2$, we have:
\begin{itemize}
\item[$(1)$] $\sigma^{\mathrm{acy}}:
\mathcal{H} (X_n^k -\mathrm{Int}\, D^k) \to \Aut (\Acy_n)$ and 
$\sigma^{\mathrm{acy}}: \mathcal{H} (X_n^k) \to \Out (\Acy_n)$ 
are surjective. 

\item[$(2)$] The image of $\widetilde{r}_\mathfrak{q_2}$ 
is an infinitely generated subgroup of $\Z^\infty$. In particular, 
$H_1 (\Aut (\Acy_n))$ and 
$H_1 (\mathcal{H} (X_n^k-\mathrm{Int}\,D^k))$ have 
infinite rank. 

\item[$(3)$] $\widetilde{r}_\mathfrak{q_2}$ factors 
through $\Out (\Acy_n)$, so that 
$H_1 (\Out (\Acy_n))$ and 
$H_1 (\mathcal{H} (X_n^k))$ have infinite rank. 
\end{itemize}
\end{theorem}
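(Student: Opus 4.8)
The plan is to establish the higher-dimensional Dehn--Nielsen realization of part (1) first, and then to deduce the infinite-rank statements (2) and (3) by exhibiting explicit elements of $\Aut(\Acy_n)$, transporting them to $\mathcal{H}(X_n^k-\mathrm{Int}\,D^k)$ via (1), and analysing the homomorphism $\widetilde{r}_{\mathfrak{q}_2}$. For (1), I would run the proof of Theorem~\ref{thm:autsp}, which in turn follows the Garoufalidis--Levine construction behind Theorem~\ref{thm:gl}. Given $\varphi\in\Aut(\Acy_n)$, use Proposition~\ref{prop:seq} to write $\Acy_n=\varinjlim_j P_j$ with each $F_n\to P_j$ a $2$-connected homomorphism of finitely presentable groups, and at each finite stage build a cobordism over $(X_n^k-\mathrm{Int}\,D^k)\times[0,1]$ by attaching $1$- and $2$-handles realizing a presentation of $P_j$ of the shape discussed in Section~\ref{subsec:observation}. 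The essential simplification over the $3$-dimensional case is that, by the duality of handle decompositions in dimension $k+1\ge 4$, the resulting homology cylinder $M$ satisfies $\pi_1 M\cong P_j$ on the nose rather than merely surjecting onto it, so $\sigma^{\mathrm{acy}}(M)$ is computed exactly; passing to the inverse limit and using functoriality of the acyclic closure yields $M$ with $\sigma^{\mathrm{acy}}(M)=\varphi$. Capping the boundary sphere $S^{k-1}$ by $D^k\times[0,1]$ gives a homology cylinder over $X_n^k$; the capping introduces exactly a conjugation ambiguity, so the resulting map lands in and surjects onto $\Out(\Acy_n)$.

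For parts (2) and (3), first note that $\widetilde{r}_{\mathfrak{q}_2}$ is a genuine homomorphism on all of $\Aut(\Acy_n)$: in the crossed-homomorphism identity for $r_{\mathfrak{q}_2}$ the twisting factor ${}^{\sigma_2(\varphi)}(\,\cdot\,)$ is absorbed once the target is divided by $A'$, since $\sigma_2(\varphi)\in\mathrm{GL}(n,\Z)$ acts trivially on $\mathcal{K}_{H_1}^\times/(\pm H_1\cdot A')$. By Example~\ref{twoconn}, $\End_2(F_n)$ embeds in $\Aut(\Acy_n)$, and on it the Magnus matrix is computed by ordinary Fox derivatives. I would use the family $f_m\in\End_2(F_n)$, for $m\ge 2$ and $n\ge 2$, given by $f_m(\gamma_1)=\gamma_1[\gamma_1,\gamma_2]^m$ and $f_m(\gamma_i)=\gamma_i$ for $i\ge 2$; these are $2$-connected because $H_2(F_n)=0$ and $[\gamma_1,\gamma_2]^m\in[F_n,F_n]$. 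A direct computation with Proposition~\ref{prop:foxproperty} shows that $r_{\mathfrak{q}_2}(f_m)$ equals the identity matrix outside its first column, with $(1,1)$-entry $1+m\gamma_1^{-1}(1-\gamma_2^{-1})$, hence
\[
\det r_{\mathfrak{q}_2}(f_m)=1+m\gamma_1^{-1}-m\gamma_1^{-1}\gamma_2^{-1}\in\mathcal{K}_{H_1}^\times .
\]
Multiplying by the unit $\gamma_1\gamma_2$, this is the class of the Laurent polynomial $\gamma_1\gamma_2+m\gamma_2-m\in\Z[H_1]$, which is prime (it is, up to a unit, monic and linear in $\gamma_1$), and whose $\mathrm{GL}(n,\Z)$-orbit is detected by the multiset $\{1,m,m\}$ of absolute values of its coefficients: a monomial change of variables permutes monomials but leaves the coefficients fixed up to a global sign. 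Using the UFD structure of $\Z[H_1]$ exactly as before Theorem~\ref{thm:cfk}, it follows that $\{\widetilde{r}_{\mathfrak{q}_2}(f_m)\}_{m\ge 2}$ consists of pairwise distinct members of a $\Z$-basis of $\mathcal{K}_{H_1}^\times/(\pm H_1\cdot A')\cong\Z^\infty$, hence spans an infinitely generated subgroup; combined with part (1), this makes $H_1(\Aut(\Acy_n))$ and $H_1(\mathcal{H}(X_n^k-\mathrm{Int}\,D^k))$ of infinite rank.

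For (3), a further computation with the fundamental properties of Fox derivatives shows that for an inner automorphism $c_h(x)=hxh^{-1}$ the Magnus matrix is $h^{-1}I_n$ plus a matrix of rank one, whose Dieudonn\'e determinant reduces modulo $[\Acy_n,\Acy_n]$ to a monomial; therefore $\widetilde{r}_{\mathfrak{q}_2}(c_h)$ is trivial, $\widetilde{r}_{\mathfrak{q}_2}$ descends to $\Out(\Acy_n)=\Aut(\Acy_n)/\mathrm{Inn}(\Acy_n)$, and together with the surjection $\mathcal{H}(X_n^k)\twoheadrightarrow\Out(\Acy_n)$ of part (1) we conclude that $H_1(\Out(\Acy_n))$ and $H_1(\mathcal{H}(X_n^k))$ have infinite rank.

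The key structural difference from the surface case --- where the analogue of $\widetilde{r}_{\mathfrak{q}_2}$ is \emph{trivial} because $\det r_{\mathfrak{q}_2}(M)=\overline{\tau_{\mathfrak{q}_2}^+(M)}\,(\tau_{\mathfrak{q}_2}^+(M))^{-1}$ always lies in $A$ --- is that a $2$-connected endomorphism of $F_n$ need not be realized by any closed $3$-manifold, so no torsion-duality relation constrains $\det r_{\mathfrak{q}_2}$ on $\End_2(F_n)$. I expect the main obstacle to be part (1): carrying out the higher-dimensional handle and surgery construction with enough care to keep track of $2$-connectivity at each finite stage and its compatibility with the direct limit defining $\Acy_n$ --- although, as noted, this is genuinely easier than the $3$-dimensional original. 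The only delicate point in part (2) is confirming that the Laurent polynomials produced lie in pairwise distinct $\mathrm{GL}(n,\Z)$-orbits of primes, which the coefficient-multiset invariant handles.
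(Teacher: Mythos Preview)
Your proposal is correct and, for parts (1) and (3), follows the same route as the paper: (1) is the higher-dimensional rerun of Theorem~\ref{thm:autsp} (with the simplification you note, that in ambient dimension $\ge 4$ the dual handle decomposition gives $\pi_1 M\cong G$ exactly, as remarked in Section~\ref{subsec:observation}); and for (3) the paper merely says it ``can be easily checked'', which your inner-automorphism computation makes explicit --- indeed, writing $J(c_h)_{ij}=(1-\gamma_j)\partial h/\partial\gamma_i+h\delta_{ij}$ in $\Z[H_1]$, the matrix determinant lemma and the fundamental formula of Proposition~\ref{prop:foxproperty}\,(4) give $\det J(c_h)=h^{n-1}$, a monomial.

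The genuine difference is in part (2). The paper uses the family
\[
f_m(\gamma_1)=(\gamma_1\gamma_2^{-1}\gamma_1^{-1}\gamma_2^{-1})^m\,\gamma_1\,\gamma_2^{2m},\qquad f_m(\gamma_i)=\gamma_i\ (i\ge 2),
\]
for which $\widetilde{r}_{\mathfrak{q}_2}(f_m)=1-\gamma_2+\gamma_2^2-\cdots+\gamma_2^{2m}$, and then appeals to the irreducibility of these cyclotomic-type polynomials when $2m+1$ is prime to get infinitely many distinct $\mathrm{GL}(n,\Z)$-orbits of primes in $\Z[H_1]$. Your family $f_m(\gamma_1)=\gamma_1[\gamma_1,\gamma_2]^m$ produces instead the Laurent polynomial (up to units) $\gamma_1\gamma_2+m\gamma_2-m$, which is irreducible simply because it is monic and linear in $\gamma_1$, and you separate the $\mathrm{GL}(n,\Z)$-orbits by the elementary invariant ``multiset of absolute values of nonzero coefficients''. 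Both arguments are valid; yours avoids the cyclotomic input and the restriction to a subsequence of $m$'s, at the cost of invoking a two-variable polynomial rather than a one-variable one. Either way one lands on infinitely many distinct basis elements of $\mathcal{K}_{H_1}^\times/(\pm H_1\cdot A')\cong\Z^\infty$, which is what (2) needs.
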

\begin{proof}[Sketch of Proof]
$(1)$ follows from a construction similar to the 
one used in the proof of Theorem \ref{thm:autsp}. To show $(2)$, 
consider $2$-connected homomorphisms $f_m: F_n \to F_n$ defined by 
\[f_m(\gamma_1) = 
(\gamma_1 \gamma_2^{-1} \gamma_1^{-1} \gamma_2^{-1})^m 
\gamma_1 \gamma_2^{2m}, \qquad
f_m(\gamma_i) = \gamma_i \ (2 \le i \le n), \] 
\noindent
which in turn give automorphisms of $\Acy_n$. We can easily check that 
\[\widetilde{r}_\mathfrak{q_2} (f_m) = 
1 -\gamma_2 +\gamma_2^2-\gamma_2^3+ \cdots 
+ \gamma_2^{2m}.\]
Then $(2)$ follows from the irreducibility of these polynomials 
when $2m+1$ is prime by a well-known fact on the cyclotomic 
polynomials. $(3)$ can be easily checked. 
\end{proof}
\begin{remark}
The statements in Theorem \ref{thm:acyMag} 
do not hold for $k=2$ by the symplecticity of 
the Magnus representation $r_\mathfrak{q_2}$ 
as seen in the previous subsection. 
When $k=3$, the theorem can be seen as a partial generalization 
of a theorem of Laudenbach \cite[Theorem 4.3]{la} stating that 
there exists an exact sequence 
\[1 \longrightarrow (\Z/2\Z)^n \longrightarrow 
\mathcal{M} (X_n^3) \longrightarrow \Aut (F_n) 
\longrightarrow 1,\] 
where the $i$-th summand of $(\Z/2\Z)^n$ corresponds to 
the rotation of $S^2$ in 
the $i$-th factor of $X_n^3$ by using $\pi_1 (SO(3)) \cong \Z/2 \Z$.
\end{remark}

In contrast with the case of surfaces, 
the homomorphism $\mathcal{M} (X) \to \mathcal{C} (X)$ 
is {\it not} necessarily injective for a general manifold $X$. 
In fact, if $[\varphi] \in \Ker(\mathcal{M} (X) \to \mathcal{C} (X))$, 
the definition of the homomorphism only says that 
$\varphi$ is a {\it pseudo isotopy} over $X$, for which 
we refer to Cerf \cite{cerf} and Hatcher-Wagoner \cite{hw}. 
Note also that we can argue about homology cylinders 
in other categories such as piecewise linear and continuous, 
which would bring us to further different and interesting phenomena.  

\frenchspacing



\begin{thebibliography}{1}

\bibitem{abd} M.~Abdulrahim, 
Complex specializations of the reduced Gassner 
representation of the pure braid group, 
\textit{Proc.\ Amer.\ Math.\ Soc.} 125 (1997), 1617--1624. 

\bibitem{abmp} J.~E.~Andersen, A.~Bene, J.~B.~Meilhan, R.~Penner, 
Finite type invariants and fatgraphs, 
\textit{Adv.\ Math.} 225 (2010), 2117--2161. 

\bibitem{abp} J.~E.~Andersen, A.~Bene, R.~Penner, 
Groupoid extensions of mapping class representations 
for bordered surfaces, 
\textit{Topology Appl.} 156 (2009), 2713--2725. 

\bibitem{andrea} S.~Andreadakis, 
On the automorphisms of free groups and free nilpotent groups, 
\textit{Proc.\ London Math.\ Soc.} 15 (1965), 239--268. 

\bibitem{artin} E.~Artin, 
Theorie der Z\"opfe, 
\textit{Abhandlungen Hamburg} 4 (1925), 47--72. 

\bibitem{artin2} E.~Artin, 
Theory of braids, 
\textit{Ann.\ of Math.} 48 (1947), 101--126.

\bibitem{aps1} M.~F.~Atiyah, V.~K.~Patodi, I.~M.~Singer, 
Spectral asymmetry and Riemannian geometry.~I, 
\textit{Math.\ Proc.\ Camb.\ Phil.\ Soc.} 77 (1975), 43--69.

\bibitem{aps2} M.~F.~Atiyah, V.~K.~Patodi, I.~M.~Singer, 
Spectral asymmetry and Riemannian geometry.~II, 
\textit{Math.\ Proc.\ Camb.\ Phil.\ Soc.} 78 (1975), 405--432.

\bibitem{aps3} M.~F.~Atiyah, V.~K.~Patodi, I.~M.~Singer, 
Spectral asymmetry and Riemannian geometry.~III, 
\textit{Math.\ Proc.\ Camb.\ Phil.\ Soc.} 79 (1976), 71--99.

\bibitem{bachmuth} S.~Bachmuth, 
Automorphisms of free metabelian groups, 
\textit{Trans.\ Amer.\ Math.\ Soc.} 118 (1965), 93--104. 

\bibitem{bm1} S.~Bachmuth, H.~Y.~Mochizuki, 
The non-finite generation of $\Aut (G)$, 
$G$ free metabelian of rank $3$, 
\textit{Trans.\ Amer.\ Math.\ Soc.} 270 (1982), 693-700.

\bibitem{bm2} S.~Bachmuth, H.~Y.~Mochizuki, 
$\Aut (F) \to \Aut (F/F'')$ is surjective 
for free group for rank $\ge 4$, 
\textit{Trans.\ Amer.\ Math.\ Soc.} 292 (1985), 81-101.

\bibitem{birman_inverse} J.~Birman, 
An inverse function theorem for free groups, 
\textit{Proc.\ Amer.\ Math.\ Soc.} 41 (1973), 634--638.

\bibitem{bi} J.~Birman, 
\textit{Braids, Links and Mapping Class Groups}, 
Ann.\ of Math.\ Stud. 82, Princeton University Press (1974). 

\bibitem{bo} A.~Bousfield, 
\textit{Homological localization towers for 
groups and $\pi$-modules}, 
Mem.\ Amer.\ Math.\ Soc. 186 (1977). 

\bibitem{br} K.~Brown, 
\textit{Cohomology of Groups}, 
Graduate Texts in Mathematics 87, Springer-Verlag, (1982).

\bibitem{bg} R.~M.~Bryant, C.~K.~Gupta, 
Automorphism groups of free nilpotent groups, 
\textit{Arch.\ Math.} 52 (1989), 313-320. 

\bibitem{burau} W.~Burau, 
\"Uber Zopfgruppen und gleichsinnig 
verdrillte Verkettungen,  
\textit{Abh.\ Math.\ Sem.\ Univ.\ Hamburg} 11 (1935), 179--186. 

\bibitem{cerf} J.~Cerf, 
La stratification naturelle des espaces de fonctions 
deff\'erentiables r\'eelles et le th\'eor\`eme 
de la pseudo-isotopie, 
\textit{Publ.\ Math.\ Inst.\ Hautes \'Etudes Sci.} 39 (1970), 5--173.

\bibitem{cha} J.~C.~Cha,
Injectivity theorems and algebraic closures 
of groups with coefficients, 
\textit{Proc.\ London Math.\ Soc.} 96 (2008), 227-250.

\bibitem{cfk} J.~C.~Cha, S.~Friedl, T.~Kim, 
The cobordism group of homology cylinders, 
\textit{preprint} (2009), arXiv:0909.5580.

\bibitem{chein} O.~Chein, 
$IA$ automorphisms of free and free metabelian groups, 
\textit{Comm.\ Pure Appl.\ Math.} 21 (1968), 605--629. 

\bibitem{cf} T.~Church, B.~Farb, 
Infinite generation of the kernels of the Magnus 
and Burau representations, 
\textit{Algebr.\ Geom.\ Topol.} 10 (2010), 837--851. 

\bibitem{coc} T.~Cochran, 
Noncommutative knot theory, 
\textit{Algebr.\ Geom.\ Topol.} 4 (2004), 347--398. 

\bibitem{coc_har} T.~Cochran, S.~Harvey, 
Homology and derived series of groups, 
\textit{Geom.\ Topol.} 9 (2005), 2159--2191. 

\bibitem{coc_har_horn} T.~Cochran, S.~Harvey, P.~Horn, 
Higher-order signature cocycles for subgroups of 
mapping class groups and homology cylinders, 
\textit{preprint} (2010), arXiv:1003.4977. 

\bibitem{cot} T.~Cochran, K.~Orr, P.~Teichner, 
Knot concordance, Whitney towers and $L^2$-signatures, 
\textit{Ann.\ of Math.} 157 (2003), 433--519.

\bibitem{cot2} T.~Cochran, K.~Orr, P.~Teichner, 
Structure in the classical knot concordance group, 
\textit{Comment.\ Math.\ Helv.} 79 (2004), 105--123.

\bibitem{co} P.~M.~Cohn, 
\textit{Free Rings and their Relations}, 
Academic Press, New York - London (1985).

\bibitem{ct} R.~Crowell, H.~Trotter, 
A class of pretzel knots, 
\textit{Duke Math.\ J.} 30 (1963), 373--377.

\bibitem{Farb_Margalit} B.~Farb, D.~Margalit, 
\textit{A primer on mapping class groups}, 
To be published by Princeton University Press.

\bibitem{fox1} R.~H.~Fox, 
Free differential calculus, 
\textit{Ann.\ of Math.} 57 (1953), 547--560.

\bibitem{fri} S.~Friedl, 
Reidemeister torsion, the Thurston norm 
and Harvey's invariants, 
\textit{Pacific J.\ Math.} 230 (2007), 271--296.

\bibitem{fjr}
S.~Friedl, A.~Juh\'{a}sz, J.~Rasmussen, 
The decategorification of sutured Floer homology,
\textit{preprint} (2009), arXiv:0903.5287. 

\bibitem{gl}
S.~Garoufalidis, J.~Levine, 
Tree-level invariants of three-manifolds, Massey products and 
the Johnson homomorphism, 
In \textit{Graphs and patterns in mathematics and theorical physics}, 
Proc.\ Sympos.\ Pure Math. 73 (2005), 173--205.

\bibitem{gassner} B.~J.~Gassner, 
On braid groups, 
\textit{Abh.\ Math.\ Sem.\ Univ.\ Hamburg} 25 (1961), 10--22. 

\bibitem{gersten} S.~Gersten, 
A presentation for the special automorphism group of 
a free group, 
\textit{J.\ Pure Appl.\ Algebra} 33 (1984), 269--279.

\bibitem{GH} S.~Gervais, N.~Habegger, 
The topological IHX relation, pure braids, and the 
Torelli group, 
\textit{Duke Math.\ J.} 112 (2002), 265--280.

\bibitem{ghi} P.~Ghiggini, 
Knot Floer homology detects genus-one fibred knots, 
\textit{Amer.\ J.\ Math.} 130 (2008), 1151--1169. 

\bibitem{gs08}
H.~Goda, T.~Sakasai, 
Homology cylinders in knot theory, 
\textit{preprint} (2008), arXiv:0807.4034. 

\bibitem{gs09}
H.~Goda, T.~Sakasai, 
Abelian quotients of monoids of homology cylinders, 
\textit{preprint} (2009), arXiv:0905.4775, 
To appear in Geom.\ Dedicata. 

\bibitem{gs10}
H.~Goda, T.~Sakasai, 
Factorization formulas and computations of 
higher-order Alexander invariants for homologically fibered knots, 
\textit{preprint} (2010), arXiv:1004.3326, 
To appear in J.\ Knot Theory Ramifications.

\bibitem{gou} M.~Goussarov,
Finite type invariants and $n$-equivalence of $3$-manifolds,
\textit{C.\ R.\ Math.\ Acad.\ Sci.\ Paris} 329 (1999), 517--522.

\bibitem{gupta1} C.~K.~Gupta, 
Around automorphisms of relatively free groups, 
\textit{Algebra, Trends Math., Birkh\"auser} (1999), 63--74.

\bibitem{gupta_shpilrain} C.~K.~Gupta, V.~Shpilrain, 
Lifting automorphisms: a survey, 
\textit{London Math.\ Soc.\ Lecture Note Ser.} 211 (1995), 249--263. 

\bibitem{habe} N.~Habegger, 
Milnor, Johnson, and tree level perturbative invariants, 
\textit{preprint} (2000). 

\bibitem{habe_lin} N.~Habegger, X.~S.~Lin, 
The classification of links up to link-homotopy, 
\textit{J.\ Amer.\ Math.\ Soc.} 3 (1990), 389--419. 

\bibitem{habiro} K.~Habiro, 
Claspers and finite type invariants of links, 
\textit{Geom.\ Topol.} 4 (2000), 1--83.


\bibitem{habiromassuyeau} K.~Habiro, G.~Massuyeau, 
From mapping class groups to monoids of 
homology cobordisms: a survey, 
Chapter ? of \textit{Handbook of Teichm\"uller Theory Volume III} 
edited by A.~Papadopoulos (2010), ???--???. 

\bibitem{harer} J.~Harer, 
The second homology group of the mapping class group 
of an orientable surface, 
\textit{Invent.\ Math.} 72 (1983), 221--239.

\bibitem{har} S.~Harvey, 
Higher-order polynomial invariants of $3$-manifolds 
giving lower bounds for the Thurston norm, 
\textit{Topology} 44 (2005), 895--945.


\bibitem{har2} S.~Harvey, 
Monotonicity of degrees of generalized 
Alexander polynomials of groups and $3$-manifolds, 
\textit{Math.\ Proc.\ Cambridge Philos.\ Soc.} 
140 (2006), 431--450.

\bibitem{har3} S.~Harvey, 
Homology cobordism invariants and the Cochran-Orr-Teichner 
filtration of the link concordance group, 
\textit{Geom.\ Topol.} 12 (2008), 387--430. 

\bibitem{hw} A.~Hatcher, J.~Wagoner, 
\textit{Pseudo-isotopies of compact manifolds}, 
Ast\'erisque, 6, Soc.\ Math.\ France, Paris, (1973).

\bibitem{he} A.~Heap, 
Bordism invariants of the mapping class group, 
\textit{Topology} 45 (2006), 851--886.

\bibitem{hempel} J.~Hempel, 
\textit{$3$-Manifolds}. 
Ann.\ of Math.\ Studies, 86, 
Princeton University Press (1976).

\bibitem{hi} J.~Hillman, 
\textit{Algebraic Invariants of Links}, 
Series on Knots and Everything -- Vol.\ 32, 
World Scientific Press (2002).

\bibitem{io} K.~Igusa, K.~Orr, 
Links, pictures and the homology of nilpotent groups, 
\textit{Topology} 40 (2001), 1125--1166.

\bibitem{ivanov} N.~V.~Ivanov, 
\textit{Mapping class groups}, 
Handbook of geometric topology, 
North-Holland, Amsterdam (2002), 523--633.

\bibitem{jo4} D.~Johnson, 
A survey of the Torelli group, 
\textit{Contemp.\ Math.} 20 (1983), 165--179.

\bibitem{juhasz} A.~Juh\'asz, 
Holomorphic discs and sutured manifolds, 
\textit{Algebr.\ Geom.\ Topol.} 6 (2006), 1429--1457. 

\bibitem{juhasz2} A.~Juh\'asz, 
Floer homology and surface decompositions, 
\textit{Geom.\ Topol.} 12 (2008), 299--350.

\bibitem{kawazumi_Magnus} N.~Kawazumi, 
Cohomological aspects of Magnus expansions, 
\textit{preprint} (2005), arXiv:0505497.

\bibitem{kawazumi} N.~Kawazumi, 
Canonical $2$-forms on the moduli space 
of Riemann surfaces, 
Chapter 6 of \textit{Handbook of Teichm\"uller Theory Volume II} 
edited by A.~Papadopoulos (2009), 217--237. 

\bibitem{kmt} T.~Kitano, T.~Morifuji, M.~Takasawa, 
$L^2$-torsion invariants of a surface bundle over $S^1$, 
\textit{J.\ Math.\ Soc.\ Japan} 56 (2004), 503--518.

\bibitem{klw}
P.~Kirk, C.~Livingston, Z.~Wang, 
The Gassner representation for string links, 
\textit{Commun.\ Contemp.\ Math.} 3 (2001), 87--136. 

\bibitem{ks} M.~Korkmaz, A.~Stipsicz, 
The second homology groups of mapping class groups 
of oriented surfaces, 
\textit{Math.\ Proc.\ Cambridge Philos.\ Soc.} 134 (2003), 479--489. 

\bibitem{la} F.~Laudenbach, 
\textit{Topologie de la dimension trois: 
homotopie et isotopie}, Ast\'erisque 12, 
Soci\'et\'e Math\'ematique de France, Paris (1974).

\bibitem{ld0} J.~Y.~Le\ Dimet, 
\textit{Cobordisme d'enlacements de disques}, 
M\'em.\ Soc.\ Math.\ France 32 (1988). 

\bibitem{ld} J.~Y.~Le\ Dimet, 
Enlacements d'intervalles et repr\'esentation de Gassner, 
\textit{Comment.\ Math.\ Helv.} 67 (1992), 306--315.

\bibitem{le1} J.~Levine, 
Link concordance and algebraic closure, II, 
\textit{Invent.\ Math.} 96 (1989), 571--592.

\bibitem{le2} J.~Levine, 
Algebraic closure of groups, 
\textit{Contemp.\ Math.} 109 (1990), 99--105.

\bibitem{le4} J.~Levine, 
Link invariants via the eta invariant, 
\textit{Comment.\ Math.\ Helv.} 69 (1994), 82--119.

\bibitem{le3} J.~Levine, 
Pure braids, a new subgroup of the mapping class group 
and finite-type invariants, 
In \textit{Tel Aviv Topology Conference: Rothenberg Festschrift, 
Contemporary Mathematics} 231 (1999), 137--157. 

\bibitem{levine} J.~Levine, 
Homology cylinders: 
an enlargement of the mapping class group, 
\textit{Algebr.\ Geom.\ Topol.} 1 (2001), 243--270.

\bibitem{magnus2} W.~Magnus, 
\"Uber $n$-dimensionale Gittertransformationen, 
\textit{Acta Math.} 64 (1935), 353-367.

\bibitem{magnus} W.~Magnus, 
On a theorem of Marshall Hall, 
\textit{Ann.\ of Math.} 40 (1939), 764--768.

\bibitem{magnus-peluso} W.~Magnus, A.~Peluso, 
On a theorem of V.~I.~Arnol'd, 
\textit{Comm.\ Pure Appl.\ Math.} 22 (1969), 683--692. 

\bibitem{mm} G.~Massuyeau, J.-B.~Meilhan, 
Characterization of $Y_2$-equivalence for homology cylinders, 
\textit{J.\ Knot Theory Ramifications} 12 (2003), 493--522.

\bibitem{me} W.~Meyer, 
\textit{Die Signatur von lokalen Koeffizientensystemen 
und Faserb\"undeln}, 
Bonner Math.\ Schriften. 53 (1972).

\bibitem{milnor2} J.~Milnor, 
A duality theorem for Reidemeister torsion, 
\textit{Ann.\ of Math.} 76 (1962), 137--147. 

\bibitem{milnor} J.~Milnor, 
Whitehead torsion, 
\textit{Bull.\ Amer.\ Math.\ Soc.} 72 (1966), 358--426.

\bibitem{mo1} S.~Morita, 
Characteristic classes of surface bundles, 
\textit{Invent.\ Math.} 90 (1987), 551--577.

\bibitem{morita_jac1} S.~Morita, 
Families of Jacobian manifolds and characteristic classes 
of surface bundles I, 
\textit{Ann.\ Inst.\ Fourier} 39 (1989), 777--810.

\bibitem{morita_jac2} S.~Morita, 
Families of Jacobian manifolds and characteristic classes 
of surface bundles. II, 
\textit{Math.\ Proc.\ Camb.\ Phil.\ Soc.} 105 (1989), 79--101.

\bibitem{mo9} S.~Morita, 
Casson's invariant for homology $3$-spheres and characteristic 
classes of surface bundles I, 
\textit{Topology}, 28 (1989), 305--323.

\bibitem{mo} S.~Morita, 
Abelian quotients of subgroups of the mapping class 
group of surfaces, 
\textit{Duke Math.\ J.} 70 (1993), 699--726.

\bibitem{morita_beyond} S.~Morita, 
Cohomological structure of the mapping class group 
and beyond, In \textit{Problems on mapping class groups 
and related topics}, edited by B.~Farb, 
Proc.\ Sympos.\ Pure Math. 74 (2006), 329--354.

\bibitem{morita_survey} S.~Morita, 
Introduction to mapping class groups of surfaces and 
related groups, Chapter 7 of 
{\it Handbook of Teichm\"uller Theory Volume I} edited by 
A.~Papadopoulos (2007), 353--386.

\bibitem{morita_GD} S.~Morita, 
Symplectic automorphism groups of nilpotent 
quotients of fundamental groups of surfaces, 
In \textit{Groups of diffeomorphisms}, 
Adv.\ Stud.\ Pure Math. 52 (2008), 443--468.

\bibitem{myers} R.~Myers, 
Homology cobordisms, link concordances, 
and hyperbolic 3-manifolds, 
\textit{Trans.\ Amer.\ Math.\ Soc.} 278 (1983), 271--288.

\bibitem{ni0} Y.~Ni, 
Sutured Heegaard diagrams for knots, 
\textit{Algebr.\ Geom.\ Topol.} 6 (2006), 513--537.

\bibitem{ni1} Y.~Ni, 
Knot Floer homology detects fibred knots, 
\textit{Invent. Math.} 170 (2007), 577--608.

\bibitem{ni} J.~Nielsen, 
Die Isomorphismengruppe der freien Gruppen, 
\textit{Math.\ Ann.} 91 (1924), 169--209.

\bibitem{oda} T.~Oda, 
A lower bound for the graded modules associated with the 
relative weight filtration on the Teichmu\"ller group, 
\textit{preprint}. 

\bibitem{orr} K.~Orr, 
Homotopy invariants of links, 
\textit{Invent.\ Math.} 95 (1989) 379--394.

\bibitem{papa} C.~D.~Papakyriakopoulos, 
Planar regular coverings of orientable closed surfaces, 
\textit{Ann.\ of Math.\ Stud.} 84, Princeton Univ.\ Press (1975), 261--292

\bibitem{paris} L.~Paris, 
Braid groups and Artin groups, 
Chapter 11 of 
{\it Handbook of Teichm\"uller Theory Volume II} edited by 
A.~Papadopoulos (2009), 389--451. 

\bibitem{pa} D.~Passman, 
\textit{The Algebraic Structure of Group Rings}, 
John Wiley and Sons (1977).

\bibitem{penner} R.~Penner, 
The decorated Teichm\"uller space of punctured surfaces, 
\textit{Comm.\ Math.\ Phys.} 113 (1987), 299--339.

\bibitem{perron} B.~Perron, 
A homotopic intersection theory on surfaces: 
applications to mapping class group and braids, 
\textit{Enseign.\ Math.} 52 (2006), 159--186. 

\bibitem{ros} J.~Rosenberg, 
\textit{Algebraic K-theory and its applications}, 
Graduate Texts in Mathematics 147, Springer-Verlag (1994).

\bibitem{sa2} T.~Sakasai, 
Homology cylinders and the acyclic closure of a free group, 
\textit{Algebr.\ Geom.\ Topol.} 6 (2006), 603--631.

\bibitem{sa3} T.~Sakasai, 
The symplecticity of the Magnus representation for 
homology cobordisms of surfaces, 
\textit{Bull.\ Austral.\ Math.\ Soc.} 76 (2007), 421--431. 

\bibitem{sakasai08} T.~Sakasai, 
The Magnus representation and higher-order 
Alexander invariants for homology cobordisms of surfaces, 
\textit{Algebr.\ Geom.\ Topol.} 8 (2008), 803--848.

\bibitem{sakasai10} T.~Sakasai, 
The Magnus representation and homology cobordism 
groups of homology cylinders, 
\textit{in preparation}. 

\bibitem{satoh_twisted_H} T.~Satoh, 
Twisted first homology groups of the automorphism 
group of a free group, 
\textit{J.\ Pure Appl.\ Algebra} 204 (2006), 334--348.

\bibitem{satoh_magnus} T.~Satoh, 
The kernel of the Magnus representation of 
the automorphism group of a free group is not finitely generated, 
\textit{preprint} (2009), arXiv:0910.0386.

\bibitem{shpilrain} V.~Shpilrain, 
Automorphisms of $F/R'$ groups, 
\textit{Internat.\ J.\ Algebra Comput.} 1 (1991), 177--184.

\bibitem{st} J.~Stallings, 
Homology and central series of groups, 
\textit{J.\ Algebra} 2 (1965), 170--181.

\bibitem{strebel} R.~Strebel, 
Homological methods applied to the derived series of groups, 
\textit{Comment.\ Math.\ Helv.} 49 (1974), 302--332.

\bibitem{su} M.~Suzuki, 
The Magnus representation of the Torelli 
group $\mathcal{I}_{g,1}$ is not faithful for $g \ge 2$, 
\textit{Proc.\ Amer.\ Math.\ Soc.} 130 (2002), 909--914. 

\bibitem{suzuki_irred} M.~Suzuki, 
Irreducible decomposition of the Magnus representation 
of the Torelli group, 
\textit{Bull.\ Austral.\ Math.\ Soc.} 67 (2003), 1--14. 

\bibitem{suz} M.~Suzuki, 
Geometric interpretation of the Magnus representation of 
the mapping class group, 
\textit{Kobe J.\ Math.} 22 (2005), 39--47.

\bibitem{suz2} M.~Suzuki, 
On the kernel of the Magnus representation of the Torelli group, 
\textit{Proc.\ Amer.\ Math.\ Soc.} 133 (2005), 1865--1872. 

\bibitem{turaev} V.~G.~Turaev, 
Intersections of loops in two-dimensional manifolds, 
\textit{Mat.\ Sb.} 106(148) (1978), 566--588.

\bibitem{turaev_knot} V.~G.~Turaev, 
Reidemeister torsion in knot theory, 
\textit{Uspekhi Mat.\ Nauk} 41 (1986), 97--147. 
English translation: \textit{Russian Math. Surveys} 41 (1986), 119--182.

\bibitem{tu2} V.~Turaev, 
\textit{Introduction to combinatorial torsions}, Lectures 
Math.\ ETH Z\"urich, Birkh\"auser (2001).

\bibitem{wa} C.~T.~C.~Wall, 
Non-additivity of the signature, 
\textit{Invent.\ Math.} 7 (1969), 269--274.
\end{thebibliography}
\end{document}